\newtheorem{theorem}{Theorem}[chapter]
\newtheorem{lemma}[theorem]{Lemma}
\theoremstyle{definition}
\newtheorem{definition}[theorem]{Definition}
\newtheorem{example}[theorem]{Example}
\newtheorem{proposition}[theorem]{Proposition}
\newtheorem{corollary}[theorem]{Corollary}
\theoremstyle{remark}
\newtheorem{remark}[theorem]{Remark}
\numberwithin{section}{chapter}
\numberwithin{equation}{chapter}
\newcommand{\N}{\mathbb{N}}
\newcommand{\J}{\mathbb{J}}
\newcommand{\C}{\mathbb{C}}
\newcommand{\cG}{\mathcal{G}}
\newcommand{\cB}{\mathcal{B}}
\newcommand{\cF}{\mathcal{F}}
\newcommand{\cH}{\mathcal{H}}
\newcommand{\cK}{\mathcal{K}}
\newcommand{\cD}{\mathcal{D}}
\newcommand{\cM}{\mathcal{M}}
\def\sideremark#1{\ifvmode\leavevmode\fi\vadjust{\vbox
to0pt{\vss \hbox to 0pt{\hskip\hsize\hskip1em
\vbox{\hsize2cm\tiny\raggedright\pretolerance10000
\noindent#1\hfill}\hss}\vbox to8pt{\vfil}\vss}}}
\newcommand{\edz}[1]{\sideremark{#1}}
\newcommand{\ip}[2]{\left\langle\,#1 ,  #2 \, \right\rangle}
\newcommand{\bmx}{\left[ \begin{matrix} }
\newcommand{\emx}{\end{matrix} \right] }
\newcommand{\littleroman}{\renewcommand{\labelenumi}{(\roman{enumi})}}
\begin{document}
\frontmatter
\title{Operator-Valued Measures, Dilations, and the Theory of Frames}

\author{Deguang Han}
\address{Department of Mathematics, University of Central
Florida, Orlando, USA} \email{deguang.han@ucf.edu}

\author{David R. Larson}
\address{Department of Mathematics, Texas A\&M University, College Station, USA}
\email{larson@math.tamu.edu}

\author{Bei Liu}
\address{Department of Mathematics, Tianjin University of Technology, Tianjin
300384, P.R. China}
\email{liubei@mail.nankai.edu.cn}

\author{Rui Liu}
\address{Department of Mathematics and LPMC, Nankai University, Tianjin 300071, P.R. China}
\email{ruiliu@nankai.edu.cn}

\thanks{{\it Acknowledgements}: The authors were all participants in the NSF funded Workshop
in Analysis and Probability at Texas A\&M University. The first author acknowledges partial support by a grant from the NSF. The third and fourth authors received partial support from the NSFC}

\date{}
\subjclass[2010]{Primary 46G10, 46L07, 46L10, 46L51, 47A20;\\Secondary 42C15, 46B15, 46B25, 47B48}

\keywords{operator-valued measures, von Neumann algebras,
dilations, normal maps, completely bounded maps, frames.}

\begin{abstract}

We develop elements of a general dilation theory for operator-valued
measures. Hilbert space operator-valued measures are closely related
to bounded linear maps on abelian von Neumann algebras, and some of
our results include new dilation results for bounded linear maps
that are not necessarily completely bounded, and from domain
algebras that are not necessarily abelian. In the non-cb case the
dilation space often needs to be a Banach space. We give
applications to both the discrete and the continuous frame theory.
There are natural associations between the theory of frames
(including continuous frames and framings), the theory of
operator-valued measures on sigma-algebras of sets, and the theory
of continuous linear maps between $C^*$-algebras. In this connection
frame theory itself is identified with the special case in which the
domain algebra for the maps is an abelian von Neumann algebra and
the map is normal (i.e. ultraweakly, or $\sigma$-weakly, or w*)
continuous. Some of the results for maps  extend to the case where
the domain algebra is non-commutative. It has been known for a long
time that a necessary and sufficient condition for a bounded linear
map from a unital C*-algebra into $B(H)$ to have a Hilbert space
dilation to a $*$-homomorphism is that the mapping needs to be
completely bounded. Our theory shows that even if it is not
completely bounded it still has a Banach space dilation to a
homomorphism. For the special case when the domain algebra is an
abelian von Neumann algebra and the map is normal, we show that the
dilation can be taken to be normal with respect to the usual Banach
space version of ultraweak topology on the range space.  We view
these results as generalizations of the known result of Cazzaza, Han
and Larson that arbitrary framings have Banach dilations, and also
the known result that completely bounded maps have Hilbertian
dilations. Our methods extend to some cases where the domain algebra
need not be commutative, leading to new dilation results for maps of
general von Neumann algebras. This paper was motivated by some
recent results in frame theory and the observation that there is a
close connection between the analysis of dual pairs of frames (both
the discrete and the continuous theory) and the theory of
operator-valued measures.

\end{abstract}

\maketitle

\setcounter{page}{4}
\tableofcontents

\mainmatter

\chapter*{Introduction}

We investigate some natural associations between the theory of
frames (including continuous frames and framings), the theory of
operator-valued measures (OVM's) on sigma algebras of sets, and the
theory of normal (ultraweakly or w* continuous) linear maps on von
Neumann algebras.  Our main focus is on the dilation theory of these
objects.

Generalized analysis-reconstruction schemes include dual pairs of
frame sequences, framings, and continuous versions of these. We
observe that all of these induce operator-valued measures on an
appropriate $\sigma$-algebra of Borel sets in a natural way. The
dilation theories for frames, dual pairs of frames, and framings,
have been studied in the literature and many of their properties
are well known.  The continuous versions also have a dilation
theory, but their properties are not as well understood. We show
that all these can be perhaps better understood in terms of
dilations of their operator-valued measures and their associated
linear maps.

There is a well known dilation theory for those operator-valued
measures that are completely bounded in the sense that their
associated bounded linear maps between the operator algebra
$L^\infty$ of the sigma algebra and the algebra of all bounded
linear operators on the underlying Hilbert space are completely
bounded maps (cb maps for short).  In this setting the dilation
theory for operator-valued measures is obtained naturally
from the dilation theory for  cb maps, and  cb maps dilate to
*-homomorphisms while OVM's dilate to projection-valued measures
(PVM's), where the projections are orthogonal projections. We
develop a general dilation theory for operator valued measures
acting on Banach spaces where operator-valued measure (or maps)
are are not necessarily completely bounded.  Our first main
result (Theorem \ref{main-thm}) shows that  any operator-valued
measure (not necessarily completely bounded) always has a dilation
to a projection-valued measure acting on a Banach space. Here the
dilation space often needs to be a Banach space and the
projections are idempotents that are not necessarily self-adjoint
(c.f. \cite{DSc}).

\bigskip

\noindent{\bf Theorem A \footnote{We are enumerating what we feel are perhaps the most
important of our contributions by labeling them A, B, C, D, E, ..., with
the order not necessarily by order of importance but simply by the order
of appearance in this manuscript. We thank the referee for making a
suggestion along these lines in order to help the reader.}}
 {\it Let $E:\Sigma \to B(X,Y)$ be an operator-valued measure. Then there exist a Banach space $Z$,
 bounded linear operators $S:Z\to Y$ and $T:X\to Z$, and a projection-valued probability measure
 $F:\Sigma\to B(Z)$ such that $$E(B)=SF(B)T$$ for all $B\in\Sigma$. }

\bigskip

\noindent  We will call $(F, Z, S, T)$ in the above theorem a {\it
Banach space dilation system}, and a {\it Hilbert dilation system}
if $Z$ can be taken as a Hilbert space. This theorem  generalizes
Naimark's (Neumark's) Dilation Theorem for positive operator
valued measures.  But even in the case that the underlying space
is a Hilbert space the dilation space cannot always be taken to be
a Hilbert space.  Thus elements of the theory of Banach spaces are
essential in this work.  A key idea is the introduction of the
elementary dilation space (Definitions \ref{de:418} and
\ref{de:420}) and the minimal dilation norm $\|\cdot\|_\alpha$
(Definition \ref{de:421}) on the space $M_E$ of bounded measurable
functions on the measure space for an OVM: The minimal dilation
norm $\|\cdot\|_\alpha$ on $M_E$ is defined by
\[\left\|\sum^{N}_{i=1}C_iE_{ {B_i}, {x_i}}\right\|_\alpha
=\sup_{B\in\Sigma}\left\|\sum_{i=1}^N C_iE(B\cap
B_i)x_i\right\|_Y\] for all $\sum^{N}_{i=1}C_iE_{ {B_i}, {x_i}}\in
M_E$. Using this we show that every OVM has a projection valued
dilation to the elementary dilation space, and moreover,  $||\cdot
||_{\alpha}$ is a minimal norm on the elementary dilation space
(see Theorem \ref{th:422} and Theorem \ref{th:426} ).

\bigskip

\noindent{\bf Theorem B}
 {\it Let $E:\Sigma\to B(X,Y)$ be an operator-valued measure and $(F,Z,S,T)$ be a corresponding Banach
 space dilation system.  Then we have the following:

(i) There exist an elementary Banach space dilation system $(F_\cD,
\widetilde{M}_{E,\cD},S_\cD,T_\cD)$ of $E$ and a linear isometric
embedding
\[U:\widetilde{M}_{E,\cD}\to Z\]
such that
\[S_\mathcal {D}=SU,\ F(\Omega)T=UT_\cD,\ UF_\cD(B)=F(B)U,
\quad \forall B\in \Sigma.\]

(ii) The norm $\|\cdot\|_\alpha$ is indeed a dilation norm. Moreover,  If $\cD$ is a dilation norm of $E,$ then there exists a
constant $C_\cD$ such that for any $\sum_{i=1}^NC_iE_{ {B_i},
{x_i}}\in M_{E,\cD},$
\[
\sup_{B\in\Sigma}\left\|\sum_{i=1}^N C_iE(B\cap B_i)x_i\right\|_Y
\leq C_\cD \left\|\sum_{i=1}^NC_iE_{ {B_i}, {x_i}}\right\|_\cD,
\]
where $N>0,$ $\{C_i\}_{i=1}^{N}\subset\mathbb{C},$
$\{x_i\}_{i=1}^{N}\subset X$ and $\{B_i\}_{i=1}^{N}\subset \Sigma. $
Consequently
\[ \|f\|_\alpha\le C_\cD\|f\|_\cD, \qquad \forall f\in M_{E}.\]
}


\bigskip





Framings are the natural generalization
of discrete frame theory (more specifically, dual-frame
pairs) to non-Hilbertian settings. Even if the underlying space is a Hilbert space,  the dilation space for framing induced operator valued measures can fail to be Hilbertian.
    This theory was originally developed by Casazza, Han and Larson in \cite{CHL} as an attempt to introduce {\it frame theory
    with dilations} into a Banach space context.  The initial motivation for the present manuscript was to completely understand
    the dilation theory of framings.  In the context of Hilbert spaces, we realized that the dilation theory for discrete framings from \cite{CHL}
    induces a dilation theory for discrete operator valued measures that may fail to be completely bounded in the sense of (c.f.
    \cite{Pa}). While in general an operator-valued probability
    measure does not admit a Hilbert space dilation, the dilation
    theory can be strengthened in the case that it does admit a Hilbert space dilation (Theorem \ref{pr:Hp1}):

\bigskip

\noindent{\bf Theorem C}
 {\it Let $E:\Sigma\to B(\cH)$ be an operator-valued probability measure.
 If $E$ has a Hilbert dilation system $(\widetilde{E},\widetilde{H},S,T)$, then there exists a
corresponding Hilbert dilation system $(F,\cK, V^*, V)$ such that
$V:\cH\to\cK$ is an isometric embedding. }

\bigskip

\noindent   This theorem turns out to have some interesting
applications to framing induced operator valued measure dilation.
In particular,  it led to a complete characterization of framings
whose induced operator valued measures are completely bounded. We
include here a few sample examples with the following theorem:
\bigskip

\noindent{\bf Theorem D}
 {\it Let $(x_i,y_i)_{i\in\N}$ be a non-zero framing for a Hilbert space
$\cH.$, and  $E$ be the operator-valued probability measure induced by
$(x_i,y_i)_{i\in\N}$. Then we have the following:

(i) $E$ has a Hilbert dilation space $\cK$ if
and only if there exist $\alpha_i,\beta_i \in \C, i\in\N$ with
$\alpha_i\bar{\beta_i}=1$ such that $\{\alpha_ix_i\}_{i\in\N}$ and
$\{\beta_iy_i\}_{i\in\N}$ both are the frames for the Hilbert space
$\cH.$

(ii)  $E$ is a completely bounded map if and only if
$\{x_i,y_i\}_{i\in\N}$ can be re-scaled to dual frames.

(iii)   If $\ \inf\|x_i\|\cdot\|y_i\|>0,$ then we can find $\alpha_i,\beta_i
\in \C, i\in\N$ with $\alpha_i\bar{\beta_i}=1$ such that
$\{\alpha_ix_i\}_{i\in\N}$ and $\{\beta_iy_i\}_{i\in\N}$ both are
frames for the Hilbert space $\cH$. Hence the operator-valued
measure induced by $\{x_i,y_i\}_{i\in\N}$ has a Hilbertian dilation.
}

\bigskip

\noindent For the existence of non-rescalable (to dual frame pairs)
framings, we obtained the following:

\bigskip

\noindent{\bf Theorem E}
 {\it There exists a framing for a Hilbert space such that its induced operator-valued measure is not completely bounded,
and consequently it can not be re-scaled to obtain a framing that
admits a Hilbert space dilation. }

\bigskip

The second part of Theorem E follows from the first part of the
theorem and Theorem  D (ii). This result also gives an example of
a framing for a Hilbert space which is not rescalable to a dual
frame pair. For the existence of such an example, the motivating
example of framing constructed by Casazza, Han and Larson (Example
3.9 in \cite{CHL}) can not be dilated  to an unconditional basis
for a Hilbert space, although it can be dilated to an
unconditional basis for a Banach space (Theorem 4.6, \cite{CHL}).
We originally conjectured that this is an example that fails to
induce a completely bounded operator valued measure. However, it
turns out that this framing can be re-scaled to a framing that
admits a Hilbert space dilation (see Theorem \ref{th:81}), and
consequently disproves our conjecture. Our construction of the new
example in Theorem E uses a non-completely bounded map to
construct a non-completely bounded OVM which yields the required
framing. This delimiting example shows that the dilation theory
for framings developed in \cite{CHL}  gives a true generalization
of Naimark's Dilation Theorem for the discrete case. This
nontrivial example led us to consider general
(non-necessarily-discrete) operator valued measures, and to the
results of Chapter 2 that lead to the dilation theory for general
(not necessarily completely bounded) OVM's that completely
generalizes Naimark's Dilation theorem in a Banach space setting,
and which is new even for Hilbert spaces.

Part (iii) of Theorem D provides us a sufficient condition under
which a framing induced operator-valued measure has a Hilbert
space dilation. This can be applied to framings that have nice
structures. For example, the following is an unexpected result for
unitary system induced framings, where a unitary system is a
countable collection of unitary operators. This clearly applies to
wavelet and Gabor systems.







\bigskip



\noindent{\bf Corollary F}
 {\it Let $\mathscr{U}_1$ and $\mathscr{U}_2$ be unitary systems on a
separable Hilbert space $\cH.$ If there exist $x,y\in\cH$ such that
$\{\mathscr{U}_1x,\mathscr{U}_2y\}$ is a framing of $\cH,$ then
$\{\mathscr{U}_1x\}$ and $\{\mathscr{U}_2y\}$ both are frames for
$\cH.$ }

\bigskip





One of the important applications  of  our main dilation theorem  (Theorem \ref{main-thm}) is the  dilation for
not necessarily cb-maps with appropriate continuity properties from
a commutative von Neumann algebra into $B(H)$ (Theorem \ref{th:t531}):

\bigskip

\noindent{\bf Theorem G}
 {\it If $\mathcal{A}$ is a purely
atomic abelian von Neumann algebra acting on a separable Hilbert
space, then for every ultraweakly continuous linear map
$\phi:\mathcal{A}\to B(\cH)$, there exists a Banach space $Z,$ an
ultraweakly continuous unital homomorphism $\pi:\mathcal{A}\to
B(Z)$, and bounded linear operators $T:\cH\to Z$ and $S:Z\to\cH$
such that $$\phi(a)=S\pi(a)T$$ for all $a\in\mathcal{A}$. }

\bigskip

\noindent The proof of Theorem G uses some special properties of
the minimal dilation system for the $\phi$ induced operator valued
measure on the space $(\Bbb{N}, 2^{\Bbb{N}})$. Motivated by some
ideas used in the proof of Theorem G,  we then obtained a
universal dilation result, Theorem \ref{th:5B1}, for bounded
linear mappings between Banach algebras.

\bigskip

\noindent{\bf Theorem H}  {\it Let $\mathscr{A}$ be a Banach
algebra, and let $\phi:\mathscr{A}\to B(\cH)$ be a bounded linear
operator, where $H$ is a Banach space. Then there exists a Banach
space $Z,$ a bounded linear unital homomorphism
$\pi:\mathscr{A}\to B(Z)$, and bounded linear operators $T:\cH\to
Z$ and $S:Z\to\cH$ such that
\[\phi(a)=S\pi(a)T\]
for all $a\in\mathscr{A}.$}

\bigskip

We prove that
this is a true generalization of our commutative theorem in an
important special case (see Remark \ref{re:431}), and generalizes some of our results for maps of
commutative von Neumann algebras to the case where the von Neumann
algebra is non-commutative (see Theorem \ref{th:t47} and Corollary
\ref{cor:e1}).  For the case when $\mathscr{A}$ is a von Neumann algebra acting on
a separable Hilbert space and $\phi$ is ultraweakly continuous
(i.e., normal) we conjecture that  the dilation space $Z$ can
be taken to be separable and the dilation homomorphism $\pi$ is
also ultraweakly continuous. While we are not able to confirm this
conjecture we shall prove the following:

\bigskip

\noindent{\bf Theorem I}  {\it Let $K,H$
be Hilbert spaces, $A\subset B(K)$ be a von Neumann algebra, and
$\phi:A\rightarrow B(H)$ be a bounded linear operator which is
ultraweakly-\textsc{SOT} continuous on the unit ball $B_A$ of $A$.
Then there exists a Banach space $Z$, a bounded linear
homeomorphism $\pi:A\rightarrow B(Z)$ which is
\textsc{SOT}-\textsc{SOT} continuous on $B_A$, and bounded linear
operator $T:H\rightarrow Z$ and $S:Z\rightarrow H$ such that
$$\phi(a)=S \pi(a)T$$ for all $a\in A.$ If in addition that $K,H$ are separable, then
the Banach space $Z$ can be taken to be separable.}

\bigskip

These results are apparently new for mappings of von Neumann
algebras. They generalize special cases of Stinespring's Dilation
Theorem. The standard discrete Hilbert space frame theory is
identified with the special case of our theory in which the domain
algebra is abelian and purely atomic, the map is completely
bounded, and the OVM is purely atomic and completely bounded with
rank-1 atoms (Remark \ref{re:415}).

The universal  dilation result has connections with
Kadison's similarity problem for bounded homomorphisms between von
Neumann algebras (see the Remark 4.14). For example,  if $\mathscr{A}$ belongs to one of the following classes:  nuclear; $\mathscr{A} = B(H)$;  $\mathscr{A}$ has no tracial states;  $\mathscr{A}$ is
commutative; $II_{1}$-factor with Murry
and von Neumann's property $\Gamma$, then any non completely bounded map $\phi:\mathscr{A}\to
B(H)$  can never have a Hilbertian dilation (i.e.  the
dilation space $Z$ can never be a Hilbert space)  since otherwise
$\pi:\mathscr{A}\to B(Z)$ would be similar to a *-homomorphism and hence completely bounded and so would
be $\phi$. On the other hand, if there exists a  von Neumann algebra $\mathscr{A}$  and a non completely bounded
map $\phi$ from $\mathscr{A}$ to $B(H)$ that has a Hilbert space
dilation: $\pi:\mathscr{A}\to B(Z)$ (i.e., where $Z$ is a Hilbert
space), then $\pi$ will be a counterexample to the Kadison's
similarity problem since in this case $\pi$ is a homomorphisim that is not completely
bounded and consequently can not be similar to a *-homomorphisim.

It is well known that there is a  theory establishing a
connection between general bounded linear mappings from the
$C^*$-algebra $C(X)$ of continuous functions on a compact Hausdorf
space $X$ into $B(H)$ and operator valued measures on the sigma
algebra of Borel
    subsets of $X$ (c.f. \cite{Pa}).  If $A$ is an abelian $C^*$-algebra then $A$ can be identified with $C(X)$ for a topological space $X$
     and can also be identified with $C(\beta X)$ where  $\beta X$ is the Stone-Cech compactification of X.  Then the
     support $\sigma$-algebra for the OVM is the sigma algebra of Borel subsets of $\beta X$  which is enormous.  However in our generalized
     (commutative) framing theory $\mathcal{A}$ will always be an abelian von Neumann algebra presented up front
     as $L^\infty(\Omega, \Sigma, \mu) $, with $\Omega$ a topological space and $\Sigma$ its algebra of Borel sets,
     and the maps on $A$ into $B(H)$ are  normal.  In particular, to model the discrete  frame and framing theory $\Omega$
     is a countable index set with the discrete topology (most often $\Bbb{N}$), so $\Sigma$ is its power set, and $\mu$ is
     counting measure.   So in this setting it is more natural to work directly with this presentation in developing
     dilation theory rather than passing to $\beta \Omega$, and we take this approach in this paper.

     We feel that the connection we make with established discrete
frame and framing theory is transparent,
    and then the OVM dilation theory for the continuous case becomes a natural but nontrivial generalization of the theory
    for the discrete case that was inspired by framings. After doing this we attempted to apply our techniques to the case where the domain algebra
    for a map is non-commutative. This led to Theorem H.    However, additional hypotheses are needed
    if dilations of maps are to have strong
    continuity and structural properties.  For a map between C*-algebras it is well-known that there is a Hilbert space dilation if the map is
    completely bounded. (If the domain algebra is commutative this statement is an iff.) Even if a map is not cb it has a Banach space dilation.
     We are interested in the continuity and structural properties a dilation can have.  Theorem G  shows that in  the discrete abelian case,
 the dilation of a normal map can be taken to be normal and the dilation space can be taken to be separable, and Theorem I tells us that with suitable hypotheses this type of result
 can be generalized to the noncommutative setting.



    The dilation theory developed in this paper uses Hilbert space operator algebra theory and aspects of Banach space theory,
   so we try to present Banach space versions of Hilbert space results when we can obtain them.  Some of the essential Hilbert space results
   we use are proven more naturally in a wider Banach context. The rest of the paper is organized as follows: Chapter one
    contains preliminary results and some exposition.
    In chapter two we develop our theory of Banach space
    operator-valued measures and the accompanying dilation theory.  Operator valued measures have many different dilations
    to idempotent valued measures on larger Banach spaces (even if the measure to be dilated is a cb measure on a Hilbert space)
    and a part of the theory necessarily deals with classification issues.
    Chapter three is devoted to some additional results
    and exposition for Hilbert space operator-valued framings and measures, including the non-cb measures and their Banach dilations.
    It contains exposition and examples on the manner in which frames and framings on a Hilbert space induce natural operator valued
    measures on that Hilbert space.  The reader might well benefit by reading this chapter first, although doing that would not be
    the natural order in which this theory is presented. In chapter four we present our results on
    dilations of linear maps in the non-commutative case. In chapter five, we give the detailed construction of the important example in Theorem E, and
    prove that the example constructed in \cite{CHL} indeed
    induces a completely bounded operator valued measure.

    We wish to thank a list of our friends and colleagues for many
    useful comments and suggestions on the preliminary version of
    this work. This list includes (but not limited to) David Blecher, Pete Casazza, Don Hadwin, Richard Kadison, Victor Kaftal, Vern Paulsen, Gelu
    Popescu, Zhongjin Ruan and Roger Smith. We would also like to thank  the anonymous referee
    for many good suggestions that help us improve the
    presentation of the paper.

\chapter{Preliminaries}

\section{Frames}

A \textit{frame} $\mathcal{F}$ for a Hilbert space $\cH$ is a
sequence of vectors $\{x_n\} \subset \cH$ indexed by a countable
index set $\J$ for which there exist constants $0<A\leq B<\infty$
such that, for every $x \in \cH$,
\begin{equation}\label{frame}
A\|x\|^2 \leq \sum_{n\in\J} |\ip{x}{x_n}|^2 \leq B\|x\|^2.
\end{equation}
The optimal constants (maximal for $A$ and minimal for $B$) are
known respectively as the upper and lower \textit{frame bounds}. A
 frame is called a \textit{tight frame} if $A=B$,  and
is called a \textit{ Parseval frame} if $A=B=1$. If we only
require that a sequence $\{x_{n}\}$ satisfies the upper bound
condition in (\ref{frame}), then $\{x_{n}\}$ is also called a {\it
Bessel sequence}.

A frame which is a basis is called a Riesz basis. Orthonormal bases
are special cases of Parseval frames. It is elementary that a
Parseval frame $\{x_n\}$ for a Hilbert space $\cH$ is an orthonormal
basis if and only if each $x_n$ is a unit vector.

For a Bessel sequence $\{x_{n}\}$, its {\it analysis operator}
$\Theta$ is a bounded linear operator from $\cH$ to
$\ell^{2}(\mathbb{N})$ defined by
\begin{equation}\label{frameDef}
\Theta x = \sum_{n\in \N}\ip{x}{x_{n}}e_{n},
\end{equation}
 where $\{e_{n}\}$ is the standard
orthonormal basis for $\ell^{2}(\mathbb{N})$. It can be easily
verified that
$$
\Theta^{*}e_{n} = x_{n}, \ \ \forall n\in\N
$$
The Hilbert space adjoint $\Theta^{*}$ is called the {\it synthesis
operator} for $\{x_{n}\}$. The positive operator
$S:=\Theta^{*}\Theta:\cH \rightarrow \cH$ is called the {\it frame
operator}, or sometimes the {\it Bessel operator} if the Bessel
sequence is not a frame, and we have
\begin{equation}\label{frameOp}
Sx = \sum_{n\in \N}\ip{x}{ x_{n}}x_{n},  \ \ \ \forall x\in \cH.
\end{equation}

A sequence $\{x_{n}\}$ is a  frame for $\cH$ if and only if its
analysis operator $\Theta$ is bounded, injective and has closed
range, which is, in turn, equivalent to the condition that the
frame operator $S$ is bounded and invertible. In particular,
$\{x_{n}\}$ is a  Parseval frame for $\cH$ if and only if $\Theta$
is an isometry or equivalently  if $S = I$.

Let $S$ be the frame operator for a frame $\{x_{n}\}$. Then the
lower frame bound is $1/||S^{-1}||$ and the upper frame bound is
$||S||$. From (\ref{frameOp}) we obtain the {\it reconstruction
formula (or frame decomposition)}:
$$
x = \sum_{n\in \N}\ip{x}{ S^{-1}x_{n}}x_{n}, \ \ \forall x\in \cH
$$

or equivalently

$$ x = \sum_{n\in \N}\ip{x}{x_{n}}S^{-1}x_{n}, \ \ \forall x\in
\cH.
$$

(The second equation is obvious.  The first can be obtained from the
second by replacing $x$ with $S^{-1}x$ and multiplying both sides by
$S$.)

The frame $\{S^{-1}x_{n}\}$ is called the {\it canonical or standard
dual} of $\{x_{n}\}$. In the case that $\{x_{n}\}$ is a Parseval
 frame for $\cH$, we have that $S = I$ and hence
$$
x = \sum_{n\in \N}\ip{x}{x_{n}}x_{n}, \ \ \forall x\in \cH.
$$

More generally, if a Bessel sequence $\{y_{n}\}$ satisfies
\begin{equation}\label{frame-dual}
x = \sum_{n\in \N}\ip{x}{y_{n}}x_{n}, \ \ \forall x\in \cH,
\end{equation}
where the convergence is in norm of $\cH$, then $\{y_{n}\}$ is
called an {\it alternate dual } of $\{x_{n}\}$. (Then $\{y_{n}\}$
is also necessarily a frame.) The canonical and alternate duals
are usually simply referred to as {\it duals}, and $\{x_{n},
y_{n}\}$ is called a {\it dual frame pair.} It is a well-known
fact that that a  frame $\{x_{n}\}$ is a Riesz basis if and only
if $\{x_{n}\}$  has a unique dual frame (cf. \cite{HL}).

There is a geometric interpretation of Parseval frames and general
frames. Let $P$ be an orthogonal projection from a Hilbert space
$\cK$ onto a closed subspace $\cH$, and let $\{u_{n}\}$ be a
sequence in $\cK$. Then $\{Pu_{n}\}$ is called the {\it orthogonal
compression} of $\{u_{n}\}$ under $P$, and correspondingly
$\{u_{n}\}$ is called an {\it orthogonal dilation } of
$\{Pu_{n}\}$. We first observe that if $\{u_n\}$ is a  frame for
$\cK$, then $\{Pu_n\}$ is a  frame for $\cH$ with frame bounds at
least as {\it good} as those of $\{u_n\}$ (in the sense that the
lower frame cannot decrease and the upper bound cannot increase).
In particular, $\{Pu_n\}$ is a  Parseval frame for $\cH$ when
$\{u_{n}\}$ is an orthonormal basis for $\cK$, i.e., every
orthogonal compression of an orthonormal basis (resp. Riesz basis)
is a Parseval frame (resp.  frame) for the projection subspace.
The converse is also true: every  frame can be orthogonally
dilated to a Riesz basis, and every  Parseval frame can be dilated
to an orthonormal basis. This was apparently first shown
explicitly by Han and Larson in Chapter 1 of \cite{HL}. There,
with appropriate definitions it had an elementary two-line proof.
And as noted by several authors, it can be alternately derived by
applying the Naimark (Neumark) Dilation theorem for operator
valued measures by first passing from a frame sequence to a
natural discrete positive operator-valued measure on the power set
of the index set. So it is sometimes referred to as the Naimark
dilation theorem for frames.  In fact, this is the observation
that inspired much of the work in this paper.

For completeness we formally state this result:
\begin{proposition} \cite{HL} \label{prop:orthog} Let $\{x_{n}\}$ be a
sequence in a Hilbert space $\cH$. Then
\begin{enumerate}\littleroman

\item  $\{x_n\}$ is a Parseval frame for $\cH$ if and only if
there exists a Hilbert space $\cK \supseteq \cH$ and an
orthonormal basis $\{u_n\}$ for $\cK$ such that $x_n = Pu_n$,
where $P$ is the orthogonal projection from $\cK$ onto $\cH$.

\item $\{x_n\}$ is a  frame for  $\cH$ if and only if there exists
a Hilbert space $\cK \supseteq \cH$ and a Riesz basis $\{v_n\}$
for $\cK$ such that $x_n = Pv_n$, where $P$ again is the
orthogonal projection from $\cK$ onto $\cH$.
\end{enumerate}
\end{proposition}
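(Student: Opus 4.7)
The plan is to build the dilations explicitly using the analysis operator, reducing (ii) to (i) via the standard canonical Parseval frame reduction. The two converse directions (producing frames from projections of orthonormal or Riesz bases) are essentially immediate from the observation, already noted in the paragraph preceding the proposition, that orthogonal compressions of frames are frames with bounds at least as good.

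For the nontrivial direction of (i), I would take $\cK = \ell^2(\N)$ with its standard orthonormal basis $\{e_n\}$. Since $\{x_n\}$ is Parseval, the analysis operator $\Theta:\cH\to\ell^2(\N)$, $\Theta x = \sum_n \langle x,x_n\rangle e_n$, is an isometry, so I identify $\cH$ with the closed subspace $\Theta(\cH)\subset \cK$ and let $P$ be the orthogonal projection of $\cK$ onto $\Theta(\cH)$. The key identity is $Pe_n = \Theta x_n$. This is verified by noting that for every $x\in\cH$,
\[
\langle \Theta x, e_n\rangle = \langle x, x_n\rangle = \langle \Theta x, \Theta x_n\rangle,
\]
so $e_n - \Theta x_n$ is orthogonal to $\Theta(\cH)$, hence $Pe_n = \Theta x_n$. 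Translating back through the identification $\cH \simeq \Theta(\cH)$ gives $x_n = Pu_n$ with $u_n := e_n$. Conversely, if $\{u_n\}$ is an orthonormal basis of some $\cK\supseteq\cH$ and $P$ is the projection onto $\cH$, then for $x\in\cH$, $\langle x, Pu_n\rangle = \langle Px, u_n\rangle = \langle x, u_n\rangle$, and Parseval's identity for $\{u_n\}$ in $\cK$ yields $\sum_n|\langle x, Pu_n\rangle|^2 = \|x\|^2$, as required.

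For (ii), I would reduce to the Parseval case via the frame operator $S = \Theta^*\Theta$. Since $\{S^{-1/2}x_n\}$ is a Parseval frame, part (i) provides a Hilbert space $\cK \supseteq \cH$, the orthogonal projection $P:\cK\to\cH$, and an orthonormal basis $\{u_n\}$ of $\cK$ with $S^{-1/2}x_n = Pu_n$. Extend $S^{1/2}$ (an invertible positive operator on $\cH$) to an invertible operator on $\cK$ by
\[
\widetilde{S} := S^{1/2}P + (I-P),
\]
i.e., $S^{1/2}$ on $\cH$ and the identity on $\cH^\perp$. Set $v_n := \widetilde{S}u_n$. Because $\{u_n\}$ is an orthonormal basis of $\cK$ and $\widetilde{S}$ is bounded and boundedly invertible, $\{v_n\}$ is a Riesz basis of $\cK$. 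Moreover $S^{1/2}$ maps $\cH$ into $\cH$, so $PS^{1/2}P = S^{1/2}P$, which gives $P\widetilde{S} = S^{1/2}P$, and therefore
\[
Pv_n = P\widetilde{S}u_n = S^{1/2}Pu_n = S^{1/2}S^{-1/2}x_n = x_n.
\]
The converse direction is the observation quoted above applied to a Riesz basis (which is in particular a frame).

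No serious obstacle is expected; the result is classical. The only step requiring genuine care is the verification of the identity $Pe_n = \Theta x_n$ in (i), since it is what pins down the concrete realization of the dilation, and the analogous verification $P\widetilde{S} = S^{1/2}P$ in (ii), which depends on the fact that $S^{1/2}$ leaves $\cH$ invariant so that the block-diagonal extension behaves correctly under the projection.
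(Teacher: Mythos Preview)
Your proof is correct. The paper does not actually supply its own proof of this proposition; it is stated for completeness with a citation to \cite{HL}, where the authors note it ``had an elementary two-line proof'' via the analysis operator. Your argument is precisely that standard construction: embed $\cH$ isometrically into $\ell^2$ via $\Theta$, identify the projection as $\Theta\Theta^*$, and read off $Pe_n=\Theta x_n$; then reduce (ii) to (i) by the canonical Parseval frame $\{S^{-1/2}x_n\}$ and a block-diagonal extension of $S^{1/2}$. Nothing is missing, and the two points you flagged as requiring care ($Pe_n=\Theta x_n$ and $P\widetilde{S}=S^{1/2}P$) are exactly the substantive verifications.
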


The above dilation result was later generalized in \cite{CHL} to
dual frame pairs.

\begin{theorem} \label{ch2-dualpair-dilation}
Suppose that  $\{x_n\}$ and $\{y_{n}\}$ are two frames for a
Hilbert space $\cH$. Then the following are equivalent:
\begin{enumerate}\littleroman
\item $\{y_{n}\}$ is a dual for $\{x_{n}\}$;

\item  There exists a Hilbert space $\cK \supseteq \cH$ and a Riesz
basis $\{u_n\}$ for $\cK$ such that $x_n = Pu_n$, and $y_{n} =
Pu_{n}^{*}$, where $\{u_{n}^{*}\}$ is the (unique) dual of the Riesz
basis $\{u_{n}\}$ and $P$  is the orthogonal projection from $\cK$
onto $\cH$.
\end{enumerate}
\end{theorem}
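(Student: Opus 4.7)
The plan is to prove the two implications separately.

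For (ii) $\Rightarrow$ (i), the shorter direction, I start from the Riesz basis expansion in $\cK$: for every $x \in \cH \subseteq \cK$, $x = \sum_n \langle x, u_n^* \rangle_\cK u_n$. Since $\cH$ is closed in $\cK$ with orthogonal projection $P$ and $x = Px$, one has $\langle x, u_n^* \rangle_\cK = \langle Px, u_n^* \rangle_\cK = \langle x, P u_n^* \rangle = \langle x, y_n \rangle$. Applying $P$ to both sides of the expansion yields $x = \sum_n \langle x, y_n \rangle x_n$, so $\{y_n\}$ is a dual of $\{x_n\}$. That both $\{x_n\}$ and $\{y_n\}$ are themselves frames is immediate from Proposition \ref{prop:orthog} applied to $\{u_n\}$ and $\{u_n^*\}$, since they are orthogonal compressions of Riesz bases.

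For (i) $\Rightarrow$ (ii), I would construct the dilation explicitly using the analysis operators $\Theta_x, \Theta_y : \cH \to \ell^2(\N)$, which satisfy $\Theta_x^* e_n = x_n$, $\Theta_y^* e_n = y_n$, and the duality identities $\Theta_x^* \Theta_y = \Theta_y^* \Theta_x = I_\cH$. Let $M_x = \Theta_x \cH \subseteq \ell^2$ (a closed subspace since $\{x_n\}$ is a frame) and take $\cK := \cH \oplus M_x^\perp$ with the natural direct-sum Hilbert norm, identifying $\cH$ with $\cH \oplus \{0\}$. Define
\[ u_n := \bigl(x_n,\; e_n - \Theta_y x_n\bigr), \qquad u_n^* := \bigl(y_n,\; e_n - \Theta_x S_x^{-1} x_n\bigr), \]
where $S_x = \Theta_x^* \Theta_x$ is the frame operator of $\{x_n\}$. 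Applying $\Theta_x^*$ to each second component and using $\Theta_x^* \Theta_y = I$ (resp.\ $\Theta_x^* \Theta_x = S_x$) shows both lie in $\ker \Theta_x^* = M_x^\perp$, so $u_n, u_n^* \in \cK$, and visibly $P u_n = x_n$, $P u_n^* = y_n$.

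What remains is the Riesz basis structure. Biorthogonality $\langle u_m, u_n^* \rangle_\cK = \delta_{mn}$ follows by expanding the inner product on $\cH \oplus M_x^\perp$ and invoking the two duality identities to cancel the cross terms against $\langle x_m, y_n \rangle$. The synthesis operator $T_u : \ell^2 \to \cK$, $T_u c = (\Theta_x^* c,\; (I - \Theta_y \Theta_x^*) c)$, admits the explicit bounded inverse $(x, z) \mapsto \Theta_y x + z$ (verification uses only $\Theta_x^* \Theta_y = I$ and $\Theta_x^* z = 0$ for $z \in M_x^\perp$), so $\{u_n\}$ is a Riesz basis of $\cK$; combined with biorthogonality this forces $\{u_n^*\}$ to be its dual Riesz basis.

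The main obstacle is choosing the dilation space so that both $\{u_n\}$ and $\{u_n^*\}$ land inside a single $\cK$. The symmetric-looking ansatz $u_n^* = (y_n, e_n - \Theta_x y_n)$ places the second component in $M_y^\perp$ rather than $M_x^\perp$, and these subspaces of $\ell^2$ generally differ. The resolution is to anchor $\cK$ on $M_x^\perp$ and exploit the fact that all duals of $\{x_n\}$ differ from the canonical dual $\{S_x^{-1} x_n\}$ by a sequence in $\ker \Theta_x^*$: replacing $\Theta_x y_n$ by $\Theta_x S_x^{-1} x_n$ in the second coordinate of $u_n^*$ is precisely what keeps $u_n^*$ inside $\cH \oplus M_x^\perp$ while preserving $P u_n^* = y_n$.
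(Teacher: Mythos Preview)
Your proof is correct. The paper does not supply its own proof of this theorem: it is stated as a known result from \cite{CHL} (``The above dilation result was later generalized in \cite{CHL} to dual frame pairs'') and no argument is given in the text. There is therefore nothing in the paper itself to compare against.

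As a standalone argument your construction is clean and in the spirit of the Han--Larson dilation machinery that the paper invokes for Proposition~\ref{prop:orthog}. The choice $\cK = \cH \oplus M_x^\perp$ with $u_n = (x_n,\, e_n - \Theta_y x_n)$ is essentially the observation that $\Theta_y \Theta_x^*$ is an idempotent on $\ell^2$ with range $M_x$, so $I - \Theta_y \Theta_x^*$ projects onto $M_x^\perp$ and $T_u$ is the graph map of a bounded operator, hence invertible. Your remark at the end about why the naive symmetric ansatz $u_n^* = (y_n, e_n - \Theta_x y_n)$ fails, and how replacing $\Theta_x y_n$ by the canonical-dual term $\Theta_x S_x^{-1} x_n$ repairs it, is exactly the right diagnosis and would be worth keeping in any written-up version.
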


As in  \cite{CHL}, a {\it framing} for a Banach space $X$ is a pair
of sequences $\{x_i,y_i\}$ with $\{x_i\}$ in $X$, $\{y_i\}$ in the
dual space $X^*$ of $X$, satisfying the condition that
\[x=\sum_i\langle x, y_i\rangle x_i,\]
where this series converges unconditionally for all $x\in X.$

The definition of a framing is a natural generalization of the
definition of a dual frame pair. Assume that $\{x_i\}$ is a  frame
for $\cH$ and $\{y_i\}$ is a dual frame for $\{x_i\}$. Then
$\{x_i,y_i\}$ is clearly a framing for $\cH.$ Moreover, if
$\alpha_i$ is a sequence of non-zero constants, then
$\{\alpha_ix_i, \bar{\alpha}^{-1}_iy_i\}$ (called a rescaling of
the pair) is also a framing, although a simple example (Example
\ref{ex:e11}) shows that it need not be a pair of frames, even if
$\{\alpha_ix_i\}$, $\{\bar{\alpha}^{-1}_iy_i\}$ are bounded
sequence.

\begin{example}\label{ex:e11}
Let $\cH$ be a separable Hilbert space and let $\{e_i\}_{i\in\N}$ be an
orthonormal basis of $\cH.$ Let
$$\{x_i\}=\Big\{e_1, e_2,
e_2, e_3, e_3, e_3, .\, .\, . \Big\}$$ and
$$\{y_i\}=\Big\{e_1, \frac{1}{2}e_2,
\frac{1}{2}e_2, \frac{1}{3}e_3, \frac{1}{3}e_3, \frac{1}{3}e_3, .\,
.\, . \Big\}.$$ Then $\{x_i,y_i\}_{i\in\N}$ is a framing of $\cH,$
$\|x_i\|\leq 1,\|y_i\|\leq 1,$ but neither $\{x_i\}$ nor $\{y_i\}$ are
frames for $\cH.$
\end{example}
\begin{proof}
Let $$\{\alpha_i\}=\Big\{1, \frac{1}{\sqrt{2}}, \frac{1}{\sqrt{2}},
\frac{1}{\sqrt{3}}, \frac{1}{\sqrt{3}}, \frac{1}{\sqrt{3}}, .\, .\,
. \Big\},$$ then
$$\{\alpha_ix_i\}=\{y_i/\alpha_i\}=\Big\{e_1,
\frac{1}{\sqrt{2}}e_2, \frac{1}{\sqrt{2}}e_2, \frac{1}{\sqrt{3}}e_3,
\frac{1}{\sqrt{3}}e_3, \frac{1}{\sqrt{3}}e_3, .\, .\, . \Big\}$$ is
a Parseval frame of $\cH.$ Thus for any $x\in\cH,$ we have
\[x=\sum_{i\in\N}\langle x,\alpha_ix_i\rangle\frac{y_i}{\alpha_i}=
\sum_{i\in\N}\langle x,x_i\rangle y_i,\] and this series converges unconditionally. Hence $\{x_i,y_i\}_{i\in\N}$ is a framing of $\cH$ and
$\|x_i\|\leq 1,\|y_i\|\leq 1.$ But for any $j\in\N,$
\[\sum_{i\in \N}|\langle e_j,x_i\rangle|^{2}=j\|e_j\|=j\] and
\[\sum_{i\in\N}|\langle e_j,y_i\rangle|^{2}=\frac{\|e_j\|}{j}=\frac{1}{j}.\]
So $\{x_i\}$ is not a Bessel sequence and $\{y_i\}$ is a Bessel
sequence but not a  frame.
\end{proof}

If $\{x_i, y_i\}$ is a framing, and if $\{x_i, y_i\}$ are both
Bessel sequences, then $\{x_i\}, \{y_i\}$ are frames and $\{x_i,
y_i\}$ is a dual frame pair. Indeed,  if $B, C$ are the Bessel
bounds for $\{x_i\}, \{y_i\}$ respectively, then for each $x,$
\begin{eqnarray*}
\langle x,x\rangle &=&\left\langle\sum_{i\in\N}\langle x,y_i\rangle x_i,x\right\rangle\\
&=&\sum_{i\in\N}\langle x,y_i\rangle \langle x_i,x\rangle\\
&\leq& \left(\sum_{i\in\N}\left|\langle x,y_i\rangle\right|^2\right)^{1/2}
\left(\sum_{i\in\N}\left|\langle x_i,x\rangle\right|^2\right)^{1/2}\\
&\leq&C^{1/2}\|x\|\left(\sum_{i\in\N}\left|\langle x_i,x\rangle\right|^2\right)^{1/2},
\end{eqnarray*}
and so $C^{-1}\|x\|^2\leq\sum_{i\in\N}\left|\langle
x_i,x\rangle\right|^2$ as required. So our interests will involve
framings for which $\{x_i\}, \{y_i\},$ or both, are not Bessel
sequences.

\begin{definition}\label{de:31} \cite{CHL}
A sequence $\{x_i\}_{i\in \N}$ in a Banach space $X$ is a {\it
projective frame for $X$} if there is a Banach space $Z$ with an
unconditional basis $\{z_i, z^*_i\}$ with $X\subset Z$ and a
(onto) projection $P: Z\to X$ so that $Pz_i = x_i$ for all $i
\in\N$. If $\{z_i\}$ is a 1-unconditional basis for $Z$ and $\|P\|
= 1$, we will call $\{x_i\}$ a Projective Parseval frame for $X$.
\end{definition}
In this case, we have for all $x \in X$ that
\[x =\sum_i\langle
x,z^*_i\rangle z_i = Px =\sum_i\langle x,z^*_i\rangle Pz_i
=\sum_i\langle x,z^*_i\rangle x_i,\] and this series converges
unconditionally in X. So this definition recaptures the
unconditional convergence from the Hilbert space definition.

We note that there exist projective frames in the sense of
Definition \ref{de:31} for an infinite dimensional Hilbert space
that fail to be  frames.  An example is contained in Chapter 5.

\begin{definition} \label{de:32} \cite{CHL}
A framing model is a Banach space $Z$ with a fixed unconditional
basis $\{e_i\}$ for $Z.$ A framing modeled on $(Z, \{e_i\}_{i\in\N}
)$ for a Banach space $X$ is a pair of sequences $\{y_i\}$ in $X^*.$
and $\{x_i\}$ in $X$ so that the operator $\theta: X\to Z$ defined
by
\[\theta u =\sum_{i\in\N}\langle u,y_i\rangle e_i,\]
 is an into isomorphism and $\Gamma: Z \to X$ given by
 \[\Gamma(\sum_{i\in\N}
a_ie_i)=\sum_{i\in\N}a_ix_i\] is bounded and $\Gamma \theta=I_X$.
\end{definition}

In this setting, $\Gamma$ becomes the reconstruction operator for
the frame.  The following result due to Casazza, Han and Larson
\cite{CHL}
 shows that these three methods for defining a frame on a
Banach space are really the same.
\begin{proposition}\label{pr:33}
Let $X$ be a Banach space and $\{x_i\}$ be a sequence of elements of
$X.$ The following are equivalent:
\begin{enumerate}
\item[(1)]$\{x_i\}$ is a projective frame for $X$.

\item[(2)]There exists a sequence $y_i\in X^*$  so that $\{x_i,
y_i\}$ is a framing for $X.$

\item[(3)]There exists a sequence $y_i\in X^*$  and a framing
model $(Z, \{e_i\})$ so that $\{x_i, y_i\}$ is a framing modeled
on $(Z, \{e_i\}).$
\end{enumerate}
\end{proposition}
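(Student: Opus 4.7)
The plan is to prove the three-way equivalence via the cycle (3)$\Rightarrow$(2)$\Rightarrow$(1)$\Rightarrow$(3), since (3)$\Rightarrow$(2) is essentially definitional, (1)$\Rightarrow$(3) is a short bookkeeping argument, and (2)$\Rightarrow$(1) is where the substantive construction lives.

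For (3)$\Rightarrow$(2): Given a framing modeled on $(Z,\{e_i\})$ with operators $\theta\colon X\to Z$ and $\Gamma\colon Z\to X$ satisfying $\Gamma\theta=I_X$, I simply apply $\Gamma$ to $\theta u=\sum_i\langle u,y_i\rangle e_i$. Because $\{e_i\}$ is an unconditional basis for $Z$, the series $\sum\langle u,y_i\rangle e_i$ converges unconditionally in $Z$, and boundedness of $\Gamma$ transfers unconditional convergence to $\sum\langle u,y_i\rangle\Gamma(e_i)=\sum\langle u,y_i\rangle x_i=u$ in $X$. Hence $\{x_i,y_i\}$ is a framing for $X$ in the sense of the paper's definition.

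For (1)$\Rightarrow$(3): Given a projective frame datum, namely a Banach space $Z\supset X$ with unconditional basis $\{z_i,z_i^*\}$ and a bounded projection $P\colon Z\to X$ with $Pz_i=x_i$, I define $y_i\in X^*$ by $y_i:=z_i^*\!\mid_X$ (the restriction to $X$). For $x\in X\subset Z$, the unconditional expansion $x=\sum\langle x,z_i^*\rangle z_i$ holds in $Z$; applying the bounded operator $P$ and using $Px=x$ yields $x=\sum\langle x,y_i\rangle x_i$ unconditionally in $X$. Taking the framing model to be $(Z,\{z_i\})$ itself, the map $\theta\colon X\to Z$ is the inclusion (which is an into isomorphism), and $\Gamma\colon Z\to X$ is the projection $P$; then $\Gamma\theta=P\mid_X=I_X$, verifying all clauses of Definition \ref{de:32}.

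For (2)$\Rightarrow$(1), which is the main step, I build an unconditional-basis Banach space $Z$ out of the framing as follows. On the space $c_{00}$ of finitely supported scalar sequences, with canonical basis $\{e_i\}$, define
\[
\Big\|\sum_i a_i e_i\Big\|_Z \;=\; \sup\Big\{\Big\|\sum_{i\in F}\varepsilon_i a_i x_i\Big\|_X : F\subset\mathbb{N}\text{ finite},\ \varepsilon_i\in\{-1,+1\}\Big\},
\]
and let $Z$ be the completion of $(c_{00},\|\cdot\|_Z)$, modulo the null space of the seminorm if necessary. Because the framing relation $u=\sum\langle u,y_i\rangle x_i$ converges unconditionally for every $u\in X$, the sequence $(\langle u,y_i\rangle)$ lies in $Z$, and the map $\theta(u)=\sum_i\langle u,y_i\rangle e_i$ is a bounded linear injection $X\to Z$; moreover $\Gamma(\sum a_i e_i):=\sum a_i x_i$ extends by definition to a bounded map $Z\to X$ with $\Gamma\theta=I_X$, so $\theta(X)$ is closed and complemented in $Z$ by the idempotent $P:=\theta\Gamma$. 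Identifying $X$ with $\theta(X)\subset Z$, $\{e_i\}$ becomes an unconditional basis of $Z$ (its biorthogonal functionals $e_i^*$ are the obvious coordinate evaluations, bounded by construction of the norm) with $Pe_i=\theta(x_i)\simeq x_i$, proving (1).

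The main obstacle is verifying all the claims in the (2)$\Rightarrow$(1) step: one must check that $\|\cdot\|_Z$ is actually a norm (separation may fail and has to be quotiented out), that the completion $Z$ genuinely admits $\{e_i\}$ as an unconditional Schauder basis rather than merely a generating unconditional sequence, and that $\theta$ is an \emph{into isomorphism} (which amounts to showing the synthesis operator $\Gamma$ is bounded below on $\theta(X)$, and this uses the uniform boundedness principle applied to the partial sum operators $u\mapsto\sum_{i\le n}\langle u,y_i\rangle x_i$, whose pointwise convergence on $X$ is guaranteed by the framing property). Once these technicalities are dispatched, the biorthogonal functionals $\{e_i^*\}$ together with $\{e_i\}$ provide the required unconditional basis of $Z$, and $P=\theta\Gamma$ is the projection onto $X$ satisfying $Pe_i=x_i$, closing the cycle.
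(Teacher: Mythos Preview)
The paper does not supply its own proof of this proposition; it is stated as a result ``due to Casazza, Han and Larson \cite{CHL}'' and used as background.  Your argument is correct and is in substance the \cite{CHL} construction: the norm you place on $c_{00}$,
\[
\Big\|\sum_i a_i e_i\Big\|_Z=\sup_{F,\varepsilon}\Big\|\sum_{i\in F}\varepsilon_i a_i x_i\Big\|_X,
\]
is (up to the equivalence displayed in Example~\ref{ex:435}) exactly the minimal dilation norm $\|\cdot\|_\alpha$ the paper later builds for the framing-induced operator-valued measure, and the projection $P=\theta\Gamma$ is the associated dilation projection.  One small wording issue: the ``modulo the null space'' clause is unnecessary and would in fact damage the basis property; once the indices with $x_i=0$ are discarded (harmless for a framing), taking $F=\{j\}$ shows $\|\cdot\|_Z$ already separates points on $c_{00}$, so no quotient is needed.
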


This proposition tells us that if $\{x_i, y_i\}$ is a framing of
$X,$ then $\{x_i, y_i\}$ can be dilated to an unconditional basis.
That is, we can find a Banach space $Z$ with an unconditional
basis $\{e_i,e^*_i\},$ $X\subset Z $ and two bounded linear maps
$S$ and $T$ such that $Se_i=x_i$ and $Te^*_i=y_i.$

\begin{definition}
Let $\cH$ be a separable Hilbert space and $\Omega$ be a $\sigma$-locally compact ($\sigma$-compact and locally compact) Hausdorff space endowed
with a positive Radon measure $\mu$ with $\mbox{supp}(\mu) =\Omega$. A weakly continuous function $\cF:\Omega\to \cH$ is called a
\emph{continuous frame} if there exist constants $0 < C_1\le C_2 < \infty$ such that
\begin{eqnarray*}
C_1\|x\|^2 \le \int_\Omega |\langle x,\cF (\omega) \rangle|^2 d
\,\mu(\omega) \le C_2\|x \|^2, \quad \forall\, x\in \cH.
\end{eqnarray*}
\end{definition}
If $C_1 = C_2$ then the frame is called \emph{tight}. Associated to
$\cF $ is the frame operator $S_\cF $ defined in the weak sense by
\begin{eqnarray*} S_\cF  : \cH \to \cH, \quad \langle S_\cF  (x),y\rangle
:= \int_\Omega \langle x, \cF (\omega)\rangle\cdot\langle \cF
(\omega),y\rangle d \,\mu(\omega).
\end{eqnarray*}
It follows from the definition that $S_\cF $ is a bounded, positive, and invertible operator. We define the following transform associated to
$\cF $,
\begin{eqnarray*}
V_\cF  : \cH\to L^2(\Omega, \mu), \quad  V_\cF  (x)(\omega) :=
\langle x, \cF (\omega)\rangle.
\end{eqnarray*}
This operator is called the {\it analysis operator} in the literature and its adjoint operator is given by
\begin{eqnarray*}
V_\cF ^* : L^2(\Omega, \mu) \to \cH, \quad \langle V_\cF
^*(f),x\rangle :=\int_{\Omega} f(\omega)\langle\cF (\omega),x\rangle
d\,\mu(\omega).
\end{eqnarray*}
Then we have $ S_\cF = V_\cF ^{*} V_\cF$, and
\begin{eqnarray} \label{cfdual}
\langle x,y\rangle = \int_\Omega \langle x, \cF
(\omega)\rangle\cdot\langle \cG (\omega),y\rangle d \,\mu(\omega),
\end{eqnarray}
where $\cG (\omega) := S_\cF ^{-1}\cF (\omega)$ is the {\it standard dual} of $\cF$. A weakly continuous function $\cF :\Omega\to \cH$ is called
\emph{Bessel} if there exists a positive constant $C$ such that
\begin{eqnarray*}
\int_\Omega |\langle x,\cF (\omega) \rangle|^2 d \,\mu(\omega) \le
C\|x \|^2, \quad \forall\, x\in \cH.
\end{eqnarray*}

It can be easily shown  that if $\cF :\Omega\to \cH$ is  Bessel,
then it is a frame for $\cH$ if and only if there exists a Bessel
mapping $\cG$ such that the reconstruction formula (\ref{cfdual})
holds. This $\cG$ may not be the standard dual of $\cF$. We will
call $(\cF, \cG)$ a {\it dual pair}.

A discrete frame is a Riesz basis if and only if its analysis
operator is surjective. But for a continuous frame $\cF$, in
general we don't have $V_\cF (\cH) = L^{2}(\Omega, \mu)$. In fact,
this could happen only when $\mu$ is  purely atomic. Therefore
there is no Riesz basis type dilation theory for continuous frames
(however, we will see later that in contrast the induced
operator-valued measure does have projection valued measure
dilations). The following modified dilation theorem was due to
Gabardo and Han \cite{GH}:

\begin{theorem} Let $\cF$ be a $(\Omega, \mu)$-frame for
$\cH$ and $\cG$ be one of its duals. Suppose that both $V_\cF(\cH)$
and $V_\cG (\cH)$ are contained in the range space $\cM$ of the
analysis operator for some $(\Omega, \mu)$-frame. Then there is a
Hilbert space $\cK\supset \cH$ and a $(\Omega, \mu)$-frame
$\tilde{\cF}$ for $\cK$ with  $P\tilde{\cF} = \cF$, $P\tilde{\cG} =
\cG$ and $V_{\tilde{\cF}}(\cH) = \cM$, where $\tilde{\cG}$ is the
standard dual of $\tilde{\cF}$ and $P$ is the orthogonal projection
from $\cK$ onto $\cH$.
\end{theorem}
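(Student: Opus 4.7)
The plan is to build the dilation inside the range space $\cM$ itself, using its reproducing kernel structure together with a single positive invertible operator that simultaneously intertwines $V_\cG$ and $V_\cF$. First I would observe that since $\cM$ is the range of the analysis operator of some $(\Omega,\mu)$-frame, each evaluation functional $f\mapsto f(\omega)$ is bounded on $\cM$, so $\cM$ carries a reproducing kernel $k:\Omega\to\cM$ with $f(\omega)=\langle f,k_\omega\rangle$ for all $f\in \cM$. The map $\omega\mapsto k_\omega$ is itself a Parseval $(\Omega,\mu)$-frame for $\cM$ whose analysis operator $V_k$ is the inclusion $\cM\hookrightarrow L^2(\Omega,\mu)$. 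Viewing $V_\cF,V_\cG:\cH\to\cM$ as operators with codomain $\cM$ (legitimate by hypothesis), the reconstruction formula translates to the operator identity $V_\cG^* V_\cF=I_\cH$. In particular both $V_\cF$ and $V_\cG$ are bounded below, hence injective with closed range.

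The crux is to construct a positive invertible operator $T:\cM\to\cM$ with $TV_\cG=V_\cF$. With respect to the orthogonal decomposition $\cM=V_\cG(\cH)\oplus V_\cG(\cH)^\perp$, write $V_\cF=U+W$ with $U=P_{V_\cG(\cH)}V_\cF$ and $W=P_{V_\cG(\cH)^\perp}V_\cF$. The constraint $TV_\cG=V_\cF$ forces the diagonal block of $T$ on $V_\cG(\cH)$ to be $UV_\cG^{-1}$ and the off-diagonal block $V_\cG(\cH)\to V_\cG(\cH)^\perp$ to be $WV_\cG^{-1}$. A short calculation using $V_\cG^* V_\cF=I_\cH$ shows that $UV_\cG^{-1}$ is self-adjoint, strictly positive, and invertible on $V_\cG(\cH)$ (with lower bound $\|V_\cG\|^{-2}$). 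Freedom remains in the lower-right block on $V_\cG(\cH)^\perp$, and I would choose it via a Schur complement argument, e.g.\ set it to $I+(WV_\cG^{-1})(UV_\cG^{-1})^{-1}(WV_\cG^{-1})^*$, which makes the full Schur complement equal to $I$ and thus guarantees $T\ge \varepsilon I$ for some $\varepsilon>0$.

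With $T$ in hand, take $\cK=\cM$, define the embedding $j=T^{1/2}V_\cG:\cH\to\cM$, and set $\tilde{\cF}(\omega)=T^{1/2}k_\omega$. Then $j^* j=V_\cG^* T V_\cG=V_\cG^* V_\cF=I_\cH$, so $j$ is isometric and $\cH$ sits as a closed subspace of $\cK$ with orthogonal projection $P=jj^*$. A direct computation via the reproducing property yields $V_{\tilde{\cF}}=T^{1/2}$ as a map $\cM\to L^2(\Omega,\mu)$, so $\tilde{\cF}$ is a frame for $\cK$ with $V_{\tilde{\cF}}(\cK)=T^{1/2}(\cM)=\cM$. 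The frame operator is $S_{\tilde{\cF}}=T$, giving standard dual $\tilde{\cG}(\omega)=T^{-1/2}k_\omega$. Using the reproducing property once more one checks that $j^* T^{1/2}k_\omega=\cF(\omega)$ and $j^* T^{-1/2}k_\omega=\cG(\omega)$, which gives $P\tilde{\cF}=\cF$ and $P\tilde{\cG}=\cG$. The main technical obstacle is the existence of $T$: its forced restriction to $V_\cG(\cH)$ must send that subspace into the generally differently-placed subspace $V_\cF(\cH)$ while remaining the restriction of a self-adjoint positive invertible operator on all of $\cM$, and the reason this can be arranged is precisely that $V_\cG^* V_\cF=I_\cH$ forces $UV_\cG^{-1}$ to be positive on $V_\cG(\cH)$, after which the Schur complement trick handles the rest.
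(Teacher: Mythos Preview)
The paper does not include a proof of this theorem; it is stated as a result of Gabardo and Han and attributed to \cite{GH}, so there is no in-paper argument to compare against. That said, your proposal stands on its own and is correct. The reproducing-kernel description of $\cM$ is legitimate because $\cM$ is the closed range of a bounded-below analysis operator, so point evaluations are bounded and $\omega\mapsto k_\omega$ inherits weak continuity from the underlying frame. Your key step---producing a positive invertible $T$ on $\cM$ with $TV_\cG=V_\cF$---works exactly as you outline: the identity $V_\cG^*V_\cF=I_\cH$ forces the $V_\cG(\cH)$-block $UV_\cG^{-1}$ to be self-adjoint and bounded below (since $\langle UV_\cG^{-1}(V_\cG x),V_\cG x\rangle=\langle V_\cF x,V_\cG x\rangle=\|x\|^2$), and the Schur-complement choice $D=I+(WV_\cG^{-1})(UV_\cG^{-1})^{-1}(WV_\cG^{-1})^*$ then makes the full $2\times2$ block operator positive with invertible Schur complement $I$, hence positive and invertible on all of $\cM$. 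The remaining verifications---$j=T^{1/2}V_\cG$ is an isometry, $S_{\tilde\cF}=T$, $j^*T^{1/2}k_\omega=V_\cG^*Tk_\omega=\cF(\omega)$, $j^*T^{-1/2}k_\omega=V_\cG^*k_\omega=\cG(\omega)$, and $V_{\tilde\cF}(\cK)=T^{1/2}(\cM)=\cM$---all go through as you indicate. One small point worth making explicit in a write-up is that both $V_\cF$ and $V_\cG$ are bounded below (the latter because $V_\cF^*V_\cG=I_\cH$ from taking adjoints), so that $V_\cG^{-1}$ and the block $UV_\cG^{-1}$ are genuinely bounded operators; you use this implicitly.
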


\begin{definition}
Let $X$ be a Banach space and $\Omega$ be a $\sigma$-locally
compact Hausdorff space. Let $\mu$ be a Borel measure on $\omega$.
A continuous framing on $X$ is a pair of maps
$(\mathscr{F},\mathscr{G}),$
\[\mathscr{F}:\Omega\to X, \quad\mathscr{G}:\Omega\to X^*,\]
such that the equation
\begin{eqnarray*}
\left\langle E_{(\mathscr{F},\mathscr{G})}(B)x,y\right\rangle
=\int_B\langle x,\mathscr{G}(\omega)\rangle\langle
\mathscr{F}(\omega),y\rangle d\mu(\omega)
\end{eqnarray*} for $x\in X$, $y\in X^*$, and $B$ a Borel subset of $\Omega$,
defines an operator-valued probability measure on $\Omega$ taking
value in $B(X)$ (see Definition \ref{de:35}). In particular, we
require the integral on the right to converge for each
$B\subset\Omega.$ We have
\begin{eqnarray}\label{eq:c71}
E_{(\mathscr{F},\mathscr{G})}(B)=\int_B
\mathscr{F}(\omega)\otimes\mathscr{G}(\omega)d E(\omega)
\end{eqnarray}
where the integral converges in the sense of Bochner. In
particular, since $E_{(\mathscr{F},\mathscr{G})}(\Omega)=I_X,$ we
have for any $x\in X$ that
\[\langle x, y\rangle =\int_\Omega\langle x,\mathscr{G}(\omega)\rangle\langle\mathscr{F}(\omega), y\rangle d E(\omega).\]
\end{definition}

\section{Operator-valued Measures}

This section briefly  discusses the well-known dilation theory for operator-valued measures, and establishes the connections between framing
dilations and dilations of their associated operator-valued measures (more detailed discussion and investigation will be given in the subsequent
chapters).

In operator theory, Naimark's dilation theorem is a result that characterizes positive operator-valued measures.  Let $\Omega$ be a compact
Hausdorff space,and let $\cB$ be the $\sigma$-algebra of all the Borel subsets of $\Omega$. A $B(\cH)$-valued measure on $\Omega$ is a mapping
$E:\cB\to B(\cH)$ that is weakly countably additive, i.e., if $\{B_i\}$ is a countable collection of disjoint Borel sets with union $B$,
then
\[\langle E(B)x,y\rangle=\sum_i\langle E(B_i)x,y\rangle\]
holds for all $x,y$ in $\cH$. The measure is called {\it bounded}  provided that
\[\sup\{\|E(B)\|:B\in \cB\}<\infty,\]
and we let $\|\varphi\|$ denote this supremum. The measure is
called {\it regular} if for all $x,y$ in $\cH$, the complex
measure given by
\begin{eqnarray}\label{eq:w141}
 \mu_{x,y}(B)=\langle E(B)x,y \rangle
\end{eqnarray} is regular.

Given a regular bounded $B(\cH)$-valued measure $E$, one
obtains a bounded, linear map
\[\phi_E:C(\Omega)\to B(\cH)\] by
\begin{eqnarray}\label{eq:w142}
\langle \phi_E(f)x,y \rangle=\int_\Omega f\, d\,
\mu_{x,y}.
\end{eqnarray}
Conversely, given a bounded, linear map $\phi:C(\Omega)\to B(\cH)$,
if one defines regular Borel measures $\{\mu_{x,y}\}$ for each
$x,y$ in $\cH$ by the above formula (\ref{eq:w142}), then for each
Borel set $B$, there exists a unique, bounded operator $E(B)$,
defined by formula (\ref{eq:w141}), and the map $B\to
E(B)$ defines a bounded, regular $B(\cH)$-valued measure.
Thus, we see that there is a one-to-one correspondence between the
bounded, linear maps of $C(\Omega)$ into $B(\cH)$ and the regular
bounded $B(\cH)$-valued measures. Such measures are called
\begin{enumerate}
\item[(i)] {\it spectral}  if $E(B_1\cap B_2)=E(B_1)\cdot E(B_2),$ \item[(ii)] {\it positive} if $E(B)\geq 0$,
\item[(iii)] {\it self-adjoint} if $E(B)^*=E(B),$
\end{enumerate}
for all Borel sets $B,B_1$ and $B_2$.

Note that if $E$ is spectral and self-adjoint, then
$E(B)$ must be an orthogonal projection for all $B\in \mathcal{B}$, and hence $E$
is positive.

\begin{theorem}[Naimark]\label{th:23}
Let $E$ be a regular, positive, $B(\cH)$-valued measure on
$\Omega$. Then there exist a Hilbert space $\cK$, a bounded linear
operator $V:\cH\to\cK$, and a regular, self-adjoint, spectral,
$B(\cK)$-valued measure $F$ on $\Omega$, such that
\[E(B)=V^*F(B)V.\]
\end{theorem}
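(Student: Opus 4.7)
\medskip

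\noindent\textbf{Proof proposal.}
My plan is to reduce the theorem to the well-known dilation theory for positive linear maps on the commutative $C^{*}$-algebra $C(\Omega)$, and then recover a spectral measure from the resulting representation via the spectral theorem. The correspondence (\ref{eq:w141})--(\ref{eq:w142}) already established before the statement is the bridge: a regular bounded $B(\cH)$-valued measure $E$ is the same data as a bounded linear map $\phi_{E}\colon C(\Omega)\to B(\cH)$, and the hypothesis that $E$ is positive translates precisely to $\phi_{E}$ being a positive linear map (since if $f\geq 0$ then $\langle\phi_{E}(f)x,x\rangle=\int f\,d\mu_{x,x}\geq 0$ because $\mu_{x,x}$ is a positive scalar measure).

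The key step is to build a Stinespring-type dilation for $\phi_{E}$. Because $C(\Omega)$ is abelian, any positive map out of it is automatically completely positive, so I can run the GNS/Stinespring construction: on the algebraic tensor product $C(\Omega)\odot\cH$ define the sesquilinear form
\[
\bigl\langle \textstyle\sum f_{i}\otimes x_{i},\,\sum g_{j}\otimes y_{j}\bigr\rangle
=\sum_{i,j}\bigl\langle \phi_{E}(\overline{g_{j}}f_{i})x_{i},y_{j}\bigr\rangle,
\]
check it is positive semidefinite (this uses positivity of $\phi_{E}$ applied to matrices of the form $[\overline{g_{j}}f_{i}]$, which is the point at which commutativity of $C(\Omega)$ makes positive equal to completely positive), quotient by the null space, and complete to obtain a Hilbert space $\cK$. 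Define $V\colon\cH\to\cK$ by $Vx=[\mathbf{1}\otimes x]$ and a unital $*$-representation $\pi\colon C(\Omega)\to B(\cK)$ by $\pi(f)[g\otimes x]=[fg\otimes x]$. A direct calculation then shows $\phi_{E}(f)=V^{*}\pi(f)V$ for all $f\in C(\Omega)$.

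Now I apply the spectral theorem to the commutative $C^{*}$-algebra $\pi(C(\Omega))\subset B(\cK)$: there is a unique regular projection-valued measure $F$ on the Borel sets of $\Omega$ such that $\pi(f)=\int_{\Omega}f\,dF$ for every $f\in C(\Omega)$. This $F$ is automatically self-adjoint and spectral in the sense of conditions (i)--(iii) stated above. To finish, I must check that $E(B)=V^{*}F(B)V$ for every Borel set $B$. Both sides define regular bounded $B(\cH)$-valued measures on $\Omega$, so by the one-to-one correspondence with bounded linear maps on $C(\Omega)$ it suffices to verify they induce the same map, i.e.
\[
\int_{\Omega} f\,d\mu_{x,y}^{E}
=\bigl\langle\phi_{E}(f)x,y\bigr\rangle
=\bigl\langle V^{*}\pi(f)Vx,y\bigr\rangle
=\int_{\Omega} f\,d\mu_{x,y}^{V^{*}FV},
\]
for all $x,y\in\cH$ and all $f\in C(\Omega)$, and then invoke the Riesz representation theorem (uniqueness of the scalar measure) together with (\ref{eq:w141}).

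\medskip

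\noindent\textbf{Anticipated obstacle.} The computational content is routine; the real care is needed in two places. First, in the GNS step one must verify that the sesquilinear form is positive semidefinite, which for a \emph{positive} (not a priori completely positive) map requires the commutativity of $C(\Omega)$ in an essential way; this is where the scalar argument that any positive $[a_{ij}]\in M_{n}(C(\Omega))$ is a sum of rank-one positives enters. Second, passing from the representation $\pi$ to a projection-valued \emph{regular Borel} measure $F$ on $\Omega$ requires that $\pi$ arises from a representation of $C(\Omega)$ rather than merely a continuous functional calculus, which is handled by the spectral theorem/Riesz--Markov machinery; regularity of $F$ is inherited from regularity of the scalar measures $\langle F(\cdot)\xi,\eta\rangle$ on $\cK$, which in turn comes from the Riesz representation theorem applied to $\pi$.
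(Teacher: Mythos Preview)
Your proposal is correct and follows essentially the same route as the paper's sketch: pass from the positive $B(\cH)$-valued measure to a positive (hence, by commutativity, completely positive) linear map on a commutative $C^{*}$-algebra, apply Stinespring's theorem to obtain $\phi_{E}(f)=V^{*}\pi(f)V$, and then read off the projection-valued measure $F$ from the $*$-representation $\pi$. The only minor variation is that the paper applies Stinespring directly to the $C^{*}$-algebra generated by the characteristic functions $\chi_{B}$, so that $F(B)=\pi(\chi_{B})$ falls out immediately, whereas you work with $C(\Omega)$ and then invoke the spectral theorem to extend $\pi$ to Borel sets; both are standard and yield the same dilation.
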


Stinespring's dilation theorem is for completely positive maps on
$C^*$-algebras. Let $\mathcal {A}$ be a unital $C^*$- algebra. An
operator-valued linear map $\phi:\mathcal {A}\to B(\cH)$ is said to
be {\it positive} if $\phi(a^{*}a) \geq 0$ for every $a\in
\mathcal{A}$, and it is called {\it completely positive} if for
every $n$-tuple
 $a_{1}, ... , a_{n}$ of elements in $\mathcal{A}$, the matrix $(\phi(a_{i}^{*}a_{j}))$ is positive in the usual sense
 that for every $n$-tuple of vectors $\xi_{1}, ... ,
\xi_{n} \in \cH$, we have
\begin{eqnarray}
\sum_{i, j=1}^{n}\langle \phi(a_{i}a_{j}^{*})\xi_{j}, \xi_{i}\rangle \geq 0
\end{eqnarray}
or equivalently, $(\phi(a_i^*a_j))$ is a positive operator on the Hilbert space $\cH\otimes\C^n$.

\begin{theorem}[Stinespring's dilation theorem]\label{th:22}
Let $\mathcal {A}$ be a unital $C^*$-algebra, and let $\phi:\mathcal
{A}\to B(\cH)$ be a completely positive map. Then there exists a
Hilbert space $\cK,$ a unital $*-$homomorphism $\pi: \mathcal {A}\to
B(\cK),$ and a bounded operator $V:\cH\to\cK$ with
$\|\phi(1)\|=\|V\|^2$ such that
\[\phi(a)=V^*\pi(a)V.\]
\end{theorem}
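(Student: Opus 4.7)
The plan is to carry out a Gelfand--Naimark--Segal style construction, using the algebraic tensor product $\mathcal{A}\odot \cH$ as the starting point. First, I would define a sesquilinear form on $\mathcal{A}\odot\cH$ by setting
\[
\Bigl\langle \sum_i a_i\otimes x_i,\ \sum_j b_j\otimes y_j\Bigr\rangle_\phi
= \sum_{i,j}\langle \phi(b_j^*a_i)x_i,\ y_j\rangle,
\]
and extending by sesquilinearity. The complete positivity of $\phi$ is exactly what is needed to ensure this form is positive semidefinite: for a single tensor $\sum_i a_i\otimes x_i$, the sum $\sum_{i,j}\langle \phi(a_j^*a_i)x_i,x_j\rangle$ is nonnegative because the matrix $(\phi(a_j^*a_i))$ is positive on $\cH\otimes\C^n$.

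Next, I would let $N=\{w\in\mathcal{A}\odot\cH : \langle w,w\rangle_\phi=0\}$ and verify via the Cauchy--Schwarz inequality for semi-inner products that $N$ is a subspace; then $\langle\cdot,\cdot\rangle_\phi$ descends to a genuine inner product on $(\mathcal{A}\odot\cH)/N$. Completing this quotient in the induced norm gives the Hilbert space $\cK$. Define $\pi(a)$ on representatives by $\pi(a)(b\otimes x + N) = (ab)\otimes x + N$, and define $V:\cH\to\cK$ by $Vx = 1\otimes x + N$. The identities $\pi(a)\pi(a') = \pi(aa')$, $\pi(1)=I_\cK$, and
\[
\langle \pi(a)Vx, Vy\rangle_\phi = \langle 1\otimes y,\ a\otimes x\rangle_\phi^{\,\ast\text{-convention}} = \langle \phi(a)x, y\rangle
\]
will then give $\phi(a) = V^*\pi(a)V$ after checking that $\pi(a^*)=\pi(a)^*$ by a direct computation on simple tensors.

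The main obstacle, and the step I would handle most carefully, is showing that each $\pi(a)$ descends to a \emph{bounded} operator on the quotient (so that it extends to $\cK$) and that $\pi$ is a $*$-homomorphism. For boundedness I would use the standard trick: fix $a\in\mathcal{A}$ and $w=\sum_i b_i\otimes x_i$, and estimate
\[
\Bigl\langle \pi(a)w,\ \pi(a)w\Bigr\rangle_\phi
= \sum_{i,j}\langle \phi(b_j^*a^*a b_i)x_i, x_j\rangle.
\]
Using the operator inequality $a^*a\le \|a\|^2\cdot 1$ in $\mathcal{A}$ together with complete positivity applied to the matrix $(b_j^*(\|a\|^2 - a^*a)b_i)$, which is positive in $M_n(\mathcal{A})$, one obtains $\|\pi(a)w\|^2 \le \|a\|^2\|w\|^2$, so $\pi(a)$ passes to the quotient and extends by continuity to a bounded operator on $\cK$ of norm at most $\|a\|$. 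In particular this shows $\pi(a)$ is well-defined on equivalence classes (elements of $N$ map into $N$).

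Finally, I would verify the norm identity $\|V\|^2=\|\phi(1)\|$. On the one hand $\|Vx\|^2 = \langle\phi(1)x,x\rangle \le \|\phi(1)\|\|x\|^2$, so $\|V\|^2\le\|\phi(1)\|$; on the other hand $\phi(1) = V^*\pi(1)V = V^*V$, so $\|\phi(1)\|=\|V^*V\|=\|V\|^2$. Regularity or additional measure-theoretic subtleties that complicate the OVM setting of Theorem~\ref{th:23} do not appear here, because we are working purely algebraically with $\mathcal{A}$; the only analytic input beyond linear algebra is completion of an inner product space and the standard $C^*$-algebra inequality $a^*a\le\|a\|^2\cdot 1$ pushed through complete positivity.
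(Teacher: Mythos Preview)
Your proof is the standard GNS-style construction of Stinespring's dilation and is correct as sketched. Note, however, that the paper does not actually supply its own proof of this theorem: it is stated in the Preliminaries chapter as a classical result (with reference to \cite{St} and \cite{Pa}), so there is no in-paper argument to compare against. Your approach is exactly the one found in those references.
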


 The following is also well known for commutative  $C^*$-algebras:

\begin{theorem}[cf. Theorem 3.11, \cite{Pa}]\label{th:21}
Let $\cB$ be a $C^*$-algebra, and let $\phi:C(\Omega)\to\cB$ be
positive. Then $\phi$ is completely positive.
\end{theorem}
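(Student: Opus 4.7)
The plan is to reduce the problem to the case $\cB = B(\cH)$ and then verify complete positivity by approximating positive matrix-valued continuous functions with positive finite sums of elementary tensors.

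First I would embed $\cB$ isometrically into $B(\cH)$ for some Hilbert space $\cH$ via its universal representation. Since the embedding is a $*$-homomorphism, it preserves positivity both of elements and of matrices over the algebra, so it suffices to prove the theorem when $\phi: C(\Omega) \to B(\cH)$. Fix $n \geq 1$ and identify $M_n(C(\Omega))$ with $C(\Omega, M_n(\C))$. A positive element in this algebra is precisely a continuous function $F: \Omega \to M_n(\C)$ such that $F(\omega) \geq 0$ for every $\omega \in \Omega$. The goal then becomes showing that the entrywise amplification $\phi_n([f_{ij}]) = [\phi(f_{ij})]$ sends such an $F$ to a positive element of $M_n(B(\cH))$.

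The key step is a partition-of-unity approximation. Given $\epsilon > 0$, uniform continuity of $F$ on the compact space $\Omega$ yields a finite open cover $\{U_\alpha\}_{\alpha=1}^N$ such that $\|F(\omega) - F(\omega')\|_{M_n} < \epsilon$ whenever $\omega, \omega' \in U_\alpha$. Choose a continuous partition of unity $\{h_\alpha\} \subset C(\Omega)_+$ subordinate to this cover with $\sum_\alpha h_\alpha = 1$, and select points $\omega_\alpha \in U_\alpha$. A routine estimate using $\operatorname{supp}(h_\alpha) \subseteq U_\alpha$ shows that
\[
\left\| F - \sum_{\alpha=1}^N h_\alpha \otimes F(\omega_\alpha) \right\|_{C(\Omega, M_n)} \leq \epsilon.
\]
Each summand $h_\alpha \otimes F(\omega_\alpha)$ is a positive elementary tensor since $h_\alpha \geq 0$ in $C(\Omega)$ and $F(\omega_\alpha) \geq 0$ in $M_n(\C)$. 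Applying $\phi_n$ yields $\sum_\alpha \phi(h_\alpha) \otimes F(\omega_\alpha)$; by positivity of $\phi$, each $\phi(h_\alpha)$ is a positive operator on $\cH$, and the tensor product of two positive matrices is positive in $M_n(B(\cH)) \cong B(\cH^n)$, so this sum is positive.

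Finally, I would invoke that $\phi$ is automatically bounded (any positive linear map from a unital $C^*$-algebra into a $C^*$-algebra is bounded), whence $\phi_n$ is bounded and positivity in the $C^*$-algebra $M_n(B(\cH))$ is preserved under norm limits. Letting $\epsilon \to 0$ gives $\phi_n(F) \geq 0$, establishing that $\phi$ is $n$-positive for every $n$, hence completely positive. The main technical point — and where one must be careful — is the partition-of-unity approximation together with verifying that the error is controlled uniformly; everything else is formal once this approximation is in hand.
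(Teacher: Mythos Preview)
Your argument is correct and is essentially the standard proof (the partition-of-unity approximation of a positive element of $M_n(C(\Omega))\cong C(\Omega,M_n)$ by positive elementary tensors, followed by a limit). The paper itself does not give a proof of this statement at all; it is quoted as Theorem~3.11 of Paulsen's book \cite{Pa}, and your approach matches the proof found there.
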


This result together with Theorem \ref{th:22} implies that
Stinespring's dilation theorem holds for positive maps when
$\mathcal{A}$ is a unital commutative $C^*$-algebra.

A proof of Naimark dilation theorem by using Stinespring's
dilation theorem can be sketched as follows: Let $\phi:{\mathcal
A}\to B(\cH)$ be the natural extension of $E$ to the $C^*$-algebra
${\mathcal A}$ generated by all the characteristic functions of
measurable subsets of $\Omega$.  Then $\phi$ is positive, and
hence is completely positive by Theorem \ref{th:21}. Apply
Stinespring's dilation theorem to obtain a $*-$homomorphism $\pi:
{\mathcal A} \to B(\cK),$ and a bounded, linear operator
$V:\cH\to\cK$ such that $\phi(f)=V^*\pi(f)V$ for all $f$ in
${\mathcal A}$.  Let $F$ be the $B(\cK)-$valued measure
corresponding to $\pi.$ Then it can be verified that $F$ has the
desired properties.

Let $\mathcal {A}$ be a $C^*$- algebra. We use  $M_n$ to denote
the set of all $n\times n$ complex matrices, and  $M_n(\mathcal
{A})$ to denote the set of all $n\times n$ matrices with entries
from $\mathcal {A}.$ For the following theory see (c.f.
\cite{Pa}):

Given two $C^*$-algebras $\mathcal {A}$ and $\mathcal {B}$ and a
map $\phi:\mathcal {A}\to\mathcal {B}$, obtain maps
$\phi_n:M_n(\mathcal {A})\to M_n(\mathcal {B})$ via the formula
\[\phi_n((a_{i,j}))=(\phi(a_{i,j})).\]
The map $\phi$ is called completely bounded if $\phi$ is bounded
and $\|\phi\|_{cb}=\sup_n\|\phi_n\|$ is finite.

Completely positive maps are completely bounded. In the other
direction we have Wittstock's decomposition theorem \cite{Pa}:

\begin{proposition}
Let $\mathcal {A}$ be a unital $C^*$-algebra, and let $\phi:\mathcal
{A}\to B(\cH)$ be a completely bounded map. Then $\phi$ is a linear
combination of two completely positive maps.
\end{proposition}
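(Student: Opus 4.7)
My plan is to reduce the statement to a Stinespring-type factorization of $\phi$ and then polarize. The starting point is Wittstock's representation theorem: every completely bounded map $\phi:\mathcal{A}\to B(\cH)$ admits a factorization
\[\phi(a)=V^{*}\pi(a)W,\qquad a\in\mathcal{A},\]
where $\pi:\mathcal{A}\to B(\mathcal{K})$ is a unital $*$-representation on some Hilbert space $\mathcal{K}$ and $V,W:\cH\to\mathcal{K}$ are bounded operators with $\|V\|\,\|W\|=\|\phi\|_{cb}$. I would obtain this via Paulsen's $2\times 2$ off-diagonal trick: after normalizing $\|\phi\|_{cb}\le 1$ and writing $\phi^{\sharp}(a):=\phi(a^{*})^{*}$, define
\[\Phi:M_{2}(\mathcal{A})\to M_{2}(B(\cH)),\quad \Phi\!\begin{pmatrix}a_{11}&a_{12}\\a_{21}&a_{22}\end{pmatrix}=\begin{pmatrix}\psi(a_{11})&\phi(a_{12})\\ \phi^{\sharp}(a_{21})&\psi(a_{22})\end{pmatrix},\]
with $\psi$ a suitable completely positive map on the diagonal (constructed through Wittstock's extension theorem, or, in simple settings, from a state of $\mathcal{A}$ scaled by $\|\phi\|_{cb}$). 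After verifying that $\Phi$ is completely positive on the matrix algebra, Stinespring's theorem (Theorem~\ref{th:22}) yields $\Phi(\cdot)=S^{*}\Pi(\cdot)S$; reading off the $(1,2)$-entry of this matrix identity produces the Wittstock factorization of $\phi$.

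With the factorization in hand, I polarize. Setting $V_{k}:=V+i^{k}W$ for $k=0,1,2,3$, expanding $V_{k}^{*}\pi(a)V_{k}$ and summing against $i^{-k}$ gives
\[\sum_{k=0}^{3}i^{-k}\,V_{k}^{*}\pi(a)V_{k}=4V^{*}\pi(a)W=4\phi(a),\]
since the diagonal terms $V^{*}\pi(a)V$ and $W^{*}\pi(a)W$ annihilate under $\sum i^{-k}$ and $\sum(-1)^{k}$ respectively. Each summand $V_{k}^{*}\pi(\cdot)V_{k}$ has Stinespring form and is therefore completely positive, so $\phi$ is expressed as a complex linear combination of CP maps. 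When $\phi^{\sharp}=\phi$ (the self-adjoint case), the imaginary contributions cancel and one is left with a difference of two CP maps $\tfrac{1}{4}(V_{0}^{*}\pi(\cdot)V_{0}-V_{2}^{*}\pi(\cdot)V_{2})$, which directly accounts for the "two" in the statement; a general $\phi$ first splits into self-adjoint parts $\tfrac{1}{2}(\phi+\phi^{\sharp})$ and $\tfrac{1}{2i}(\phi-\phi^{\sharp})$, each of which is CB and self-adjoint and hence a difference of two CP maps.

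The main technical obstacle is verifying complete positivity of the $2\times 2$ amplification $\Phi$, which is the heart of Wittstock's theorem. The completion $\psi$ on the diagonal must be chosen so that at every matrix level $M_{n}(M_{2}(\mathcal{A}))$, a Schwarz-type inequality forces every block matrix $[\Phi(A_{ij})]$ to be positive; establishing existence of such a $\psi$ ordinarily requires Wittstock's extension theorem or an argument through the injective envelope of $\mathcal{A}$. Once CP-ness of $\Phi$ is secured, both the invocation of Stinespring and the polarization computation are purely algebraic and present no further difficulty.
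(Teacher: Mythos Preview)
The paper does not supply a proof of this proposition at all: it is stated in the preliminaries chapter as Wittstock's decomposition theorem, with a citation to Paulsen's book \cite{Pa}, and the very next result (Theorem~\ref{de:25}) records the factorization $\phi(a)=V_{1}^{*}\pi(a)V_{2}$, again without proof. Your outline is precisely the standard argument found in that reference---Paulsen's $2\times 2$ off-diagonal system, Stinespring applied to the resulting completely positive matrix map, and polarization of the factorization---so there is nothing to compare.

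One remark on the statement itself: as literally phrased (``two'' completely positive maps) it is only correct for completely bounded maps $\phi$ satisfying $\phi(a^{*})=\phi(a)^{*}$; for a general $\phi$ the polarization yields four completely positive summands, or equivalently one first decomposes $\phi$ into its self-adjoint and skew-adjoint parts as you indicate. You have handled this correctly, and the slight imprecision lies in the paper's wording rather than in your argument.
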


The following  is a generalization of Stinespring's representation
theorem.

\begin{theorem}\label{de:25}
Let $\mathcal {A}$ be a  unital $C^*$-algebra, and let
$\phi:\mathcal {A}\to B(\cH)$ be a completely bounded map. Then
there exists a Hilbert space $\cK,$ a $*-$homomorphism $\pi:
\mathcal {A}\to B(\cK),$ and bounded operators $V_i:\cH\to\cK,
i=1,2,$ with $\|\phi\|_{cb}=\|V_1\|\cdot\|V_2\|$ such that
\[\phi(a)=V_1^*\pi(a)V_2\]
for all $a\in\mathcal {A}.$ Moreover, if $\|\phi\|_{cb}=1,$ then
$V_1$ and $V_2$ may be taken to be isometries.
\end{theorem}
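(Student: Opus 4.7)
The plan is to prove the theorem via Paulsen's off-diagonal dilation technique, reducing the completely bounded case to the completely positive case treated in Stinespring's Theorem \ref{th:22}. Normalize so that $\|\phi\|_{cb}=1$, and introduce the companion map $\phi^{\#}:\mathcal{A}\to B(\cH)$ defined by $\phi^{\#}(a)=\phi(a^{*})^{*}$, which is also completely contractive.

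The central construction is to pass to the unital $C^{*}$-algebra $M_{2}(\mathcal{A})$ and build a $2\times 2$ block map $\Phi:M_{2}(\mathcal{A})\to B(\cH\oplus\cH)$ of the form
\[
\Phi\!\left(\bmx a_{11} & a_{12} \\ a_{21} & a_{22} \emx\right)=\bmx \psi_{1}(a_{11}) & \phi(a_{12}) \\ \phi^{\#}(a_{21}) & \psi_{2}(a_{22}) \emx,
\]
and then to choose unital completely positive maps $\psi_{1},\psi_{2}:\mathcal{A}\to B(\cH)$ so that $\Phi$ itself is unital and completely positive. Establishing the existence of such $\psi_{1},\psi_{2}$ is the principal technical step: it rests on the fact that $\|\phi\|_{cb}\leq 1$ is precisely the condition needed to make the $2\times 2$ ampliation contractive, which combined with a Hahn--Banach separation argument in the cone of completely positive maps on $M_{2}(\mathcal{A})$ supplies the required diagonal fillers. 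This is the main obstacle, and it is the only place where the cb-assumption (rather than mere boundedness) is really used.

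Once $\Phi$ is a unital completely positive map on the unital $C^{*}$-algebra $M_{2}(\mathcal{A})$, Theorem \ref{th:22} yields a Hilbert space $\widetilde{\cK}$, a unital $*$-homomorphism $\Pi:M_{2}(\mathcal{A})\to B(\widetilde{\cK})$, and an isometry $W:\cH\oplus\cH\to\widetilde{\cK}$ with $\Phi(A)=W^{*}\Pi(A)W$. Exploiting $M_{2}(\mathcal{A})\cong \mathcal{A}\otimes M_{2}$ together with the fact that every $*$-representation of $M_{2}$ is a multiple of the identity representation, one decomposes $\widetilde{\cK}=\cK\oplus\cK$ so that $\Pi(a\otimes e_{ij})=\pi(a)\otimes e_{ij}$ for a $*$-homomorphism $\pi:\mathcal{A}\to B(\cK)$. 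Writing $W$ as a column $\bigl(\begin{smallmatrix}V_{1}\\V_{2}\end{smallmatrix}\bigr)$ with $V_{i}:\cH\to\cK$, and reading off the $(1,2)$-entry of the identity $\Phi(A)=W^{*}\Pi(A)W$ applied to $a\otimes e_{12}$, yields the desired factorization $\phi(a)=V_{1}^{*}\pi(a)V_{2}$.

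It remains to match norms. The inequality $\|V_{1}\|\cdot\|V_{2}\|\leq 1$ follows from $W$ being an isometry, while $\|\phi\|_{cb}\leq \|V_{1}\|\cdot\|V_{2}\|$ is automatic from the factorization through the $*$-homomorphism $\pi$ (which is completely contractive), so equality holds. For the final clause, when $\|\phi\|_{cb}=1$ the $\psi_{i}$ can be selected to be unital, forcing $W$ to be isometric; a further diagonal absorption trick (enlarging $\cK$ to $\cK\oplus\cK$ and redefining $\pi$ as $\pi\oplus\pi$ if necessary) allows one to modify $V_{1}$ and $V_{2}$ individually into isometries without disturbing the product $V_{1}^{*}\pi(a)V_{2}$.
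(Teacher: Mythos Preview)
The paper does not actually prove this theorem: it is stated in the preliminaries chapter as a known result (the completely bounded generalization of Stinespring's theorem, cf.\ \cite{Pa}), with no proof supplied. Your proposal is the standard Paulsen off-diagonal argument and is correct in outline; the one place to be careful is the existence of the unital completely positive diagonal fillers $\psi_1,\psi_2$, which you correctly flag as the key technical step but do not fully carry out---in a self-contained write-up you would need to invoke either Paulsen's $2\times 2$ matrix trick (showing directly that the map with identity diagonals is completely positive on the operator system generated by the off-diagonal corners, then extending via Arveson) or cite the relevant lemma from \cite{Pa}.
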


Now let $\Omega$ be a compact Hausdorff space, let $E$ be a
bounded, regular, operator-valued measure on $\Omega,$ and let
$\phi:C(\Omega)\to B(\cH)$ be the bounded, linear map associated
with $E$ by integration as described in section 1.4.1. So for any
$f\in C(\Omega),$
\[\langle \phi(f)x,y \rangle=\int_\Omega f\, d\, \mu_{x,y},\]
where
\[ \mu_{x,y}(B)=\langle E(B)x,y \rangle \]
The OVM $E$ is called completely bounded when $\phi$ is completely
bounded. Using Wittstock's decomposition theorem, $E$ is completely
bounded if and only if it can be expressed as a linear combination
of positive operator-valued measures.

Let $\{x_i\}_{i\in\J}$ be a non-zero frame for a separable Hilbert
space $\cH.$ Let $\Sigma$ be the $\sigma$-algebra of all subsets
of $\J$. Define the mapping
$$E: \Sigma\to B(\cH), \quad
E(B)=\sum_{i\in B} x_i\otimes x_i$$
where $x\otimes y$ is the mapping on $\cH$ defined by $(x\otimes
y)(u)=\langle u,y\rangle x.$ Then $E$ is a regular, positive
$B(\cH)$-valued measure. By Naimark's dilation Theorem
\ref{th:23}, there exists a Hilbert space $\cK$, a bounded linear
operator $V:\cH\to\cK$, and a regular, self-adjoint, spectral,
$B(\cK)$-valued measure $F$ on $\J$, such that
\[E(B)=V^*F(B)V.\]
We will show (easily) that this Hilbert space $\cK$ can be $\ell_2$,
and the atoms $x_{i}\otimes x_{i}$ of the measure dilates to rank-1
projections $e_{i}\otimes e_{i}$, where $\{e_{i}\}$ is the standard
orthonormal basis for $\ell^{2}$.  That is $\cK$ can be the same as
the dilation space in Proposition \ref{prop:orthog} (ii).

Similarly, suppose that $\{x_i,y_i\}_{i\in\J}$ is a non-zero framing
for a separable Hilbert space $\cH.$ Define the mapping
$$E:\Sigma\to B(\cH), \quad
E(B)=\sum_{i\in B} x_i\otimes y_i, $$ for all $B\in \Sigma$. Then
$E$ is a $B(\cH)$-valued measure. We will show that this $E$ also
has a dilation space $Z$. But this dilation space is not
necessarily  a Hilbert space, in general, it is a Banach space and
consistent with Proposition \ref{pr:33}. The dilation is
essentially constructed using Proposition \ref{pr:33} (ii), where
the dilation of the atoms $x_{i}\otimes y_{i}$ corresponds to the
projection $u_{i}\otimes u_{i}^{*}$ and $\{u_{i}\}$ is an
unconditional basis for the dilation space $Z$.

\chapter{Dilation of Operator-valued Measures}

We develop a dilation theory of
operator-valued measures  to projection (i.e. idempotent)-valued
measures. Our main results show that this can be always achieved for
any operator-valued measure on a Banach space. Our approach is to dilate an
operator-valued measure to a linear space which is ``minimal" in a certain sense
and complete it in a norm compatable with the dilation geometry.  Such ``dilation norms" are not
unique, and we w:ill focus on two: one we call the
minimal dilation norm and one we call the maximal dilation norm. The
applications of these new dilation results to Hilbert space
operator-valued measures  will be discussed in the subsequent
chapters.

A positive operator-valued measure is a measure whose values are
non-negative operators on a Hilbert space. From Naimark's dilation
Theorem, we know that every positive operator-valued measure can be
dilated to a self-adjoint, spectral operator-valued measure on a
larger Hilbert space. But not all of the operator-valued measures
can have a Hilbert dilation space. For example, the operator-valued
measure induced by the framing in Chapter 5 does not have a Hilbert
dilation space. However it always admits a Banach dilation space,
and moreover it can be dilated to a spectral operator-valued measure
on a larger Banach space. Thus, it is necessary to develop the
dilation theory to include \emph{non-Hilbertian } operator-valued
measures on a Hilbert space; i.e. $B(\cH)$-valued measures that have
no Hilbert dilation space. We obtain a similar result to Naimark's
dilation Theorem for non-Hilbertian $B(\cH)$-valued measures. That
is, for an arbitrary non-necessarily-Hilbertian $B(\cH)$-valued
measure $E$ on $(\Omega,\Sigma)$, we show that there exists a Banach
space $X,$ a spectral $B(X)$-valued measure $F$ on
$(\Omega,\Sigma)$, and bounded linear maps $S$ and $T$ such that
$E(B)=SF(B)T$ holds for every  $ B\in\Sigma$.

\section{Basic Definitions}

\begin{definition}\label{de:35}
Let $X$ and $Y$ be Banach spaces, and let $(\Omega,\Sigma)$ be a
measurable space. A \emph{$B(X,Y)$-valued measure on $\Omega$} is a
map $E:\Sigma\to B(X,Y)$ that is countably additive in the weak
operator topology; that is, if $\{B_i\}$ is a disjoint countable
collection of members of $\Sigma$ with union $B$, then
\begin{equation*}
y^{*}(E(B)x)=\sum_i y^{*} (E(B_i)x)
\end{equation*}
for all $x\in X$ and $y^*\in Y^*.$
\end{definition}

We will use the symbol $(\Omega, \Sigma, E)$ if the range space is
clear from context, or $(\Omega, \Sigma, E, B(X, Y))$, to denote
this operator-valued measure system.

\begin{remark}\label{re:36}
The Orlicz-Pettis theorem  states that weak unconditional
convergence and norm unconditional convergence of a series are the
same in every Banach space (c.f.\cite{DJT}). Thus  we have  that
$\sum_i E(B_i)x$ weakly unconditionally converges to $E(B)x$ if
and only if $\sum_i E(B_i)x$ strongly unconditionally converges to
$E(B)x.$ So Definition \ref{de:35} is equivalent to saying that
$E$ is strongly countably additive, that is, if $\{B_i\}$ is a
disjoint countable collection of members of $\Sigma$ with union
$B$, then
\begin{equation*}
E(B)x=\sum_i E(B_i)x,\quad  \ \forall x\in X.
\end{equation*}
\end{remark}

\begin{definition}\label{de:37}
Let $E$ be a $B(X,Y)$-valued measure on $(\Omega,\Sigma).$ Then the  norm of $E$ is defined by
\[\|E\|=\sup_{B\in\Sigma}\|E(B)\|.\]
We call $E$ normalized if $\|E\|=1.$
\end{definition}

\begin{remark}\label{re:38}
A $B(X,Y)$-valued measure $E$ is always bounded, i.e.
\begin{equation}\label{eq:31}
\sup_{B\in\Sigma}\|E(B)\|<+\infty.
\end{equation}
Indeed, for all $x\in X$ and $y^*\in Y^*,$
$\mu_{x,y^*}(B):=y^*(E(B)x) $ is a complex measure on
$(\Omega,\Sigma).$ From complex measure theory (c.f.\cite{Rud}),
we know that $\mu_{x,y^*}$ is bounded, i.e.
\[\sup_{B\in\Sigma}|y^*(E(B)x)|<+\infty.
\]
By the Uniform Boundedness Principle, we get (\ref{eq:31}).
\end{remark}

\begin{definition}\label{de:7}
A $B(X)$-valued measure $E$ on $(\Omega,\Sigma)$ is called:
\begin{enumerate}
\item[(i)] an operator-valued probability measure if
$E(\Omega)=\mathrm{I}_X,$

\item[(ii)]a projection-valued measure if $E(B)$ is a projection
on $X$ for all $B\in\Sigma$,

\item[(iii)] a spectral operator-valued measure if for all $A,
B\in \Sigma, E(A\cap B)=E(A)\cdot E(B)$ (we will also use the term
idempotent-valued measure to mean a spectral-valued measure.)
\end{enumerate}
\end{definition}
With Definition \ref{de:35}, a $B(X)$-valued measure which is a
projection-valued measure is always a spectral-valued measure (c.f.
\cite{Pa}). We give a proof for completeness. (Note that spectral
operator-valued measures are clearly projection-valued measures).

\begin{lemma}\label{le:39}
Let $E$ be a $B(X)$-valued measure. If for each $B\in\Sigma,$ $E(B)$ is a projection (i.e. idempotent),  then for any $A, B
\in\Sigma,$ $E(A\cap B)=E(A)\cdot E(B).$
\end{lemma}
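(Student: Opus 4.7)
The plan is to exploit the fact that the sum of two idempotents is itself an idempotent exactly when they annihilate each other (on both sides), combined with the inclusion–exclusion decomposition of $A$ and $B$ into pairwise disjoint pieces.

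First I would record two preliminary facts that follow immediately from countable additivity (Definition \ref{de:35} together with Remark \ref{re:36}). Setting every $B_i = \emptyset$ forces $E(\emptyset) = 0$, and by padding a finite disjoint pair with empty sets we get finite additivity: $E(A \sqcup B) = E(A) + E(B)$ whenever $A \cap B = \emptyset$. These are the only analytic ingredients I need; everything after is algebraic manipulation of idempotents.

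Next I would handle the disjoint case. Suppose $A \cap B = \emptyset$. Since $E(A \sqcup B) = E(A) + E(B)$ is an idempotent by hypothesis, squaring gives
\[
E(A) + E(B) = (E(A) + E(B))^{2} = E(A) + E(A)E(B) + E(B)E(A) + E(B),
\]
so $E(A)E(B) + E(B)E(A) = 0$. Multiplying this identity on the left by $E(A)$ yields $E(A)E(B) + E(A)E(B)E(A) = 0$, and multiplying on the right by $E(A)$ yields $E(A)E(B)E(A) + E(B)E(A) = 0$. Subtracting gives $E(A)E(B) = E(B)E(A)$, and combining with $E(A)E(B) + E(B)E(A) = 0$ forces $E(A)E(B) = 0$ for disjoint $A, B$.

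Finally, for arbitrary $A, B \in \Sigma$, I would write the disjoint decompositions $A = (A \cap B) \sqcup (A \setminus B)$ and $B = (A \cap B) \sqcup (B \setminus A)$, so that
\[
E(A)E(B) = \bigl(E(A \cap B) + E(A \setminus B)\bigr)\bigl(E(A \cap B) + E(B \setminus A)\bigr).
\]
Expanding this product, three of the four terms involve pairs of disjoint sets from $\{A \cap B,\ A \setminus B,\ B \setminus A\}$ and vanish by the disjoint case, while the remaining term is $E(A \cap B)^{2} = E(A \cap B)$. This gives $E(A)E(B) = E(A \cap B)$, which is the desired identity.

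There is no real obstacle here; the only subtlety is the ``both sides'' step that promotes $E(A)E(B) + E(B)E(A) = 0$ to $E(A)E(B) = 0$, which works in any Banach algebra without needing positivity or self-adjointness and is therefore well suited to the Banach-space framework of this chapter.
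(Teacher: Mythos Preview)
Your proof is correct and follows essentially the same approach as the paper: first show that $E(A)E(B)=0$ for disjoint $A,B$ using the fact that a sum of idempotents is idempotent only when they mutually annihilate, then expand $E(A)E(B)$ via the disjoint decomposition $A=(A\cap B)\sqcup(A\setminus B)$, $B=(A\cap B)\sqcup(B\setminus A)$. The only difference is that the paper simply quotes the ``$P+Q$ idempotent $\Rightarrow PQ=QP=0$'' fact without proof, whereas you supply the short left/right-multiplication argument establishing it.
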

\begin{proof}
If $P$ and $Q$ are projections on $X$, then $P+Q$ is a projection on
$X$ if and only if $PQ=QP=0$.

For disjoint $A, B\in \Sigma,$ we have
$E(A)+E(B)=E(A\cup B)$ which is a projection. From
above, we obtain that
$E(A)\cdot E(B)=E(A)\cdot E(B)=0$. Then, for
all $A, B\in \Sigma$, we have
\begin{eqnarray*}
E(A)\cdot E(B) &=&E((A\cap B^c)\cup(A\cap B))\cdot
E((B\cap A^c)\cup(B\cap A))\\
&=&(E(A\cap B^c)+E(A\cap B))\cdot(E(B\cap
A^c)+E(B\cap A))\\
&=&E(A\cap B^c)\cdot E(B\cap A^c)+E(A\cap
B^c)\cdot E(B\cap A)+\\
&&+E(A\cap B)\cdot E((B\cap
A^c)+E(A\cap B)\cdot E(B\cap A)\\
&=&E(A\cap B)\cdot E(B\cap A)=E(A\cap B).
\end{eqnarray*}

\end{proof}

\begin{definition}\label{de:41}
Let $E:\Sigma\to B(X,Y)$ be an operator-valued measure. Then
the dual of $E$, denoted by $E^*$, is a mapping from
$\Sigma$ to $B(Y^*,X^*)$ which is defined by
$E^*(B)=[E(B)]^*$ for all $B\in\Sigma$.
\end{definition}

\begin{proposition}\label{th:43}
Let $X$ and $Y$ be Banach spaces. If $X$ is reflexive, then
$E^*:\Sigma\to B(Y^*,X^*)$ is an operator-valued measure.
\end{proposition}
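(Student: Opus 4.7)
The plan is to unwind the definition of the weak operator topology on $B(Y^*,X^*)$ and reduce the countable additivity of $E^*$ to that of $E$, using reflexivity precisely at the point where the dual pairing has to extend from $X$ to $X^{**}$.

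First I would note that $E^*(B) = [E(B)]^*$ is a well-defined element of $B(Y^*,X^*)$ for each $B \in \Sigma$, with $\|E^*(B)\| = \|E(B)\|$; in particular $E^*$ is uniformly bounded on $\Sigma$ by Remark \ref{re:38}. So the only real content is countable additivity in WOT: given a disjoint sequence $\{B_i\} \subset \Sigma$ with union $B$, I need
\[
x^{**}\bigl(E^*(B)y^*\bigr) \;=\; \sum_i x^{**}\bigl(E^*(B_i)y^*\bigr)
\qquad \forall\, y^* \in Y^*,\ \forall\, x^{**} \in X^{**}.
\]

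Next I invoke reflexivity of $X$: the canonical embedding $X \hookrightarrow X^{**}$ is surjective, so every $x^{**} \in X^{**}$ is of the form $x^{**}(\phi) = \phi(x)$ for a unique $x \in X$. Under this identification,
\[
x^{**}\bigl(E^*(B)y^*\bigr) \;=\; \bigl(E^*(B)y^*\bigr)(x) \;=\; y^*\bigl(E(B)x\bigr),
\]
and similarly $x^{**}(E^*(B_i)y^*) = y^*(E(B_i)x)$. Now the original operator-valued measure $E$ is countably additive in WOT, so $y^*(E(B)x) = \sum_i y^*(E(B_i)x)$ (alternatively, by Remark \ref{re:36} one has $E(B)x = \sum_i E(B_i)x$ in the norm of $Y$, and then apply the continuous functional $y^*$). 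This yields the displayed equation, proving $E^*$ is weak-operator countably additive.

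The only place where a potential obstacle appears is the WOT on $B(Y^*,X^*)$: by default its defining seminorms are indexed by $Y^* \times X^{**}$, not $Y^* \times X$, and without reflexivity the equality above would only be known against $x \in X \subset X^{**}$, which is not enough. Reflexivity removes exactly this gap. I do not expect any difficulty from the summability side, since it follows directly from the countable additivity of $E$ and the continuity of $y^*$. The proof is therefore essentially a single identification $X^{**} = X$ followed by a one-line computation.
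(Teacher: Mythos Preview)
Your proof is correct and follows essentially the same approach as the paper's own proof: both reduce the WOT countable additivity of $E^*$ to the identity $(E^*(B)y^*)(x)=y^*(E(B)x)$ and then invoke the countable additivity of $E$, with reflexivity used precisely to identify $X^{**}$ with $X$. Your version is slightly more explicit about why reflexivity is needed (namely that the WOT on $B(Y^*,X^*)$ is tested against $X^{**}$), but the argument is the same.
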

\begin{proof}
Let $\{B_i\}$ be a disjoint countable collection of members of $\Sigma$ with union $B$. Then for all $x\in X$ and $y^*\in Y^*,$ we have
\begin{eqnarray*}
(E^*(B)y^*)(x)=y^{*}(E(B)x)=\sum_i
y^{*}(E(B_i)x) =\sum_i (E^*(B_i)y^*)(x).
\end{eqnarray*}
Since $X=X^{**}$, this shows that $E$ is weakly countably additive in the sense of Definition \ref{de:35}.
\end{proof}

\begin{remark}\label{re:44}
 The following example shows that there exists
 an operator-valued measure $E$ for which
$E^*$ is not weakly countably additive. So the above result is sharp.
\end{remark}

\begin{example}\label{ex:45}
Let $\{x_i,y_i\}^{\infty}_{i=1}\subset l_1\times l_\infty$ and $\textbf{1}=(1,1,1,\cdots), $ where
\[\{x_i\}^{\infty}_{i=1}=\{e_1,e_1,e_1,e_2,e_3,e_4,e_5,e_6,\cdots\}\]
\[\{y_i\}^{\infty}_{i=1}=\{\textbf{1},\textbf{-1},e^*_1,e^*_2,e^*_3,e^*_4,e^*_5,e^*_6,\cdots\}, \]
and $\{e_i\}^{\infty}_{i=1}$ is the standard unit vector basis. Let $\Omega=\mathbb{N},\Sigma=2^{\mathbb{N}}$ and
\[E(B)=\sum_{i\in B}x_i\otimes y_i.\]

Then it can be verified that $E^*$ is not weakly countably
additive.
\end{example}

Given two operator-valued measure space systems $(\Omega, \Sigma, E,B(X))$ and $(\Omega, \Sigma, F,B(Y)).$ We say that
\begin{enumerate}
\item[(i)] $E$ and $F$ are \emph{isometrically equivalent} (or \emph{isometric}) if there is a surjective isometry $U:X\to Y$ such that
$E(B)=U^{-1}F(B)U$ for all $B\in\Sigma.$

\item[(ii)] $E$ and $F$ are \emph{similar} (or
\emph{isomorphic}) if there is a bounded linear invertible operator
$Q:X\to Y$ such that $E(B)=Q^{-1}F(B)Q$ for all
$B\in\Sigma.$
\end{enumerate}

The following property follows immediately from the definition:

\begin{proposition}\label{pr:47}
Let $X$ and $Y$ be Banach spaces and let $E:\Sigma\to B(X,Y)$ be an operator-valued measure. Assume that $W$ and $Z$ are Banach spaces and
that $T:W\to X$ and $S: Y\to Z$ both are bounded linear operators. Then the mapping $F:\Sigma\to B(W,Z)$ defined by
\begin{eqnarray*}
F(B)=S E(B)T, \ \ \forall\, B\in\Sigma,
\end{eqnarray*}
is an operator-valued measure.
\end{proposition}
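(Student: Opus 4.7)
The statement is essentially a routine verification that weak countable additivity is preserved under pre- and post-composition with bounded linear operators, so I do not expect any genuine obstacle. My plan is to unpack the definition of operator-valued measure (Definition \ref{de:35}) for the candidate mapping $F$ and reduce the required weak countable additivity for $F$ to the known weak countable additivity of $E$ via the dual action of $S$ on $Z^*$.

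More precisely, I will fix a disjoint countable collection $\{B_i\}\subset \Sigma$ with union $B$, and then fix arbitrary $w\in W$ and $z^*\in Z^*$. The idea is to compute
\[
z^{*}\bigl(F(B)w\bigr) = z^{*}\bigl(SE(B)Tw\bigr) = (S^{*}z^{*})\bigl(E(B)(Tw)\bigr),
\]
observe that $Tw\in X$ and $S^{*}z^{*}\in Y^{*}$, and then invoke the definition of an operator-valued measure applied to $E$ to write
\[
(S^{*}z^{*})\bigl(E(B)(Tw)\bigr) = \sum_i (S^{*}z^{*})\bigl(E(B_i)(Tw)\bigr) = \sum_i z^{*}\bigl(SE(B_i)Tw\bigr) = \sum_i z^{*}\bigl(F(B_i)w\bigr).
\]
Since $w$ and $z^{*}$ were arbitrary, this gives the weak countable additivity required in Definition \ref{de:35}.

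The only other condition to address is that $F(B)\in B(W,Z)$ for each $B\in \Sigma$, but this is immediate: $F(B) = S\circ E(B)\circ T$ is a composition of bounded linear operators, hence bounded and linear, with $\|F(B)\|\le \|S\|\,\|E(B)\|\,\|T\|$. There is no subtlety here and no need to invoke Orlicz--Pettis or the Uniform Boundedness Principle, since the strong/weak equivalence from Remark \ref{re:36} and the automatic boundedness from Remark \ref{re:38} are already in hand and not needed for this particular verification. Accordingly, I expect the write-up to consist of a single short display chain followed by a sentence identifying this as the definitional conclusion.
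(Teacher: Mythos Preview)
Your proof is correct and is exactly the routine verification the paper has in mind; in fact the paper does not write out a proof at all, simply stating that the proposition ``follows immediately from the definition.'' Your argument via $S^{*}z^{*}\in Y^{*}$ and $Tw\in X$ is the natural way to make that immediacy explicit.
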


Let $X$ be a Banach space and $\{x_i,y_i\}_{i\in\N}$ be a framing
for $X.$ Then as we have introduced in Chapter 1 that the mapping
$E$ defined by
\[E:2^{\N}\to
B(X), \quad  E(B)=\sum_{i\in B} x_i\otimes y_i ,\]  is an
operator-valued probability measure. We will call it the
operator-valued probability measure {\it induced} by the framing
$\{x_i,y_i\}_{i\in\N}.$ The following lemma shows that every
operator-valued probability measure system  $(\N, 2^\N, E,B(X))$
with rank one atoms is induced by a framing.

\begin{lemma}\label{le:312}
Let $(\N, 2^\N, E,B(X))$ be an operator-valued probability
measure system with
\begin{eqnarray*}\mathrm{rank}(E(\{i\}))\in\{0,1\}
\end{eqnarray*}
for all $i\in\N$. Then there exists a framing $\{x_i,y_i\}_{i\in\N}$
of $X$ such that $E$ is induced by $\{x_i,y_i\}_{i\in\N}$. Moreover,
if $X$ is a Hilbert space and $E$ is spectral, then the framing
$\{x_i,y_i\}_{i\in\N}$ can be chosen as a pair of Riesz bases of
$X$.
\end{lemma}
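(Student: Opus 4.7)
The plan is to turn each rank-one (or zero) atom $E(\{i\})$ into a rank-one operator of the form $x_i \otimes y_i$ and then read off the framing directly from the countable additivity of $E$ together with the normalization $E(\N) = I_X$. Concretely, for each $i$ with $\mathrm{rank}(E(\{i\})) = 1$, pick a nonzero $x_i \in \mathrm{Ran}(E(\{i\}))$ and observe that $E(\{i\}) v$ must lie in $\mathbb{C} x_i$ for every $v\in X$; writing $E(\{i\}) v = y_i(v)\, x_i$ defines a linear functional $y_i$, and boundedness of $E(\{i\})$ forces $y_i \in X^*$. When the rank is $0$ I would simply set $x_i = 0$. Thus $E(\{i\}) = x_i \otimes y_i$ for every $i$.

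Next, because $E$ is an operator-valued probability measure and $\N = \bigsqcup_i \{i\}$, countable additivity (Definition 2.0.1) gives $\sum_i E(\{i\}) x = x$ in the weak sense, and by Remark 2.0.2 (Orlicz--Pettis) this actually converges unconditionally in norm. Substituting the rank-one expression yields
\[
x \;=\; \sum_{i\in\N} \langle x, y_i\rangle \, x_i
\]
with unconditional convergence for every $x\in X$, which is precisely the definition of a framing $\{x_i,y_i\}_{i\in\N}$ of $X$. By construction $E(B) = \sum_{i\in B} x_i\otimes y_i$ for every $B\in 2^{\N}$, so $E$ is induced by this framing.

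For the moreover part, assume $X = \cH$ is a Hilbert space and $E$ is spectral. Then $E(\{i\})E(\{j\}) = E(\{i\}\cap\{j\})$, which forces $(x_i\otimes y_i)(x_j\otimes y_j) = \delta_{ij}\, (x_i\otimes y_i)$; chasing this through $(x_i\otimes y_i)(x_j\otimes y_j) = \langle x_j, y_i\rangle\, (x_i\otimes y_j)$ yields the biorthogonality relations $\langle x_j, y_i\rangle = \delta_{ij}$. I would now rescale $(x_i, y_i) \to (\alpha_i x_i, \bar\alpha_i^{-1} y_i)$ so that $\|x_i\| = 1$; this preserves both the framing identity and the biorthogonality, and since $\|E(\{i\})\| \le \|E\| < \infty$ uniformly and $\|E(\{i\})\| = \|x_i\|\cdot\|y_i\|$ for a rank-one Hilbert space operator, we also get $1 \le \|y_i\| \le \|E\|$. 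The partial sums $Q_N = \sum_{i\le N} E(\{i\})$ converge strongly to the identity, so by uniform boundedness $\sup_N \|Q_N\| < \infty$; hence $\{x_i\}$ is an unconditional Schauder basis of $\cH$ with biorthogonal functionals $\{y_i\}$.

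Finally, I would invoke the classical fact that every semi-normalized unconditional Schauder basis of a Hilbert space is a Riesz basis (the argument is a standard averaging over signs: unconditionality gives $\sum |a_i|^2 \|x_i\|^2 \sim \|\sum a_i x_i\|^2$, and the semi-normalization $\inf\|x_i\|, \sup\|x_i\| \in (0,\infty)$ converts this to $\sum |a_i|^2 \sim \|\sum a_i x_i\|^2$). Since $\{x_i\}$ is a Riesz basis with bounded biorthogonal sequence $\{y_i\} \subset \cH$, the sequence $\{y_i\}$ is automatically a Riesz basis as well (it is equivalent to the dual Riesz basis, which is unique). The main obstacle I anticipate is the bookkeeping in the spectral case: one must justify the rescaling step, verify that biorthogonality plus uniform boundedness of the partial sums genuinely produces a Schauder basis (as opposed to merely a framing), and cite the semi-normalized-unconditional-implies-Riesz theorem cleanly. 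Everything else is an essentially formal consequence of countable additivity and Orlicz--Pettis.
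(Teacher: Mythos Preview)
Your approach is essentially identical to the paper's: decompose each rank-one atom as $x_i\otimes y_i$, use countable additivity and Orlicz--Pettis to obtain the framing, derive biorthogonality $\langle x_j,y_i\rangle=\delta_{ij}$ from the spectral relation $E(\{i\})E(\{j\})=\delta_{ij}E(\{i\})$, and then invoke the classical fact that a (semi-)normalized unconditional basis of a Hilbert space is a Riesz basis (the paper cites Lemma~3.6.2 of Christensen for this last step). The only cosmetic differences are that the paper assumes without loss of generality that all atoms have rank one rather than setting $x_i=0$ on null atoms, and normalizes at the end rather than rescaling midway.
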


\begin{proof}
Without loss of generality, we can assume that for any $i\in\N,$
$\mathrm{rank}(E(\{i\}))=1.$ Then we can find $x_i\in X$ and $y_i\in
X^*$ such that for any $x\in X,$
\[E(\{i\})(x)=y_i(x)x_i=(x_i\otimes y_i)(x).\]
It follows that
\begin{equation}\label{eq:32}
x=I_X(x)=E(\N)(x)=\sum_{i\in \N}E(\{i\})(x)=\sum_{i\in
\N}(x_i\otimes y_i)(x).
\end{equation}
By Definition \ref{de:35}, we know that the series in (\ref{eq:32})
converges unconditionally for all $x\in X.$ Hence
$\{x_i,y_i\}_{i\in\N}$ is a framing of $X.$

When $X$ is a Hilbert space and $E$ is spectral, since for all
$B\in 2^\N,$
\begin{eqnarray*}
E(B)x=\sum_{i\in B} \langle x ,y_i\rangle x_i,\quad \forall
x\in X,
\end{eqnarray*}
we have for any $i,j\in\N$ and $x\in X,$
\begin{eqnarray*}
E(\{i\})E(\{j\})x=\langle x ,y_j\rangle \langle
x_j,y_i\rangle x_i.
\end{eqnarray*}

When $i=j,$ since
\begin{eqnarray*}
E(\{i\})E(\{i\})x=\langle x ,y_i\rangle \langle
x_i,y_i\rangle x_i=\langle x ,y_i\rangle x_i=E(\{i\})x,
\end{eqnarray*}
we get $\langle x_i,y_i\rangle=1.$

When $i\neq j,$ we have
\begin{eqnarray*}
E(\{i\})E(\{j\})x=\langle x ,y_j\rangle \langle
x_j,y_i\rangle x_i=0,
\end{eqnarray*}
and so  $\langle x_j,y_i\rangle=0.$ If $\sum_i a_ix_i=0,$ then
\begin{eqnarray*} E(\{j\})\left(\sum_i
a_ix_i\right)=\sum_i a_i E(\{j\})( x_i)=a_jx_j=0,
\end{eqnarray*}
and hence $a_j=0.$ Thus $\{x_i\}_{i\in\N}$ is an unconditional basis. By Lemma 3.6.2 in \cite{Ch}, we know that $\{x_i/\|x_i\|\}_{i\in\N}$ is a
Riesz basis of $X,$ and hence $\{\|x_i\|y_i\}_{i\in\N}$ is also a Riesz basis of $X.$ Clearly  for all $B\in 2^\N,$ we also have
\begin{eqnarray*}
E(B)x=\sum_{i\in E} \langle x ,\|x_i\|y_i\rangle
x_i/\|x_i\|,\quad \forall x\in X.
\end{eqnarray*}

\end{proof}

\section{Dilation Spaces and Dilations}

Let $(\Omega,\Sigma)$ be a measurable space and $E:\Sigma\to B(X,Y)$ be an operator-valued measure.

\begin{definition}\label{de:411}
A Banach space $Z$ is called \textbf{a dilation space} of an
operator-valued measure space $(\Omega,\Sigma,E)$ if there exist
bounded linear operators $S:Z\to Y$ and $T:X\to Z$, and a
projection-valued measure space $(\Omega,\Sigma,F,B(Z))$ such that
for any $B\in\Sigma,$
\[E(B)=S F(B)T.\]
We call $S$ and $T$ the corresponding analysis operator and
synthesis operator, respectively, and use
$(\Omega,\Sigma,F,B(Z),S,T)$ to denote the corresponding dilation
projection-valued measure space system.
\end{definition}

It is easy to see that the mappings
\[G_1:\Sigma\to B(X,Z),\quad G_1(B)=F(B)T\]
and
\[G_2:\Sigma\to B(X,Z),\quad G_2(B)=S F(B)\]
are both operator-valued measures. We call $G_1$ and $G_2$ the
corresponding {\it analysis operator-valued measure} and {\it
synthesis operator-valued measure}, respectively.

Clearly, $E=G_2\cdot G_1.$ Indeed,
\[E(B)=S F(B)T=S F(B)F(B)T=G_2(B)G_1(B).\]

\begin{remark}\label{re:412}
If $X=Y$ and $E(\Omega)=I_X,$ then $S$ is a surjection, $T$ is an isomorphic embedding, and $TS: Z\to Z$ is a projection onto
$T(X).$
\end{remark}
Further, we have the following result:

\begin{lemma}\label{le:413}
Let $(\Omega,\Sigma,E)$ be an operator-valued measure space and $(\Omega,\Sigma,F,B(Z),S,T)$ be a corresponding dilation
projection-valued measure space system.
\begin{enumerate}
\item[(i)]If $E(\Omega): X\to Y$ is an isomorphic operator, then $S: Z\to Y$ is a surjection and $T: X\to Z$ is an isomorphic embedding.

\item[(ii)] If $X=Y$ and $E(\Omega)=\mathrm{I}_X,$ then
$TS F(\Omega), F(\Omega)TS$ and $F(\Omega)TS F(\Omega)$
are all projections on $Z.$
\end{enumerate}
\end{lemma}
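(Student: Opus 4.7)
The plan is to read off both parts as direct algebraic consequences of the defining identity $E(B)=SF(B)T$ specialized at $B=\Omega$, together with the single structural fact that $F(\Omega)\in B(Z)$ is an idempotent (since $F$ is projection-valued). A subtle point worth flagging up front is that $F(\Omega)$ is not assumed to equal $I_Z$; only $F(\Omega)^2=F(\Omega)$ is available. All of the asserted projection relations will turn out to be robust to this.

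For part (i), I would start from $E(\Omega)=SF(\Omega)T$ and let $E(\Omega)^{-1}\colon Y\to X$ denote the inverse, which exists by hypothesis. Surjectivity of $S$ is immediate: every $y\in Y$ factors as $y=E(\Omega)E(\Omega)^{-1}y=S\bigl(F(\Omega)TE(\Omega)^{-1}y\bigr)$, exhibiting $y$ in $\mathrm{range}(S)$. For $T$, injectivity follows because $Tx=0$ gives $E(\Omega)x=SF(\Omega)Tx=0$ and hence $x=0$. To upgrade this to an isomorphic embedding, I would use the bound
\[
\|x\|=\|E(\Omega)^{-1}E(\Omega)x\|\le \|E(\Omega)^{-1}\|\,\|S\|\,\|F(\Omega)\|\,\|Tx\|,
\]
so $T$ is bounded below, and combining with continuity of $T$ shows $T$ is an isomorphism onto its closed range.

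For part (ii), the hypothesis $E(\Omega)=I_X$ becomes the useful identity $SF(\Omega)T=I_X$. I would then verify idempotence of each of the three listed operators by a short direct calculation, grouping factors so as to isolate a copy of $SF(\Omega)T$. Concretely,
\[
(TSF(\Omega))^2=T\bigl(SF(\Omega)T\bigr)SF(\Omega)=T\cdot I_X\cdot SF(\Omega)=TSF(\Omega),
\]
\[
(F(\Omega)TS)^2=F(\Omega)T\bigl(SF(\Omega)T\bigr)S=F(\Omega)T\cdot I_X\cdot S=F(\Omega)TS,
\]
and finally, using $F(\Omega)^2=F(\Omega)$ to collapse the middle,
\[
(F(\Omega)TSF(\Omega))^2=F(\Omega)TS\,F(\Omega)^2\,TSF(\Omega)=F(\Omega)T\bigl(SF(\Omega)T\bigr)SF(\Omega)=F(\Omega)TSF(\Omega).
\]

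There is no genuine obstacle here; the only thing to watch is the temptation to assume $F(\Omega)=I_Z$, which is not given and which the argument for the third identity specifically avoids by appealing only to $F(\Omega)^2=F(\Omega)$. Boundedness of all operators involved is automatic, so no convergence or continuity issue arises, and the proof reduces to the bookkeeping displayed above.
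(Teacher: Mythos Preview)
Your proposal is correct and follows essentially the same approach as the paper: both parts are derived directly from $E(\Omega)=SF(\Omega)T$ together with $F(\Omega)^2=F(\Omega)$, and your idempotent computations in (ii) are line-for-line the paper's. Your treatment of (i) is in fact a bit more careful than the paper's, since you explicitly supply the bounded-below estimate for $T$ rather than inferring ``isomorphic embedding'' from injectivity alone.
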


\begin{proof} (i) Since $E(\Omega)$ is an isomorphic operator, for any $y\in Y,$ there exists $x\in X$ such that $E(\Omega)x=y.$ Hence
$(S F(\Omega)T)(x)=E(\Omega)(x)=y,$ and so $S$ is surjective.

Suppose that $Tx_1=Tx_2$ for  $x_1,x_2\in X.$ Then we have $(S F(\Omega)T)(x_1)=(S F(\Omega)T)(x_2),$ and so $
E(\Omega)x_1=E(\Omega)x_2.$ Since $E(\Omega)$ is invertible,  we get $x_1=x_2.$ Thus $T$ is an isomorphic embedding.

(ii) When $X=Y$ and $E(\Omega)=\mathrm{I}_X,$ we have
\begin{eqnarray*}
 (TS F(\Omega))^{2}&=&TS F(\Omega)TS F(\Omega)=T E(\Omega)S F(\Omega)\\
&=&T\mathrm{I}_XS F(\Omega)=TS F(\Omega),\\
(F(\Omega)TS)^{2}&=&F(\Omega)TSF(\Omega)TS=F(\Omega)T\mathrm{I}_XS\\
&=&F(\Omega)TS,\\
\end{eqnarray*}
and
\begin{eqnarray*}
 (F(\Omega)TSF(\Omega))^{2}
 &=&F(\Omega)TSF(\Omega)F(\Omega)TSF(\Omega)\\
&=&F(\Omega)TSF(\Omega)TSF(\Omega)\\
&=&F(\Omega)T\mathrm{I}_XSF(\Omega)\\
&=&F(\Omega)TSF(\Omega).
\end{eqnarray*}
Thus $TSF(\Omega), F(\Omega)TS$ and
$F(\Omega)TSF(\Omega)$ are all projections on $Z.$
\end{proof}

In general, the corresponding dilation projection-valued measure
system is not unique. However, we can always find a
projection-valued probability measure system from a known dilation
projection-valued measure system.

\begin{proposition}\label{pr:414}
Let $(\Omega,\Sigma,E)$ be an operator-valued measure space and let
$(\Omega,\Sigma,F,B(Z),S,T)$ be a corresponding dilation
projection-valued measure space system. Define a mapping
$\hat{F}:\Sigma\to B(F(\Omega)Z)$ by
\[\hat{F}(B)=F(B)|_{F(\Omega)Z}, \quad\mbox{for all}\ B\in\Sigma .\]
Then
\[\hat{S}=S|_{F(\Omega)Z}: F(\Omega)Z\to Y\]
and
\[\hat{T}=F(\Omega)T: X\to F(\Omega)Z\]
are both bounded linear operators. And $\hat{F}$ is a spectral operator-valued probability measure such that
\[E(B)=\hat{S}\hat{F}(B)\hat{T}, \ B\in\Sigma.\]

\end{proposition}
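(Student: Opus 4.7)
The plan is to verify, in order, that (a) $F(\Omega)Z$ is a well-defined closed subspace that is invariant for every $F(B)$, (b) $\hat{T}$ maps into this subspace and $\hat{S}$ out of it (both bounded), (c) $\hat{F}$ is a projection-valued probability measure on $F(\Omega)Z$ (hence automatically spectral by Lemma \ref{le:39}), and (d) the factorization $E(B)=\hat{S}\hat{F}(B)\hat{T}$ holds by direct computation. Almost everything reduces to two ingredients already available: $F$ is spectral (Lemma \ref{le:39} applied to the dilation), and bounded restrictions of strongly countably additive maps to closed invariant subspaces are again strongly countably additive.

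For step (a): since $F(\Omega)$ is an idempotent in $B(Z)$, its range $F(\Omega)Z$ is closed. Using that $F$ is spectral, for every $B\in\Sigma$ one has $F(B)F(\Omega)=F(B\cap\Omega)=F(B)=F(\Omega\cap B)=F(\Omega)F(B)$, so $F(\Omega)Z$ is invariant under each $F(B)$ and $\hat{F}(B):=F(B)|_{F(\Omega)Z}$ is a well-defined bounded operator on $F(\Omega)Z$. For step (b): $\hat{T}=F(\Omega)T$ is the composition of two bounded maps and clearly lands in $F(\Omega)Z$; $\hat{S}=S|_{F(\Omega)Z}$ is the restriction of a bounded map to a closed subspace, hence bounded.

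For step (c): each $\hat{F}(B)$ is idempotent since $\hat{F}(B)^2z=F(B)F(B)z=F(B)z=\hat{F}(B)z$ for $z\in F(\Omega)Z$, so $\hat{F}$ is projection-valued and therefore spectral by Lemma \ref{le:39}. Countable additivity passes verbatim from $F$ to $\hat{F}$: if $\{B_i\}$ is a disjoint family with union $B$ and $z\in F(\Omega)Z$, then $\hat{F}(B)z=F(B)z=\sum_i F(B_i)z=\sum_i \hat{F}(B_i)z$ with unconditional norm convergence by Orlicz--Pettis (Remark \ref{re:36}). Finally, for $z\in F(\Omega)Z$ write $z=F(\Omega)w$; then $\hat{F}(\Omega)z=F(\Omega)F(\Omega)w=F(\Omega)w=z$, so $\hat{F}(\Omega)=I_{F(\Omega)Z}$ and $\hat{F}$ is a probability measure.

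For step (d), a single line suffices: for any $x\in X$ and $B\in\Sigma$,
\[
\hat{S}\hat{F}(B)\hat{T}x=S\bigl(F(B)|_{F(\Omega)Z}\bigr)\bigl(F(\Omega)Tx\bigr)=SF(B)F(\Omega)Tx=SF(B)Tx=E(B)x,
\]
using $F(B)F(\Omega)=F(B)$ in the third equality and the dilation identity for $(F,Z,S,T)$ in the last. I do not anticipate a genuine obstacle here; the only subtlety worth highlighting is that one must invoke the spectral property of $F$ (obtained for free from Lemma \ref{le:39}) to know that $F(\Omega)Z$ is invariant under the entire family $\{F(B):B\in\Sigma\}$, which is what makes the compression $\hat{F}$ meaningful in the first place.
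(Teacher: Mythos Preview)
Your proof is correct and follows essentially the same route as the paper's. The only cosmetic difference is that the paper verifies the spectral identity $\hat{F}(A)\hat{F}(B)=\hat{F}(A\cap B)$ directly from that of $F$, whereas you first check that each $\hat{F}(B)$ is idempotent and then invoke Lemma~\ref{le:39}; both arguments are equivalent and equally short.
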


\begin{proof}
Clearly $\hat{S}$ and $\hat{T}$ are both well defined and are
bounded linear operators. Since
\[F(B)F(\Omega)Z=F(B)Z=F(\Omega)F(B)Z\subset F(\Omega)Z,\]
$\hat{F}$ is well defined. It is clearly that $\hat{F}$ is
an operator-valued probability measure. Moreover, for all
$A,B\in\Sigma,$
\begin{eqnarray*}
\hat{F}(A)\hat{F}(B)
&=&F(A)|_{F(\Omega)Z}F(B)|_{F(\Omega)Z}\nonumber\\
&=&F(A)F(B)|_{F(\Omega)Z}\nonumber\\
&=&F(A\cap B)|_{F(\Omega)Z}\nonumber\\
&=&\hat{F}(A\cap B).
\end{eqnarray*}
It follows that $\hat{F}$ is a spectral operator-valued
measure. Finally, we have for any $B\in\Sigma$ and $x\in X,$
\begin{eqnarray*}
\hat{S}\hat{F}(B)\hat{T}(x)
&=&S|_{F(\Omega)Z}F(B)|_{F(\Omega)Z}F(\Omega)T(x)\nonumber\\
&=&S|_{F(\Omega)Z}F(B)F(\Omega)T(x)\nonumber\\
&=&SF(B)T(x)\nonumber\\
&=&E(B)(x).
\end{eqnarray*}
\end{proof}
\begin{remark}\label{re:01}
It is easy to see that
\[\hat{S}\hat{T}=\hat{S}\hat{F}(\Omega)\hat{T}=E(\Omega).\]
If $X=Y$ and $E(\Omega)=\mathrm{I}_X,$ then
\[\hat{T}\hat{S}=\hat{T}SF(\Omega)=F(\Omega)T\hat{S}
=F(\Omega)TSF(\Omega)\] is a projection from $F(\Omega)Z$
onto $\hat{T}Z=F(\Omega)T(Z).$
\end{remark}

As a consequence of Proposition \ref{pr:414}, we obtain:

\begin{corollary} If an operator-valued measure can be dilated to a
spectral operator-valued measure, then it can be dilated to a
spectral operator-valued probability measure.
\end{corollary}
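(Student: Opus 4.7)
The plan is to derive this as an immediate consequence of Proposition \ref{pr:414}, so the entire task is really to unpack what that proposition already delivers and notice that it produces precisely the object the corollary demands.

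First I would start from the hypothesis: by assumption there is a Banach space $Z$, bounded linear operators $S:Z\to Y$ and $T:X\to Z$, and a spectral operator-valued measure $F:\Sigma\to B(Z)$ such that $E(B)=SF(B)T$ for every $B\in\Sigma$. Since $F$ is spectral, $F(\Omega)$ is in particular an idempotent in $B(Z)$, so its range $F(\Omega)Z$ is a closed subspace of $Z$ (closedness follows from $F(\Omega)$ being a bounded projection) and therefore a Banach space in its own right.

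Next I would invoke Proposition \ref{pr:414} with this data. The proposition produces $\hat F:\Sigma\to B(F(\Omega)Z)$ defined by $\hat F(B)=F(B)|_{F(\Omega)Z}$, together with operators $\hat S=S|_{F(\Omega)Z}$ and $\hat T=F(\Omega)T$, and shows that $\hat F$ is a spectral operator-valued measure on the Banach space $F(\Omega)Z$ satisfying $E(B)=\hat S\hat F(B)\hat T$ for all $B\in\Sigma$. This gives a dilation of $E$ to a spectral operator-valued measure on $F(\Omega)Z$; the only remaining point is normalization at $\Omega$.

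Finally I would observe that $\hat F(\Omega)=F(\Omega)|_{F(\Omega)Z}$, and since $F(\Omega)$ acts as the identity on its own range $F(\Omega)Z$, we have $\hat F(\Omega)=\mathrm{I}_{F(\Omega)Z}$. By Definition \ref{de:7}(i) this says exactly that $\hat F$ is an operator-valued probability measure, and combined with the spectral property established in the proposition we conclude that $(\Omega,\Sigma,\hat F,B(F(\Omega)Z),\hat S,\hat T)$ is the desired spectral operator-valued probability measure dilation of $E$. There is no real obstacle here: the substantive work is already contained in Proposition \ref{pr:414}, and the corollary is simply the remark that passing from $Z$ to the range of $F(\Omega)$ automatically normalizes the dilating measure at $\Omega$ without destroying spectrality.
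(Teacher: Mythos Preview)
Your proof is correct and follows exactly the paper's approach: the corollary is stated as an immediate consequence of Proposition~\ref{pr:414}, which already asserts that $\hat F$ is a spectral operator-valued probability measure dilating $E$. Your write-up simply makes explicit the one-line verification $\hat F(\Omega)=F(\Omega)|_{F(\Omega)Z}=\mathrm{I}_{F(\Omega)Z}$ that the proposition's proof passes over with ``it is clear''.
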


\section{Elementary Dilation Spaces}

This section provides the first step in the construction of dilation
spaces for operator-valued measures. The main result (Theorem
\ref{th:422}) shows that for any dilation projection-valued measure
system $(\Omega,\Sigma,F,B(Z),S,T)$ of  an operator-valued measure
system $(\Omega,\Sigma,E,B(X,Y))$, there exists an ``elementary"
dilation operator-valued measure system which can be linearly
isometrically embedded into $(\Omega,\Sigma,F,B(Z),S,T)$.

\begin{definition}\label{de:415}
Let $Y$ be a Banach space and $(\Omega,\Sigma)$ be a measurable
space. A mapping $\nu:\Sigma\to Y$ is called \textbf{a vector-valued
measure} if $\nu$ is countably additive; that is, if $\{B_i\}$ is a
disjoint countable collection of members of $\Sigma$ with union $B$,
then
\begin{equation*}
\nu(B)=\sum_i \nu(B_i).
\end{equation*}
\end{definition}

We use the notation  $(\Omega,\Sigma,\nu,Y)$ for a vector-valued
measure system.
\begin{remark}\label{re:416}
By the Orlicz-Pettis Theorem, we know that $\nu$ countably additive
is equivalent to $\nu$ weakly countably additive. That is
\begin{equation*}
y^{*}\left(\nu(B)\right)=\sum_i y^{*} (\nu(B_i)),
\end{equation*}
for all $y^*\in Y^*.$
\end{remark}

We use the symbol $\mathfrak{M}^Y_\Sigma$ to denote the linear space
of all vector-valued measures on $(\Omega,\Sigma)$ of $Y$.

Let $X,Y$ be Banach spaces and $(\Omega,\Sigma,E,B(X,Y))$  an
operator-valued measure system. For any $B\in \Sigma$ and $x\in X,$
define
\[E_{ B, x }:\Sigma\to Y,\quad E_{ B, x }(A)=E(B\cap A)x,
\quad \forall A\in \Sigma.\]
 Then it is easy to see that
$E_{ B, x }$ is a vector-valued measure on $(\Omega,\Sigma)$ of
$Y$ and $E_{ B, x }\in \mathfrak{M}^Y_\Sigma.$

Let $M_E=\mbox{span}\{E_{ B, x }: x\in X, B\in \Sigma\}.$ Obviously, $M_E\subset\mathfrak{M}^Y_\Sigma,$ and we will refer it as
the space induced by $(\Omega,\Sigma,E,B(X,Y)).$ We first introduce some linear mappings on the spaces $X,$ $Y$ and $M_E.$

\begin{lemma}\label{le:417}
Let $X,Y$ be Banach spaces and $(\Omega, \Sigma, E,B(X,Y))$ an
operator-valued measure system. For any $\{C_i\}_{i=1}^N\subset
\mathbb{C},$ $\{B_i\}_{i=1}^N\subset \Sigma$ and
$\{x_i\}_{i=1}^N\subset X$, the mappings
$$S:M_E\to Y,\quad
S\left(\sum_{i=1}^NC_i E_{{B_i}, {x_i}}\right)=\sum_{i=1}^NC_i
E(B_i)x_i$$
$$T: X\to M_E,\quad T(x)=E_{\Omega,x}$$
and
$$F(B): M_E\to M_E,\quad F(B)\left(\sum_{i=1}^NC_i E_{ {B_i}, {x_i}}\right)
=\sum_{i=1}^NC_i E_{{B\cap B_i}, {x_i}}, \quad \forall B\in\Sigma$$
are well-defined and linear.
\end{lemma}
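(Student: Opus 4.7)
The plan is to observe that linearity of all three maps is immediate once well-definedness is established, and that well-definedness in turn reduces to the following principle: if $\sum_{i=1}^N C_i E_{B_i, x_i} = 0$ as an element of $\mathfrak{M}^Y_\Sigma$, then for every $A \in \Sigma$ we have $\sum_{i=1}^N C_i E(B_i \cap A) x_i = 0$ in $Y$. This is simply the definition of equality of vector-valued measures evaluated at $A$. So the whole argument amounts to three applications of this identity at suitable choices of $A$.

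First I would handle $T$, which is the easiest: the element $E_{\Omega, x}$ lies in $M_E$ by definition of $M_E$ as the span, and the assignment $x \mapsto E_{\Omega, x}$ is linear because for any $A \in \Sigma$ we have $E_{\Omega, \alpha x + \beta y}(A) = E(A)(\alpha x + \beta y) = \alpha E(A)x + \beta E(A)y = (\alpha E_{\Omega, x} + \beta E_{\Omega, y})(A)$. Hence $T$ is well-defined on all of $X$ and linear.

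Next I would verify well-definedness of $S$. Suppose $\sum_{i=1}^N C_i E_{B_i, x_i} = \sum_{j=1}^M D_j E_{B_j', x_j'}$ in $M_E$; equivalently, their difference is the zero measure. Evaluating both sides at $A = \Omega$ yields
\[
\sum_{i=1}^N C_i E(B_i \cap \Omega) x_i \;=\; \sum_{j=1}^M D_j E(B_j' \cap \Omega) x_j',
\]
which is exactly the required identity $\sum_i C_i E(B_i) x_i = \sum_j D_j E(B_j') x_j'$. Linearity of $S$ is then transparent from the formula.

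Finally, for $F(B)$, two things need checking: that the output lies in $M_E$, and that the assignment is independent of representation. The former is clear because each $E_{B \cap B_i, x_i}$ is a generator of $M_E$ (since $B \cap B_i \in \Sigma$). For well-definedness, suppose again $\sum_{i=1}^N C_i E_{B_i, x_i} = 0$ in $M_E$. For any $A \in \Sigma$, evaluate this zero measure at $B \cap A$, and use the identity $(B \cap B_i) \cap A = B_i \cap (B \cap A)$:
\[
\sum_{i=1}^N C_i E_{B \cap B_i, x_i}(A) \;=\; \sum_{i=1}^N C_i E(B_i \cap (B \cap A)) x_i \;=\; 0.
\]
Since $A$ is arbitrary, $\sum_i C_i E_{B \cap B_i, x_i} = 0$ in $\mathfrak{M}^Y_\Sigma$, so $F(B)$ is well-defined, and linearity follows at once. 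There is no substantive obstacle here; the lemma is essentially bookkeeping, and its only nontrivial content is the observation that the $\Sigma$-indexed family $\{E_{B_i, x_i}\}$ carries enough information to recover $\sum_i C_i E(B_i)x_i$ and all the translates $\sum_i C_i E(B_i \cap B)x_i$ by evaluating at $\Omega$ and at $B \cap A$ respectively.
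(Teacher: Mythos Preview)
Your proof is correct and follows essentially the same approach as the paper: well-definedness of $S$ is obtained by evaluating the zero measure at $A=\Omega$, and well-definedness of $F(B)$ by evaluating at $B\cap A$. The paper dispatches $T$ with ``obviously'' and $F(B)$ with ``similarly,'' whereas you spell these out, but the underlying argument is identical.
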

\begin{proof} Obviously $T$ is well-defined. For any
$\{C_i\}_{i=1}^N\subset \mathbb{C},$  $\{x_i\}_{i=1}^N\subset X$
and $\{B_i\}_{i=1}^N\subset \Sigma$, if
\[\sum_{i=1}^NC_i E_{ {B_i}, {x_i}}=0,\]
then, by definition of $E_{x_i,B_i}$, we have for any $A\in \Sigma$ that
\[\sum_{i=1}^NC_i E_{ {B_i}, {x_i}}(A)
=\sum_{i=1}^NC_i E(B_i\cap A)x_i=0.\] Let $B=\Omega.$ Then
\[\sum_{i=1}^NC_i E(B_i)x_i=0.\]
Hence $S$ is well-defined. Similarly we can verify that $F(B)$ is
also well-defined. Clearly for any $B\in \Sigma,$ $S,T$ and $F(B)$
are linear.
\end{proof}

With the aid of Lemma \ref{le:417}, we can now give the definition
of a dilation norm.

\smallskip

Note: In this manuscript we will in most cases use the traditional
notation $\|\cdot\|$ for a norm;  however, in the case of dilation
norms (especially) we will frequently find it convenient to use the
functional notation, typically $\cD(\cdot)$, for a norm, because of
the length of the expressions being normed. In this case we will
sometimes also write $\|\cdot\|_{\cD}$ for this same norm when the
meaning is clear, using the norming function $\cD$ to subscript the
traditional norm notation.

\begin{definition}\label{de:418}
Let $M_E$ be the space induced by $(\Omega,\Sigma,E,B(X,Y)).$ Let
$\|\cdot\|$ be a norm on $M_E$. Denote this normed space by
$M_{E,\|\cdot\|}$ and its completion $\widetilde{M}_{E,\|\cdot\|}.$
The norm on $\widetilde{M}_{E,\|\cdot\|}$, with $\|\cdot\| :=
\|\cdot\|_{\cD}$ given by a norming function $\cD$ as discussed
above, is called a \textbf{dilation norm of $E$}if the following
conditions are satisfied:
\begin{enumerate}
\item[(i)] The mapping $S_\cD: \widetilde{M}_{E,\cD}\to Y$ defined on $M_E$ by
\begin{eqnarray*}
S_\cD\left(\sum_{i=1}^NC_i E_{ {B_i}, {x_i}}\right)=\sum_{i=1}^NC_i E(B_i)x_i
\end{eqnarray*}
is bounded. \item[(ii)]
 The mapping $T_\cD: X\to \widetilde{M}_{E,\cD}$ defined by
\begin{eqnarray*}
T_\cD(x)=E_{\Omega,x}
\end{eqnarray*}
is bounded. \item[(iii)]
 The mapping $F_\cD: \Sigma\to B(\widetilde{M}_{E,\cD})$ defined
 by
\begin{eqnarray*}
F_\cD(B)\left(\sum_{i=1}^NC_i E_{ {B_i}, {x_i}}\right)
=\sum_{i=1}^NC_i E_{{B\cap B_i},{x_i}}
\end{eqnarray*}
is an operator-valued measure,
\end{enumerate}
where $\{C_i\}_{i=1}^N\subset \mathbb{C},$ $\{x_i\}_{i=1}^N\subset
X$ and $\{B_i\}_{i=1}^N\subset \Sigma$.
\end{definition}

\begin{theorem}\label{th:419}
Let $(\Omega,\Sigma,E,B(X,Y))$ be an operator-valued measure system.
If a norm $\cD$ is a dilation norm of $E,$ then the Banach space $
\widetilde{M}_{E,\cD}$ is a dilation space of
$(\Omega,\Sigma,E,B(X,Y)).$ Moreover,
$(\Omega,\Sigma,F_\cD,B(\widetilde{M}_{E,\cD}),S_\cD,T_\cD)$ is the
corresponding dilation projection-valued probability measure system.
\end{theorem}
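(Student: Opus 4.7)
The plan is to verify directly the two assertions: that $E(B) = S_\cD F_\cD(B) T_\cD$ for every $B \in \Sigma$, and that $F_\cD$ is a projection-valued probability measure on $\widetilde{M}_{E,\cD}$. The boundedness of $S_\cD$ and $T_\cD$, and the countable additivity of $F_\cD$ in the strong operator topology, are already built into the hypothesis that $\cD$ is a dilation norm (conditions (i)--(iii) of Definition \ref{de:418}), so no analytic work is required for these properties; the theorem is essentially a packaging statement.

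First I would check the factorization pointwise on $X$. For any $x \in X$, we have $T_\cD(x) = E_{\Omega,x}$, then $F_\cD(B)E_{\Omega,x} = E_{B \cap \Omega, x} = E_{B,x}$, and finally $S_\cD(E_{B,x}) = E(B)x$, using the three defining formulas from Definition \ref{de:418} in sequence. This yields $E(B) = S_\cD F_\cD(B) T_\cD$ immediately.

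Next I would verify that $F_\cD(\Omega) = I_{\widetilde{M}_{E,\cD}}$ and that each $F_\cD(B)$ is an idempotent. On a typical element $\sum_{i=1}^{N} C_i E_{B_i, x_i}$ of the dense subspace $M_E$, the definition gives
\[
F_\cD(\Omega)\Bigl(\sum_{i=1}^{N} C_i E_{B_i,x_i}\Bigr) = \sum_{i=1}^{N} C_i E_{\Omega \cap B_i, x_i} = \sum_{i=1}^{N} C_i E_{B_i,x_i},
\]
and
\[
F_\cD(B)^{2}\Bigl(\sum_{i=1}^{N} C_i E_{B_i,x_i}\Bigr) = F_\cD(B)\Bigl(\sum_{i=1}^{N} C_i E_{B \cap B_i, x_i}\Bigr) = \sum_{i=1}^{N} C_i E_{B \cap B_i, x_i} = F_\cD(B)\Bigl(\sum_{i=1}^{N} C_i E_{B_i,x_i}\Bigr).
\]
Since $F_\cD(B)$ is a bounded operator on $\widetilde{M}_{E,\cD}$ (being the value at $B$ of an operator-valued measure, by condition (iii)), these identities extend from $M_E$ to the entire completion by continuity.

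Finally, $F_\cD$ is a countably additive projection-valued map with $F_\cD(\Omega) = I$, so Lemma \ref{le:39} upgrades it automatically to a spectral operator-valued measure, and the tuple $(\Omega,\Sigma,F_\cD,B(\widetilde{M}_{E,\cD}),S_\cD,T_\cD)$ satisfies Definition \ref{de:411}, exhibiting $\widetilde{M}_{E,\cD}$ as a dilation space. The only point requiring mild care is the density argument needed to transfer $F_\cD(\Omega)=I$ and $F_\cD(B)^2 = F_\cD(B)$ from $M_E$ to its completion, but that is entirely standard once the boundedness of $F_\cD(B)$ is on the table; I do not anticipate any genuine obstacle.
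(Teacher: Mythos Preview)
Your proposal is correct and follows essentially the same approach as the paper: verify the factorization $E(B)=S_\cD F_\cD(B)T_\cD$ pointwise, check $F_\cD(\Omega)=I$ on the dense subspace $M_E$, and establish that $F_\cD$ is spectral. The only cosmetic difference is that you prove each $F_\cD(B)$ is idempotent and then invoke Lemma~\ref{le:39} to obtain the multiplicative identity $F_\cD(B_1\cap B_2)=F_\cD(B_1)F_\cD(B_2)$, whereas the paper verifies this multiplicative identity directly on generators $E_{A,x}$ in one line; both routes are immediate.
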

\begin{proof}
For any $x\in X$ and $B_1,B_2,A\in \Sigma,$ we have
\begin{equation*}
F_\cD(B_1\cap B_2)(E_{A, x})=E_{B _1\cap B_2\cap A, x}
=F_\cD(B_1)(E_{B_2\cap A, x})=F_\cD(B_1)F_\cD(B_2) (E_{A, x}).
\end{equation*} So $F_\cD$ is a spectral
operator-valued measure.

For any $B\in \Sigma,$ we get
\[S_\cD F_\cD(B)T_\cD(x)=S_\cD F_\cD(B)(E_{\Omega,x})
=S_\cD(E_{B,x})=E(B)(x),\qquad \forall x\in X, \] and thus $S_\cD
F_\cD(E)T_\cD=E(B). $ Therefore
$(\Omega,\Sigma,F_\cD,B(\widetilde{M}_{\varphi,\cD}),S_\cD,T_\cD)$
is the corresponding dilation projection-valued measure system.

Observe that for any $x\in X$ and $B\in \Sigma,$ we have $F_\cD(\Omega)(E_{B,x})=E_{B,x}.$ Hence
$F_\cD(\Omega)=\mathrm{I}_{\widetilde{M}_{\varphi,\cD}}.$ The proof is complete.
\end{proof}

\begin{definition}\label{de:420}
Let $M_E$ be the space induced by $(\Omega,\Sigma,E,B(X,Y))$ and $\cD$ be a dilation norm of $E$. We call the Banach space
$\widetilde{M}_{E,\cD}$ \textbf{the elementary dilation space of $E$} and
$(\Omega,\Sigma,F_\cD,B(\widetilde{M}_{E,\cD}),S_\cD,T_\cD)$ the elementary dilation operator-valued measure system.
\end{definition}

In order to prove the main result (the existence of a dilation norm) in this section, we need the following lemma:

\begin{lemma}\label{le:421}
Let $X$ and $Y$ be Banach spaces, and let $(\Omega,\Sigma)$ be a
measurable space. Assume that $X_0\subset X$ is dense in $X.$ If a
mapping $E:\Sigma\to B(X,Y)$ is strongly countably additive on
$X_0$, i.e. if $\{B_i\}$ is a disjoint countable collection of
members of $\Sigma$ with union $B$ then
\begin{equation}\label{eq:41}
E(B)x=\sum_i E(B_i)x,\quad \ \forall x\in X_0,
\end{equation}
and if $E$ is uniformly bounded on $X_0$, i.e. there exists a
constant $C>0$ such that for any $B\in \Sigma$
\[\|E(B)x\|\leq C \|x\|,\quad \forall x\in X_0 ,\]
then $E$ is an operator-valued measure.
\end{lemma}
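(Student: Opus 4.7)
The strategy is to bootstrap from the two hypotheses on the dense subspace $X_0$ to the corresponding properties on all of $X$, using that each $E(B)$ is already a bounded operator $X \to Y$. Once I have strong countable additivity on all of $X$, Remark \ref{re:36} (Orlicz--Pettis) immediately gives weak countable additivity, which is exactly what Definition \ref{de:35} requires.

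First I would upgrade the uniform bound. Each $E(B)$ is a bounded operator on all of $X$ by hypothesis, and it satisfies $\|E(B)x\|_Y\le C\|x\|_X$ on the dense subspace $X_0$; by continuity this inequality persists on all of $X$, so $\|E(B)\|\le C$ uniformly in $B\in\Sigma$. Next I need finite additivity as operators on $X$. Applying (\ref{eq:41}) to the decomposition $\emptyset = \emptyset \sqcup \emptyset \sqcup \cdots$ forces $E(\emptyset)x = 0$ for all $x\in X_0$, hence $E(\emptyset) = 0$ on $X$ by density. Then for disjoint $A,B\in\Sigma$, the relation $E(A\cup B)x = E(A)x + E(B)x$ holds on $X_0$ by (\ref{eq:41}) (padding with empty sets), and extends to $X$ because both sides are continuous linear maps agreeing on a dense set. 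Finite additivity as operators follows by induction.

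Now fix a disjoint sequence $\{B_i\}_{i\in\N}\subset\Sigma$ with union $B$, and set $A_N=\bigcup_{i=1}^N B_i$ and $R_N = B\setminus A_N=\bigcup_{i>N}B_i$. Finite additivity as operators gives $\sum_{i=1}^N E(B_i)x = E(A_N)x$ and $E(B)x - E(A_N)x = E(R_N)x$ for every $x\in X$. So the claim reduces to showing $E(R_N)x\to 0$ in $Y$ as $N\to\infty$ for every $x\in X$. For $x\in X_0$ this is precisely (\ref{eq:41}) applied to the tail. For general $x\in X$ and $\varepsilon>0$, pick $x_0\in X_0$ with $\|x-x_0\|<\varepsilon/(2C)$, and then choose $N_0$ so that $\|E(R_N)x_0\|<\varepsilon/2$ for $N\ge N_0$; using $\|E(R_N)\|\le C$ yields
\[
\|E(R_N)x\|\le \|E(R_N)(x-x_0)\| + \|E(R_N)x_0\| < C\cdot\frac{\varepsilon}{2C}+\frac{\varepsilon}{2}=\varepsilon.
\]
Hence $E(B)x = \sum_i E(B_i)x$ in norm for every $x\in X$, which gives strong countable additivity and therefore weak countable additivity, so $E$ is an operator-valued measure in the sense of Definition \ref{de:35}.

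The only genuinely delicate point is the passage from convergence on the dense subspace $X_0$ to convergence on all of $X$, because the hypothesis of convergence is only pointwise on $X_0$ rather than uniform on the unit ball. What makes it work is the uniform operator bound $\sup_{B\in\Sigma}\|E(B)\|\le C$, which controls $E(R_N)$ on the approximation error $x-x_0$ independently of $N$; without this uniform bound one could not separate the $\varepsilon/2$ estimates. Everything else is a routine verification.
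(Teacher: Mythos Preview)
Your proof is correct and follows essentially the same approach as the paper: extend the uniform bound from $X_0$ to $X$ by density, obtain finite additivity as operator identities, and then run an $\varepsilon/2$ (or $\varepsilon/3$) argument combining the uniform bound $\|E(\cdot)\|\le C$ with strong convergence on $X_0$. The only cosmetic difference is that the paper writes a three-term splitting $E(B)x - E(B)x_M$, $E(B)x_M - \sum_{i=1}^N E(B_i)x_M$, $\sum_{i=1}^N E(B_i)x_M - \sum_{i=1}^N E(B_i)x$, whereas you first package the tail as the single operator $E(R_N)$ and then split into two terms; these are the same argument.
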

\begin{proof} Since $\overline{X_0}=X,$ $E$ is uniformly bounded
on $X.$ For any $N>0,$ by (\ref{eq:41}), if $\{B_i\}^{N}_{i=1}$ is
a disjoint collection of members of $\Sigma,$ then
\begin{equation}\label{eq:42}
E\left(\bigcup^{N}_{i=1}B_i\right)x=\sum^{N}_{i=1}
E(B_i)x,\quad \ \forall x\in X_0.
\end{equation}
Hence $E\left(\bigcup^{N}_{i=1}B_i\right)=\sum^{N}_{i=1}E(B_i)$ on
$X.$

We have
$$\left\|\sum^{N}_{i=1}E(B_i)\right\|=
\left\|E\left(\bigcup^{N}_{i=1}B_i\right)\right\|\leq C.$$

Let $\{B_i\}^{\infty}_{i=1}$ be a  countable disjoint collection of
elements of $\Sigma$ with union $B$. For any $x\in X,$ there exists
a sequence $\{x_j\}^{\infty}_{j=1}\subset X_0$ such that
$$\lim_{j\rightarrow\infty}x_j=x.$$
Observe that
\begin{eqnarray*}\label{eq:43}
&&\left\|E(B)x-\sum^{N}_{i=1} E(B_i)x\right\|\nonumber\\
&=&\left\|E(B)x-E(B)x_M+E(B)x_M-\sum^{N}_{i=1}
E(B_i)x_M+\sum^{N}_{i=1} E(B_i)x_M-\sum^{N}_{i=1}
E(B_i)x\right\|\nonumber\\
&\leq&\left\|E(B)x-E(B)x_M\right\|+\left\|E(B)x_M-\sum^{N}_{i=1}
E(B_i)x_M\right\|+\left\|\sum^{N}_{i=1}
E(B_i)x_M-\sum^{N}_{i=1} E(B_i)x\right\|
\nonumber\\
&\leq&\left\|E(B)\right\|\cdot\left\|x-x_M\right\|+\left\|E(B)x_M-\sum^{N}_{i=1}
E(B_i)x_M\right\|+\left\|E\left(\bigcup^{N}_{i=1}B_i\right)\right\|\cdot\left\|x_M-x\right\|
\nonumber\\
&\leq&2C\left\|x-x_M\right\|+\left\|E(B)x_M-\sum^{N}_{i=1}
E(B_i)x_M\right\|.\nonumber\\
\end{eqnarray*}

For any $\varepsilon>0,$ we can find $M>0$ such that
\begin{equation*}\label{eq:44}
\left\|x-x_M\right\|\leq\frac{\varepsilon}{3C}.
\end{equation*}
By (\ref{eq:41}), for $x_M\in X_0,$ we can find a sufficiently large
$N>0$ such that
\begin{equation*}\label{eq:45}
\left\|E(B)x_M-\sum^{N}_{i=1}
E(B_i)x_M\right\|\leq\frac{\varepsilon}{3}.
\end{equation*}
Therefore
\begin{equation*}
\left\|E(B)x-\sum^{N}_{i=1} E(B_i)x\right\|\leq \varepsilon
\end{equation*}
when $N$ is sufficiently large.  Hence
\begin{equation*}
E(B)x=\sum_i E(B_i)x,\quad \ \forall x\in X.
\end{equation*}
Therefore $E$ is an operator-valued measure.
\end{proof}

Now we state and prove the main result of this section.

\begin{theorem}\label{th:422}
Let $(\Omega,\Sigma,E,B(X,Y))$ be an operator-valued measure system and $(\Omega,\Sigma,F,B(Z),S,T)$ be a corresponding dilation
projection-valued measure space system. Then there exist an elementary dilation operator-valued measure system
$(\Omega,\Sigma,F_\cD,B(\widetilde{M}_{E,\cD}),S_\cD,T_\cD)$ of $E$ and a linear isometric embedding
\[U:\widetilde{M}_{E,\cD}\to Z\]
such that
\[S_\mathcal {D}=SU,\ F(\Omega)T=UT_\cD,\ UF_\cD(B)=F(B)U,
\quad \forall B\in \Sigma.\]
\end{theorem}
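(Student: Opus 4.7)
The embedding $U$ is essentially forced by the stated intertwining relations: since $T_\cD(x) = E_{\Omega, x}$ and $F_\cD(B) E_{\Omega, x} = E_{B, x}$, the conditions $UT_\cD = F(\Omega) T$ and $UF_\cD(B) = F(B) U$ together dictate $U(E_{B, x}) = F(B) T x$ on generators. I will therefore define this naive map, engineer the norm $\cD$ so that it becomes an isometry, and then verify the dilation-norm axioms and intertwining.

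\textbf{Step 1 (well-definedness of $U_0$).} Define $U_0 : M_E \to Z$ by linear extension from $U_0(E_{B, x}) := F(B) T x$, so that $U_0(\sum_i C_i E_{B_i, x_i}) = \sum_i C_i F(B_i) T x_i$. If two combinations agree in $M_E \subseteq \mathfrak{M}^Y_\Sigma$, i.e., $\sum_i C_i E(B_i \cap A) x_i = \sum_j D_j E(A_j \cap A) y_j$ for every $A \in \Sigma$, then substituting $E = SF(\cdot)T$ shows that the difference $w \in Z$ of the two candidate images satisfies $SF(A) w = 0$ for every $A$. Reducing first via Proposition \ref{pr:414} to the probability dilation $(\hat F, F(\Omega)Z, \hat S, \hat T)$, and working inside the spanning subspace $N := \mathrm{span}\{F(B) T x : B \in \Sigma, x \in X\}$, one concludes $w = 0$, so $U_0$ is well-defined and injective.

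\textbf{Step 2 (dilation norm).} Set $\cD(f) := \|U_0(f)\|_Z$ on $M_E$; this is a norm because $U_0$ is injective. For the three axioms of Definition \ref{de:418}, write $w := U_0(f)$ and compute
\[
\|S_\cD f\|_Y = \|S w\|_Y \leq \|S\|\,\cD(f), \qquad \cD(T_\cD x) = \|F(\Omega) T x\|_Z \leq \|F(\Omega)\|\,\|T\|\,\|x\|.
\]
A direct check on generators (using that $F$ is spectral, so $F(B)F(B_i) = F(B \cap B_i)$) gives $U_0 F_\cD(B) = F(B) U_0$, hence $\cD(F_\cD(B) f) \leq \|F(B)\|\,\cD(f)$. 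Countable additivity of $F_\cD$ in the strong operator topology on the completion $\widetilde{M}_{E, \cD}$ follows from Lemma \ref{le:421} applied to the dense subspace $M_E$, where strong countable additivity holds pointwise from the definition of $E_{B,x}$.

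\textbf{Step 3 (extension and intertwining).} Let $U : \widetilde{M}_{E, \cD} \to Z$ be the continuous extension of $U_0$; by construction it is a linear isometric embedding with image contained in $\overline N$. The three intertwining relations are verified on the dense set $M_E$ by direct calculation on generators,
\[
SU(E_{B,x}) = SF(B) T x = E(B) x = S_\cD(E_{B,x}), \qquad UT_\cD(x) = U(E_{\Omega, x}) = F(\Omega) T x,
\]
together with $UF_\cD(B)(E_{A,x}) = U(E_{B \cap A, x}) = F(B \cap A) T x = F(B) F(A) T x = F(B) U(E_{A,x})$, and they extend by continuity to all of $\widetilde{M}_{E, \cD}$.

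\textbf{Main obstacle.} The crux is Step 1: the identifications in $\mathfrak{M}^Y_\Sigma$ (where distinct-looking combinations may represent the same measure) must be shown to lift to equalities in $Z$. This is where Proposition \ref{pr:414} and the reduction to the projection-valued probability dilation on $F(\Omega) Z$ play the essential role, ensuring that the obstruction vectors $w \in N$ annihilated by every $SF(A)$ actually vanish; once this is in hand, the rest of the argument is routine verification.
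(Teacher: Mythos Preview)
Your overall strategy matches the paper's exactly: set $U_0(E_{B,x}) := F(B)Tx$, pull back the $Z$-norm to define $\cD$, and then verify the dilation-norm axioms and the three intertwining relations. Steps~2 and~3 parallel the paper's argument line by line.

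You are right that well-definedness of $U_0$ (your Step~1 and ``main obstacle'') is the crux; in fact the paper's proof does not treat this direction at all---its item~(iii) only proves the converse implication, that $\sum_i C_i F(B_i)Tx_i = 0$ forces $\sum_i C_i E_{B_i,x_i} = 0$ as a measure. But your proposed resolution is incorrect: the assertion that $w\in N$ with $SF(A)w=0$ for all $A$ forces $w=0$ is false in general, and neither Proposition~\ref{pr:414} nor restricting to $N$ rescues it. Take $X=Y=\mathbb{C}$, $\Omega=\{1,2\}$, $E(\{1\})=1$, $E(\{2\})=0$; dilate by $Z=\mathbb{C}^2$ with $F(\{i\})$ the $i$th coordinate projection, $Tx=(x,x)$, $S(a,b)=a$ (so already $F(\Omega)=I$). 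Then $E_{\{2\},1}$ is the zero measure in $M_E$, yet $U_0(E_{\{2\},1})=F(\{2\})T(1)=(0,1)\neq 0$; here $w=(0,1)\in N$ and $SF(A)w=0$ for every $A$. Worse, in this example $M_E$ is one-dimensional with $F_\cD(\{2\})=0$, while the required relation $UT_\cD=F(\Omega)T$ forces $U(E_{\Omega,1})=(1,1)$, giving $F(\{2\})U(E_{\Omega,1})=(0,1)\neq 0=UF_\cD(\{2\})(E_{\Omega,1})$. Thus no dilation norm \emph{on $M_E$} can satisfy the stated intertwining for this $(F,Z,S,T)$; the construction (yours and the paper's) really produces the completion of the \emph{formal} span of the symbols $E_{B,x}$ modulo the kernel of $\cD$, which by the paper's item~(iii) surjects onto $M_E$ but need not inject.
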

\begin{proof}
Define $\cD:M_E\to R^+\cup\{0\}$ by
\[\cD\left(\sum^{N}_{i=1}C_iE_{ {B_i}, {x_i}}\right)=\left\|\sum^{N}_{i=1}C_iF(B_i)T(x_i)\right\|_Z,\]
where $N>0,$ $C_i$ are all complex numbers and $\{x_i\}_{i=1}^{N}\subset X$ and $\{B_i\}_{i=1}^{N}\subset \Sigma.$ We first show that  $\cD$ is
a norm on $M_E.$

(i) Obviously,
\[\cD\left(\sum^{N}_{i=1}C_iE_{ {B_i}, {x_i}}\right)=\left\|\sum^{N}_{i=1}C_iF(B_i)T(x_i)\right\|_Z
\geq 0.\]

(ii) Take $\{\widetilde{C}_j\}_{j=1}^{M},$ $\{y_j\}_{j=1}^{M}\subset
X$ and $\{A_j\}_{j=1}^{M}\subset \Sigma.$ We have
\begin{eqnarray*}
\cD\left(\sum^{N}_{i=1}C_iE_{ {B_i},
{x_i}}+\sum^{M}_{j=1}\widetilde{C}_jE_{{A_j}, {y_j}}\right)
&=&\left\|\sum^{N}_{i=1}C_iF(B_i)T(x_i)+\sum^{M}_{j=1}\widetilde{C}_jF(A_j)T(y_j)\right\|_Z\\
&\leq&\left\|\sum^{N}_{i=1}C_iF(B_i)T(x_i)\right\|_Z+\left\|\sum^{M}_{j=1}\widetilde{C}_jF(A_j)T(y_j)\right\|_Z\\
&=&\cD\left(\sum^{N}_{i=1}C_iE_{ {B_i}, {x_i}}\right)+
\cD\left(\sum^{M}_{j=1}\widetilde{C}_jE_{{A_j}, {y_j}}\right).
\end{eqnarray*}

(iii) If
$\cD\left(\sum^{N}_{i=1}C_iE_{ {B_i}, {x_i}}\right)=0,$
then $\left\|\sum^{N}_{i=1}C_iF(B_i)T(x_i)\right\|_Z=0,$ and hence\\
$\sum^{N}_{i=1}C_iF(B_i)T(x_i)=0.$ This implies that for any $A\in\Sigma,$ we have
\begin{eqnarray*}
\sum^{N}_{i=1}C_iE_{ {B_i}, {x_i}}(A)
&=&\sum^{N}_{i=1}C_iE(B_i\cap A)(x_i)
=\sum^{N}_{i=1}C_iSF(B_i\cap A)T(x_i)\\
&=&SF(A)\sum^{N}_{i=1}C_i F(B_i)T(x_i)=0.
\end{eqnarray*}
Thus $\cD$ is a norm on $M_E.$

Denote this norm by $\|\cdot\|_\cD$, and let $\widetilde{M}_{E,\cD}$
be the completion of $M_{E,\cD}$ under this norm. We will show that
$\cD$ is a dilation norm of $E.$

First, since
\begin{eqnarray*}
&&\left\|S_\cD\left(\sum_{i=1}^NC_iE_{ {B_i}, {x_i}}\right)\right\|_Y
=\left\|\sum_{i=1}^NC_iE(B_i)x_i\right\|_Y\\
&=&\left\|\sum_{i=1}^NC_iSF(B_i)Tx_i\right\|_Y
=\left\|S\left(\sum_{i=1}^NC_iF(B_i)Tx_i\right)\right\|_Y\\
&\leq& \|S\|\cdot\left\|\sum_{i=1}^NC_iF(B_i)Tx_i\right\|_Z
=\|S\|\cdot\left\|\sum_{i=1}^NC_iE_{ {B_i}, {x_i}}\right\|_\cD,
\end{eqnarray*}
and
\begin{eqnarray*}
\|T_\cD(x)\|_\cD&=&\|E_{\Omega,x}\|_\cD
=\|F(\Omega)T(x)\|_Z\\
&\leq& \|F(\Omega)\|\cdot\|T\|\cdot\|x\|_X,
\end{eqnarray*}
 we have that the mappings
$$S_\cD: \widetilde{M}_{E,\cD}\to Y,\qquad
S_\cD\left(\sum_{i=1}^NC_iE_{ {B_i}, {x_i}}\right)=\sum_{i=1}^NC_iE(B_i)x_i
$$
and $$T_\cD: X\to \widetilde{M}_{E,\cD},\qquad
T_\cD(x)=E_{\Omega,x}$$ are both well-defined, linear and
bounded.

We prove that the mapping $F_\cD: \Sigma\to B(\widetilde{M}_{E,\cD})$ defined by
\begin{eqnarray*}
F_\cD(B)\left(\sum_{i=1}^NC_iE_{ {B_i}, {x_i}}\right)
=\sum_{i=1}^NC_iE_{{B\cap B_i}, {x_i},}
\end{eqnarray*}
is an operator-valued measure. By Lemma \ref{le:421}, we only need
to show that $F_\cD$ is strongly countably additive and uniformly
bounded on $M_{E,\cD}.$

If $\{A_j\}^\infty_{j=1}$ is a disjoint countable collection of
members of $\Sigma$ with union $A$, since $F$ is an
operator-valued measure, we have
\[F(A\cap B_i)T(x_i)=\sum_{j=1}^\infty F(A_j\cap B_i)T(x_i).\]
It follows that
\begin{eqnarray*}
F_\cD(A)(E_{B_i,x_i})=\sum_{j=1}^\infty F_\cD(A_j)(E_{B_i,x_i}).
\end{eqnarray*}
Hence
\begin{eqnarray*}
F_\cD(A)\left(\sum_{i=1}^NC_i E_{ {B_i}, {x_i}}\right)
=\sum_{j=1}^\infty F_\cD(A_j)
\left(\sum_{i=1}^NC_i E_{ {B_i}, {x_i}}\right).
\end{eqnarray*}
Thus $F_\cD$ is strongly countably additive on
$M_{E,\cD}.$

For any $A\in \Sigma,$ we have
\begin{eqnarray*}\label{eq:47}
&&\left\|F_\cD(A)\left(\sum^{N}_{i=1}C_iE_{B_i,x_i}\right)\right\|_\cD
=\left\|\sum^{N}_{i=1}C_iF_\cD(A)\left(E_{B_i,x_i}\right)\right\|_\cD\nonumber\\
&=&\left\|\sum^{N}_{i=1}C_iE_{A\cap B_i, x_i,}\right\|_\cD
=\left\|\sum^{N}_{i=1}C_i F(A\cap B_i)T(x_i)\right\|_Z\nonumber\\
&=&\left\|\sum^{N}_{i=1}C_iF(A)F(B_i)T(x_i)\right\|_Z
\leq\left\|F(A)\right\|\left\|\sum^{N}_{i=1}C_iF(B_i)T(x_i)\right\|_Z\nonumber\\
&\leq&\sup_{A\in\Sigma}\left\|F(A)\right\|\left\|\sum^{N}_{i=1}C_iE_{B_i,x_i}\right\|_\cD.
\end{eqnarray*}
By Remark \ref{re:38}, we know that
\[\sup_{A\in\Sigma}\left\|F(A)\right\|<+\infty,\]
and thus $F_\cD$ is uniformly bounded on $M_{E,\cD}.$

Define a mapping $U:\widetilde{M}_{E,\cD}\to Z$ by
\begin{eqnarray*}
U\left(\sum^{N}_{i=1}C_iE_{ {B_i}, {x_i}}\right)
=\sum^{N}_{i=1}C_iF(B_i)T(x_i).
\end{eqnarray*} It is easy to see
that $U$ is a well-defined and linear isometric embedding mapping.

Finally, we have
\begin{eqnarray*}
S_\cD\left(\sum_{i=1}^NC_iE_{ {B_i}, {x_i}}\right)
&=&\sum_{i=1}^NC_iE(B_i)x_i
=\sum_{i=1}^NC_iSF(B_i)T(x_i)\\
&=&S\sum_{i=1}^NC_iF(B_i)T(x_i)
=SU\left(\sum_{i=1}^NC_iE_{ {B_i}, {x_i}}\right),
\end{eqnarray*}
\begin{eqnarray*}
UT_\cD(x)=U\left(E_{\Omega,x}\right) =F(\Omega)T(x)
\end{eqnarray*}
and
\begin{eqnarray*}
UF_\cD(B)\left(\sum_{i=1}^NC_iE_{ {B_i}, {x_i}}\right)
&=&U\left(\sum_{i=1}^NC_iE_{{B\cap B_i}, x_i}\right)
=\sum^{N}_{i=1}C_iF(B\cap B_i)T(x_i)\\
&=&\sum^{N}_{i=1}C_iF(B)F(B_i)T(x_i)
=F(B)\left(\sum^{N}_{i=1}C_iF(B_i)T(x_i)\right)\\
&=&F(B)U\left(\sum_{i=1}^NC_iE_{ {B_i}, {x_i}}\right).
\end{eqnarray*}
Thus we get \[S_\mathcal {D}=SU,\ F(\Omega)T=UT_\cD,\ UF_\cD(B)=F(B)U, \quad \forall B\in \Sigma ,\] as claimed.
\end{proof}

 In general, we can find different dilation norms on $M_E,$ and
hence $E$ may have different dilation projection-valued measure systems. The following result shows that if $E$ is a projection
$B(X)$-valued probability measure, then these different dilation projection-valued measure systems are all similar, that is, $E$ is
isomorphic to $F_\cD$ for every dilation norm $\cD$ on $M_E.$

\begin{proposition}\label{pr:423}
Let $(\Omega,\Sigma,E,B(X))$ be a projection-valued
probability measure system and $\cD$ be a dilation norm of
$E$. Then $S_\cD$ and $T_\cD$ both are invertible operators
with $S_\cD T_\cD=\mathrm{I}_X$ and $T_\cD S_\cD=
\mathrm{I}_{\widetilde{M}_{E,\cD}}$. Thus, $E$ is
isomorphic to $F_\cD$ for every dilation norm $\cD$ of
$E$.
\end{proposition}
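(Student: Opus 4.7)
The plan is to exploit the fact that when $E$ is itself already a projection-valued probability measure, the elementary dilation essentially collapses back onto $X$, because $T_\cD$ and $S_\cD$ turn out to be inverses of each other. Both maps are bounded by Definition \ref{de:418}, so everything will follow from checking the two composition identities.

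First I would verify $S_\cD T_\cD = \mathrm{I}_X$ directly from the formulas: for any $x\in X$,
\[
S_\cD T_\cD(x) \;=\; S_\cD(E_{\Omega,x}) \;=\; E(\Omega)x \;=\; x,
\]
using that $E$ is a probability measure. This is immediate and uses no spectrality.

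Next, and this is the crucial step, I would check that $T_\cD S_\cD$ acts as the identity on the dense subspace $M_E \subset \widetilde{M}_{E,\cD}$. Since $T_\cD S_\cD$ is linear, it suffices to verify this on the generators $E_{B,x}$. Compute
\[
T_\cD S_\cD(E_{B,x}) \;=\; T_\cD(E(B)x) \;=\; E_{\Omega,\,E(B)x}.
\]
Now the key point: since $E$ is projection-valued, Lemma \ref{le:39} tells us $E$ is spectral, i.e.\ $E(A)E(B)=E(A\cap B)$ for all $A,B\in\Sigma$. Consequently, for every $A\in\Sigma$,
\[
E_{\Omega,\,E(B)x}(A) \;=\; E(A)E(B)x \;=\; E(A\cap B)x \;=\; E_{B,x}(A),
\]
so $E_{\Omega,\,E(B)x}=E_{B,x}$ as elements of $M_E$. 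Hence $T_\cD S_\cD$ is the identity on $M_E$, and by boundedness of both $T_\cD$ and $S_\cD$ together with density of $M_E$ in $\widetilde{M}_{E,\cD}$, we get $T_\cD S_\cD = \mathrm{I}_{\widetilde{M}_{E,\cD}}$.

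Finally, combining the two identities, $T_\cD$ is a bounded linear bijection with bounded inverse $S_\cD$, so both are invertible operators between Banach spaces. The similarity of $E$ and $F_\cD$ then drops out of the defining relation of the dilation: for every $B\in\Sigma$,
\[
E(B) \;=\; S_\cD F_\cD(B) T_\cD \;=\; T_\cD^{-1} F_\cD(B)\, T_\cD,
\]
which is exactly the statement that $E$ is isomorphic to $F_\cD$. I do not expect any serious obstacle here; the only substantive input is invoking spectrality of $E$ to identify $E_{\Omega,E(B)x}$ with $E_{B,x}$, and once that is done, boundedness plus density closes everything.
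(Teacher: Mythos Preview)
Your proof is correct and follows essentially the same approach as the paper: both verify $S_\cD T_\cD=\mathrm{I}_X$ directly from $E(\Omega)=\mathrm{I}_X$, and both establish $T_\cD S_\cD=\mathrm{I}_{\widetilde{M}_{E,\cD}}$ by using spectrality to identify $E_{\Omega,E(B)x}$ with $E_{B,x}$ on the dense subspace $M_E$. Your version is in fact slightly cleaner, since you check the second identity on generators and extend by linearity and density, whereas the paper also includes a separate (and redundant, once $T_\cD S_\cD=\mathrm{I}$ is known) verification that $S_\cD$ is injective.
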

\begin{proof}
By Theorem \ref{th:419}, we know that
$(\Omega,\Sigma,F_\cD,B(\widetilde{M}_{E,\cD}),S_\cD,T_\cD)$
is a projection-valued probability measure space system. Hence
$F_\cD(\Omega)=\mathrm{I}_{\widetilde{M}_{E,\cD}}$ and
\[S_\cD T_\cD=S_\cD F_\cD(\Omega)T_\cD=E(\Omega)=\mathrm{I}_X.\]
By Remark \ref{re:412}, we know that $S_\cD$ is a surjection and
$T_\cD$ is an isomorphic embedding.

Let $N>0,$ $\{C_i\}_{i=1}^{N}\subset\mathbb{C},$
$\{x_i\}_{i=1}^{N}\subset X$ and $\{B_i\}_{i=1}^{N}\subset
\Sigma.$ If
\[S_\cD\left(\sum_{i=1}^NC_iE_{ {B_i}, {x_i}}\right)
=\sum_{i=1}^NC_iE(B_i)x_i=0,\] then for any $A\in \Sigma,$
\begin{eqnarray*}
\sum_{i=1}^NC_iE_{ {B_i}, {x_i}}(A) =\sum_{i=1}^NC_iE(B_i\cap A)(x_i) =E(A)\left(\sum_{i=1}^NC_iE(B_i)x_i\right) =0.
\end{eqnarray*}
So we get $\sum_{i=1}^NC_iE_{ {B_i}, {x_i}}=0, $ and hence $S_\cD$ is invertible.

Observe that for any $A\in \Sigma,$
\begin{eqnarray*}
E_{\Omega, \sum_{i=1}^NC_iE(B_i)x_i}(A)
=E(A)\left(\sum_{i=1}^NC_iE(B_i)x_i\right) =\sum_{i=1}^NC_i E_{
{B_i}, {x_i}}(A).
\end{eqnarray*}
Thus we have
\begin{eqnarray*}
&&T_\cD S_\cD\left(\sum_{i=1}^NC_i E_{ {B_i}, {x_i}}\right)
=T_\cD\left(\sum_{i=1}^NC_iE(B_i)x_i\right)\\
&=&E_{\Omega,\sum_{i=1}^NC_iE(B_i)x_i}
=\sum_{i=1}^NC_iE_{ {B_i}, {x_i}}.
\end{eqnarray*}
So we get $T_\cD S_\cD=\mathrm{I}_{\widetilde{M}_{\varphi,\cD}}$ and $T_\cD$ is invertible. Therefore $E$ is isomorphic to $F_\cD$ for
every dilation norm $\cD$ of $E$.
\end{proof}

\section{The Minimal Dilation Norm $\|\cdot\|_\alpha$}

While the connection between a corresponding dilation
projection-valued measure system and the  elementary dilation space
has been established in the previous section, we still need to
address the existence issue for a corresponding dilation
projection-valued measure system. So in this and the next two
sections we will focus on constructing two special dilation norms
for the (algebraic) elementary dilation space $M_{E}$ which we call
the minimal and maximal dilation norms.

\begin{definition}\label{de:421}
Define  $\|\cdot\|_\alpha:M_E\to \mathbb{R}^+\cup\{0\}$ by
\[\left\|\sum^{N}_{i=1}C_iE_{ {B_i}, {x_i}}\right\|_\alpha
=\sup_{B\in\Sigma}\left\|\sum_{i=1}^N C_iE(B\cap B_i)x_i\right\|_Y.\]
\end{definition}

\begin{proposition}\label{pr:425} $\|\cdot\|_\alpha$ is a norm on $M_E$.
\end{proposition}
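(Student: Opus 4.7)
The plan is to verify the four axioms of a norm (finiteness, absolute homogeneity, subadditivity, and positive definiteness) together with well-definedness under change of representation. Since elements of $M_E$ are defined as vector-valued measures (finite linear combinations of the $E_{B,x}$), well-definedness and positive definiteness are closely linked: both boil down to the observation that the only representative of $0\in M_E\subset \mathfrak{M}^Y_\Sigma$ is the identically-zero set function.

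First I would verify finiteness. By Remark \ref{re:38}, $M:=\sup_{B\in\Sigma}\|E(B)\|<\infty$, so for any $\sum_{i=1}^N C_i E_{B_i,x_i}\in M_E$ and any $B\in\Sigma$,
\[
\Bigl\|\sum_{i=1}^N C_i E(B\cap B_i)x_i\Bigr\|_Y\le M\sum_{i=1}^N|C_i|\,\|x_i\|_X,
\]
which is independent of $B$. Thus the supremum defining $\|\cdot\|_\alpha$ is finite.

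Next I would address well-definedness. Suppose $\sum_{i=1}^N C_i E_{B_i,x_i}$ and $\sum_{j=1}^M D_j E_{A_j,y_j}$ represent the same element of $M_E$, i.e.\ the same vector-valued measure. Evaluating the equality of measures at an arbitrary $B\in\Sigma$ yields
\[
\sum_{i=1}^N C_i E(B\cap B_i)x_i=\sum_{j=1}^M D_j E(B\cap A_j)y_j,
\]
so the two sides give the same supremum over $B$. Hence $\|\cdot\|_\alpha$ depends only on the element of $M_E$, not on the representation. Absolute homogeneity is immediate from pulling scalars out of the norm on $Y$ and out of the sup. Subadditivity follows from the triangle inequality in $Y$ together with $\sup(f+g)\le\sup f+\sup g$: for any two elements, using a common refinement of representations one obtains $\|f+g\|_\alpha\le\|f\|_\alpha+\|g\|_\alpha$.

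The final step, and the only one with any substance, is positive definiteness. If $\bigl\|\sum_{i=1}^N C_i E_{B_i,x_i}\bigr\|_\alpha=0$, then for every $B\in\Sigma$,
\[
\sum_{i=1}^N C_i E(B\cap B_i)x_i=0\quad\text{in }Y.
\]
But this is precisely the statement that the vector-valued measure $\bigl(\sum_{i=1}^N C_i E_{B_i,x_i}\bigr)(B)$ vanishes for every $B\in\Sigma$, which means the element of $M_E$ is the zero measure. So $\|\cdot\|_\alpha$ separates points on $M_E$. I do not expect any real obstacle; the conceptual point is simply that $M_E$ is defined as a subspace of $\mathfrak{M}^Y_\Sigma$, so equality in $M_E$ means equality of set-functions, which is exactly what the supremum detects.
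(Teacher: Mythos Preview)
Your proof is correct and follows essentially the same approach as the paper's own proof. You are slightly more thorough in explicitly checking finiteness (via Remark~\ref{re:38}) and absolute homogeneity, which the paper leaves implicit; otherwise the well-definedness, triangle inequality, and positive-definiteness arguments are identical in substance.
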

\begin{proof}
Let $N>0,$ $\{C_i\}_{i=1}^{N}\subset\mathbb{C},$
$\{x_i\}_{i=1}^{N}\subset X$ and $\{B_i\}_{i=1}^{N}\subset
\Sigma.$ If $\sum^{N}_{i=1}C_iE_{ {B_i}, {x_i}}=0,$ then for
any $B\in\Sigma,$ $\sum_{i=1}^N C_iE(B\cap B_i)x_i=0,$ hence
$$\left\|\sum^{N}_{i=1}C_iE_{ {B_i}, {x_i}}\right\|_\alpha
=\sup_{B\in\Sigma}\left\|\sum_{i=1}^N C_iE(B\cap
B_i)x_i\right\|_Y =0.$$ Thus $\|\cdot\|_\alpha$ is well-defined.

If $\left\|\sum^{N}_{i=1}C_iE_{ {B_i}, {x_i}}\right\|_\alpha =0,$
then for any $B\in \Sigma,$ $\sum_{i=1}^N C_iE(B\cap B_i)x_i =0,$
thus
\[\sum^{N}_{i=1}C_iE_{ {B_i}, {x_i}}=0.\]

 Now we prove that $\|\cdot\|_\alpha$ satisfies the triangle
inequality. Let $M>0,$ $\{\widetilde{C}_j\}_{j=1}^{M}\subset\mathbb{C},$ $\{y_j\}_{j=1}^{M}\subset X$ and $\{A_j\}_{j=1}^{M}\subset \Sigma.$
Then
\begin{eqnarray*}
&&\left\|\sum^{N}_{i=1}C_iE_{ {B_i}, {x_i}}+\sum^{M}_{j=1}\widetilde{C}_jE_{{A_j}, {y_j}}\right\|_\alpha\\
&=&\sup_{B\in\Sigma} \left\|\sum_{i=1}^N C_iE(B_i\cap B)
x_i+\sum_{j=1}^M\widetilde{C}_jE(A_j\cap B)
y_j\right\|_Y\\
&\le&\sup_{B\in\Sigma} \left\|\sum_{i=1}^N C_iE(B_i\cap B)
x_i\right\|_Y+\sup_{B\in\Sigma}\left\|\sum_{j=1}^M
\widetilde{C}_jE(A_j\cap B)
y_j\right\|_Y\\
&=&\left\|\sum^{N}_{i=1}C_iE_{ {B_i}, {x_i}}\right\|_\alpha
+\left\|\sum^{M}_{j=1}C_jE_{{A_j}, {y_j}}\right\|_\alpha.
\end{eqnarray*}
So $\|\cdot\|_\alpha$ is a norm on $M_E$.
\end{proof}

We denote its normed space by $M_{E,\alpha}$ and its
completion by $\widetilde{M}_{E,\alpha}.$

\begin{theorem}\label{th:426}
$\|\cdot\|_\alpha$ is a dilation norm of $E$.
\end{theorem}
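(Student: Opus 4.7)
The plan is to verify the three conditions of Definition \ref{de:418} for the norm $\|\cdot\|_\alpha$ (which is already known to be a norm by Proposition \ref{pr:425}). Conditions (i) and (ii) follow by picking a single set in the supremum. For (i), taking $B = \Omega$ in the definition of $\|\cdot\|_\alpha$ gives $\|\sum C_i E(B_i) x_i\|_Y \le \|\sum C_i E_{B_i, x_i}\|_\alpha$, so $S_\alpha$ is well-defined on $M_{E,\alpha}$ with $\|S_\alpha\| \le 1$ and extends by continuity to $\widetilde{M}_{E,\alpha}$. For (ii), $\|T_\alpha(x)\|_\alpha = \|E_{\Omega, x}\|_\alpha = \sup_{B \in \Sigma} \|E(B) x\|_Y \le \|E\| \cdot \|x\|_X$, which is finite by Remark \ref{re:38}.

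For condition (iii), I first check that each $F_\alpha(B)$ is a well-defined contraction on $M_{E,\alpha}$. Well-definedness is immediate from the identity $E_{B \cap B_i, x_i}(A) = E_{B_i, x_i}(B \cap A)$: if $\sum C_i E_{B_i, x_i} = 0$, then evaluating the image at any $A$ gives $\sum C_i E(B \cap B_i \cap A) x_i = 0$. For boundedness,
\[
\Big\|F_\alpha(B)\Big(\sum_{i=1}^N C_i E_{B_i, x_i}\Big)\Big\|_\alpha = \sup_{A \in \Sigma} \Big\|\sum_{i=1}^N C_i E(A \cap B \cap B_i) x_i\Big\|_Y \le \Big\|\sum_{i=1}^N C_i E_{B_i, x_i}\Big\|_\alpha,
\]
since $\{A \cap B : A \in \Sigma\} \subseteq \Sigma$. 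Hence each $F_\alpha(B)$ extends to a contraction on $\widetilde{M}_{E,\alpha}$, and the family is uniformly bounded.

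The main remaining step is the strong countable additivity of $F_\alpha$. By Lemma \ref{le:421} it suffices to verify this on the dense subspace $M_E$. Fix $f = \sum_{i=1}^k C_i E_{B_i, x_i}$ and a disjoint sequence $\{A_j\} \subset \Sigma$ with union $A$. A direct computation using finite additivity of $E$ shows
\[
\Big(F_\alpha(A) - \sum_{j=1}^N F_\alpha(A_j)\Big) f = \sum_{i=1}^k C_i E_{D_N \cap B_i, x_i}, \qquad D_N := A \setminus \bigcup_{j=1}^N A_j,
\]
and applying $\|\cdot\|_\alpha$ and reparametrizing the supremum yields
\[
\Big\|\Big(F_\alpha(A) - \sum_{j=1}^N F_\alpha(A_j)\Big) f\Big\|_\alpha = \sup_{A' \in \Sigma,\, A' \subseteq D_N} \Big\|\sum_{i=1}^k C_i E(A' \cap B_i) x_i\Big\|_Y.
\]
This is the semivariation on $D_N$ of the countably additive $Y$-valued vector measure $A' \mapsto \sum_i C_i E(A' \cap B_i) x_i$. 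The hard part is the following classical fact: the semivariation of any countably additive vector measure is continuous from above at the empty set (a consequence of the Orlicz--Pettis theorem applied to subseries indexed by a partition of $D_N$). Since $D_N \downarrow \emptyset$, this supremum tends to $0$, which establishes strong countable additivity of $F_\alpha$ on $M_E$; Lemma \ref{le:421} then completes the verification of condition (iii).
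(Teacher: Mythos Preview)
Your proof is correct and matches the paper's for conditions (i), (ii), and the uniform bound $\|F_\alpha(B)\|\le 1$. The only difference lies in the countable-additivity step. You identify the remainder norm as $\sup_{A'\subseteq D_N}\|\nu(A')\|$ for the vector measure $\nu(A')=\sum_i C_i E(A'\cap B_i)x_i$ and invoke continuity from above at $\emptyset$ of the semivariation of a countably additive Banach-space-valued measure (strictly speaking your quantity is only bounded above by the semivariation $\|\nu\|(D_N)$, but that suffices; the standard proof of this continuity goes through a Bartle--Dunford--Schwartz control measure rather than Orlicz--Pettis directly). The paper instead gives a self-contained gliding-hump argument: after a triangle inequality reducing to a single $x_i$, it assumes the tail supremum $\sup_{B'}\|\sum_{j>M}E(A_j\cap B')x_i\|$ fails to vanish and extracts disjoint blocks $\bigcup_{j=n_l}^{m_l}(A_j\cap B_l')$ with $\|E(\cdot)x_i\|\ge\delta$, contradicting countable additivity of $E(\cdot)x_i$. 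This is exactly an elementary proof of the special case of the fact you cite (for the particular decreasing sequence $D_N=\bigcup_{j>N}A_j$), so the two routes coincide in substance; yours is shorter modulo the black box, the paper's is self-contained.
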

\begin{proof}Let $N>0,$ $\{C_i\}_{i=1}^{N}\subset\mathbb{C},$
$\{x_i\}_{i=1}^{N}\subset X$ and $\{B_i\}_{i=1}^{N}\subset \Sigma.$ Since the linear map $S_\alpha: \widetilde{M}_{E,\alpha}\to Y$ is defined by
$$S_\alpha\left(\sum^{N}_{i=1}C_iE_{ {B_i}, {x_i}}\right)=\sum_{i=1}^NC_iE(B_i) x_i,$$  we have that
\begin{eqnarray*}
&&\left\|S_\alpha\left(\sum^{N}_{i=1}C_iE_{ {B_i}, {x_i}}\right)\right\|_Y
=\left\|\sum_{i=1}^NC_iE(B_i) x_i\right\|_Y\\
& \leq &\sup_{B\in\Sigma}\left\|\sum_{i=1}^N C_iE(B\cap
B_i)x_i\right\|_Y=\left\|\sum^{N}_{i=1}C_iE_{ {B_i}, {x_i}}\right\|_\cM,
\end{eqnarray*}
which implies that $S_\alpha$ is bounded and $\|S_\alpha\|\le 1$.

Recall that the map $T_\alpha:X\to \widetilde{M}_{E,\alpha}$ is defined by $T_\alpha(x)=E_{\Omega,x}.$ Obviously,
we have
\begin{equation*}
\left\|T_\alpha\,x\right\|_\alpha=\left\|E_{\Omega,x}\right\|_\alpha =\sup_{B\in\Sigma}\left\|E(B) x\right\|_Y\le
\|E\|\cdot\|x\|,
\end{equation*}
and hence $\|T_\alpha\|\le \|E\|$.

Now we prove the mapping $F_{\alpha}: \Sigma\to B(\widetilde{M}_{E,\alpha})$ defined by
\begin{eqnarray*}
F_{\alpha}(B)\left(\sum_{i=1}^NC_iE_{ {B_i}, {x_i}}\right)
=\sum_{i=1}^NC_iE_{{B\cap B_i}, {x_i}}
\end{eqnarray*}
is an operator-valued measure. By Lemma \ref{le:421}, we only need to prove $F_{\alpha}$ is strongly countably additive and uniform
bounded on $M_{E,\alpha}.$

Since
\begin{eqnarray*}
&&\left\|F_{\alpha}(B)\left(\sum_{i=1}^NC_iE_{ {B_i}, {x_i}}\right)\right\|_\alpha
=\left\|\sum_{i=1}^NC_iE_{{B\cap B_i}, {x_i}}\right\|_\alpha\\
&=&\sup_{B'\in\Sigma}\left\|\sum_{i=1}^NC_iE(B_i\cap B\cap
B')(x_i)\right\|_Y
=\sup_{B'\in\Sigma}\left\|\sum_{i=1}^NC_i E(B_i\cap B')x_i\right\|_Y\\
&=&\left\|\sum^{N}_{i=1}C_iE_{ {B_i}, {x_i}}\right\|_Y,
\end{eqnarray*}
we get that $\|F_\alpha(B)\|=1$ for any $B\in \Sigma$.

For the strong countable additivity,  let $\{A_j\}_{j=1}^\infty$ be
a countable disjoint collection of members of $\Sigma$ with union
$A$. Then we have
\begin{eqnarray*}
&&\left\|\sum_{j=1}^MF_{\alpha
}(A_j)\left(\sum_{i=1}^NC_iE_{{B_i},{ x_i}}\right)-F_\alpha(A)\left(\sum_{i=1}^NC_iE_{{B_i},{ x_i}}\right)\right\|_\alpha\\
&=&\left\|\sum_{j=1}^M\left(\sum_{i=1}^NC_iE_{{A_j\cap
A_i},{x_i}}\right)-\left(\sum_{i=1}^NC_iE_{{A\cap B_i},{x_i}}\right)\right\|_\alpha\\
&=&\left\|\sum_{i=1}^NC_i\left(\sum_{j=1}^ME_{{A_j\cap
B_i},{x_i}}-E_{{A\cap B_i},{x_i}}\right)\right\|_\alpha\\
&=&\sup_{B'\in\Sigma}\left\|\sum_{i=1}^NC_i\left(\sum_{j=1}^ME(A_j\cap
B_i\cap B')x_i-E(A\cap
B_i\cap B')x_i\right)\right\|_Y\\
&=&\sup_{B'\in\Sigma}\left\|\sum_{i=1}^NC_iE\left(\bigcup_{j=M+1}^\infty
\left(A_j\cap B_i\cap B'\right)\right)x_i\right\|_Y\\
&\le&\sum_{i=1}^N|C_i|\sup_{B'\in\Sigma}\left\|B\left(\bigcup_{j=M+1}^\infty
\left(A_j\cap B_i\cap B'\right)\right)x_i\right\|_Y\\
&=&\sum_{i=1}^N|C_i|\sup_{B'\in\Sigma}\left\|E\left(\bigcup_{j=M+1}^\infty
\left(A_j\cap B'\right)\right)x_i\right\|_Y\\
&=&\sum_{i=1}^N|C_i|\sup_{B'\in\Sigma}\left\|\sum_{j=M+1}^\infty E(A_j\cap
B')x_i\right\|_Y.
\end{eqnarray*}
If $\sup_{B'\in\Sigma}\left\|\sum_{j=M+1}^\infty E(A_j\cap B')x_i\right\|_Y$ does not tend to 0 as $M\rightarrow \infty,$ then we can find
$\delta>0$, a sequence of $n_1\le m_1<n_2\le m_2<n_3\le m_3<.\,.\,.$ , and $\{B_l'\}_{l=1}^\infty\subset \Sigma$ such that
\begin{eqnarray*}
\left\|\sum_{j=n_l}^{m_l} E(A_j\cap B_l')x_i\right\|\ge
\delta, \ \ \forall\, l\in\N.
\end{eqnarray*}
Since for $l\in\N$ and $n_l\le j\le m_l$, $A_j\cap B_l'$ are disjoint from each other, we have
\begin{eqnarray*}
E\left(\bigcup_{l=1}^\infty\bigcup_{j=n_l}^{m_l} A_j\cap
B_l'\right)x_i=\sum_{l=1}^\infty\sum_{j=n_l}^{m_l} E(B_j\cap
B_l')x_i,
\end{eqnarray*}
which implies
 $\left\|\sum_{j=n_i}^{m_i} E(B_j\cap
B_i')x_i\right\|\rightarrow 0, $ which is a contradiction. Hence
\[F_\alpha(A)\left(\sum_{i=1}^NC_iE_{{B_i},{ x_i}}\right)=\sum_{j=1}^\infty F_{\alpha}(A_j)\left(\sum_{i=1}^NC_iE_{{B_i},{ x_i}}\right),\]
 as expected.
\end{proof}

Combining Theorem \ref{th:426} and Theorem \ref{th:419} we have the following:

\begin{theorem} \label{main-thm} For any operator-valued
measure system $(\Omega,\Sigma,E,B(X,Y))$, there exist a Banach space $Z$,  two bounded linear operators $S:Z\to Y$ and $T:X\to Z$, and a
projection-valued probability measure system $(\Omega,\Sigma,F,B(Z))$ such that $E(B)=SF(B)T$ for any $B\in\Sigma$. In other words,
every operator-valued measure can be dilated to a projection-valued measure.
\end{theorem}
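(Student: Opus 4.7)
The plan is to obtain the theorem as an immediate consequence of the two results just established. Concretely, I would take $Z := \widetilde{M}_{E,\alpha}$, the completion of the elementary space $M_E$ under the minimal dilation norm $\|\cdot\|_\alpha$ from Definition \ref{de:421}. The required operators will then be the analysis map $S := S_\alpha$, the synthesis map $T := T_\alpha$, and the projection-valued probability measure $F := F_\alpha$ introduced in Section 2.3, all acting on/into this completion.

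The first step is to invoke Theorem \ref{th:426}, which asserts that $\|\cdot\|_\alpha$ is in fact a dilation norm of $E$. Since that theorem has just been proved, nothing further is required at this point; in particular the delicate part, namely the strong countable additivity of $F_\alpha$ on $M_{E,\alpha}$, has been handled there by a contradiction argument that exploits the disjointness of the sets $A_j \cap B_\ell'$ together with weak (hence strong, via Orlicz--Pettis) unconditional convergence of $\sum_j E(A_j \cap B')x_i$ in $Y$.

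The second step is to apply Theorem \ref{th:419} with $\cD := \|\cdot\|_\alpha$, which automatically produces, out of any dilation norm, the Banach space $\widetilde{M}_{E,\cD}$ together with the system $(\Omega,\Sigma,F_\cD,B(\widetilde{M}_{E,\cD}),S_\cD,T_\cD)$ as a dilation projection-valued probability measure system satisfying $E(B) = S_\cD F_\cD(B) T_\cD$ for every $B \in \Sigma$. Substituting the $\alpha$-norm closes the argument. I do not anticipate an additional obstacle here: the genuine work — constructing a norm on $M_E$ that is simultaneously compatible with $S$, $T$, and a projection-valued $F$, and then verifying that $F_\alpha$ is a bona fide operator-valued measure (not merely finitely additive) — was completed in the two preceding theorems. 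The present assembly is purely formal, consisting of citation and identification of the relevant data.
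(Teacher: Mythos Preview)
Your proposal is correct and matches the paper's own proof exactly: the theorem is stated immediately after Theorem~\ref{th:426} as a direct combination of Theorem~\ref{th:426} (that $\|\cdot\|_\alpha$ is a dilation norm) and Theorem~\ref{th:419} (that any dilation norm yields a projection-valued probability measure dilation system). The only minor slip is terminological---in the paper's conventions (Definition~\ref{de:411}) $S$ is called the analysis operator and $T$ the synthesis operator, so your labels are swapped, but this does not affect the argument.
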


\begin{lemma}\label{le:424}
Let $(\Omega,\Sigma,E,B(X,Y))$ be an operator-valued measure system. If $\cD$ is a dilation norm of $E,$ then there exists a
constant $C_\cD$ such that for any $\sum_{i=1}^NC_iE_{ {B_i}, {x_i}}\in M_{E,\cD},$
\[
\sup_{B\in\Sigma}\left\|\sum_{i=1}^N C_iE(B\cap B_i)x_i\right\|_Y \leq C_\cD \left\|\sum_{i=1}^NC_iE_{ {B_i}, {x_i}}\right\|_\cD,
\]
where $N>0,$ $\{C_i\}_{i=1}^{N}\subset\mathbb{C},$ $\{x_i\}_{i=1}^{N}\subset X$ and $\{B_i\}_{i=1}^{N}\subset \Sigma. $ Consequently we have
that
\[ \|f\|_\alpha\le C_\cD\|f\|_\cD, \qquad \forall f\in M_{E}.\]
\end{lemma}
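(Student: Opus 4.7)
The plan is to exploit the dilation system guaranteed by the hypothesis that $\cD$ is a dilation norm. By Theorem \ref{th:419}, we obtain an elementary dilation projection-valued probability measure system $(\Omega,\Sigma,F_\cD,B(\widetilde{M}_{E,\cD}),S_\cD,T_\cD)$ with $E(B)=S_\cD F_\cD(B) T_\cD$ for every $B\in\Sigma$, where $S_\cD$ and $T_\cD$ are bounded linear operators and $F_\cD$ is an operator-valued measure on the Banach space $\widetilde{M}_{E,\cD}$.

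The key identity I would derive is the factorization
\[
\sum_{i=1}^N C_i E(B\cap B_i)x_i \;=\; S_\cD F_\cD(B)\!\left(\sum_{i=1}^N C_i E_{B_i,x_i}\right),
\]
valid for every $B\in\Sigma$. This follows directly from the definitions of $S_\cD$ and $F_\cD$ on the algebraic space $M_E$: applying $F_\cD(B)$ sends $E_{B_i,x_i}$ to $E_{B\cap B_i, x_i}$ by Definition \ref{de:418}(iii), and then applying $S_\cD$ recovers $\sum_i C_i E(B\cap B_i)x_i$ by Definition \ref{de:418}(i). Taking norms in $Y$ and using the operator norm bounds yields
\[
\left\|\sum_{i=1}^N C_i E(B\cap B_i)x_i\right\|_Y \;\le\; \|S_\cD\|\cdot\|F_\cD(B)\|\cdot \left\|\sum_{i=1}^N C_i E_{B_i,x_i}\right\|_\cD.
\]

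To finish, I take the supremum over $B\in\Sigma$. Since $F_\cD$ is a bona fide $B(\widetilde{M}_{E,\cD})$-valued measure, Remark \ref{re:38} tells us that $K_\cD := \sup_{B\in\Sigma}\|F_\cD(B)\|$ is finite. Setting $C_\cD := \|S_\cD\|\cdot K_\cD$ produces the required inequality. The final statement $\|f\|_\alpha\le C_\cD\|f\|_\cD$ for all $f\in M_E$ is then immediate from the definition of $\|\cdot\|_\alpha$ in Definition \ref{de:421}, since every element of $M_E$ is by definition a finite linear combination of the form $\sum_{i=1}^N C_i E_{B_i,x_i}$.

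There is no real obstacle here; the argument is a direct verification once one writes out the factorization through the dilation system. The only small point worth highlighting is that the uniform bound $K_\cD<\infty$ is not assumed a priori in Definition \ref{de:418} but is automatic from Remark \ref{re:38} because $F_\cD$ has been shown to be an operator-valued measure, so the uniform boundedness principle applies to the scalar measures $\mu_{f,\ell}(B)=\ell(F_\cD(B)f)$ for $f\in\widetilde{M}_{E,\cD}$, $\ell\in\widetilde{M}_{E,\cD}^{\,*}$.
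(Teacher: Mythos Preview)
Your proof is correct and follows essentially the same approach as the paper: both establish the factorization $\sum_i C_i E(B\cap B_i)x_i = S_\cD F_\cD(B)\bigl(\sum_i C_i E_{B_i,x_i}\bigr)$ and then take the supremum over $B$, invoking the automatic boundedness of the operator-valued measure $F_\cD$. The only cosmetic difference is that the paper takes $C_\cD=\sup_{B\in\Sigma}\|S_\cD F_\cD(B)\|$ directly rather than your product bound $\|S_\cD\|\cdot\sup_{B\in\Sigma}\|F_\cD(B)\|$, which yields a possibly sharper constant but is otherwise the same argument.
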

\begin{proof}
Let $C_\cD=\sup_{B\in\Sigma}\left\|S_\cD F_\cD(B)\right\|.$ Then obviously $C_\cD<+\infty$. We also have
\begin{eqnarray*}
\sup_{B\in\Sigma}\left\|\sum_{i=1}^N C_iE(B\cap B_i)x_i\right\|_Y &=&\sup_{B\in\Sigma}\left\|\sum_{i=1}^N C_i S_\cD F_\cD(B\cap
B_i)T_\cD (x_i)\right\|_Y\\
&=&\sup_{B\in\Sigma}\left\|\sum_{i=1}^N
C_iS_\cD F_\cD(B)F_\cD(B_i)T_\cD (x_i)\right\|_Y\\
&\leq&\sup_{B\in\Sigma}\left\|S_\cD F_\cD(B)\right\|
\left\|\sum_{i=1}^N C_iF_\cD(B_i)T_\cD (x_i)\right\|_\cD \\
&=&C_\cD \left\|\sum_{i=1}^N C_iE_{ {B_i}, {x_i}}\right\|_\cD.
\end{eqnarray*}
\end{proof}

This lemma justifies the following definition for the dilation norm $\alpha$.

\begin{definition}\label{de:427}
Let $(\Omega,\Sigma,E,B(X,Y))$ be an operator-valued measure
system. We call $\alpha$  the \emph{minimal dilation norm}
of $E$ and $\widetilde{M}_{E,\alpha}$ the $\alpha$-dilation
space.
\end{definition}

The next result shows that if we start with a projection-valued
probability measure $E$, then the dilated projection-valued
probability measure on the elementary dilation space
$\widetilde{M}_{E,\alpha}$ is isometric to $E$.

\begin{proposition}\label{pr:429}
Let $(\Omega,\Sigma,E,B(X))$ be a normalized
projection-valued probability measure system and $\alpha$ be the
minimal dilation norm of $E$. Then $S_\alpha$ and
$T_\alpha$ both are isometries and $E$ is isometric to
$F_\alpha$.
\end{proposition}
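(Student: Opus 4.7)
The plan is to leverage Proposition \ref{pr:423}, which already tells us that $S_\alpha$ and $T_\alpha$ are mutual inverses (since $E$ is a projection-valued probability measure), and then show that $T_\alpha$ happens to be an isometry for the specific dilation norm $\alpha$. Once we have this, everything else follows automatically.

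First I would compute $\|T_\alpha x\|_\alpha$ directly from the definitions. By construction,
\[
\|T_\alpha x\|_\alpha \;=\; \|E_{\Omega,x}\|_\alpha \;=\; \sup_{B\in\Sigma}\|E(B)x\|_X.
\]
The hypothesis $\|E\|=1$ gives the upper bound $\|E(B)x\|_X \le \|x\|_X$ for every $B\in\Sigma$, while the hypothesis $E(\Omega)=I_X$ (which is part of being a probability measure) forces $\|E(\Omega)x\|_X = \|x\|_X$ to be attained at $B=\Omega$. Hence $\sup_{B\in\Sigma}\|E(B)x\|_X = \|x\|_X$, so $T_\alpha$ is a linear isometry from $X$ into $\widetilde{M}_{E,\alpha}$.

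Next I would invoke Proposition \ref{pr:423}: since $E$ is a projection-valued probability measure and $\alpha$ is a dilation norm, $S_\alpha T_\alpha = I_X$ and $T_\alpha S_\alpha = I_{\widetilde{M}_{E,\alpha}}$. The second identity shows that $T_\alpha$ is surjective onto $\widetilde{M}_{E,\alpha}$, so $T_\alpha$ is a bijective isometry. Consequently $S_\alpha = T_\alpha^{-1}$ is also a bijective isometry from $\widetilde{M}_{E,\alpha}$ onto $X$.

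Finally, to conclude that $E$ is isometric to $F_\alpha$ in the sense of the definition preceding Proposition \ref{pr:47}, I would take $U := T_\alpha$ as the surjective isometry. By Theorem \ref{th:419} the dilation identity $E(B) = S_\alpha F_\alpha(B) T_\alpha$ holds for every $B\in\Sigma$; combined with $S_\alpha = T_\alpha^{-1} = U^{-1}$ this rewrites as $E(B) = U^{-1} F_\alpha(B) U$, which is exactly the required isometric equivalence. There is no real obstacle here: the only substantive computation is the supremum $\sup_B \|E(B)x\| = \|x\|$, and everything else is a bookkeeping consequence of Proposition \ref{pr:423} and the dilation identity already established.
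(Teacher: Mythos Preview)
Your proof is correct and follows essentially the same approach as the paper: both invoke Proposition~\ref{pr:423} for the mutual inverse relations $S_\alpha T_\alpha = I_X$, $T_\alpha S_\alpha = I_{\widetilde{M}_{E,\alpha}}$, and both compute $\|T_\alpha x\|_\alpha = \sup_{B\in\Sigma}\|E(B)x\|$ directly from the definition. Your version is marginally cleaner in that you observe the supremum is actually attained at $B=\Omega$ (since $E(\Omega)=I_X$), which gives $\|T_\alpha x\|_\alpha = \|x\|$ immediately, whereas the paper shows $\|T_\alpha\|\le 1$ and $\|S_\alpha\|\le 1$ separately and then sandwiches via $\|x\| = \|S_\alpha T_\alpha x\| \le \|T_\alpha x\| \le \|x\|$.
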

\begin{proof}
Since $\alpha$ is a dilation norm of $E,$ by Proposition \ref{pr:423}, $S_\alpha$ and $T_\alpha$ both are invertible operators with $S_\alpha
T_\alpha=\mathrm{I}_X$ and $T_\alpha S_\alpha= \mathrm{I}_{\widetilde{M}_{E,\alpha}}$. For any $x\in X,$ we have
\[\|T_\alpha x\|_\alpha=\|E_{\Omega,x} \|_\alpha
=\sup_{B\in\Sigma}\left\|E(B)x\right\| \leq
\sup_{B\in\Sigma}\left\|E(B)\right\|\|x\| =\|x\|,
\]
and thus $\|T_\alpha \|\leq 1.$

On the other hand, for any $N>0,$  $\{C_i\}_{i=1}^{N}\subset\mathbb{C},$ $\{x_i\}_{i=1}^{N}\subset X$ and $\{B_i\}_{i=1}^{N}\subset \Sigma,$ we
have that
\begin{eqnarray*}
&&\left\|S_{\alpha}\left(\sum_{i=1}^NC_iE_{{B_i},{ x_i}}\right)\right\|_X
=\left\|\sum_{i=1}^NC_iE(B_i)x_i\right\|_X\\
&\le&\sup_{B'\in\Sigma}\left\|\sum_{i=1}^NC_iE\left(B_i\cap
B'\right)x_i\right\|_X =\left\|\sum_{i=1}^NC_iE_{{B_i},{ x_i}}\right\|_\alpha,
\end{eqnarray*}
and so $\left\|S_{\alpha}\right\|\leq 1.$

Since $S_\alpha T_\alpha=\mathrm{I}_X$ and $T_\alpha S_\alpha=
\mathrm{I}_{\widetilde{M}_{E,\alpha}}$, we have that for each $x\in
X$ we have $$||x|| = ||S_{\alpha}T_{\alpha}x|| \leq ||T_{\alpha}x||
\leq ||x||.$$ Hence $T_{\alpha}$ is a surjective isometry. Similarly
$S_\alpha$ is also a surjective isometry, and therefore $E$ is
isometric to $F_\alpha$.
\end{proof}

\begin{proposition}\label{pr:430}
Let $(\Omega,\Sigma,E,B(X,Y))$ be an operator-valued measure system and $\alpha$ be the minimal dilation norm of $E$. If $P_j:Y\to Y$
are norm one projections  such that $\sum^{M}_{j=1} P_j=\mathrm{I}_{Y}$, then
$$E_j(B):\Sigma\to B(X,Y), \quad E_j(B)x=P_jE(B)x,\quad x\in X$$
is an operator-valued measure for $1\le j\le M$. Moreover for any $\sum_{i=1}^NC_iE_{{B_i},{ x_i}}\in M_{E}$, we have
\begin{eqnarray*}
\max_{1\le j\le M} \left\|\sum_{i=1}^NC_i(E_j)_{{
B_i},{x_i}}\right\|_{\alpha_j}\le \left\|\sum_{i=1}^NC_iE_{{B_i},{ x_i}}\right\|_{\alpha}\le \sum_{1\le j\le M}
\left\|\sum_{i=1}^NC_i(E_j)_{{ B_i},{x_i}}\right\|_{\alpha_j},
\end{eqnarray*}
where $\alpha_j$ is the minimal dilation norm of $E_j$.
\end{proposition}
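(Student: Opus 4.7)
The plan is to break the statement into three pieces and handle each by directly unwinding the definition of the minimal dilation norm $\alpha$ from Definition \ref{de:421}.

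First I would verify that each $E_j := P_j E$ is actually an operator-valued measure in the sense of Definition \ref{de:35}. This is immediate: for any $x\in X$ and $y^*\in Y^*$, the functional $y^*\circ P_j$ lies in $Y^*$, so the weak countable additivity of $E$ against $(P_j^* y^*, x)$ gives weak countable additivity of $E_j$ against $(y^*,x)$. Hence $E_j:\Sigma\to B(X,Y)$ is an OVM, and it therefore has its own minimal dilation norm $\alpha_j$ given by
\[
\left\|\sum_{i=1}^N C_i (E_j)_{B_i,x_i}\right\|_{\alpha_j}
= \sup_{B\in\Sigma}\left\|P_j\sum_{i=1}^N C_iE(B\cap B_i)x_i\right\|_Y.
\]

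For the upper bound, I would fix an arbitrary $B\in\Sigma$ and use the resolution of identity $\sum_{j=1}^M P_j = I_Y$ together with the triangle inequality to write
\[
\left\|\sum_{i=1}^N C_iE(B\cap B_i)x_i\right\|_Y
\le \sum_{j=1}^M \left\|P_j\sum_{i=1}^N C_iE(B\cap B_i)x_i\right\|_Y
\le \sum_{j=1}^M \left\|\sum_{i=1}^N C_i(E_j)_{B_i,x_i}\right\|_{\alpha_j},
\]
where the last step replaces each $j$-th term by its supremum over $\Sigma$. Taking $\sup_{B\in\Sigma}$ on the left yields exactly the right-hand inequality of the proposition. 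For the lower bound, I would use $\|P_j\|=1$ to obtain, for each fixed $B$ and each $j$,
\[
\left\|P_j\sum_{i=1}^N C_iE(B\cap B_i)x_i\right\|_Y \le \left\|\sum_{i=1}^N C_iE(B\cap B_i)x_i\right\|_Y.
\]
Taking $\sup_{B\in\Sigma}$ and then $\max_{1\le j\le M}$ gives the left-hand inequality.

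There is no substantial obstacle here; the only point requiring a moment of care is the formula for $\|\cdot\|_{\alpha_j}$, which relies on the observation that $E_j(B\cap B_i)x_i = P_j E(B\cap B_i)x_i$ commutes with the supremum because $P_j$ is a fixed bounded operator independent of $B$. Everything else follows from the triangle inequality and $\|P_j\|=1$, so the whole argument reduces to manipulations with suprema of norms in $Y$.
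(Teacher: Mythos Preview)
Your proposal is correct and follows essentially the same approach as the paper: both arguments unwind the definition of the minimal dilation norm, use $\|P_j\|=1$ to get the left-hand inequality, and use $\sum_{j=1}^M P_j = I_Y$ together with the triangle inequality (then a swap of sum and supremum) for the right-hand inequality. The paper merely presents the two bounds as a single chain of (in)equalities rather than treating them separately, but the content is identical.
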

\begin{proof} It is obvious that for each $1\le j\le M,$
$$E_j(B):\Sigma\to B(X,Y), \quad E_j(B)x=P_jE(B)x,\quad x\in X$$
is an operator-valued measure.

The ``moreover"  part follows from:
\begin{eqnarray*}
 \left\|\sum_{i=1}^NC_i(E_j)_{{
B_i},{x_i}}\right\|_{\alpha_j}
&=&\sup_{B'\in\Sigma}\left\|\sum_{i=1}^NC_iP_jE\left(B_i\cap
B'\right)x_i\right\|\\
&=&\sup_{B'\in\Sigma}\left\|P_j\left(\sum_{i=1}^NC_iE\left(B_i\cap
B'\right)x_i\right)\right\|\\
&\leq&\|P_j\|\sup_{B'\in\Sigma}\left\|\sum_{i=1}^NC_iE\left(B_i\cap
B'\right)x_i\right\|\\
&=&\sup_{B'\in\Sigma}\left\|\sum_{i=1}^NC_iE\left(B_i\cap
B'\right)x_i\right\|\\
&=&\sup_{B'\in\Sigma}\left\|\sum^{M}_{j=1}
P_j\left(\sum_{i=1}^NC_iE\left(B_i\cap
B'\right)x_i\right)\right\| \\
&\leq&\sum^{M}_{j=1}\sup_{B'\in\Sigma}\left\|\sum_{i=1}^NC_iP_j\left(E\left(B_i\cap
B'\right)x_i\right)\right\|\\
&=&\sum_{1\le j\le M} \left\|\sum_{i=1}^NC_i(E_j)_{{
B_i},{x_i}}\right\|_{\alpha_j}.
\end{eqnarray*}
\end{proof}

\begin{lemma} \label{le:new37} If  $\{i\}$ is an atom in $\Sigma$, then the rank of $F_\alpha(\{i\})$ is equal to the rank of  $E(\{i\})$.
\end{lemma}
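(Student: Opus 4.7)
The plan is to use the atom property of $\{i\}$ in a crucial way: for every $B\in\Sigma$, $\{i\}\cap B$ is either $\{i\}$ or $\emptyset$. This collapses the action of $F_\alpha(\{i\})$ on the dense subspace $M_E$ down to a very simple form, which I will then compare with the range of $E(\{i\})$.

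First I would establish the easy inequality $\mathrm{rank}\,E(\{i\}) \le \mathrm{rank}\,F_\alpha(\{i\})$. By Theorem \ref{th:419} we have the factorization $E(\{i\}) = S_\alpha F_\alpha(\{i\}) T_\alpha$, so the image of $E(\{i\})$ is the $S_\alpha$-image of a subspace of $F_\alpha(\{i\})(\widetilde{M}_{E,\alpha})$, forcing the dimension inequality.

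For the reverse, I would compute the image of $F_\alpha(\{i\})$ on the dense subspace $M_E$. Using the atom property,
\[
F_\alpha(\{i\})\!\left(\sum_{j=1}^N C_j E_{B_j,x_j}\right) \;=\; \sum_{j:\,i\in B_j} C_j E_{\{i\},x_j} \;=\; E_{\{i\},\,\sum_{j:\,i\in B_j} C_j x_j},
\]
so $F_\alpha(\{i\})(M_E) = \{E_{\{i\},x} : x\in X\}$. The map $\Psi:X\to M_{E,\alpha}$, $\Psi(x)=E_{\{i\},x}$, is linear. Its kernel consists of $x$ with $E(\{i\}\cap A)x=0$ for every $A\in\Sigma$, which (taking $A=\Omega$ and using the atom property) is exactly $\ker E(\{i\})$. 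Moreover, since $E(\{i\}\cap B)$ equals either $E(\{i\})$ or $0$,
\[
\|E_{\{i\},x}\|_\alpha \;=\; \sup_{B\in\Sigma} \|E(\{i\}\cap B)x\|_Y \;=\; \|E(\{i\})x\|_Y.
\]
These two facts together show that the assignment $E(\{i\})x \mapsto E_{\{i\},x}$ is a well-defined linear isometric bijection between the range of $E(\{i\})$ (as a subspace of $Y$) and $F_\alpha(\{i\})(M_E)$ (as a subspace of $\widetilde{M}_{E,\alpha}$).

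Finally, since $F_\alpha$ is a spectral operator-valued measure (Theorem \ref{th:419}), $F_\alpha(\{i\})$ is an idempotent and hence its image is closed in $\widetilde{M}_{E,\alpha}$; therefore $F_\alpha(\{i\})(\widetilde{M}_{E,\alpha})$ coincides with the closure of $F_\alpha(\{i\})(M_E)$. When $E(\{i\})$ has finite rank (the case of primary interest, e.g.\ rank-one atoms coming from framings), its range is already closed, so the isometric identification gives $\mathrm{rank}\,F_\alpha(\{i\}) = \mathrm{rank}\,E(\{i\})$ directly. When $E(\{i\})$ has infinite rank, the first step already forces $\mathrm{rank}\,F_\alpha(\{i\})$ to be infinite, so equality holds as well. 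The only mildly delicate step is this last one --- keeping the algebraic rank and the ``closed-range'' rank straight --- but the isometric identification of $\Psi$ makes the bookkeeping routine.
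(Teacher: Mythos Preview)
Your argument is correct and follows essentially the same route as the paper's proof: both hinge on the observation that, because $\{i\}$ is an atom, $F_\alpha(\{i\})(M_E)=\{E_{\{i\},x}:x\in X\}$ and that $E_{\{i\},x}$ is determined by $E(\{i\})x$. Your version is in fact more careful than the paper's, which simply picks $x_1,\dots,x_k$ with $\mathrm{range}\,E(\{i\})=\mathrm{span}\{E(\{i\})x_j\}$ and checks that $E_{\{i\},x}\in\mathrm{span}\{E_{\{i\},x_j}\}$; you add the isometric identification $\|E_{\{i\},x}\|_\alpha=\|E(\{i\})x\|_Y$, the explicit handling of the closure via the idempotence of $F_\alpha(\{i\})$, and the infinite-rank case, none of which the paper spells out.
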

\begin{proof}  Suppose that the rank of  $E(\{i\})$ is $k$. Then there exist $x_{1}, ... , x_{k}\in X$ such that $range\ E(\{i\}) =
span \{E(\{i\})x_{1}, ..., E(\{i\})x_{k}\}$. We show that $range\
F(\{i\}) = span \{E_{x_{1}, \{i\}}, ..., E_{x_{k}, \{i\}} \}$. In
fact, for any $x\in X, B\in \Sigma$, we have that $F(\{i\})E_{
B,x} = E_{\{i\},x}$ or $0$. Now for any $A\in \Sigma$, we have $
E_{\{i\},x}(A) = E(\{i\})x$ if $i\in A$ and $0$ if $i\notin A$.
Write  $E(\{i\})x = \sum_{j=1}^{k}c_{j}E(\{i\})x_{j}$. Then we get
$E_{\{i\},x}(A)  = \sum_{j=1}^{k}c_{j}E(\{i\})x_{j} =
\sum_{j=1}^{k}c_{j}E_{\{i\},x_{j}}(A)$ if $i\in A$ and $0$
otherwise. Hence $$E_{\{i\}, x}  = \sum_{j=1}^{k}c_{j}E_{\{i\},
x_{j}},
$$ and therefore $range \ F(\{i\}) = span \{E_{\{i\},x_{1}}, ...,
E_{\{i\},x_{k}},\}$ as claimed.
\end{proof}

\noindent{\bf Problem A.} Is it always true that with an appropriate
notion of rank function for an operator valued measure, that $r (F
(B)) = r( E (B))$ for every $B\in \Sigma$? The previous lemma tells
us that it is true when $B$ is an atom. A possible ``rank "
definition might be: $r(B) = \sup \{rank E(A): A\subset B,
A\in\Sigma\}$.

\begin{example}\label{ex:435}
Assume that $(\N, 2^\N, E,B(X))$ is an induced operator-valued
probability measure system by a framing $\{x_i,y_i\}_{i\in\N}$ of
$X$,  where $E(B)=\sum_{i\in B} x_i\otimes y_i$ for every
$B\in 2^\N$.

We characterize its minimal $\alpha$-dilation space
$\widetilde{M}_{E,\alpha}.$ Let $(\N,2^\N,
F_\alpha,B(\widetilde{M}_{E,\alpha}))$ be the corresponding spectral
operator-valued probability measure. Then we have
\begin{eqnarray*}
f=\sum_{i\in\N} F_\alpha(\{i\})f, \ \ \forall\, f\in
\widetilde{M}_{E,\alpha}.
\end{eqnarray*}

By Lemma \ref{le:new37}, all $F_\alpha(\{i\})$'s are rank-one projections. Choose $\tilde{x}_i$ such that $y_i(\tilde{x}_i)=1$. Then
\begin{eqnarray*}
F_\alpha(\{i\})\left(E_{ \{i\},\tilde{x}_i}\right)=E_{ \{i\},\tilde{x}_i},\quad  \left\|
E_{ \{i\},\tilde{x}_i}\right\|_\alpha=\|x_i\|, \ \ \forall\,
i\in\N.
\end{eqnarray*}
And because $F_\alpha(\{i\})$'s are projections with
$F_\alpha(\{i\})F_\alpha(\{j\})=\delta_{ij}F_\alpha(\{i\})$,
we know that $\{E_{ \{i\},\tilde{x}_i}\}$ is a basis of
$\widetilde{M}_{E,\alpha},$ which is just our minimal framing
model basis. Actually, for every $(a_i)\in c_{00}$, we have
\begin{eqnarray*}
\left\|\sum_{i\in\N} a_i E_{ \{i\},\tilde{x}_i}\right\|_\alpha
&=& \sup_{B\subset \N}\left\|\sum_{i\in\N} a_i E(\{i\}\cap
B)\tilde{x}_i\right\|\\
&=&\sup_{B\subset \N}\left\|\sum_{i\in B} a_i E(\{i\})\tilde{x}_i\right\|\\
&=&\sup_{B\subset \N}\left\|\sum_{i\in B} a_i (x_i\otimes y_i)\tilde{x}_i\right\|\\
&=&\sup_{B\subset \N}\left\|\sum_{i\in B} a_i y_i(\tilde{x}_i) x_i\right\|\\
&=&\sup_{B\subset \N}\left\|\sum_{i\in B} a_i  x_i\right\|,
\end{eqnarray*}
which is equivalent to the minimal framing model basis because of
the following inequality.
\begin{eqnarray*}
\frac{1}{2}\sup_{\epsilon_i=\pm1}\left\|\sum_{i\in \N} \epsilon_i
a_i x_i\right\|\le \sup_{B\subset \N}\left\|\sum_{i\in B} a_i
x_i\right\|\le \sup_{\epsilon_i=\pm1}\left\|\sum_{i\in \N}
\epsilon_i a_i x_i\right\|.
\end{eqnarray*}
This is because
\begin{eqnarray*}
\frac{1}{2}\sup_{\epsilon_i=\pm1}\left\|\sum_{i\in \N} \epsilon_i
a_i x_i\right\| &=&
\frac{1}{2}\sup_{\epsilon_i=\pm1}\left\|\sum_{{i\in
\N}\atop{\epsilon_i=1}}a_i x_i-\sum_{{i\in
\N}\atop{\epsilon_i=-1}}a_i x_i\right\|\\
&\leq&\frac{1}{2}\sup_{\epsilon_i=\pm1}\left(\sup_{B\subset
\N}\left\|\sum_{{i\in B}}a_i x_i\right\|+\sup_{B\subset
\N}\left\|\sum_{{i\in
B}}a_i x_i\right\|\right)\\
&=&\sup_{B\subset
\N}\left\|\sum_{{i\in B}}a_i x_i\right\|\\
&=&\frac{1}{2}\sup_{B\subset
\N}\left\|\sum_{{i\in\N}\atop{\epsilon_i=1}}\epsilon_ia_i
x_i+\sum_{{\epsilon_i=1, i\in B
}\atop{\epsilon_i=-1, i\not\in B}}\epsilon_ia_i x_i\right\|\\
&\leq&\sup_{B\subset \N}\sup_{\epsilon_i=\pm1}\left\|\sum_{i\in \N}
\epsilon_i a_i x_i\right\|\\
&=&\sup_{\epsilon_i=\pm1}\left\|\sum_{i\in \N} \epsilon_i a_i
x_i\right\|.
\end{eqnarray*}
\end{example}

The above example shows that in the case that the operator-valued probability measure system is induced by a framing $\{x_i,y_i\}_{i\in\N}$ of
$X$, then the minimal $\alpha$-dilation space is exactly the one constructed in \cite{CHL} and hence it is always separable. However, it is
not clear whether this is true in general. So we ask:

\bigskip

\noindent{\bf Problem B.} Assume that  $(\Omega,\Sigma,E,B(X))$ is  an operator-valued measure system such that $X$ is a separable Banach
space. Is the dilation space $\alpha$ equipped with the minimal dilation norm always separable? If not, does there exist a dilation space which is
separable?

\bigskip

We end this section with a result concerning this problem. We say
that an operator valued measure $E$ on $(\Omega, \Sigma)$ into
$B(X)$ is {\it totally strongly countably additive} if whenever
$\{B_{n}\}$ is a disjoint sequence in $\Sigma$ with union $B$, and
$x\in X$, then given any $\epsilon > 0$, there exists $N\in\Bbb{N}$
such that
$$
sup_{A\in\Sigma} ||\sum_{j=n}^{\infty}E(B_{j}\cap A)x|| < \epsilon
$$
holds for all $n\geq N$. We say that $(\Omega, \Sigma, E)$ has a {\it countable Borel basis}  if there exists a countable set
$\mathcal{B}$ of subsets from $\Sigma$ such that for each $B\in\Sigma$, $B = \sum_{n=1}^{\infty}B_n$ for some  subsets $B_{n}$ from $\mathcal{B}$
up to a $E$-null set (A $E$-null set is a set $C\in\Sigma)$ such that $E(A) = 0$ if $A\subseteq C$ and $A\in\Sigma$).

\begin{proposition} Assume that an operator valued measure $E$ on $(\Omega, \Sigma)$ into $B(X)$ is
totally strongly countably additive, and that $(\Omega, \Sigma, E)$ has a  countable Borel basis. If  $X$ is separable, then so is the
dilation space $\alpha$ with the minimal dilation norm.
\end{proposition}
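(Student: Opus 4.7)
The plan is to exhibit an explicit countable dense subset of the dilation space. Let $\{x_n\}_{n\in\N}$ be a countable dense subset of $X$ and let $\mathcal{B}=\{C_k\}_{k\in\N}$ be the countable Borel basis. Since a countable Boolean algebra generated by $\mathcal{B}$ is still countable, I may assume without loss of generality that $\mathcal{B}$ is closed under finite unions, intersections, and differences, so that each $B\in\Sigma$ can be written, up to an $E$-null set, as a \emph{disjoint} countable union $B=\bigsqcup_{k\in\N} B_k$ with $B_k\in\mathcal{B}$. Let $D$ be the set of all finite $(\Q+i\Q)$-linear combinations of elementary vector-valued measures of the form $E_{C_k,x_n}$, which is countable. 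I will prove $D$ is dense in $\widetilde{M}_{E,\alpha}$.

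Since $M_E$ is dense in $\widetilde{M}_{E,\alpha}$ by construction, it suffices to approximate an arbitrary $f=\sum_{i=1}^m c_i E_{B_i,y_i}\in M_E$ by elements of $D$ in three stages.

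Stage one (replace the vectors): For each $i$, choose $x_{n_i}$ with $\|y_i-x_{n_i}\|<\varepsilon/(3m(1+|c_i|)(1+\|E\|))$. From the bound
\[\|E_{B_i,y_i}-E_{B_i,x_{n_i}}\|_\alpha=\sup_{A\in\Sigma}\|E(B_i\cap A)(y_i-x_{n_i})\|\le\|E\|\cdot\|y_i-x_{n_i}\|,\]
we conclude $\|f-\sum c_i E_{B_i,x_{n_i}}\|_\alpha<\varepsilon/3$.

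Stage two (replace the sets): For each $B_i$, write $B_i=\bigsqcup_{k\in\N} B_{i,k}$ up to an $E$-null set with $B_{i,k}\in\mathcal{B}$. Since $E$ vanishes on subsets of $E$-null sets, $E_{B_i,x_{n_i}}$ is unchanged by modification on such sets. By the total strong countable additivity hypothesis applied to the disjoint sequence $\{B_{i,k}\}_{k\in\N}$ and the vector $x_{n_i}$, there exists $N_i\in\N$ with
\[\sup_{A\in\Sigma}\Bigl\|\sum_{k\ge N_i}E(B_{i,k}\cap A)x_{n_i}\Bigr\|<\frac{\varepsilon}{3m(1+|c_i|)}.\]
Since $E_{B_i,x_{n_i}}-\sum_{k<N_i}E_{B_{i,k},x_{n_i}}$ evaluated at $A\in\Sigma$ equals precisely $\sum_{k\ge N_i}E(B_{i,k}\cap A)x_{n_i}$, summing over $i$ yields a total error under $\varepsilon/3$.

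Stage three (replace the coefficients): Finally approximate each $c_i$ by a complex rational $q_i$, using that the finite sums $\sum_i\sum_{k<N_i}E_{B_{i,k},x_{n_i}}$ have bounded $\alpha$-norm, to lose at most an additional $\varepsilon/3$. The resulting element lies in $D$ and approximates $f$ within $\varepsilon$, so $D$ is dense and $\widetilde{M}_{E,\alpha}$ is separable.

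The main obstacle to watch out for is the handling of $E$-null sets in Stage two: one must verify that replacing $B_i$ by a set differing from it by an $E$-null set leaves $E_{B_i,x}$ unchanged as an element of $M_E$ (it does, since $E(N\cap A)=0$ whenever $N$ is $E$-null and $A\in\Sigma$). The role of the total strong countable additivity hypothesis is precisely to make the tail of the decomposition small uniformly in the test set $A\in\Sigma$, which is the supremum appearing in the $\alpha$-norm; ordinary strong countable additivity (which is automatic from Definition~\ref{de:35}) only gives pointwise control and would not suffice.
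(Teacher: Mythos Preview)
Your proof is correct and follows essentially the same approach as the paper's: exhibit the countable set $\{E_{C_k,x_n}\}$ and show its span is dense by approximating in the set variable (via the Borel basis and total strong countable additivity) and in the vector variable (via density of $\{x_n\}$). The paper's proof is a two-sentence sketch that asserts these approximations are ``easy to prove''; you have carefully filled in the details, including the passage to a Boolean algebra to obtain disjoint decompositions, the verification that $E$-null modifications leave $E_{B,x}$ unchanged, and the explicit identification of where total strong countable additivity (rather than ordinary countable additivity) is needed.
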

\begin{proof} Fix $x\in X$, and let $\{B_{i}\}$ be a countable Borel basis for $(\Omega, \Sigma, E)$. Then it is easy to prove that
$\overline{\text{span}}\{E_{B_{i}, x}: i\in\Bbb{N}\}$ contains  $E_{B, x}$ for every $B\in\Sigma$. Now fix $B\in\Sigma$, and let
$\{x_{i}\}$ be a dense subset of $X$. Then $\overline{\text{span}}\{E_{B, x_{i}}: i\in\Bbb{N}\}$ contains  $E_{B, x}$ for every
$x\in X$. Therefore we get that $span \{E_{B_{i}, x_{j}}: i, j\in\Bbb{N}\}$ is dense in $\alpha$ and hence $\alpha$ is separable.
\end{proof}

\section{The Dual Space of the Minimal Dilation Space}

In this section we give a concrete description for the dual space of the $\alpha$-dilation space. Let $(\Omega,\Sigma,E,B(X,Y))$ be
an operator-valued measure system, $Y^*$ be the dual space of $Y$ and $\widetilde{M}^*_{E,\alpha}$ be the dual space of
$\widetilde{M}_{E,\alpha}$. For any $y^*\in Y^*$ and $ B\in \Sigma,$ define a mapping $\phi_{B,y^*}:\widetilde{M}_{E,\alpha}\to
\mathbb{C}$ by
\[\phi_{B,y^*}\left(\sum_{i=1}^NC_iE_{{B_i},{ x_i}}\right)=y^*\left(\sum_{i=1}^NC_iE(B\cap B_i)(x_i)\right).\]  If $\sum_{i=1}^NC_iE_{{B_i},{ x_i}}=0,$ then
\begin{eqnarray*}
\phi_{B,y^*}\left(\sum_{i=1}^NC_iE_{{B_i},{ x_i}}\right) =y^*\left(\sum_{i=1}^NC_i E
(B\cap B_i)(x_i)\right)
=y^*\left(\sum_{i=1}^NC_iE_{{B_i},{ x_i}}(B)\right) =0.
\end{eqnarray*}
Thus the mapping $\phi_{B,y^*}:\widetilde{M}_{E,\alpha}\to \mathbb{C}$ is well-defined.

\begin{proposition}\label{pr:431} Let $\mathscr{M}_E=\mbox{span}\{\phi_{B,y^*}: y^*\in
Y^*, B\in \Sigma\}.$  Then $\mathscr{M}_E\subset
\widetilde{M}^*_{E,\alpha}.$
\end{proposition}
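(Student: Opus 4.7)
The plan is straightforward: show that each generator $\phi_{B,y^*}$ is a bounded linear functional on the normed space $M_{E,\alpha}$, and then invoke the universal property of completion to conclude that it extends uniquely to an element of $\widetilde{M}^*_{E,\alpha}$. Since $\widetilde{M}^*_{E,\alpha}$ is a vector space, the span of these extensions will automatically lie inside it.

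First, I would observe that linearity of $\phi_{B,y^*}$ on $M_E$ is immediate from the definition, once well-definedness has been verified (which is already done in the paragraph preceding the proposition). The core step is the norm estimate. For any $\sum_{i=1}^N C_i E_{B_i, x_i} \in M_E$, I would compute directly:
\begin{align*}
\left|\phi_{B,y^*}\!\left(\sum_{i=1}^N C_i E_{B_i,x_i}\right)\right|
&= \left|y^*\!\left(\sum_{i=1}^N C_i E(B\cap B_i)x_i\right)\right| \\
&\le \|y^*\|_{Y^*}\left\|\sum_{i=1}^N C_i E(B\cap B_i)x_i\right\|_Y \\
&\le \|y^*\|_{Y^*}\sup_{B'\in\Sigma}\left\|\sum_{i=1}^N C_i E(B'\cap B_i)x_i\right\|_Y \\
&= \|y^*\|_{Y^*}\left\|\sum_{i=1}^N C_i E_{B_i,x_i}\right\|_\alpha,
\end{align*}
where the last equality is exactly the definition of $\|\cdot\|_\alpha$ from Definition~\ref{de:421}. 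This shows $\phi_{B,y^*}$ is a bounded linear functional on $M_{E,\alpha}$ with operator norm at most $\|y^*\|_{Y^*}$.

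Since $M_{E,\alpha}$ is dense in its completion $\widetilde{M}_{E,\alpha}$ and $\phi_{B,y^*}$ is uniformly continuous on $M_{E,\alpha}$, the standard bounded linear extension theorem gives a unique continuous extension, also denoted $\phi_{B,y^*}$, in $\widetilde{M}^*_{E,\alpha}$. Finally, because $\widetilde{M}^*_{E,\alpha}$ is a linear subspace of the algebraic dual, finite linear combinations of such functionals remain in $\widetilde{M}^*_{E,\alpha}$, giving $\mathscr{M}_E \subset \widetilde{M}^*_{E,\alpha}$.

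There is no real obstacle here; the whole point of the definition of $\|\cdot\|_\alpha$ as a supremum over $B'\in\Sigma$ is precisely that it dominates any particular value $\|E(B\cap B_i)x_i\|$ appearing in an evaluation $\phi_{B,y^*}$. The only mild subtlety to flag is well-definedness on equivalence-class representatives in $M_{E,\alpha}$, but this follows from the computation above: if $\sum_i C_i E_{B_i,x_i} = 0$ in $M_E$, then $\sum_i C_i E(B'\cap B_i)x_i = 0$ for every $B'\in\Sigma$ (take $B'=\Omega$ and use the earlier argument that this forces vanishing at all $B'$), so $\phi_{B,y^*}$ evaluates to zero.
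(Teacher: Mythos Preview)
Your proof is correct and matches the paper's argument essentially line for line: the paper also reduces to showing $\phi_{B,y^*}$ is bounded on $M_{E,\alpha}$ and establishes the same chain of inequalities yielding $\|\phi_{B,y^*}\|\le\|y^*\|$. Your added remarks on extension to the completion and on well-definedness are fine elaborations but not needed beyond what the paper states.
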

\begin{proof}
It is sufficient to prove that $\phi_{B,y^*}$ is bounded on $M_{E,\alpha}$. In fact, this follows immediately from the following:
\begin{eqnarray*}
&&\left|\phi_{B,y^*}\left(\sum_{i=1}^NC_iE_{{B_i},{ x_i}}\right)\right|\\
&=&\left|y^*\left(\sum_{i=1}^NC_iE(B\cap
B_i)(x_i)\right)\right| \\
&\leq& \|y^*\| \left\|\sum_{i=1}^NC_iE(B_i\cap B)
x_i\right\|\\
&\leq& \|y^*\| \left\|\sum_{i=1}^NC_iE_{{B_i},{ x_i}}\right\|_\alpha .
\end{eqnarray*}
\end{proof}

We will denote the norm on $\widetilde{M}^*_{E,\alpha}$ by $\alpha^*$. Then the space $\mathscr{M}_E$ endowed with the norm
$\alpha^*$ is a normed space, which will be denoted by $\mathscr{M}_{E,\alpha^*}$.

\begin{proposition}\label{pr:432}
For every $\sum_{i=1}^NC_i\phi_{{ B_i},{y^*_i}}\in
\mathscr{M}_{E,\alpha^*}$, we have
\begin{eqnarray*}
\left\|\sum_{i=1}^NC_i\phi_{{B_i},{y^*_i}}\right\|_{\alpha^*} \le
\sum_{i=1}^N |C_i|\|y_i^*\|.
\end{eqnarray*}
\end{proposition}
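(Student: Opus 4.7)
The plan is to reduce the statement to a triangle-inequality argument by first pinning down the operator-norm bound for each individual functional $\phi_{B,y^*}$. From the computation already carried out in the proof of Proposition \ref{pr:431}, one has the pointwise bound
\[
\left|\phi_{B,y^*}\!\left(\sum_{i=1}^N C_i E_{B_i,x_i}\right)\right| \;\le\; \|y^*\|\,\left\|\sum_{i=1}^N C_i E_{B_i,x_i}\right\|_\alpha
\]
for every element of the (dense) subspace $M_{E,\alpha}\subset \widetilde{M}_{E,\alpha}$. This immediately gives the single-term estimate $\|\phi_{B,y^*}\|_{\alpha^*}\le \|y^*\|$, which is the key input.

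With that in hand, the argument for the stated inequality is a one-line application of the triangle inequality in the dual Banach space $\widetilde{M}_{E,\alpha}^{\ast}$: for any finite combination,
\[
\left\|\sum_{i=1}^N C_i\,\phi_{B_i,y_i^*}\right\|_{\alpha^*} \;\le\; \sum_{i=1}^N |C_i|\,\|\phi_{B_i,y_i^*}\|_{\alpha^*} \;\le\; \sum_{i=1}^N |C_i|\,\|y_i^*\|.
\]
No further structural properties of $E$ or of the elementary dilation construction are needed.

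There is no genuine obstacle here; the only thing worth being careful about is to note that the functionals $\phi_{B,y^*}$ are defined on the algebraic space $M_{E,\alpha}$ and extended by continuity to the completion $\widetilde{M}_{E,\alpha}$, so that the norm $\alpha^*$ is computed against elements of the completion rather than only the dense subspace. Since the per-term estimate is uniform on the dense subspace, the extension preserves the bound, and the triangle inequality then applies verbatim in $\widetilde{M}_{E,\alpha}^{\ast}$. Hence the proposition follows with essentially a two-line proof.
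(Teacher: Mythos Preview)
Your proposal is correct and follows essentially the same approach as the paper: establish the single-term bound $\|\phi_{B,y^*}\|_{\alpha^*}\le\|y^*\|$ by citing the estimate from Proposition~\ref{pr:431}, then apply the triangle inequality in the dual norm. The paper's proof is exactly this two-step argument, without the additional remark on extension by density.
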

\begin{proof}
First, for every $\phi_{{y^*},{ B}}\in
\mathscr{M}_{E,\alpha^*}$, by the proof of Proposition
\ref{pr:431}, we have
\begin{eqnarray*}
\left\|\phi_{{ B},{y^*}}\right\|\le \|y^*\|.
\end{eqnarray*}
Thus, for every $\sum_{i=1}^NC_i\phi_{{ B_i},{y^*_i}}\in
\mathscr{M}_{E,\alpha^*}$, we get
\begin{eqnarray*}
\left\|\sum_{i=1}^NC_i\phi_{{B_i},{y^*_i}}\right\|_{\alpha^*} \le \sum_{i=1}^N|C_i|\left\|\phi_{{B_i},{y^*_i}}\right\|\leq\sum_{i=1}^N
|C_i|\|y_i^*\|.
\end{eqnarray*}
\end{proof}

Since $\alpha^*$ is a norm on $\mathscr{M}_{E}$, we denote the completion of $\mathscr{M}_{E}$ under norm $\alpha^*$ by
$\widetilde{\mathscr{M}}_{E,\alpha^*}.$ Then $\widetilde{\mathscr{M}}_{E,\alpha^*}$ is a Banach space, and so
$\widetilde{\mathscr{M}}_{E,\alpha^*}\subset \widetilde{M}^*_{E,\alpha}.$

\begin{proposition}\label{pr:434}
$\widetilde{M}^*_{E,\alpha}
=\overline{\widetilde{\mathscr{M}}_{E,\alpha^*}}^{w^*}.$
\end{proposition}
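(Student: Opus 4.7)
The plan is to reduce this identity to a standard duality fact: a linear subspace $\mathscr{N}\subset X^{*}$ is weak-$*$ dense in $X^{*}$ if and only if its pre-annihilator ${}^{\perp}\mathscr{N}=\{x\in X:\phi(x)=0\ \forall\,\phi\in\mathscr{N}\}$ in $X$ is trivial (this is the bipolar theorem for subspaces of dual spaces). Since taking norm-closure does not enlarge the pre-annihilator, it will suffice to show that $\mathscr{M}_{E}$ itself separates points of $\widetilde{M}_{E,\alpha}$.

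The main tool I would establish first is the formula
\[
\|f\|_{\alpha}=\sup_{B\in\Sigma}\bigl\|S_{\alpha}F_{\alpha}(B)f\bigr\|_{Y},\qquad f\in\widetilde{M}_{E,\alpha},
\]
together with the identity
\[
\phi_{B,y^{*}}(f)=y^{*}\!\bigl(S_{\alpha}F_{\alpha}(B)f\bigr),\qquad y^{*}\in Y^{*},\ B\in\Sigma.
\]
Both identities are immediate on the dense subspace $M_{E}$ directly from Definition \ref{de:421} and the formulas for $S_{\alpha}$ and $F_{\alpha}$ given in Theorem \ref{th:426}. To propagate them to the completion I would use the fact, extracted from the proof of Theorem \ref{th:426}, that $\|F_{\alpha}(B)\|=1$ and $\|S_{\alpha}\|\le 1$, so that the family $\{S_{\alpha}F_{\alpha}(B)\}_{B\in\Sigma}$ is uniformly bounded by $1$ in operator norm. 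This makes both $f\mapsto \sup_{B}\|S_{\alpha}F_{\alpha}(B)f\|_{Y}$ and $f\mapsto y^{*}(S_{\alpha}F_{\alpha}(B)f)$ Lipschitz continuous in $\|\cdot\|_{\alpha}$, so the identities extend from $M_{E}$ to $\widetilde{M}_{E,\alpha}$ by continuity.

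With these two identities in hand, separation is quick: if $f\in\widetilde{M}_{E,\alpha}$ is nonzero then $\|f\|_{\alpha}>0$, so the first formula produces $B\in\Sigma$ with $S_{\alpha}F_{\alpha}(B)f\ne 0$ in $Y$; Hahn-Banach then yields $y^{*}\in Y^{*}$ with $y^{*}(S_{\alpha}F_{\alpha}(B)f)\ne 0$, and the second identity rewrites this as $\phi_{B,y^{*}}(f)\ne 0$. Hence $\mathscr{M}_{E}\subset\widetilde{\mathscr{M}}_{E,\alpha^{*}}$ separates points of $\widetilde{M}_{E,\alpha}$, so ${}^{\perp}\widetilde{\mathscr{M}}_{E,\alpha^{*}}=\{0\}$, and the bipolar theorem gives
\[
\overline{\widetilde{\mathscr{M}}_{E,\alpha^{*}}}^{\,w^{*}}=\bigl({}^{\perp}\widetilde{\mathscr{M}}_{E,\alpha^{*}}\bigr)^{\perp}=\{0\}^{\perp}=\widetilde{M}^{*}_{E,\alpha},
\]
as required. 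The only step with any friction is extending the sup-norm formula from $M_{E}$ to its completion, and the uniform boundedness noted above makes this routine rather than a genuine obstacle.
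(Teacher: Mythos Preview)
Your proof is correct and follows essentially the same approach as the paper: both arguments reduce to showing that $\mathscr{M}_{E}$ separates points of $\widetilde{M}_{E,\alpha}$ and then invoke Hahn--Banach/bipolar for weak-$*$ density. The only cosmetic difference is that you first extend the identity $\|f\|_{\alpha}=\sup_{B}\|S_{\alpha}F_{\alpha}(B)f\|_{Y}$ to the completion via the uniform bound $\|S_{\alpha}F_{\alpha}(B)\|\le 1$ and then apply it directly, whereas the paper carries out the equivalent $\epsilon$-approximation argument inline by choosing $g\in M_{E}$ close to $f$.
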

\begin{proof}
By the Hahn-Banach Separation Theorem with respect to the
$w^*$-topology, it is enough to show that
$\widetilde{\mathscr{M}}_{E,\alpha^*}$ separates
$\widetilde{M}_{E,\alpha}$. If not, then there is an $f\in
\widetilde{M}_{E,\alpha}$ with $\|f\|_\alpha=1$ such that $g^*(f)=0$
for every $g^*\in \widetilde{M}^*_{E,\alpha}$. For any
$0<\epsilon<1/2$, we can find a $g\in M_{E,\alpha}$ with
$\|g\|_\alpha=1$ and $\|f-g\|_\alpha\leq \epsilon$. Let
$g=\sum_{i=1}^N C_iE_{B_i,x_i}$ be a representation of $g$. By the
definition of $\alpha$-dilation norm, there is $E\in\Sigma$
satisfying
\begin{eqnarray*}
\left\|\sum_{i=1}^NC_iE(B_i\cap B)(x_i)\right\|\ge
1-\epsilon.
\end{eqnarray*}
Take an $y^*\in Y^*$ such that
\begin{eqnarray*}
y^*\left(\sum_{i=1}^NC_iE(B_i\cap
B)x_i\right)=\left\|\sum_{i=1}^NC_iE(B_i\cap B)x_i\right\|\ge
1-\epsilon.
\end{eqnarray*}
Then for $\phi_{B,y^*}\in\mathscr{M}_{E,\alpha^*}$,
\begin{eqnarray*}
\phi_{B,y^*}(g)=\phi_{B,y^*}\left(\sum_{i=1}^N
C_iE_{B_i,x_i}\right)=y^*\left(\sum_{i=1}^NC_iE(B_i\cap
B)x_i\right)\ge 1-\epsilon
\end{eqnarray*}
and $\left\|\phi_{B,y^*}\right\|_{\alpha^*}\le \|y^*\|=1$. Thus, we
have
\begin{eqnarray*}
\phi_{B,y^*}(f)&=&\phi_{B,y^*}(f-g)+\phi_{B,y^*}(g)\\
&\ge&1-\epsilon-\|\phi_{B,y^*}\|_{\alpha^*}\|f-g\|\\
&\ge&1-2\epsilon\\
&>&0,
\end{eqnarray*}
which leads to a contradiction. Thus
$\widetilde{M}^*_{E,\alpha}
=\overline{\widetilde{\mathscr{M}}_{E,\alpha^*}}^{w^*}.$
\end{proof}

\section{The Maximal Dilation Norm $\|\cdot\|_\omega$}

In this section, we construct the ``maximal " dilation norm for the elementary dilation space $M_{E}$. Let
$(\Omega,\Sigma,E,B(X,Y))$ be an operator-valued measure system. Consider the basic elements $E_{ B, x }\in M_{E}$. It is natural
to require that
\begin{equation}\label{eq:46}
\left\|E_{ B, x }\right\|\le
\sup_{B'\in\Sigma}\left\|E(B\cap B')x\right\|.
\end{equation}
Now let $f$ be any element of $M_{E}$. If $\sum_{i=1}^N C_iE_{B_i,x_i}$ is a representation of $f$, then it follows from the
triangle inequality that
\begin{equation*}
\|f\|\le \sum_{i=1}^N \sup_{B\in\Sigma}\|C_iE(B_i\cap
B)x_i\|.
\end{equation*}
Since this holds for every representation of $f$,  we get
\begin{equation*}
\|f\|\le \inf\Big\{\sum_{i=1}^N
\sup_{B\in\Sigma}\|C_iE(B_i\cap B)x_i\|\Big\},
\end{equation*}
where the infimum is taken over all representation of $f$. Define $\|\cdot\|_\omega:M_{E}\to \mathbb{R}^+\cup\{0\}$ by
\begin{equation*}
\|f\|_\omega=\inf\left\{\sum_{i=1}^N
\sup_{B\in\Sigma}\left\|C_iE(B_i\cap B)x_i\right\|:
f=\sum_{i=1}^N C_iE_{B_i,x_i}\in M_{E}\right\}.
\end{equation*}

Then we have the following proposition.

\begin{proposition}\label{pr:436}
$\|\cdot\|_\omega$ is a semi-norm on $M_{E}$ and
\begin{equation*}
\left\|E_{ B, x }\right\|_\omega=\sup_{E'\in\Sigma}\|E(B\cap
B')x\|
\end{equation*}
for every $B\in\Sigma$ and $x\in X$.
\end{proposition}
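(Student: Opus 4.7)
The plan is to verify the semi-norm axioms directly from the definition of $\|\cdot\|_\omega$ and then prove the explicit formula for $\|E_{B,x}\|_\omega$ by a two-sided inequality, where the non-trivial direction uses that different representations of the same element must agree as vector-valued measures.

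First I would check non-negativity (immediate from the definition) and the fact that $\|0\|_\omega = 0$ (use the trivial representation $0 = 0 \cdot E_{\emptyset,x}$). For absolute homogeneity, given any scalar $c$ and any representation $f = \sum_{i=1}^N C_i E_{B_i,x_i}$, the expression $cf = \sum_{i=1}^N (cC_i) E_{B_i,x_i}$ is a representation of $cf$, and $\sum_i \sup_{B\in\Sigma}\|cC_i E(B_i\cap B)x_i\| = |c|\sum_i \sup_{B\in\Sigma}\|C_i E(B_i\cap B)x_i\|$. Taking the infimum over representations gives $\|cf\|_\omega \leq |c|\|f\|_\omega$, and for $c\neq 0$ the reverse inequality follows by applying the same estimate to $c^{-1}$ and $cf$; the case $c=0$ reduces to the $\|0\|_\omega = 0$ observation.

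For the triangle inequality, given $f,g\in M_E$ and $\varepsilon>0$, pick representations $f = \sum_{i=1}^N C_i E_{B_i,x_i}$ and $g = \sum_{j=1}^M D_j E_{A_j,y_j}$ whose corresponding sums are within $\varepsilon$ of $\|f\|_\omega$ and $\|g\|_\omega$ respectively. Concatenating gives a representation of $f+g$, so $\|f+g\|_\omega \leq \sum_i \sup_{B}\|C_i E(B_i\cap B)x_i\| + \sum_j \sup_{B}\|D_j E(A_j\cap B)y_j\| \leq \|f\|_\omega + \|g\|_\omega + 2\varepsilon$, and letting $\varepsilon\to 0$ yields subadditivity.

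For the explicit formula, the upper bound $\|E_{B,x}\|_\omega \leq \sup_{B'\in\Sigma}\|E(B\cap B')x\|$ is obtained directly from the one-term representation $E_{B,x} = 1\cdot E_{B,x}$. The lower bound is the key step: if $E_{B,x} = \sum_{i=1}^N C_i E_{B_i,x_i}$ is any representation, then equality as elements of $\mathfrak{M}^Y_\Sigma$ means that for every $A\in\Sigma$,
\[
E(B\cap A)x = \sum_{i=1}^N C_i E(B_i\cap A)x_i.
\]
Fixing $B'\in\Sigma$ and applying the triangle inequality in $Y$,
\[
\|E(B\cap B')x\| \leq \sum_{i=1}^N \|C_i E(B_i\cap B')x_i\| \leq \sum_{i=1}^N \sup_{B''\in\Sigma}\|C_i E(B_i\cap B'')x_i\|.
\]
Taking the supremum over $B'\in\Sigma$ on the left and then the infimum over all representations on the right gives $\sup_{B'\in\Sigma}\|E(B\cap B')x\| \leq \|E_{B,x}\|_\omega$. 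I expect the main (only modest) obstacle to be the lower bound, but it dissolves as soon as one exploits the fact that two representations of the same element of $M_E$ produce the same vector-valued measure, so they must agree on every measurable set $A$.
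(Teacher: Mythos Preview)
Your proposal is correct and follows essentially the same approach as the paper: both verify homogeneity and the triangle inequality directly from the infimum definition using near-optimal representations, and both obtain the lower bound in the explicit formula by observing that any representation $E_{B,x}=\sum_i C_i E_{B_i,x_i}$ must satisfy $E(B\cap B')x=\sum_i C_i E(B_i\cap B')x_i$ for every $B'\in\Sigma$ and then applying the triangle inequality. The only cosmetic difference is that the paper absorbs the scalar into $x_i$ rather than into $C_i$ when proving homogeneity.
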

\begin{proof}
First, we show that $\left\|\lambda f\right\|_\omega=|\lambda|\left\|f\right\|_\omega$. This is obvious when $\lambda$ is zero.
So suppose that $\lambda\ne
0$. If $f=\sum_{i=1}^N C_iE_{B_i,x_i}$ is a representation of $f$, then
$$\lambda f=\sum_{i=1}^N \lambda C_iE_{B_i,x_i}
=\sum_{i=1}^N C_iE_{B_i, \lambda x_i},$$ and so we have
\begin{equation*}
\left\|\lambda f\right\|_\omega\le \sum_{i=1}^N
\sup_{B\in\Sigma}\|C_iE(B_i\cap B)(\lambda
x_i)\|=|\lambda|\sum_{i=1}^N \sup_{B\in\Sigma}\|C_iE(B_i\cap
B)x_i\|.
\end{equation*}
Since this holds for every representation of $f$, it follows that
$\left\|\lambda f\right\|_\omega\le |\lambda| \left\|f\right\|$. In the
same way, we have $\left\|f\right\|_\omega=\left\|\lambda^{-1}\lambda
f\right\|_\omega\le |\lambda|^{-1}\left\|\lambda f\right\|_\omega$, giving
$|\lambda|\left\|f\right\|_\omega\le \left\|\lambda f\right\|_\omega$.
Therefore $\left\|\lambda f\right\|_\omega=|\lambda|\left\|f\right\|_\omega$.

Now, to prove that $\|\cdot\|_\omega$ satisfies the triangle
inequality. Let $f,g\in M_{E}$ and let $\epsilon>0$. It
follows from the definition that we may choose representations
$f=\sum_{i=1}^N C_iE_{B_i,x_i}$ and $g=\sum_{j=1}^M
\widetilde{C}_jE_{y_j,A_j}$ such that
\begin{equation*}
\sum_{i=1}^N \sup_{B\in\Sigma}\left\|C_iE(B_i\cap
B)x_i\right\|\le \|f\|_\omega+\epsilon/2,
\end{equation*}
\begin{equation*}
\sum_{j=1}^M \sup_{A\in\Sigma}\|\widetilde{C}_jE(A_j\cap
A)x_j\|\le \|g\|_\omega+\epsilon/2.
\end{equation*}
Then $\sum_{i=1}^N C_iE_{B_i,x_i}+\sum_{j=1}^M\widetilde{C}_j
E_{A_i, y_i}$ is a representation of $f+g$ and so
\begin{equation*}
\|f+g\|_\omega\le \sum_{i=1}^N
\sup_{B\in\Sigma}\|C_iE(B_i\cap B)x_i\|+\sum_{j=1}^M
\sup_{A\in\Sigma}\|\widetilde{C}_jE(A_j\cap A)x_j\|\le
\|f\|_\omega+\|g\|_\omega+\epsilon.
\end{equation*}
Since this holds for every $\epsilon>0$, we have
$\|f+g\|_\omega\le\|f\|_\omega+\|g\|_\omega.$

Finally, we must show that
$\left\|E_{ B, x }\right\|_\omega=\sup_{B'\in\Sigma}\|E(B\cap
B')x\|$. On the one hand, it is clear that
$\left\|E_{ B, x }\right\|_\omega\le \sup_{B'\in\Sigma}\|E(B\cap
B')x\|$. On the other hand, if
$E_{ B, x }=\sum_{i=1}^NC_iE_{B_i,x_i}$ is a representation
of $E_{ B, x }$, 
we have
\begin{eqnarray*}
\sup_{B'\in\Sigma}\|E(B\cap
B')x\|&=&\sup_{B'\in\Sigma}\Big\|\sum_{i=1}^NC_iE(B_i\cap B')x_i\Big\|\\
&\le&\sup_{B'\in\Sigma}\sum_{i=1}^N\|C_iE(B_i\cap B')x_i\|\\
&\le&\sum_{i=1}^N\sup_{B'\in\Sigma}\|C_iE(B_i\cap B')x_i\|.
\end{eqnarray*}
Since this holds for every representation of $E_{ B, x }$, it
follows that $\sup_{B'\in\Sigma}\|E(B\cap
B')x\|\le\|E_{ B, x }\|_\omega$. Therefore
$\|E_{ B, x }\|_\omega=\sup_{B'\in\Sigma}\|E(B\cap B')x\|$.
\end{proof}

We define an equivalence relation $R_{\omega}$ on $M_{E}$ by $f \sim g$ if $\|f-g\|_\omega=0.$ Then $\|\cdot\|_\omega$ is a norm on $M_{E}.$
Denote by $M_{E,\omega}$ the space of the $R_{\omega}$-equivalence classes of $M_{E}$ endowed with the norm $\|\cdot\|_\omega$, and by
$\widetilde{M}_{E,\omega}$ for its completion.

\begin{theorem}\label{th:437}
$\|\cdot\|_\omega$ is a dilation norm of $E$.
\end{theorem}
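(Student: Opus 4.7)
The plan is to verify directly the three conditions (i)--(iii) of Definition \ref{de:418} for the semi-norm $\|\cdot\|_\omega$ (after passing to the quotient by its null space, which is automatic once boundedness is established).

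For condition (i), given $f = \sum_{i=1}^N C_i E_{B_i, x_i}\in M_E$, the triangle inequality together with taking $B' = \Omega$ inside each supremum gives
\[
\Big\|\sum_{i=1}^N C_i E(B_i) x_i\Big\|_Y \;\le\; \sum_{i=1}^N \|C_i E(B_i) x_i\| \;\le\; \sum_{i=1}^N \sup_{B'\in\Sigma}\|C_i E(B_i\cap B') x_i\|.
\]
Taking the infimum over all representations of $f$ yields $\|S_\omega f\|_Y \le \|f\|_\omega$, so $S_\omega$ is well-defined on the quotient and extends by continuity with $\|S_\omega\|\le 1$. For (ii), Proposition \ref{pr:436} directly gives $\|T_\omega x\|_\omega = \|E_{\Omega, x}\|_\omega = \sup_{B'\in\Sigma}\|E(B') x\|\le \|E\|\cdot\|x\|$, so $\|T_\omega\|\le \|E\|$.

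For (iii), I first check that each $F_\omega(B)$ descends to a bounded operator. For a representation $f = \sum C_i E_{B_i, x_i}$, the triangle inequality and monotonicity of the supremum under further intersection give
\[
\|F_\omega(B) f\|_\omega \;=\; \Big\|\sum C_i E_{B\cap B_i, x_i}\Big\|_\omega \;\le\; \sum \sup_{B'\in\Sigma}\|C_i E(B\cap B_i\cap B') x_i\| \;\le\; \sum \sup_{B'\in\Sigma}\|C_i E(B_i\cap B') x_i\|.
\]
Infimizing over representations of $f$ yields $\|F_\omega(B) f\|_\omega \le \|f\|_\omega$, so $F_\omega(B)$ extends to $\widetilde{M}_{E,\omega}$ with norm at most $1$; in particular $F_\omega$ is uniformly bounded.

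The main obstacle is strong countable additivity of $F_\omega$. By Lemma \ref{le:421} it suffices to verify this on the dense subspace $M_E$, and by linearity it is enough to work with basic elements $E_{B, x}$. Given a disjoint sequence $\{A_j\}\subset \Sigma$ with union $A$, a direct computation of vector-valued measures (using disjointness of the $A_j$) shows that in $M_E$
\[
E_{A\cap B, x} - \sum_{j=1}^M E_{A_j\cap B, x} \;=\; E_{(\bigcup_{j>M} A_j)\cap B,\, x},
\]
and by Proposition \ref{pr:436},
\[
\Big\|E_{(\bigcup_{j>M} A_j)\cap B,\, x}\Big\|_\omega = \sup_{B'\in\Sigma}\Big\|\sum_{j>M} E(A_j\cap B\cap B')x\Big\|.
\]
That this supremum tends to $0$ as $M\to\infty$ is established by exactly the contradiction argument already used in the proof of Theorem \ref{th:426}: if it failed, one could extract blocks $n_\ell\le m_\ell<n_{\ell+1}$ and sets $B_\ell'\in\Sigma$ with $\|\sum_{j=n_\ell}^{m_\ell} E(A_j\cap B\cap B_\ell') x\|\ge \delta$, but then the sets $A_j\cap B\cap B_\ell'$ (as $\ell$ varies and $n_\ell\le j\le m_\ell$) are pairwise disjoint, so strong countable additivity of $E$ (Orlicz--Pettis, Remark \ref{re:36}) forces these block sums to tend to zero, a contradiction. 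This yields $F_\omega(A) f = \sum_j F_\omega(A_j) f$ in the $\omega$-norm for every basic $f$, hence by linearity and boundedness for every $f\in \widetilde{M}_{E,\omega}$, completing the verification that $F_\omega$ is an operator-valued measure and $\|\cdot\|_\omega$ is a dilation norm.
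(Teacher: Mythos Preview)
Your proof is correct and follows essentially the same approach as the paper: bound $S_\omega$, $T_\omega$, and each $F_\omega(B)$ by infimizing over representations, then establish strong countable additivity on $M_E$ via the same contradiction argument and appeal to Lemma~\ref{le:421}. Your reduction to basic elements together with the exact formula of Proposition~\ref{pr:436} is a slightly cleaner way to reach the tail estimate $\sup_{B'}\|\sum_{j>M}E(A_j\cap B\cap B')x\|$ than the paper's direct computation on a general finite sum, but the substance is identical.
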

\begin{proof}
Let $N>0,$ $\{C_i\}_{i=1}^{N}\subset\mathbb{C},$
$\{x_i\}_{i=1}^{N}\subset X$ and $\{B_i\}_{i=1}^{N}\subset \Sigma.$
First, we show that the map $S_\omega:
\widetilde{M}_{E,\omega}\to Y$ defined on
$M_{E,\omega}$ by
$$S_\omega\left(\sum^{N}_{i=1}C_iE_{ {B_i}, {x_i}}\right)=\sum_{i=1}^NE(B_i) x_i,$$
 is well-defined and $\|S_\omega\|\leq 1.$ If
$f=\sum^{N}_{i=1}C_iE_{ {B_i}, {x_i}}$ is a representation of
$f,$ then
\begin{equation*}
\|S_{\omega}(f)\|=\left\|\sum_{i=1}^N C_iE(B_i)
x_i\right\|\le \sum_{i=1}^N \sup_{B\in\Sigma}\|C_iE(B_i\cap
B)x_i\|.
\end{equation*}
Since this holds for every representation of $f$, it follows that
$\|S_{\omega}(f)\|\le \|f\|_\omega$. Therefore $S_{\omega}$
is well-defined and bounded with $\|S_{\omega}\|\le 1.$

Now, to prove that the map $T_{\omega}:X\to
\widetilde{M}_{E,\omega}$ with $T(x)=E_{\Omega,x}$
is bounded with $\|T_{\omega}\|\le\|E\|$. It follows from
the definition of $\omega$ that
\begin{equation*}
\|T_\omega x\|_\omega=\|E_{\Omega,x}\|_\omega=\sup_{B\in\Sigma}\|E(B)
x\|\le \|E\|\cdot\|x\|.
\end{equation*}
Thus, $\|T_{\omega}\|\le\|E\|$.

Finally we need to prove that the map $F_{\omega}: \Sigma\to B(\widetilde{M}_{E,\omega})$ defined by
\begin{eqnarray*}
F_{\omega}(B)\left(\sum_{i=1}^NC_iE_{ {B_i}, {x_i}}\right)
=\sum_{i=1}^NC_iE_{{B\cap B_i}, {x_i}}
\end{eqnarray*}
is an operator-valued measure. By Lemma \ref{le:421}, we only need
to show that $F_{\omega}$ is strongly countably additive and uniform
bounded on $M_{E,\omega}.$ The proof is very similar to that in
Theorem \ref{th:426}. We include it here for completeness.

If $f=\sum_{i=1}^N C_iE_{{B_i},{ x_i}}$ is a representation of
$f$, then
\begin{eqnarray*}
\left\|F_{\omega}(B)\left(f\right)\right\|_\omega &=&
\left\|F_{\omega}(B)\left(\sum_{i=1}^NC_iE_{ {B_i}, {x_i}}\right)\right\|_\omega
\\
&=& \left\|\sum_{i=1}^NC_iE_{{B_i\cap B}, {x_i}}\right\|_\omega
\\
&\le&\sum_{i=1}^N \sup_{B'\in\Sigma}\left \|C_i E(B_i\cap
B\cap B')
x_i\right\|\\
&=&\sum_{i=1}^N\sup_{B'\in\Sigma} \| C_iE(B_i\cap B') x_i\|.
\end{eqnarray*}

Since this holds for every representation of $f$, it follows that
$\left\|F
_{\omega}(E)f\right\|_\omega\le \|f\|_\omega$, which
implies that $\|F_{\omega}(B)\|\le1$.

For the strong  countably additivity of $F_\omega$,  let $\{A_j\}_{j=1}^\infty$  be a disjoint countable collection of members of
$\Sigma$ with union $A$. Then
\begin{eqnarray*}
&&\left\|\sum_{j=1}^MF_{\omega}(A_j)\left(\sum_{i=1}^NC_iE_{{B_i},{ x_i}}\right)-F_\omega(A)\left(\sum_{i=1}^NC_iE_{{B_i},{ x_i}}\right)\right\|_\omega\\
&=&\left\|\sum_{j=1}^M\left(\sum_{i=1}^NC_iE_{{A_j\cap
B_i}, {x_i}}\right)-\left(\sum_{i=1}^NC_iE_{{A\cap B_i}, {x_i}}\right)\right\|_\omega\\
&=&\left\|\sum_{i=1}^NC_i\left(\sum_{j=1}^ME_{{A_j\cap
B_i}, {x_i} }-E_{{A\cap B_i}, {x_i}}\right)\right\|_\omega\\
&=&\sup_{B'\in\Sigma}\left\|\sum_{i=1}^NC_i\left(\sum_{j=1}^ME(A_j\cap
B_i\cap B')x_i-E(A\cap
B_i\cap B')x_i\right)\right\|_Y\\
&=&\sup_{B'\in\Sigma}\left\|\sum_{i=1}^NC_iE\left(\bigcup_{j=M+1}^\infty
\left(A_j\cap B_i\cap B'\right)\right)x_i\right\|_Y\\
&\le&\sum_{i=1}^N|C_i|\sup_{B'\in\Sigma}\left\|E\left(\bigcup_{j=M+1}^\infty
\left(A_j\cap B_i\cap B'\right)\right)x_i\right\|_Y\\
&=&\sum_{i=1}^N|C_i|\sup_{B'\in\Sigma}\left\|E\left(\bigcup_{j=M+1}^\infty
\left(A_j\cap B'\right)\right)x_i\right\|_Y\\
&=&\sum_{i=1}^N|C_i|\sup_{B'\in\Sigma}\left\|\sum_{j=M+1}^\infty E(A_j\cap
B')x_i\right\|_Y.
\end{eqnarray*}
If $\sup_{B'\in\Sigma}\left\|\sum_{j=M+1}^\infty E(A_j\cap
B')x_i\right\|_Y$ does not tend to 0 when $M$ tends to $\infty,$
then we can find $\delta>0$, a sequence of $n_1\le m_1<n_2\le
m_2<n_3\le m_3<.\,.\,.$ , and $\{B_l'\}_{l=1}^\infty\subset
\Sigma$ such that
\begin{eqnarray*}
\left\|\sum_{j=n_l}^{m_l} E(A_j\cap B_l')x_i\right\|\ge
\delta, \ \ \forall\, l\in\N.
\end{eqnarray*}
Obviously, for $l\in\N$ and $n_l\le j\le m_l$, $A_j\cap B_l'$ are
disjoint from each other, so
\begin{eqnarray*}
E\left(\bigcup_{l=1}^\infty\bigcup_{j=n_l}^{m_l} A_j\cap
B_l'\right)x_i=\sum_{l=1}^\infty\sum_{j=n_l}^{m_l} E(A_j\cap
B_l')x_i,
\end{eqnarray*}
which implies
 $\left\|\sum_{j=n_l}^{m_l} E(A_j\cap
B_l')x_i\right\|\rightarrow 0.$ It is a contradiction, hence we have
\[F_\omega(A)\left(\sum_{i=1}^NC_iE_{{B_i},{ x_i}}\right)=\sum_{j=1}^\infty F_{\omega}(A_j)\left(\sum_{i=1}^NC_iE_{{B_i},{ x_i}}\right).\] Thus $F_\omega$ is strongly
countably additive.
\end{proof}
\bigskip
In what follows we will refer $\omega$ as the \emph{maximal
dilation norm} of $E.$ From Proposition \ref{pr:436} we have that
both the minimal and the maximal dilation norms agree on the
elementary vectors $E_{x, B}$, i.e., $||E_{x, B}||_{\alpha} =
||E_{x, B}||_{\omega}$.

Finally we point out that for some special operator-valued measure
systems, there are some other natural ways to construct new
dilations norms. We mention two of them  for which we will only
give the definition but will skip the proofs.

Let $(\Omega,\Sigma,E,B(X,Y))$ be an operator-valued measure
system. The \emph{strong variation of $E$} is the extended
nonnegative function $|E|_\mathrm{SOT}$ whose value on a set
$B\in\Sigma$ is given by
\begin{eqnarray*}
|E|_\mathrm{SOT}(B)&=&\sup\left\{\sum \|E(B_i)x\|:x\in
B_X, \mbox{$B_i$'s are
a partition of $B$}\right\}\\
&=&\sup_{x\in X} |E_x|(B).
\end{eqnarray*}
If $|E|_\mathrm{SOT}(\Omega)<\infty$, then $E$ is called an \emph{operator-valued measure of strongly bounded variation}. Similarly,
the \emph{weak variation of $E$} is the extended nonnegative function $|E|_{\mathrm{WOT}}$ whose value on a set $B\in\Sigma$ is
given by
\begin{eqnarray*}
|E|_{\mathrm{WOT}}(B)&=&\sup\left\{\sum |x^*(E(B_i)x)|:x\in X,
y^*\in Y^*, \mbox{$B_i$'s are a partition of $B$}\right\}\\
&=&\sup_{x\in X, y^*\in Y^*} |E_{x,y^*}|(B).
\end{eqnarray*}
If $|E|_{\mathrm{WOT}}(\Omega)<\infty$, then $E$ is called an \emph{operator-valued measure of weakly bounded variation}.

For an operator-valued measure of strongly bounded variation
(respectively, of weakly bounded variation) , we have a natural
approach to construct a dilation norm on $M_{E}$. Now let $f =
\sum_{i=1}^N C_iE_{{B_i},{ x_i}}$ be any element of $M_{E}$.
Define $\widetilde{\omega}(f)$ and $\widetilde{\mathcal{W}}(f)$ by
\begin{eqnarray*}
\widetilde{\omega}(f)&=&\sup\Big\{\sum_{j=1}^M
\Big\|\sum_{i=1}^NC_iE(B_i\cap A_j)x_i\Big\|: \mbox{$A_j$'s
are a
partition of $\Omega$}\Big\}\\
&=&|E_f|(\Omega).
\end{eqnarray*}
 and
\begin{eqnarray*}
\widetilde{\mathcal{W}}(f) &=&\sup\left\{\sum_{j=1}^M \left\|
y^*\left(\sum_{i=1}^NC_iE(B_i\cap A_j)x_i\right)\right\|:
y^*\in Y^*,
\mbox{$A_j$'s are a partition of $\Omega$}\right\}\\
&=&\sup_{y^*\in
Y^*}|y^*E_f|(\Omega)\\
&=&\|E_f\|(\Omega).
\end{eqnarray*}

\noindent Then it can be proved that $\widetilde{\omega}$  (resp.
$\widetilde{\mathcal{W}}$) is a dilation norm of $E$.

\chapter{Framings and Dilations}

We examine the dilation theory for discrete and continuous framing-induced operator valued
measures.
We also provide a new self-contained proof of Naimark's Dilation Theorem based on our methods of chapter 2,
because we feel that this helps clarify our approach, and also for independent interest.

\section{Hilbertian Dilations}

In the Hilbert space theory, the term ``projection" is usually reserved for ``orthogonal" (i.e. self-adjoint) projection. So in this Chapter we
will resume this tradition. The term ``idempotent" will be used to denote a not necessarily self-adjoint operator which is equal to its square.
Throughout this chapter, $\cH$ denotes the Hilbert space and $I_{\cH}$ is the identity operator on $\cH.$ Let
 $2^{\N}$ denote the family of all subsets of $\N.$

\begin{definition}\label{de:H51}
Given an operator-valued measure $E:\Sigma\to B(\cH)$. Then $E$ is called:
\begin{enumerate}
\item[(i)]an operator-valued probability measure if
$E(\Omega)=\mathrm{I}_\cH,$

\item[(ii)]a (orthogonal) projection-valued measure if
$E(B)$ is a (orthogonal) projection on $\cH$ for all
$B\in\Sigma$; an idempotent-valued measure if $E$ is a
spectral in the sense of Definition \ref{de:7},

\item[(iv)]a positive operator-valued measure if $E(B)$ is a
positive operator on $\cH$ for all $B\in\Sigma$.
\end{enumerate}
\end{definition}

Let $(\Omega,\Sigma,E,B(\cH))$ be an operator-valued measure
system. The definition of dilation space is the same as in section
2.2.  Keep in mind that not every operator-valued measure on a
Hilbert space admits a dilation space which is a Hilbert space.

\begin{definition}\label{de:H31}
Let $(\Omega,\Sigma,E,B(\cH))$ be an operator-valued measure system. We say that $E$ has a Hilbertian dilation if it has a dilation
space which is a Hilbert space. That is, $E$ has a Hilbert dilation space if there exist a Hilbert space $\cK,$ two bounded linear
operators $S:\cK \to \cH$ and $T:\cH\to \cK$, an idempotent-valued measure $F:\Sigma\to B(\cK)$ such that
\begin{equation*}
E(B)=SF(B)T,\qquad \forall B\in\Sigma.
\end{equation*}
\end{definition}

\begin{theorem}\label{pr:Hp1} Let
$(\Omega,\Sigma,E,B(\cH))$ be an operator-valued probability
measure system. If $E$ has a Hilbert dilation space, then
there exist a corresponding Hilbert dilation system
$(\Omega,\Sigma,F,B(\cK), V^*, V)$ such that $V:\cH\to\cK$ is an
isometric embedding.
\end{theorem}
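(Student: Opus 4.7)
The plan is to enlarge the given dilation space so that $\cH$ sits isometrically inside it as a direct summand, while still carrying an idempotent-valued measure that compresses back to $E$.

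\emph{Step 1 (Normalization).} I would first apply Proposition \ref{pr:414} to the given system $(\widetilde E, \widetilde H, S, T)$: replacing $\widetilde H$ by the closed subspace $\widetilde E(\Omega)\widetilde H$, $\widetilde E$ by its restriction, $S$ by $S|_{\widetilde E(\Omega)\widetilde H}$, and $T$ by $\widetilde E(\Omega)T$, we may assume that $\widetilde E(\Omega) = I_{\widetilde H}$. Since $E(\Omega) = I_\cH$, this forces $ST = I_\cH$, and hence by Lemma \ref{le:413}(ii) the operator $TS$ is a (possibly non-orthogonal) idempotent on $\widetilde H$.

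\emph{Step 2 (Construction).} Let $\cK := \cH \oplus \widetilde H$ equipped with the usual Hilbert direct-sum inner product, and define $V:\cH\to\cK$ by $Vx := (x,0)$. Then $V$ is an isometric embedding and $V^*(x,w) = x$. Introduce auxiliary maps
\[
\widetilde S:\widetilde H\to\cK,\ \widetilde Sz := \bigl(Sz,\,(I-TS)z\bigr),\qquad \widetilde T:\cK\to\widetilde H,\ \widetilde T(x,w):= Tx+w,
\]
and set $F(B) := \widetilde S\,\widetilde E(B)\,\widetilde T$ for every $B\in\Sigma$.

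\emph{Step 3 (Verification).} A direct computation shows $\widetilde T\widetilde S = I_{\widetilde H}$. Combined with $\widetilde E(B)^2 = \widetilde E(B)$, this yields
\[
F(B)^2 = \widetilde S\,\widetilde E(B)\,(\widetilde T\widetilde S)\,\widetilde E(B)\,\widetilde T = \widetilde S\,\widetilde E(B)^2\,\widetilde T = F(B),
\]
so $F$ is idempotent-valued. Countable additivity of $F$ is inherited from that of $\widetilde E$ together with the boundedness of $\widetilde S$ and $\widetilde T$. Finally,
\[
V^*F(B)Vx = V^*\widetilde S\,\widetilde E(B)(Tx) = V^*\bigl(S\widetilde E(B)Tx,\,(I-TS)\widetilde E(B)Tx\bigr) = S\widetilde E(B)Tx = E(B)x,
\]
so $(F,\cK,V^*,V)$ is the desired Hilbert dilation system.

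\emph{Where the difficulty sits.} One cannot in general modify the given $S$ to equal $T^*$ on the original space $\widetilde H$ without destroying the idempotence of $\widetilde E$. Passing to $\cK = \cH\oplus\widetilde H$ and routing $\widetilde E$ through the pair $(\widetilde S,\widetilde T)$ satisfying $\widetilde T\widetilde S = I_{\widetilde H}$ is precisely what lets both constraints, the isometric embedding of $\cH$ and the idempotence of $F(B)$, be met simultaneously; this direct-sum construction is the key maneuver.
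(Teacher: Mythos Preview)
Your proof is correct, but the construction differs from the paper's. The paper does not enlarge the dilation space at all: after the same normalization step, it uses the idempotent $P=TS$ to obtain the algebraic decomposition $\widetilde H = T\cH \oplus \ker S$, then defines a new Hilbert space $\cH\oplus_2\ker S$ together with the isomorphism $U=T^{-1}|_{T\cH}\oplus I_{\ker S}:\widetilde H\to \cH\oplus_2\ker S$; the new measure is the similarity $\tilde F(B)=UF(B)U^{-1}$, and $V=UT$ is shown to be isometric with $(UT)^*U=S$. Thus the paper's dilation space is isomorphic (as a Banach space) to the original $\widetilde H$, and the resulting $\tilde F$ remains a \emph{probability} measure, $\tilde F(\Omega)=I_{\cK}$. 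Your route, by contrast, adjoins an external copy of $\cH$ to get $\cK=\cH\oplus\widetilde H$ and factors through the pair $(\widetilde S,\widetilde T)$ with $\widetilde T\widetilde S=I_{\widetilde H}$; this is arguably cleaner (no need to verify the decomposition $\widetilde H=T\cH\oplus\ker S$ or the adjoint identity $(UT)^*U=S$), but it produces a strictly larger space and an $F$ with $F(\Omega)=\widetilde S\widetilde T\ne I_{\cK}$, so your $F$ is only idempotent-valued rather than a spectral probability measure. Both meet the requirements of the theorem as stated; the paper's construction is more economical and preserves the probability normalization.
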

\begin{proof} From Proposition \ref{pr:414}, we know that if
an operator-valued measure can be dilated to a spectral
operator-valued measure, then it can be dilated to a spectral
operator-valued probability measure. So without losing the
generality, we can  assume that the corresponding dilation
projection-valued measure space system
$(\Omega,\Sigma,F,B(\cK),S,T)$ is a probability measure space
system, where $\cK$ is the Hilbert dilation space.

Since $E$ and $F$ are both operator-valued probability
measures, we have that $S$ is a surjection, $T$ is an isomorphic
embedding and $\mathrm{I}_\cH=ST.$ Let $P=TS:\cK\to T\cH$. Then $P$
is a projection from $\cK$ onto $T\cH.$  Hence
\[\cK=P\cK\oplus_\cK(\mathrm{I}_\cK-P)\cK=T\cH\oplus_\cK(\mathrm{I}_\cK-P)\cK\]
From
\[S(\mathrm{I}_\cK-P)=S-STS=S-S=0,\] we get that
\[(\mathrm{I}_\cK-P)\cK\subset ker S.\]
On the other hand, for $z\in\cK,$ if $Sz=0,$ then
\[(\mathrm{I}_\cK-P)z=z-Pz=z-TSz=z\in(\mathrm{I}_\cK-P)\cK\]
Thus
\[\cK=T\cH\oplus_\cK ker S.\]
Define
\[\widetilde{\cH}=\cH\oplus_2ker S.\]
It is easy to see that the operator
$$U:\cK=T\cH\oplus_\cK ker S\to\widetilde{\cH}=\cH\oplus_2ker S$$
defined by $$U=T^{-1}|_{T\cH}\oplus \mathrm{I}_{ker S},$$ is an isomorphic operator. Let $V=UT$ be an operator form $\cH\to\widetilde{\cH}.$ For
any $x\in\cH,$ we have
\[\|Vx\|_{\widetilde{\cH}}=\|UTx\|_{\widetilde{\cH}}=\|x\|_\cH.\]
Thus $V$ is an isometric embedding.

Define
$$\tilde{F}:\Sigma\to B(\widetilde{\cH}),\quad \tilde{F}(B)=UF(B)U^{-1}.$$
Then $\tilde{F}$ is a spectral operator-valued probability
measure. Now we show that
\[E(B)=V^*\tilde{F}(B)V,\qquad \forall B\in\Sigma.\]
Since
\[V^*\tilde{F}(B)V=V^*UF(E)U^{-1}V=(UT)^*UF(B)U^{-1}UT=(UT)^*UF(B)T,\]
we only need to prove that $(UT)^*U=S,$ as claimed.

 For any $x\in \cH$ and
$z=Tx_1+z_2\in \cK,$ where $z_2\in ker S$ and $x_1\in \cH,$
\begin{eqnarray*}
\langle(UT)^*U z, x\rangle_\cH=\langle U z, UT
x\rangle_{\widetilde{\cH}}=\langle UTx_1+U z_2,
x\rangle_{\widetilde{\cH}}=\langle x_1, x\rangle_{\cH}
\end{eqnarray*}
and
\begin{eqnarray*}
\langle S z, x\rangle_\cH=\langle STx_1+S z_2,
x\rangle_{\cH}=\langle x_1, x\rangle_{\cH}
\end{eqnarray*}
Hence $(UT)^*U=S.$
\end{proof}

The classical Naimark's Dilation Theorem tells us that if $E$ is a positive operator-valued measure, then $E$ has a Hilbert dilation
space. Moreover, the corresponding spectral operator-valued measure $F$ is an orthogonal projection-valued measure. Although this is a well
known result in the dilation theory, for self completeness here we include a new proof which uses similar line of ideas as used in the proof of
the Banach space dilation theory.

\begin{theorem}[\textbf{Naimark's Dilation Theorem}]\label{th:H39}
Let $E:\Sigma\to B(\cH)$ be a positive operator-valued
measure. Then there exist a Hilbert space $\cK$, a bounded linear
operator $V:\cH\to \cK$, and an orthogonal projection-valued measure
$F:\Sigma\to B(\cK)$ such that
$$E(B)=V^* F(B) V.$$
\end{theorem}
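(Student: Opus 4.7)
The plan is to mimic the elementary dilation construction of Chapter 2, but now to exploit positivity to equip $M_{E}$ with an inner product (rather than merely a norm), so that the completion is automatically a Hilbert space and the dilating idempotents are automatically self-adjoint, hence orthogonal projections.

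First I would work on the algebraic elementary space $M_{E} = \mathrm{span}\{E_{B,x} : B\in\Sigma,\; x\in\cH\}$ and define a sesquilinear form by
\[
\Bigl\langle \sum_{i} c_{i} E_{B_{i},x_{i}},\; \sum_{j} d_{j} E_{A_{j},y_{j}}\Bigr\rangle_{\cK}
= \sum_{i,j} c_{i}\overline{d_{j}}\,\langle E(B_{i}\cap A_{j})x_{i},\,y_{j}\rangle_{\cH}.
\]
Well-definedness on equivalence classes of formal sums and positive semi-definiteness are both consequences of the positivity of $E$: for a single vector $f=\sum_{i} c_{i} E_{B_{i},x_{i}}$, refining the sets $\{B_{i}\}$ into a common disjoint partition $\{C_{k}\}$ of $\bigcup_{i} B_{i}$ shows that $\langle f,f\rangle_{\cK}$ reduces to a sum of terms of the form $\langle E(C_{k})u_{k},u_{k}\rangle\geq 0$. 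The usual Cauchy--Schwarz argument then lets me quotient by the null space $\mathcal{N}=\{f: \langle f,f\rangle_{\cK}=0\}$ and complete to obtain the candidate Hilbert space $\cK$.

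Next I would define the natural maps and check the required identities. Put $Vx = [E_{\Omega,x}]$ and $F(B)[E_{A,x}] = [E_{B\cap A,x}]$, extending by linearity and continuity. Boundedness of $V$ follows from $\|Vx\|_{\cK}^{2} = \langle E(\Omega)x,x\rangle\leq \|E(\Omega)\|\,\|x\|^{2}$. For $F(B)$, the key observations are the algebraic identities
\[
F(B_{1})F(B_{2})[E_{A,x}] = [E_{B_{1}\cap B_{2}\cap A,x}] = F(B_{1}\cap B_{2})[E_{A,x}],
\]
\[
\langle F(B)[E_{A_{1},x_{1}}], [E_{A_{2},x_{2}}]\rangle_{\cK}
= \langle E(B\cap A_{1}\cap A_{2})x_{1},x_{2}\rangle
= \langle [E_{A_{1},x_{1}}], F(B)[E_{A_{2},x_{2}}]\rangle_{\cK},
\]
so each $F(B)$ is a self-adjoint idempotent, i.e.\ an orthogonal projection on $\cK$, and in particular $\|F(B)\|\leq 1$. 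A direct computation on elementary vectors gives
\[
\langle V^{*}F(B)Vx,y\rangle_{\cH} = \langle F(B)[E_{\Omega,x}],[E_{\Omega,y}]\rangle_{\cK}
= \langle E(B)x,y\rangle_{\cH},
\]
which is the desired identity $E(B)=V^{*}F(B)V$.

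The remaining obligation, and the step I expect to require the most care, is to verify that $F$ is actually an operator-valued measure, i.e.\ countably additive in the strong operator topology on $B(\cK)$. On elementary vectors this reduces to the identity $F(B)[E_{A,x}] = [E_{B\cap A,x}]$ together with the weak countable additivity of $E$ itself; once one has weak countable additivity of $F$ on the total set $\{[E_{A,x}]\}$, a Lemma \ref{le:421}-style uniform-boundedness argument (using $\|F(B)\|\leq 1$) upgrades this to strong countable additivity on all of $\cK$. Finally, since each $F(B)$ is idempotent, Lemma \ref{le:39} gives the multiplicativity $F(B_{1}\cap B_{2})=F(B_{1})F(B_{2})$ for free, so $F$ is a spectral, orthogonal projection-valued measure, completing the proof.
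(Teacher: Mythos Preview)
Your proposal is correct and follows essentially the same construction as the paper's proof: both build the inner product $\langle \sum_i E_{B_i,x_i},\sum_j E_{A_j,y_j}\rangle=\sum_{i,j}\langle E(B_i\cap A_j)x_i,y_j\rangle$ on $M_E$, complete to a Hilbert space, define $V x=E_{\Omega,x}$ and $F(B)E_{A,x}=E_{B\cap A,x}$, and verify self-adjointness, idempotency, and $E(B)=V^*F(B)V$ by direct computation. If anything you are slightly more careful than the paper, which leaves the countable additivity of $F$ implicit, whereas you correctly flag it and sketch how the uniform bound $\|F(B)\|\le 1$ together with strong additivity on elementary vectors (via Lemma~\ref{le:421}) upgrades to strong countable additivity on all of $\cK$.
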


\begin{proof}
Let $M_E=\mbox{span}\{E_{B,x}: x\in \cH, B\in \Sigma\}$ be the space induced by $(\Omega,\Sigma,E,B(\cH)).$  Now we define a
sesquilinear functional $\langle\,,\rangle$ on this space by setting
$$\left\langle\sum_{i=1}^NE_{{B_i},{x_i}}, \sum_{j=1}^ME_{{A_i},{y_i}}\right\rangle
=\sum_{i=1}^N\sum_{j=1}^M\langle E(B_i\cap A_j)x_i,y_j\rangle_\cH.$$  Actually, if $\sum_{i=1}^NE_{{B_i},{x_i}}=0$, then for $1\le
j\le M$, $\sum_{i=1}^NE(B_i\cap A_j)x_i=0$. Thus this sesquilinear functional is well-defined. Since for any
$f=\sum_{i=1}^NE_{{B_i},{x_i}}$ (without losing the generality, we can assume that $B_i$'s are disjoint from each other), we have that
\begin{eqnarray*}
\left\langle\sum_{i=1}^NE_{{B_i},{x_i}},
\sum_{j=1}^NE_{{B_j},{x_j}}\right\rangle
=\sum_{i=1}^N\sum_{j=1}^N\langle E(B_i\cap
B_j)x_i,x_j\rangle_\cH =\sum_{i=1}^N
\langle E(B_i)x_i,x_i\rangle_\cH\geq 0.
\end{eqnarray*}
Thus it follows  that $\langle\,,\rangle$ is positive definite.

Obviously, this positive definite sesquilinear functional satisfies the Cauchy-Schwarz inequality,
$$|\langle f, g \rangle|^2\le \langle f, f\rangle\cdot \langle g,g\rangle.$$
Hence the sesquilinear functional $\langle\,,\rangle$ on $M_E$ is an inner product. Let $\widetilde{M}_E$ denote the Hilbert space
that is the completion of the inner product space $M_E$, and the induced norm is denoted by $\|\cdot\|_{\widetilde{M}_E}$.

 For every $B\in\Sigma$, define a linear map
 $F(B):\widetilde{M}_E\to\widetilde{M}_E$ by
 $$F(B)\left(\sum_{i=1}^NE_{{B_i},{x_i}}\right)=\sum_{i=1}^N
  E_{{x_i},{B\cap B_i}}.$$
It is easy to see that $F(B)$ is a projection. Take
 $\sum_{i=1}^N E_{{B_i},{x_i}}\in\widetilde{M}_E$, then
 \begin{eqnarray*}
 \left\|F(B)\left(\sum_{i=1}^N
 E_{{B_i},{x_i}}\right)\right\|_{\widetilde{M}_E}
 &=&\sum_{i=1}^N \langle E(B\cap B_i)x_i,x_i\rangle_\cH\\
 &\le& \sum_{i=1}^N
 \langle E(B_i)x_i,x_i\rangle_\cH\\
 &=&\left\|\sum_{i=1}^NE_{{B_i},{x_i}}\right\|_{\widetilde{M}_E}.
 \end{eqnarray*}
 So $\|F(B)\|=1$ or $\|F(B)\|=0$.
 Moreover, we have
 \begin{eqnarray*}
 \left\langle F(B)\left(\sum_{i=1}^N E_{{B_i},{x_i}}\right),
 \sum_{j=1}^M E_{{A_j},{y_j}}\right\rangle_{\widetilde{M}_E}
 &=&\sum_{i=1}^N\sum_{j=1}^M
 \langle E(B\cap B_i\cap A_j)x_i,y_j\rangle_\cH\\
 &=&\left\langle\sum_{i=1}^N E_{{B_i},{x_i}},
 F(B)\left(\sum_{j=1}^M E_{{A_j},{y_j}}\right)\right\rangle_{\widetilde{M}_E}.
 \end{eqnarray*}
 Thus $F(E)$ is a self-adjoint orthogonal projection.

 Now we define
 $$V: \cH\to {\widetilde{M}_E},\qquad \qquad V(x)=E_{x,\Omega}.$$
Since
 $$\|Vx\|^2_{\widetilde{M}_E}
 =\langle E(\Omega)x,x\rangle_\cH\le \|E(\Omega)\|\cdot\|x\|_\cH^2,$$
 we know that $V$ is bounded. Indeed, it is clear that
 $$\|V\|^2=\sup\{\langle E(\Omega)x,x\rangle_\cH:\|x\|_\cH\le1\}=\|E(\Omega)\|.$$
Observe that for any $x,y\in\cH,$
 $$\langle V^*F(B)Vx,y\rangle_\cH=
 \langle F(B)(E_{\Omega,x}),V(E_{\Omega,y})\rangle_{\widetilde{M}_E}
 =\langle E(B)x,y\rangle_\cH,$$
 so $E(B)=V^*F(B) V$, which completes the proof.
 \end{proof}

For general Hilbert space operator-valued measures, Don Hadwin
 completely characterized those that have Hilbertian dilations in
 terms of Hahn decompositions. Let $\Omega$ be a compact Hausdorff
 space and $\Sigma$ be the $\sigma$-algebra of Borel subsets of
 $\Omega$. Recall that a $B(\cH)$-valued measure $E$ is
 called regular if for all $x, y\in \cH$ the complex measure given
 by $<E(\cdot)x, y>$ is regular.

\begin{theorem} \label{th:tHp2}(\cite{HA}) Let $E$ be a
regular, bounded $B(\cH)$-valued measure. Then the following are
equivalent:

(i) $E$ has a Hahn decomposition $E = (E_{1} -
E_{2}) + i( E_{3} -  E_{4})$, where $E_{j}
(j =1, 2, 3, 4)$ are positive operator-valued measures;

(ii) there exist a Hilbert space $\cK,$ two bounded linear
operators $S:\cK \to \cH$ and $T:\cH\to \cK$, an projection-valued
measure $F:\Sigma\to B(\cK)$ such that
\begin{equation*}
E(B)=SF(B)T,\qquad \forall B\in\Sigma.
\end{equation*}
\end{theorem}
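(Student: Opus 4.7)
The plan is to prove the two implications separately. Direction $(i) \Rightarrow (ii)$ will follow quickly from Naimark's Dilation Theorem (Theorem \ref{th:H39}) by taking the orthogonal direct sum of the four Naimark dilations of the positive parts. The interesting direction is $(ii) \Rightarrow (i)$, where I will polarize the identity $E(B) = SF(B)T$ to extract four positive operator-valued measures that add up to $E$ in the required way.

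For $(i) \Rightarrow (ii)$, I would apply Naimark's theorem to each positive OVM $E_{j}$ ($j=1,2,3,4$) to obtain Hilbert spaces $\cK_{j}$, bounded $V_{j}: \cH \to \cK_{j}$, and orthogonal projection-valued measures $F_{j}: \Sigma \to B(\cK_{j})$ with $E_{j}(B) = V_{j}^{*} F_{j}(B) V_{j}$. Set $\cK = \cK_{1} \oplus \cK_{2} \oplus \cK_{3} \oplus \cK_{4}$, $F(B) = F_{1}(B) \oplus F_{2}(B) \oplus F_{3}(B) \oplus F_{4}(B)$, and define $T:\cH \to \cK$ by $Tx = (V_{1}x, V_{2}x, V_{3}x, V_{4}x)$ and $S: \cK \to \cH$ by $S(w_{1}, w_{2}, w_{3}, w_{4}) = V_{1}^{*}w_{1} - V_{2}^{*}w_{2} + iV_{3}^{*}w_{3} - iV_{4}^{*}w_{4}$. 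A routine block-diagonal computation then gives $SF(B)T = (E_{1} - E_{2}) + i(E_{3} - E_{4}) = E(B)$, and $F$ is an orthogonal projection-valued measure on the direct sum because each summand is.

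For $(ii) \Rightarrow (i)$, the key observation is that for any bounded operator $V: \cH \to \cK$, the map $B \mapsto V^{*} F(B) V$ is a positive OVM, since
\[
V^{*} F(B) V \;=\; V^{*} F(B)^{*} F(B) V \;=\; (F(B) V)^{*} (F(B) V) \;\geq\; 0;
\]
this step uses crucially that $F(B)$ is a self-adjoint projection, not merely an idempotent. I would then polarize by introducing
\[
V_{k} \;:=\; \tfrac{1}{2}\bigl(T + i^{k} S^{*}\bigr) : \cH \to \cK, \qquad k = 0, 1, 2, 3,
\]
and expand $V_{k}^{*} F(B) V_{k}$ using $SF(B)T = E(B)$ together with its adjoint $T^{*} F(B) S^{*} = E(B)^{*}$. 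The ``diagonal'' terms $T^{*}F(B)T$ and $SF(B)S^{*}$ cancel in the appropriate differences, leaving
\[
V_{0}^{*} F(B) V_{0} - V_{2}^{*} F(B) V_{2} = \tfrac{1}{2}\bigl(E(B) + E(B)^{*}\bigr), \quad V_{1}^{*} F(B) V_{1} - V_{3}^{*} F(B) V_{3} = \tfrac{1}{2i}\bigl(E(B) - E(B)^{*}\bigr).
\]
Declaring $E_{1} = V_{0}^{*} F(\cdot) V_{0}$, $E_{2} = V_{2}^{*} F(\cdot) V_{2}$, $E_{3} = V_{1}^{*} F(\cdot) V_{1}$, $E_{4} = V_{3}^{*} F(\cdot) V_{3}$ yields four positive OVMs satisfying $(E_{1} - E_{2}) + i(E_{3} - E_{4}) = \tfrac{1}{2}(E + E^{*}) + i \cdot \tfrac{1}{2i}(E - E^{*}) = E$, and regularity of the $E_{j}$ transfers from regularity of $E$ via these same identities.

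The only genuinely nontrivial point is recognizing that positivity of $V^{*}F(B)V$ in direction $(ii) \Rightarrow (i)$ requires $F(B)$ to be an \emph{orthogonal} projection rather than a general idempotent. This is precisely what distinguishes Hadwin's Hilbertian-dilation characterization from the universal Banach-space idempotent-valued dilation produced in Theorem \ref{main-thm}, and it explains why the existence of a Hahn decomposition is exactly the right obstruction to Hilbertianness.
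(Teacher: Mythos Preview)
The paper does not supply its own proof of this theorem; it is quoted from Hadwin \cite{HA} without argument. Your proof is correct and is the standard one: $(i)\Rightarrow(ii)$ via the orthogonal direct sum of the four Naimark dilations, and $(ii)\Rightarrow(i)$ via the polarization $V_k=\tfrac{1}{2}(T+i^kS^*)$, using that $F(B)=F(B)^*F(B)$ so each $V_k^*F(\cdot)V_k$ is positive.

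One minor inaccuracy: your closing remark that ``regularity of the $E_j$ transfers from regularity of $E$ via these same identities'' is not quite right. Each $E_j=V_k^*F(\cdot)V_k$ also contains the diagonal terms $\tfrac14 T^*F(\cdot)T$ and $\tfrac14 SF(\cdot)S^*$, and these do not arise from $E$ or $E^*$; their regularity would have to come from $F$, which condition (ii) does not assume to be regular. This does not damage the proof, however, since the theorem as stated only requires the $E_j$ to be positive operator-valued measures, not regular ones.
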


\section{Operator-Valued Measures Induced by Discrete Framings}

 Let
$\{x_i\}_{i\in\N}$ be a non-zero frame for a separable Hilbert space
$\cH.$ Then the mapping $E:\N\to B(\cH)$ defined by $$
E(B)=\sum_{i\in B} x_i\otimes x_i, \ \ \forall\, B\in 2^\N,$$
is a positive operator-valued probability measure. By Theorem
\ref{th:H39}, we know that $E$ have a Hilbert dilation space
$\widetilde{M}_E$, the corresponding operator-valued measure
$F$ is self-adjoint and idempotent. Obviously, the rank of
$F(\{i\})$ is 1, so $\{E_{\{i\},x_i}/\|x_i\|^2\}_{i\in\N}$
is an orthonormal basis of the Hilbert space
$\widetilde{M}_E$. For any $B\in 2^\N,$ if $i\in B,$ then
\begin{eqnarray*}
E_{\{i\},x}(B)=E(B\cap\{i\})x=E(\{i\})x =\langle
x,x_i\rangle x_i,
\end{eqnarray*}
\begin{eqnarray*}
\langle x,x_i\rangle\frac{E_{\{i\},x_i}(B)}{\|x_i\|^2}
=\frac{\langle x,x_i\rangle}{\|x_i\|^2}E(B\cap\{i\})x_i
=\frac{\langle x,x_i\rangle}{\|x_i\|^2}\langle x_i,x_i\rangle
x_i=\langle x,x_i\rangle x_i.
\end{eqnarray*}
Thus $E_{\{i\},x}=\langle x,x_i\rangle\frac{E_{\{i\},x_i}}{\|x_i\|^2}.$ Then $V$ is the traditional analysis operator and $V^*$ is
the traditional synthesis operator. This is because
\[Vx=E_{x,\N}
=\sum_{i\in \N}\langle
x,x_i\rangle\frac{E_{\{i\},x_i}}{\|x_i\|^2}\] and
\[V^*\left(\sum_{i\in \N}a_i\frac{E_{\{i\},x_i}}{\|x_i\|^2}
\right)=\sum_{i\in \N}a_i\frac{E(\{i\})x_i}{\|x_i\|^2}
=\sum_{i\in \N}a_ix_i.\]

\begin{definition}\label{de:H41}
Let $\{x_i\}_{i\in\N}$ and $\{y_i\}_{i\in\N}$ are both frames for $\cH.$ We call $\{x_i\}_{i\in\N}$ and $\{y_i\}_{i\in\N}$ are \emph{scale
equivalent} if for each $i\in\N,$ $y_i=\lambda_i x_i$ for some $\lambda_i\in\C$. $\{x_i\}_{i\in\N}$ and $\{y_i\}_{i\in\N}$ are called
\emph{unit-scale equivalent} if we can take $|\lambda_j|=1$ for all $i\in\N.$
\end{definition}

\begin{lemma}\label{le:H42}
Two frames $\{x_i\}_{i\in\N}$ and $\{y_i\}_{i\in\N}$ are unit-scale
equivalent if and only if $\{x_i\}_{i\in\N}$ and $\{y_i\}_{i\in\N}$
generate the same positive operator-valued measure.
\end{lemma}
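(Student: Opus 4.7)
The plan is to prove both implications by reducing the statement about the operator-valued measures to a statement about their atoms, and then analyzing the rank-one positive operators $x_i \otimes x_i$ and $y_i \otimes y_i$.

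For the forward direction, suppose $y_i = \lambda_i x_i$ with $|\lambda_i| = 1$ for all $i \in \N$. A direct computation using the definition $(x \otimes y)(u) = \langle u, y\rangle x$ gives
\[
y_i \otimes y_i = (\lambda_i x_i) \otimes (\lambda_i x_i) = \lambda_i \overline{\lambda_i}\, x_i \otimes x_i = |\lambda_i|^2\, x_i \otimes x_i = x_i \otimes x_i.
\]
Summing over any $B \in 2^{\N}$ (with convergence guaranteed by the frame property) shows that the induced positive operator-valued measures coincide.

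For the converse, assume that $\sum_{i \in B} x_i \otimes x_i = \sum_{i \in B} y_i \otimes y_i$ for every $B \in 2^{\N}$. Evaluating this identity on singletons $B = \{i\}$, we obtain $x_i \otimes x_i = y_i \otimes y_i$ for every $i \in \N$. It then suffices to show the elementary fact: if $a, b \in \cH$ satisfy $a \otimes a = b \otimes b$, then $b = \lambda a$ for some $\lambda \in \C$ with $|\lambda| = 1$ (when $a = 0$ the equality forces $b = 0$ and any unimodular $\lambda$ works). If $a \neq 0$, then $\text{range}(a \otimes a) = \text{span}\{a\}$ and $\text{range}(b \otimes b) = \text{span}\{b\}$, so these one-dimensional subspaces coincide and we may write $b = \lambda a$ for some $\lambda \in \C$. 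Substituting back gives $a \otimes a = |\lambda|^2\, a \otimes a$, and since $a \otimes a \neq 0$ we conclude $|\lambda| = 1$.

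The argument is essentially bookkeeping; the one place requiring a moment of thought is justifying that $x_i \otimes x_i = y_i \otimes y_i$ really implies $y_i$ is a unimodular scalar multiple of $x_i$, and this is handled by the rank-one range argument above. No obstacle is expected.
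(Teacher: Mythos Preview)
Your proof is correct. The paper actually states this lemma without proof, treating it as elementary; the closest thing to a proof in the paper is the continuous analogue (Lemma~\ref{le:H51}), which proceeds slightly differently: from equality of the measures it deduces $|\langle x, x_i\rangle| = |\langle x, y_i\rangle|$ for all $x$, then uses the equality case of Cauchy--Schwarz (via $|\langle x_i, y_i\rangle| = \|x_i\|^2 = \|y_i\|^2$) to conclude $y_i$ is a unimodular multiple of $x_i$. Your route through the range of the rank-one operator $x_i\otimes x_i$ is equally direct and arguably cleaner in the discrete setting.
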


The following theorem shows exactly when a framing induced
operator-valued measure has a Hilbertian dilation.

\begin{theorem}\label{pr:Hp2}
Let $(x_i,y_i)_{i\in\N}$ be a non-zero framing for a Hilbert space
$\cH.$ Let $E$ be the induced operator-valued probability
measure, i.e.,
\[E(B)=\sum_{i\in B}x_i\otimes y_i, \qquad \forall\,
B\subseteq \N.\] Then $E$ has a Hilbert dilation space $\cK$
if and only if there exist $\alpha_i,\beta_i \in \C, i\in\N$ with
$\alpha_i\bar{\beta_i}=1$ such that $\{\alpha_ix_i\}_{i\in\N}$ and
$\{\beta_iy_i\}_{i\in\N}$ both are the frames for the Hilbert space
$\cH.$
\end{theorem}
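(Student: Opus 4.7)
For the ``if'' direction, I would start from the observation that $\alpha_i\bar\beta_i=1$ lets one rewrite the framing identity as $x=\sum_i\langle x,y_i\rangle x_i=\sum_i\langle x,\beta_iy_i\rangle(\alpha_ix_i)$, so $\{\alpha_ix_i\}$ and $\{\beta_iy_i\}$ form a dual frame pair. Applying Theorem \ref{ch2-dualpair-dilation} then yields a Hilbert space $\cK\supseteq\cH$ and a Riesz basis $\{u_i\}$ of $\cK$ with unique dual $\{u_i^*\}$ satisfying $Pu_i=\alpha_ix_i$ and $Pu_i^*=\beta_iy_i$, where $P:\cK\to\cH$ is the orthogonal projection. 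The operator-valued measure $F(B):=\sum_{i\in B}u_i\otimes u_i^*$ is then an idempotent-valued probability measure on $\cK$, and a direct computation---using $\langle x,u_i^*\rangle=\langle x,Pu_i^*\rangle=\bar\beta_i\langle x,y_i\rangle$ for $x\in\cH$, together with $\alpha_i\bar\beta_i=1$---shows that $E(B)=PF(B)\iota$, where $\iota:\cH\hookrightarrow\cK$ is the inclusion, giving the desired Hilbertian dilation.

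For the converse, my plan is to exploit a Hilbert-space structure on an elementary dilation space built from the given Hilbertian dilation. By Theorem \ref{pr:Hp1} I may assume the dilation has the form $E(B)=V^*F(B)V$, where $V:\cH\to\cK$ is an isometric embedding and $F$ is a spectral probability measure on the Hilbert space $\cK$. Following the construction in the proof of Theorem \ref{th:422}, I define the dilation norm $\cD$ on $M_E$ by $\|\sum C_iE_{B_i,x_i}\|_\cD=\|\sum C_iF(B_i)Vx_i\|_\cK$; the map $f\mapsto\sum C_iF(B_i)Vx_i$ then isometrically embeds $\widetilde{M}_{E,\cD}$ as a closed subspace of $\cK$, so $\widetilde{M}_{E,\cD}$ inherits a Hilbert-space structure. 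Choosing $\tilde{x}_i\in\cH$ with $\langle\tilde{x}_i,y_i\rangle=1$ (possible because $y_i\neq 0$), I set $\lambda_i:=\|E_{\{i\},\tilde{x}_i}\|_\cD$, which is strictly positive since $V^*F(\{i\})V\tilde{x}_i=x_i\neq 0$.

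The key step is that $\{E_{\{i\},\tilde{x}_i}\}_{i\in\N}$ is an unconditional basis of the Hilbert space $\widetilde{M}_{E,\cD}$. Each $F_\cD(\{i\})$ is a rank-one idempotent with image $\mathrm{span}\{E_{\{i\},\tilde{x}_i}\}$ (because $E_{\{i\},x}=\langle x,y_i\rangle E_{\{i\},\tilde{x}_i}$ for all $x\in\cH$), and the family $\{F_\cD(\{i\})\}$ sums strongly to the identity by countable additivity. Since unconditional bases of a Hilbert space are Riesz bases after normalization (the standard Khintchine/Rademacher averaging argument), one gets $\|\sum a_iE_{\{i\},\tilde{x}_i}\|_\cD^{2}\asymp\sum|a_i|^2\lambda_i^2$. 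Boundedness of $T_\cD$, which sends $x$ to $\sum_i\langle x,y_i\rangle E_{\{i\},\tilde{x}_i}$, then forces $\sum_i|\langle x,y_i\rangle|^2\lambda_i^2\lesssim\|x\|^2$, meaning $\{\lambda_iy_i\}$ is Bessel; boundedness of $S_\cD$ applied to $\sum c_iE_{\{i\},\tilde{x}_i}$ gives $\|\sum c_ix_i\|_\cH\lesssim(\sum|c_i|^2\lambda_i^2)^{1/2}$, which after the substitution $c_i=b_i/\lambda_i$ says $\{x_i/\lambda_i\}$ is Bessel.

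To conclude, I take $\alpha_i:=1/\lambda_i$ and $\beta_i:=\lambda_i$, giving $\alpha_i\bar\beta_i=1$ with both $\{\alpha_ix_i\}$ and $\{\beta_iy_i\}$ Bessel. The framing identity $x=\sum_i\langle x,y_i\rangle x_i=\sum_i\langle x,\beta_iy_i\rangle(\alpha_ix_i)$ then realises them as a Bessel dual pair, and the standard Cauchy--Schwarz trick (a Bessel dual pair is automatically a pair of frames, with lower bounds reciprocal to the opposite Bessel bounds) finishes the argument. The main obstacle I anticipate is justifying the Hilbert structure on $\widetilde{M}_{E,\cD}$ together with the rank-one atomic decomposition of $F_\cD$; once these are in hand, the rest is a routine transfer of the Riesz-basis property through the bounded maps $T_\cD$ and $S_\cD$.
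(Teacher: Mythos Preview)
Your proposal is correct and follows essentially the same route as the paper: for the converse, both build the elementary dilation space $\widetilde{M}_{E,\cD}$ as a Hilbert space via Theorem~\ref{th:422} and extract a Riesz basis from the rank-one atoms $F_\cD(\{i\})$, then push it down to $\cH$ through $S_\cD$ and $T_\cD^*$. The paper invokes Lemma~\ref{le:312} to produce the Riesz basis $\{\phi_i\}$ and identifies the frames directly as $\{S_\cD\phi_i\}$ and $\{T_\cD^*\phi_i^*\}$, whereas you unpack the unconditional-basis-to-Riesz-basis step explicitly and finish with the Bessel-dual-pair criterion; your preliminary appeal to Theorem~\ref{pr:Hp1} is unnecessary but harmless.
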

\begin{proof}We first prove the sufficient condition. If $E$ has a Hilbert dilation space $\cK,$ then by Theorem \ref{th:422}, there is an elementary dilation
operator-valued measure system $(\Omega,\Sigma,F_\cD,B(\widetilde{M}_{E,\cD}),S_\cD,T_\cD)$ of $E.$ Since the norm on
$M_{E,\cD}$ is defined by
\[\left\|\sum_{i}E_{{B_i},{x_i}}\right\|_\cD=\left\|\sum_{i}F(B_i)T(x_i)\right\|_\cK,\]
$\widetilde{M}_{E,\cD}$ is a Hilbert space.

Recall that the analysis operator
$$T_\cD: \cH\to \widetilde{M}_{E,\cD},\qquad
T_\cD(x)=E_{x,\Omega}$$ and the synthesis operator
$$S_\cD:
\widetilde{M}_{E,\cD}\to \cH,\qquad
S_\cD\left(\sum_{i}E_{{B_i},{x_i}}\right)=\sum_{i}E(B_i)x_i
$$
are both linear and bounded. It is easy to prove that
$F_\cD(\{i\})$ is rank $1$ for all $i\in\N.$ By Lemma
\ref{le:312}, there is a Riesz basis $\phi_i$ on
$\widetilde{M}_{E,\cD}$ such that
\[F_\cD(\{i\})=\phi_i\otimes \phi^{\ast}_i.\]
Hence
\[x_i\otimes y_i=S_\cD\left(\phi_i\otimes \phi^{\ast}_i\right)T_\cD
=(S_\cD\phi_i)\otimes(T_\cD^{\ast}\phi^{\ast}_i).\]
For any $x\in\cH,$ we have
\[\sum_{i\in \N}\left|\langle x,S_\cD\phi_i\rangle\right|^{2}
=\sum_{i\in \N}\left|\langle S_\cD^{\ast}
x,z_i\rangle\right|^{2},\quad \sum_{i\in \N}\left|\langle
x,T_\cD^{\ast}\phi^{\ast}_i\rangle\right|^{2} =\sum_{i\in
\N}\left|\langle T_\cD x,\phi^{\ast}_i\rangle\right|^{2}.\]
 Thus we know that $\{S_\cD\phi_i\}_{i\in\N}$ and
 $\{T_\cD^{\ast}\phi^{\ast}_i\}_{i\in\N}$ both are frames for $\cH$ and
\[x_i\otimes y_i=(S_\cD\phi_i)\otimes(T_\cD^{\ast}\phi^{\ast}_i).\]
So there is only a scalar adjustment $S_\cD\phi_i=\alpha_ix_i$ and
$T_\cD^{\ast}\phi^{\ast}_i=\beta_iy_i$ with
$\alpha_i\bar{\beta_i}=1.$

For the necessary part, assume that there exist $\alpha_i,\beta_i \in \C, i\in\N$ with $\alpha_i\bar{\beta_i}=1$ such that
$\{\alpha_ix_i\}_{i\in\N}$ and $\{\beta_iy_i\}_{i\in\N}$ both are the frames for the Hilbert space $\cH.$ By Proposition 1.6 in \cite{HL}, we
have a Hilbert space $\cK$, a Riesz basis $\{z_i\}_{i\in \N}$ and a projection $P:\cK\to\cH$ such that $Pz_i=\alpha_ix_i.$ Let
$\{z^*_i\}_{i\in\N}$ be the dual Riesz basis, then $\langle z_i,z^*_j\rangle=\delta_{i,j}.$ Define $S=P$
 \[T:\cH\to\cK \qquad T^*z^*_i=\beta_iy_i\]
 and
 \[F:2^{\N}\to B(\cK) \qquad F(B)=z_i\otimes z^*_i\quad \forall B\in 2^{\N}.\]
Then  $T$ is a linear and bounded operator and $F$ is a spectral
operator-valued measure. For any $x\in\cH$ and $B\in 2^{\N},$
 \begin{eqnarray*}
 SF(E)Tx&=&S\sum_{i\in B}\langle Tx,z^*_i\rangle z_i
 =\sum_{i\in B}\langle x,T^*z^*_i\rangle Pz_i\\
 &=&\sum_{i\in B}\langle x,\beta_iy_i\rangle \alpha_ix_i
 =\sum_{i\in B}\langle x,y_i\rangle x_i\\
 &=&E(B)
 \end{eqnarray*}
Thus $\cK$ is the Hilbert dilation space of $E.$
\end{proof}

Since a completely bounded map is a linear combination of positive maps, the following Corollary immediately from Theorem \ref{th:tHp2} and
Theorem \ref{pr:Hp2}.
\begin{corollary}\label{co:cH222}
Let $\{x_i,y_i\}_{i\in\N}$ be a non-zero framing for a Hilbert space $\cH,$ and $E$ be the induced operator-valued probability measure.
Then $E$ is a completely bounded map if and only if $\{x_i,y_i\}_{i\in\N}$ can be re-scaled to dual frames.
\end{corollary}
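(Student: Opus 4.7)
The plan is to chain Theorem \ref{pr:Hp2} with Hadwin's characterization (Theorem \ref{th:tHp2}) and bridge them to complete boundedness using Wittstock's decomposition together with the fact, recorded in Theorem \ref{th:21}, that positive maps out of a commutative $C^*$-algebra into $B(\cH)$ are automatically completely positive. The operator-valued measure $E$ induced by the framing corresponds to the linear map $\phi_E:\ell^\infty(\N)\to B(\cH)$, and by definition $E$ is called completely bounded exactly when $\phi_E$ is. Schematically, I want to establish the chain: $E$ is completely bounded $\iff$ $E$ is a complex linear combination of positive operator-valued measures $\iff$ $E$ admits a Hilbertian dilation $\iff$ $\{x_i,y_i\}_{i\in\N}$ can be re-scaled to a dual frame pair.

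For the forward direction, assume $E$ is completely bounded. Wittstock's decomposition expresses $\phi_E$ as a complex linear combination of completely positive maps $\psi_j:\ell^\infty(\N)\to B(\cH)$, each of which corresponds to a positive operator-valued measure $E_j$ on $(\N,2^{\N})$. Writing $E=\sum_j c_j E_j$ is precisely a Hahn-type decomposition, so Theorem \ref{th:tHp2} supplies a Hilbertian dilation of $E$. Theorem \ref{pr:Hp2} then produces scalars $\alpha_i,\beta_i\in\C$ with $\alpha_i\bar\beta_i=1$ such that $\{\alpha_ix_i\}_{i\in\N}$ and $\{\beta_iy_i\}_{i\in\N}$ are both frames for $\cH$. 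The framing identity $x=\sum_i\langle x,y_i\rangle x_i$ combined with $\alpha_i\bar\beta_i=1$ rewrites as $x=\sum_i\langle x,\beta_iy_i\rangle\,\alpha_ix_i$, showing that these two rescaled sequences constitute a dual frame pair; thus $\{x_i,y_i\}$ has been re-scaled to a dual frame pair.

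For the converse, suppose $\{x_i,y_i\}$ can be re-scaled to dual frames; unpacking the definition of ``rescaling" from the framings section this is exactly the existence of $\alpha_i,\beta_i$ with $\alpha_i\bar\beta_i=1$ making $\{\alpha_ix_i\}$ and $\{\beta_iy_i\}$ frames, which is the hypothesis of the reverse implication of Theorem \ref{pr:Hp2}. Hence $E$ has a Hilbertian dilation, so Theorem \ref{th:tHp2} furnishes a decomposition $E=\sum_j c_j E_j$ into positive operator-valued measures. Each positive $E_j$ integrates to a positive linear map $\psi_j:\ell^\infty(\N)\to B(\cH)$, and since $\ell^\infty(\N)$ is a commutative $C^*$-algebra, Theorem \ref{th:21} forces $\psi_j$ to be completely positive, hence completely bounded. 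Consequently $\phi_E=\sum_j c_j\psi_j$ is a linear combination of completely bounded maps, so $E$ itself is completely bounded.

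No serious obstacle arises: the argument is a concatenation of previously established theorems. The only bookkeeping to be checked carefully is that ``re-scaled to dual frames" is the same condition that appears in Theorem \ref{pr:Hp2}, namely the existence of $\alpha_i,\beta_i$ with $\alpha_i\bar\beta_i=1$ making both rescaled sequences frames; the duality of these frames is then automatic from the framing reconstruction formula, so the hypothesis of Theorem \ref{pr:Hp2} and the conclusion of the corollary coincide verbatim.
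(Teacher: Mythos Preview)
Your proof is correct and follows essentially the same approach as the paper, which simply records in one sentence that the corollary follows from Theorem~\ref{th:tHp2} (Hadwin's characterization) and Theorem~\ref{pr:Hp2}, together with the equivalence between complete boundedness and decomposability into positive maps (Wittstock's decomposition in one direction, Theorem~\ref{th:21} in the other). You have written out explicitly the chain of equivalences that the paper leaves implicit, including the verification that ``re-scaled to dual frames'' matches the scalar condition in Theorem~\ref{pr:Hp2}.
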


By using this theorem, we can prove that the first example of
framing in Chapter 5 can not be scaled to a dual frame since the
induced operator-valued measure doesn't have a Hilbert dilation
space,


\begin{lemma}\label{le:H222}\cite[Lemma 5.6.2]{Ch}
Assume that $\{x_i\}_{i\in\N}$ and $\{y_i\}_{i\in\N}$ are Bessel
sequences in $\cH.$ Then the following are equivalent:
\begin{enumerate}
\item[(i)] $x=\sum_{i\in \N} \langle x,y_i\rangle x_i,\quad
\forall x\in\cH.$ \item[(ii)]$x=\sum_{i\in \N} \langle x,x_i\rangle
y_i ,\quad \forall x\in\cH.$ \item[(iii)] $\langle
x,y\rangle=\sum_{i\in \N} \langle x,x_i\rangle \langle
y_i,x_i\rangle ,\quad \forall x,y\in\cH.$
\end{enumerate}
In case the equivalent conditions are satisfied, $\{x_i\}_{i\in\N}$
and $\{y_i\}_{i\in\N}$ are dual frames for $\cH.$
\end{lemma}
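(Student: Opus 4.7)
The plan is to treat (i)--(iii) as three avatars of the single operator identity $U^{*}V=I_{\cH}$ (equivalently $V^{*}U=I_{\cH}$), where $U,V:\cH\to \ell^{2}(\N)$ are the analysis operators of the Bessel sequences $\{x_{i}\}$ and $\{y_{i}\}$. Since both sequences are Bessel, $U$ and $V$ are bounded, and so are their adjoints $U^{*},V^{*}$, which act as the synthesis operators $U^{*}(c_{i})=\sum_{i}c_{i}x_{i}$ and $V^{*}(c_{i})=\sum_{i}c_{i}y_{i}$, with norm convergence. In particular, all infinite sums appearing in (i), (ii), (iii) are automatically convergent, and the pairing $\langle\cdot,\cdot\rangle$ may be freely interchanged with them.

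First I would show (i)$\Leftrightarrow$(iii). Assuming (i), taking $\langle\,\cdot\,,y\rangle$ of both sides and pulling the inner product inside the norm-convergent sum yields $\langle x,y\rangle=\sum_{i}\langle x,y_{i}\rangle\langle x_{i},y\rangle$ for all $x,y\in\cH$; swapping the roles of $x$ and $y$ and conjugating gives (iii) in the form $\langle x,y\rangle=\sum_{i}\langle x,x_{i}\rangle\langle y_{i},y\rangle$. Conversely, (iii) reads $\langle V^{*}Ux,y\rangle=\langle x,y\rangle$ for all $x,y$, hence $V^{*}U=I_{\cH}$, which is exactly the functional content of (ii); taking adjoints yields $U^{*}V=I_{\cH}$, i.e.\ (i). The same computation shows (ii)$\Leftrightarrow$(iii), so the three statements are mutually equivalent.

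It remains to verify that under any one of these conditions $\{x_{i}\}$ and $\{y_{i}\}$ are automatically frames (with $\{y_{i}\}$ a dual of $\{x_{i}\}$ as built into (i)). Let $B,C$ be the Bessel bounds of $\{x_{i}\}$ and $\{y_{i}\}$, respectively; these already serve as the upper frame bounds. For the lower bound on $\{x_{i}\}$ I would set $y=x$ in (iii), apply the Cauchy--Schwarz inequality in $\ell^{2}$, and then invoke the Bessel bound for $\{y_{i}\}$:
\[
\|x\|^{2}=\sum_{i}\langle x,x_{i}\rangle\langle y_{i},x\rangle \le \Bigl(\sum_{i}|\langle x,x_{i}\rangle|^{2}\Bigr)^{1/2}\Bigl(\sum_{i}|\langle x,y_{i}\rangle|^{2}\Bigr)^{1/2}\le C^{1/2}\|x\|\Bigl(\sum_{i}|\langle x,x_{i}\rangle|^{2}\Bigr)^{1/2},
\]
which gives $\|x\|^{2}/C \le \sum_{i}|\langle x,x_{i}\rangle|^{2}$, a lower frame bound of $1/C$ for $\{x_{i}\}$. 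Interchanging the roles of $\{x_{i}\}$ and $\{y_{i}\}$ yields $1/B$ as a lower frame bound for $\{y_{i}\}$, completing the proof. There is no real obstacle here; the only thing to watch is pairing each Bessel bound with the correct sum in the Cauchy--Schwarz step.
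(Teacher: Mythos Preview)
The paper does not supply its own proof of this lemma; it is simply quoted from Christensen's book \cite[Lemma~5.6.2]{Ch}. Your argument is the standard one and is correct: rewriting (i), (ii), (iii) as $U^{*}V=I$, $V^{*}U=I$, and $\langle V^{*}Ux,y\rangle=\langle x,y\rangle$ respectively (with $U,V$ the analysis operators) makes the equivalences transparent via adjoints. For the ``dual frames'' conclusion, your Cauchy--Schwarz computation is in fact exactly the argument the paper itself gives earlier, in the paragraph following Example~\ref{ex:e11}, so you are fully aligned with the paper on that part.

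One small remark: you implicitly (and correctly) read condition (iii) as $\langle x,y\rangle=\sum_{i}\langle x,x_{i}\rangle\langle y_{i},y\rangle$; the printed version has $\langle y_{i},x_{i}\rangle$, which is evidently a typo since otherwise the right-hand side would not depend on $y$.
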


\begin{lemma}\label{le:H223}\cite[Lemma 14.9]{Si}
Let $\{x_i\}_{i\in\N}$ be a system of vectors in $\cH.$ If
$\sum_{i\in\N}x_i$ converges unconditionally in $\cH,$ then
\begin{eqnarray*}
\sum_{i\in \N} \|x_i\|^2\leq C<+\infty.
\end{eqnarray*}
\end{lemma}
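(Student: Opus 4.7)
The plan is to apply the classical Rademacher averaging argument, which is the standard route to this consequence of the Orlicz theorem. First I would invoke the bounded-multiplier principle: whenever $\sum_{i\in\N} x_i$ converges unconditionally in a Banach space, the quantity
\[
M := \sup\Bigl\{\Bigl\|\sum_{i\in F}\epsilon_i\,x_i\Bigr\|\,:\, F\subset\N \text{ finite},\ \epsilon_i\in\{-1,+1\}\Bigr\}
\]
is finite. This can be obtained either by appealing to the Orlicz--Pettis theorem (already used in Remark \ref{re:36}) to conclude unconditional convergence of $\sum \epsilon_i x_i$ for every sign sequence, followed by the uniform boundedness principle applied to the family of partial-sum functionals on the countable product $\{-1,+1\}^\N$, or by a direct contradiction argument producing a subseries with divergent norm.

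Next I would introduce the Rademacher functions $\{r_i\}_{i\in\N}$ on $[0,1]$, characterized by $r_i(t)\in\{-1,+1\}$ and the orthogonality relation $\int_0^1 r_i(t)r_j(t)\,dt=\delta_{ij}$. For any finite $N$, the Hilbert space inner product expansion gives
\[
\Bigl\|\sum_{i=1}^N r_i(t)\,x_i\Bigr\|^2 = \sum_{i,j=1}^N r_i(t)r_j(t)\,\langle x_i,x_j\rangle,
\]
and integrating over $[0,1]$ collapses the off-diagonal terms, yielding
\[
\int_0^1 \Bigl\|\sum_{i=1}^N r_i(t)\,x_i\Bigr\|^2 dt \;=\; \sum_{i=1}^N \|x_i\|^2.
\]

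Finally, for each fixed $t\in[0,1]$ the values $(r_i(t))_{i=1}^N$ form a sign choice, so by the first step $\bigl\|\sum_{i=1}^N r_i(t)x_i\bigr\|\le M$ for every $t$. Integrating this pointwise bound gives $\sum_{i=1}^N \|x_i\|^2 \le M^2$, and letting $N\to\infty$ produces the desired estimate with $C=M^2$. The only nontrivial step is the bounded-multiplier principle in the first paragraph; the rest is a routine computation once that uniform bound is in hand. Since the lemma is quoted from \cite{Si}, one could alternatively simply cite the reference, but the Rademacher proof above is short and self-contained.
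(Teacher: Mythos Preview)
Your argument is correct; this is precisely the classical Rademacher-averaging proof of Orlicz's theorem in Hilbert space. Note, however, that the paper does not supply a proof of this lemma at all: it is stated with a citation to \cite[Lemma~14.9]{Si} and used as a black box. So there is no ``paper's own proof'' to compare against, and your self-contained argument simply fills in what the authors chose to quote. Your closing remark already anticipates this.

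One minor point: your justification of the bounded-multiplier constant $M$ via ``the uniform boundedness principle applied to the family of partial-sum functionals on the countable product $\{-1,+1\}^\N$'' is a bit vague as stated, since $\{-1,+1\}^\N$ is not a Banach space. The cleanest route is to note that unconditional convergence of $\sum x_i$ forces $\sum \epsilon_i x_i$ to converge for every choice of signs (this is one of the equivalent characterizations of unconditional convergence), and then apply the uniform boundedness principle to the operators $T_N:\ell_\infty\to\cH$, $T_N(a)=\sum_{i=1}^N a_i x_i$, whose pointwise boundedness on each $a\in\{-1,+1\}^\N\subset\ell_\infty$ yields $\sup_N\|T_N\|<\infty$. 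This is essentially what the paper itself does elsewhere (see the proof of Theorem~\ref{th:t51}), so the step is unproblematic once phrased carefully.
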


The following lemma can be proved by using the Orlicz-Pettis Theorem.
\begin{lemma}\label{le:H224}
A pair $\{x_i,y_i\}_{i\in\N}$ is a non-zero framing for a Hilbert space $\cH$ iff $\{y_i,x_i\}_{i\in\N}$ is a non-zero framing for $\cH.$
\end{lemma}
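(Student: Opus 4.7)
The plan is to exploit the symmetry built into the inner product on a Hilbert space, combined with the Orlicz--Pettis theorem (already invoked in Remark \ref{re:36}) to upgrade weak unconditional convergence to norm unconditional convergence. By interchanging the roles of $\{x_i\}$ and $\{y_i\}$, it suffices to prove just one direction: assuming $\{x_i,y_i\}_{i\in\N}$ is a framing for $\cH$, we must produce the reconstruction formula
\[
y \;=\; \sum_{i\in\N} \langle y, x_i\rangle\, y_i
\]
with unconditional convergence in norm, for every $y\in\cH$.

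The first step is to fix $y,z\in\cH$ and pair the original reconstruction $z=\sum_i \langle z,y_i\rangle x_i$ (which converges unconditionally in norm) with $y$ to obtain the scalar identity
\[
\langle z,y\rangle \;=\; \sum_{i\in\N} \langle z,y_i\rangle\, \langle x_i,y\rangle.
\]
Because the original series converges for every rearrangement, so does this scalar series; thus the scalar series is unconditionally convergent. Taking complex conjugates and using $\overline{\langle z,y_i\rangle\langle x_i,y\rangle}=\langle y,x_i\rangle\langle y_i,z\rangle$ gives
\[
\langle y,z\rangle \;=\; \sum_{i\in\N} \langle y,x_i\rangle\,\langle y_i,z\rangle,
\]
and this equality persists under arbitrary reordering of the index set.

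The second step is to interpret this identity as weak unconditional convergence of the series $\sum_i \langle y,x_i\rangle\, y_i$. Indeed, the previous display reads
\[
\langle y,z\rangle \;=\; \left\langle \sum_{i\in\N} \langle y,x_i\rangle\, y_i,\; z\right\rangle
\]
for every $z\in\cH$ and every rearrangement. Hence the partial sums converge to $y$ in the weak topology, unconditionally. Applying the Orlicz--Pettis theorem as recorded in Remark \ref{re:36} upgrades this to norm unconditional convergence, yielding the required reconstruction formula and showing that $\{y_i,x_i\}_{i\in\N}$ is a framing.

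The only subtlety lies in step one, where one must justify that \emph{every} rearrangement of the scalar series still sums to $\langle z,y\rangle$; this is immediate from the unconditional norm convergence of the original vector series combined with continuity of the inner product. Everything else is a direct bookkeeping of complex conjugates and a single application of Orlicz--Pettis, so no deeper obstacle appears.
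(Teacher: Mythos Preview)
Your proof is correct and is precisely the argument the paper has in mind: the paper does not give a detailed proof but simply states that the lemma ``can be proved by using the Orlicz--Pettis Theorem,'' and your use of the conjugate symmetry of the inner product to pass from $\langle z,y\rangle=\sum_i\langle z,y_i\rangle\langle x_i,y\rangle$ to weak unconditional convergence of $\sum_i\langle y,x_i\rangle y_i$, followed by Orlicz--Pettis, is exactly that. The ``non-zero'' qualifier is preserved trivially since swapping the roles of $x_i$ and $y_i$ does not affect whether the vectors are non-zero.
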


The following provides us a sufficient condition under which a
framing induced operator-valued measure has a Hilbertian dilation.

\begin{theorem}\label{pr:Hp3}
Let $\{x_i,y_i\}_{i\in\N}$ be a non-zero framing for a Hilbert space $\cH.$ If $\ \inf\|x_i\|\cdot\|y_i\|>0,$ then we can find $\alpha_i,\beta_i
\in \C, i\in\N$ with $\alpha_i\bar{\beta_i}=1$ such that $\{\alpha_ix_i\}_{i\in\N}$ and $\{\beta_iy_i\}_{i\in\N}$ both are frames for the
Hilbert space $\cH$. Hence the operator-valued measure induced by $\{x_i,y_i\}_{i\in\N}$ has a Hilbertian dilation.
\end{theorem}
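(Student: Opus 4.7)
The plan is to exhibit explicit scalars $\alpha_{i},\beta_{i}$ (for concreteness, real and positive with $\alpha_{i}^{2}=\|y_{i}\|/\|x_{i}\|$ and $\beta_{i}^{2}=\|x_{i}\|/\|y_{i}\|$), so that $\alpha_{i}\bar\beta_{i}=1$ and the two rescaled sequences have the common pointwise norm $\|\alpha_{i}x_{i}\|=\|\beta_{i}y_{i}\|=\sqrt{\|x_{i}\|\|y_{i}\|}$. Once both $\{\alpha_{i}x_{i}\}$ and $\{\beta_{i}y_{i}\}$ are shown to be Bessel sequences, the short Cauchy--Schwarz computation displayed just after Example \ref{ex:e11} upgrades them to a dual pair of frames, and the concluding Hilbertian-dilation assertion is then immediate from Theorem \ref{pr:Hp2}.

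The first step is a two-sided control $c\le\|x_{i}\|\|y_{i}\|\le\sqrt{C}$. The lower bound is the hypothesis. For the upper bound I would start from the unconditionally convergent reconstruction $x=\sum_{i}\langle x,y_{i}\rangle x_{i}$ and apply Lemma \ref{le:H223} to see that, for each fixed $x$, $\sum_{i}|\langle x,y_{i}\rangle|^{2}\|x_{i}\|^{2}<\infty$. A Banach--Steinhaus argument applied to the linear maps $x\mapsto\bigl(\langle x,y_{i}\rangle\|x_{i}\|\bigr)_{i\in F}$ indexed by finite $F\subset\N$ then upgrades this pointwise finiteness to a uniform estimate
\[
\sum_{i}|\langle x,y_{i}\rangle|^{2}\|x_{i}\|^{2}\le C\|x\|^{2},\qquad x\in\cH.
\]
Substituting $x=y_{j}/\|y_{j}\|$ and retaining only the $j$-th summand extracts $\|x_{j}\|\|y_{j}\|\le\sqrt{C}$. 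By Lemma \ref{le:H224} the reversed pair $\{y_{i},x_{i}\}$ is also a framing, and the same argument yields the symmetric uniform bound $\sum_{i}|\langle x,x_{i}\rangle|^{2}\|y_{i}\|^{2}\le C'\|x\|^{2}$.

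The second step is the Bessel estimate for $\{\alpha_{i}x_{i}\}$. Writing $|\alpha_{i}|^{2}=\|y_{i}\|/\|x_{i}\|$ and using the lower bound $\|x_{i}\|\|y_{i}\|\ge c$ to reshape the denominator gives
\[
|\alpha_{i}|^{2}=\frac{\|y_{i}\|^{2}}{\|x_{i}\|\|y_{i}\|}\le\frac{\|y_{i}\|^{2}}{c},
\]
so
\[
\sum_{i}|\langle x,\alpha_{i}x_{i}\rangle|^{2}=\sum_{i}|\alpha_{i}|^{2}|\langle x,x_{i}\rangle|^{2}\le\frac{1}{c}\sum_{i}\|y_{i}\|^{2}|\langle x,x_{i}\rangle|^{2}\le\frac{C'}{c}\|x\|^{2}.
\]
Hence $\{\alpha_{i}x_{i}\}$ is Bessel. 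Interchanging the roles of $x_{i}$ and $y_{i}$ and using the other uniform bound from step one shows that $\{\beta_{i}y_{i}\}$ is Bessel as well. Because $\alpha_{i}\bar\beta_{i}=1$, the original reconstruction formula becomes
\[
x=\sum_{i}\langle x,y_{i}\rangle x_{i}=\sum_{i}\langle x,\beta_{i}y_{i}\rangle\,\alpha_{i}x_{i},
\]
unconditionally, so $(\alpha_{i}x_{i},\beta_{i}y_{i})$ is itself a framing with both components Bessel; the argument immediately after Example \ref{ex:e11} then forces each of $\{\alpha_{i}x_{i}\}$ and $\{\beta_{i}y_{i}\}$ to be a frame. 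The ``hence'' of the theorem is a direct invocation of Theorem \ref{pr:Hp2}.

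The hard part is the first step. The hypothesis only bounds $\|x_{i}\|\|y_{i}\|$ from below, and without the matching upper bound the inequality used in step two collapses. Obtaining that upper bound is not formal, since the unconditional convergence of $\sum\langle x,y_{i}\rangle x_{i}$ only yields an \emph{a priori} pointwise square-summability statement; promoting it to a uniform Bessel-type estimate via Banach--Steinhaus (combined with Lemma \ref{le:H223}) is the non-trivial ingredient that converts the framing hypothesis into the Bessel property of the rescalings.
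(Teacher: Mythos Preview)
Your argument is correct and follows essentially the same route as the paper: the same rescaling $\alpha_i=(\|y_i\|/\|x_i\|)^{1/2}$, $\beta_i=\alpha_i^{-1}$, the same inequality $|\alpha_i|^2=\|y_i\|^2/(\|x_i\|\|y_i\|)\le\|y_i\|^2/c$, and the same Bessel-plus-reconstruction conclusion (the paper cites Lemma~\ref{le:H222} where you use the computation after Example~\ref{ex:e11}). One remark: the upper bound $\|x_i\|\|y_i\|\le\sqrt{C}$ that you extract in step one is a detour---your own step two never uses it, and neither does the paper; the only substantive input is the uniform estimate $\sum_i|\langle x,x_i\rangle|^2\|y_i\|^2\le C'\|x\|^2$, which the paper states only pointwise (via Lemma~\ref{le:H223}) and leaves the Banach--Steinhaus upgrade to ``Bessel'' implicit.
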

\begin{proof}
Since $\{x_i,y_i\}_{i\in\N}$ is a non-zero framing for $\cH,$ we have by Lemma \ref {le:H224} that
\[x=\sum_{i\in \N} \langle x,y_i\rangle x_i=
\sum_{i\in \N} \langle x,x_i\rangle y_i,\] and the series converges unconditionally for all $x\in \cH.$ Applying Lemma \ref{le:H223} to the
sequences $\{\langle x,y_i\rangle x_i\}_{i\in \N}$ and $\{\langle x,x_i\rangle y_i \}_{i\in \N}$, we get
\[\sum_{i\in \N} \left|\langle x,y_i\rangle\right|^2\|x_i\|^2<+\infty,
\quad \mbox{and}\quad \sum_{i\in \N} \left|\langle x,x_i\rangle\right|^2\|y_i\|^2<+\infty.
 \]
Let
$$\alpha_i=(\|y_i\|/\|x_i\|)^{1/2}\quad\mbox{and}\quad
\beta_i=1/\alpha_i=(\|x_i\|/\|y_i\|)^{1/2}.$$ It is easy to see
that
\begin{eqnarray*}
\sum_{i\in \N} \left|\langle
x,\alpha_ix_i\rangle\right|^2&=&\sum_{i\in \N} \frac{\left|\langle
x,x_i\|y_i\|\rangle\right|^2}{\|x_i\|\|y_i\|}\\
&\leq&\frac{1}{\inf\|x_i\|\cdot\|y_i\|}\sum_{i\in \N} \left|\langle
x,x_i\rangle\right|^2\|y_i\|^2<+\infty
\end{eqnarray*}
and
\begin{eqnarray*}
\sum_{i\in \N} \left|\langle
x,\beta_iy_i\rangle\right|^2&=&\sum_{i\in \N} \frac{\left|\langle
x,y_i\|x_i\|\rangle\right|^2}{\|x_i\|\|y_i\|}\\
&\leq&\frac{1}{\inf\|x_i\|\cdot\|y_i\|}\sum_{i\in \N} \left|\langle
x,y_i\rangle\right|^2\|x_i\|^2<+\infty,
\end{eqnarray*}
Hence $\{\alpha_ix_i\}_{i\in\N}$ and $\{\beta_iy_i\}_{i\in\N}$ are
both Bessel sequence. For any $x\in\cH,$ we have
\[x=\sum_{i\in \N} \langle x,\beta_iy_i\rangle \alpha_ix_i=
\sum_{i\in \N} \langle x,\alpha_ix_i\rangle \beta_iy_i.\] By Lemma \ref{le:H223}, we conclude that $\{\alpha_ix_i\}_{i\in\N}$ and
$\{\beta_iy_i\}_{i\in\N}$ both are frames for $\cH.$
\end{proof}

Recall from Dai and Larson \cite{DL} that a \emph{unitary system} $\mathcal {U}$ is a subset of the unitary operators acting on a separable
Hilbert space $\cH$ which contains the identity operator $I.$ We have the following consequence immediately follows from Theorem \ref{pr:Hp3}:

\begin{corollary}\label{co:H38}
Let $\mathscr{U}_1$ and $\mathscr{U}_2$ be unitary systems on a
separable Hilbert space $\cH.$ If there exist $x,y\in\cH$ such that
$\{\mathscr{U}_1x,\mathscr{U}_2y\}$ is a framing of $\cH,$ then
$\{\mathscr{U}_1x\}$ and $\{\mathscr{U}_2y\}$ both are frames for
$\cH.$
\end{corollary}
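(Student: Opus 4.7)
My plan is to reduce Corollary F directly to Theorem \ref{pr:Hp3}, using the fact that elements of a unitary system have constant norm as operators acting on a fixed vector. Enumerate $\mathscr{U}_1=\{U_i\}_{i\in\N}$ and $\mathscr{U}_2=\{V_i\}_{i\in\N}$, and set $x_i:=U_ix$, $y_i:=V_iy$, so that $\{x_i,y_i\}_{i\in\N}$ is the given non-zero framing of $\cH$. I want to verify the hypothesis $\inf_i\|x_i\|\cdot\|y_i\|>0$ of Theorem \ref{pr:Hp3}, and then see that in this special setting the conclusion upgrades from ``rescalable to a pair of frames'' to ``already a pair of frames.''

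First I would argue $x\neq 0$ and $y\neq 0$. If $x=0$, then $x_i=0$ for every $i$, and the framing reconstruction $z=\sum_i\langle z,y_i\rangle x_i=0$ would hold for all $z\in\cH$, which is impossible. By Lemma \ref{le:H224} the roles of the two sequences can be swapped, so the same reasoning gives $y\neq 0$. Since each $U_i$ and $V_i$ is unitary, $\|x_i\|=\|x\|$ and $\|y_i\|=\|y\|$ for all $i$, and hence
\[
\inf_i\|x_i\|\cdot\|y_i\|=\|x\|\cdot\|y\|>0.
\]

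Now I would apply Theorem \ref{pr:Hp3} to produce scalars $\alpha_i,\beta_i\in\C$ with $\alpha_i\bar\beta_i=1$ such that $\{\alpha_ix_i\}_{i\in\N}$ and $\{\beta_iy_i\}_{i\in\N}$ are both frames for $\cH$. The point I would then emphasize is that the explicit choice in the proof of that theorem is $\alpha_i=(\|y_i\|/\|x_i\|)^{1/2}$ and $\beta_i=1/\alpha_i$; in the present setting these reduce to the constants $\alpha:=(\|y\|/\|x\|)^{1/2}$ and $\beta:=1/\alpha$, \emph{independent of} $i$. Since scaling a sequence by a fixed nonzero constant only rescales the frame bounds, $\{x_i\}_{i\in\N}=\alpha^{-1}\{\alpha x_i\}_{i\in\N}$ is a frame for $\cH$, and likewise $\{y_i\}_{i\in\N}$ is a frame for $\cH$, which is exactly the claim.

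There is no real obstacle here: the whole content of the corollary is the observation that the unitary-system hypothesis forces the sequences $\{\|x_i\|\}$ and $\{\|y_i\|\}$ to be constant, which both secures the hypothesis of Theorem \ref{pr:Hp3} automatically and collapses the rescaling in its conclusion to a single global scalar. The only mild subtlety worth writing out is the verification that $x\neq 0$ and $y\neq 0$, which is needed to get a strictly positive infimum of the products of the norms.
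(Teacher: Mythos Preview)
Your proposal is correct and is exactly the argument the paper has in mind: the corollary is stated as an immediate consequence of Theorem \ref{pr:Hp3}, and the only content is the observation you spell out, namely that unitarity forces $\|U_ix\|\cdot\|V_iy\|=\|x\|\cdot\|y\|>0$, so the hypothesis of Theorem \ref{pr:Hp3} holds and the rescaling constants $\alpha_i,\beta_i$ produced there are independent of $i$. Your added care in verifying $x\neq 0$ and $y\neq 0$ (and hence that the framing is non-zero) is a useful detail the paper leaves implicit.
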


We remark that there exist sequences $\{x_{n}\}$ and $\{y_{n}\}$
in a Hilbert space $\mathcal H$ with the following properties:

(i) $x = \sum_{n} \langle x, x_{n}\rangle y_{n}$  hold for all $x$
in a dense subset of $\mathcal H$, and the convergence is
unconditionally.

(ii) $inf ||x_{n}||\cdot ||y_{n}|| >0$.

(iii) $\{x_{n},  y_{n}\}$ is not a framing.

Here is a simple example of this type: Let $\mathcal{H} = L^{2}[0,
1]$, and $g(t) = t^{1/4}$, $f(t) = 1/g(t)$. Define $x_{n}(t) =
e^{2\pi i nt}f(t)$ and $y_{n}(t) = e^{2\pi in t}g(t)$. Then it is
easy to verify $(i)$ and $(ii)$. For $(iii)$, we consider the
convergence of the series
$$
\sum_{n\in \Bbb{Z}}\langle f, x_{n}\rangle y_{n}.
$$
Note that $||\langle f, x_{n}\rangle y_{n}||^{2} = |\langle f,
x_{n}\rangle|^{2} \cdot ||g||^{2}$ and $\{\langle f,
x_{n}\rangle\}$ is not in $\ell^{2}$ (since $f^{2} \notin L^{2}[0,
1]$). Thus, from Lemma \ref{le:H223}, we have that
$\sum_{n}\langle f, x_{n}\rangle y_{n}$ can not be convergent
unconditionally. Therefore $\{x_{n},  y_{n}\}$ is not a framing.

\section{Operator-Valued Measures Induced by Continuous
Frames}

We mainly examine the positive-operator valued measure induced by continuous frames investigated by Gabardo and Han in \cite{GH} and also by
Fornasier and Rauhut in \cite{FR}. At the same time, we will introduce a few new ``frames" including operator-valued continuous frames and
operator-valued $\mu$-frames, both generalize the concept of continuous frames.
\begin{definition}\label{de:43}
Let $\cH$ be a separable Hilbert space and $\Omega$ be a
$\sigma$-locally compact ($\sigma$-compact and locally compact)
Hausdorff space endowed with a positive Radon measure $\mu$ with
$\mbox{supp}\mu=\Omega$. A weakly continuous function
$\cF:\Omega\to \cH$ is called a \emph{continuous frame} if there
exist constants $0 < C_1\le C_2 < \infty$ such that
\begin{eqnarray*}
C_1\|x\|^2 \le \int_\Omega |\langle x,\cF (\omega) \rangle|^2 d
\,\mu(\omega) \le C_2\|x \|^2, \quad \forall\, x\in \cH.
\end{eqnarray*}
\end{definition}
If $C_1 = C_2$,  then the frame is called \emph{tight}. Associated to $\cF $ is the frame operator $S_\cF $ defined  by
\begin{eqnarray*} S_\cF  : \cH \to \cH, \quad \langle S_\cF  (x),y\rangle
:= \int_\Omega \langle x, \cF (\omega)\rangle\cdot\langle \cF
(\omega),y\rangle d \,\mu(\omega).
\end{eqnarray*}
Then $S_\cF $ is a bounded, positive, and invertible operator. We define the following transform associated to $\cF $,
\begin{eqnarray*}
V_\cF  : \cH\to L^2(\Omega, \mu), \quad  V_\cF  (x)(\omega) :=
\langle x, \cF (\omega)\rangle.
\end{eqnarray*}
Its adjoint operator is given  by
\begin{eqnarray*}
V_\cF ^* : L^2(\Omega, \mu) \to \cH, \quad \langle V_\cF
^*(f),x\rangle :=\int_{\Omega} f(\omega)\langle\cF
(\omega),x\rangle d\,\mu(\omega).
\end{eqnarray*}

A weakly continuous function $\cF :\Omega\to \cH$ is called
\emph{Bessel} if there exist a positive constant $C$ such that
\begin{eqnarray*}
\int_\Omega |\langle x,\cF (\omega) \rangle|^2 d \,\mu(\omega) \le
C\|x \|^2, \quad \forall\, x\in \cH.
\end{eqnarray*}
Let $\mathcal{B}$ be the Borel algebra of $\Omega$ and $\cF :\Omega\to \cH$ be Bessel. Then the mapping $$E_\cF :\mathcal{B}\to B(\cH),
\quad \langle E_\cF (B)x,y\rangle=\int_B \langle x,\cF (\omega)\rangle\cdot\langle \cF (\omega),y\rangle d\,\mu(\omega)$$ is well-defined
since we have
\begin{eqnarray*}
|\langle E_\cF (B)x,y\rangle|&=&\Big|\int_B \langle x,\cF
(\omega)\rangle\cdot\langle \cF (\omega),y\rangle
d\,\mu(\omega)\Big|\\&\leq&\int_\Omega |\langle x,\cF
(\omega)\rangle|\cdot|\langle \cF (\omega),y\rangle|
d\,\mu(\omega)\\&\leq&\Big(\int_\Omega |\langle x,\cF
(\omega)\rangle|^2
d\,\mu(\omega)\Big)^{1/2}\cdot\Big(\int_\Omega|\langle \cF
(\omega),y\rangle|^2
d\,\mu(\omega)\Big)^{1/2}\\
&\leq& C \cdot\|x\|\cdot\|y\|.
\end{eqnarray*}
and therefore  $\|E_\cF (B)\|\le C$ for all $E\in \mathcal{B}.$
\begin{lemma}\label{le:44}
$E_\cF $  defined above is a positive operator-valued measure.
\end{lemma}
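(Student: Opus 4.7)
The plan is to verify the two defining conditions: positivity of each operator $E_\cF(B)$, and weak countable additivity of $B\mapsto E_\cF(B)$. Boundedness has already been verified in the paragraph preceding the lemma, using Cauchy--Schwarz together with the Bessel bound.

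First, I would handle positivity. Setting $y=x$ in the defining formula gives
\[
\langle E_\cF(B)x,x\rangle \;=\; \int_B |\langle x,\cF(\omega)\rangle|^2\, d\mu(\omega)\;\geq\; 0,
\]
so $E_\cF(B)$ is a positive (in particular self-adjoint, hence well-defined by polarization) bounded operator on $\cH$ for every Borel set $B$.

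Next, for the measure property, I would fix $x,y\in\cH$ and consider the scalar function $f_{x,y}(\omega):=\langle x,\cF(\omega)\rangle\overline{\langle y,\cF(\omega)\rangle}$. The Bessel hypothesis and Cauchy--Schwarz give
\[
\int_\Omega |f_{x,y}(\omega)|\, d\mu(\omega) \;\leq\; \Bigl(\int_\Omega |\langle x,\cF(\omega)\rangle|^2 d\mu\Bigr)^{1/2}\Bigl(\int_\Omega |\langle y,\cF(\omega)\rangle|^2 d\mu\Bigr)^{1/2}\leq C\,\|x\|\,\|y\|,
\]
so $f_{x,y}\in L^1(\Omega,\mu)$. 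Countable additivity of the scalar Lebesgue integral for a fixed integrable function then gives, for any disjoint sequence $\{B_i\}\subset\mathcal{B}$ with union $B$,
\[
\langle E_\cF(B)x,y\rangle \;=\; \int_B f_{x,y}\, d\mu \;=\; \sum_{i} \int_{B_i} f_{x,y}\, d\mu \;=\; \sum_i \langle E_\cF(B_i)x,y\rangle.
\]
This is precisely weak countable additivity as in Definition \ref{de:35}, so $E_\cF$ is a $B(\cH)$-valued measure; by the Orlicz--Pettis theorem recalled in Remark \ref{re:36}, it is in fact strongly countably additive.

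There is no real obstacle here — the lemma is essentially the statement that pulling a scalar integral through a sesquilinear form yields an operator-valued measure, and the only thing to check is that the integrand is genuinely $L^1$, which is what the Bessel condition was designed to provide. I would finish by remarking that combining the two steps, $E_\cF:\mathcal{B}\to B(\cH)$ is a positive operator-valued measure, as claimed.
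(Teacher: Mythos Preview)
Your proof is correct and follows essentially the same approach as the paper: verify positivity by setting $y=x$, then establish weak countable additivity via countable additivity of the scalar integral of the integrable function $\omega\mapsto \langle x,\cF(\omega)\rangle\langle \cF(\omega),y\rangle$. You are slightly more explicit than the paper in justifying the $L^1$ membership of $f_{x,y}$ (which the paper takes as implicit from the Cauchy--Schwarz estimate preceding the lemma), but the argument is otherwise identical.
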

\begin{proof} The positivity follows from that
\begin{eqnarray*}
\langle E_\cF (B)x,x\rangle=\int_\Omega |\langle x,\cF (\omega)\rangle|^2 d\,\mu(\omega)\geq 0
\end{eqnarray*}
holds for all $x$ in $\cH$.

Assume that $\{B_i\}_{i\in\N}$ is a sequence of disjoint Borel sets
of $\Omega$ with union $B$. Then we have
\begin{eqnarray*}
\sum_{i\in\N}\langle E_\cF (B_i)x,y\rangle
&=&\sum_{i\in\N}\int_{B_i} \langle x,\cF (\omega)\rangle\cdot\langle
\cF (\omega),y\rangle d\,\mu(\omega)\\
&=&\int_B \langle x,\cF (\omega)\rangle\cdot\langle \cF
(\omega),y\rangle d\,\mu(\omega)\\&=&\langle E_\cF
(B)x,y\rangle
\end{eqnarray*}
for all $x,y$ in $\cH$. Thus $E_\cF $ weakly countably additive, and hence it is a positive operator-valued measure.
\end{proof}

\begin{lemma}\label{le:H51}
Let $\cF_1$ and $\cF_2$ be two Bessel functions from $\Omega$ to $\cH$. Then $\cF_1$ and $\cF_2$ induce the same positive operator-valued
measure if and only if $\cF_1=u\cdot \cF_2$ where $u$ is a unimodular function on $\Omega$.
\end{lemma}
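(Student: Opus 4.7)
The ``if'' direction is a straightforward verification: if $\cF_1(\omega) = u(\omega)\cF_2(\omega)$ with $|u(\omega)|=1$, then for every $x,y\in\cH$ and every Borel $B$,
\[\int_B \langle x,\cF_1(\omega)\rangle\langle \cF_1(\omega),y\rangle\,d\mu(\omega) = \int_B |u(\omega)|^2 \langle x,\cF_2(\omega)\rangle\langle \cF_2(\omega),y\rangle\,d\mu(\omega),\]
so $E_{\cF_1}=E_{\cF_2}$.

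For the ``only if'' direction, the plan is to reduce to a pointwise identity and then apply a Cauchy--Schwarz rigidity argument. From $E_{\cF_1}=E_{\cF_2}$, the equality $\langle E_{\cF_1}(B)x,y\rangle=\langle E_{\cF_2}(B)x,y\rangle$ holds for every Borel $B$, so the two Radon--Nikodym densities with respect to $\mu$ must agree pointwise $\mu$-almost everywhere, giving
\[\langle x,\cF_1(\omega)\rangle\,\overline{\langle y,\cF_1(\omega)\rangle} = \langle x,\cF_2(\omega)\rangle\,\overline{\langle y,\cF_2(\omega)\rangle}\]
off a $\mu$-null set $N_{x,y}$ depending on $x,y$.

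Next, I use separability of $\cH$ to remove the dependence on $x,y$. Fix a countable dense subset $\{x_n\}_{n\in\N}\subset \cH$ and set $N=\bigcup_{m,n} N_{x_m,x_n}$, which is still $\mu$-null. For every $\omega\notin N$, the above identity holds for all pairs $x_m,x_n$, and since both sides are continuous in $x$ and in $y$ (for each fixed $\omega$, $\langle\cdot,\cF_i(\omega)\rangle$ is a bounded linear functional), it extends to all $x,y\in\cH$.

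Finally, fix $\omega\notin N$. Setting $x=y=\cF_2(\omega)$ gives $|\langle\cF_2(\omega),\cF_1(\omega)\rangle|^2 = \|\cF_2(\omega)\|^4$, and setting $x=y=\cF_1(\omega)$ gives $\|\cF_1(\omega)\|^4 = |\langle\cF_1(\omega),\cF_2(\omega)\rangle|^2$. Combined, $\|\cF_1(\omega)\|=\|\cF_2(\omega)\|$ and the Cauchy--Schwarz inequality is saturated, forcing $\cF_1(\omega)$ and $\cF_2(\omega)$ to be scalar multiples of one another (when nonzero). Writing $\cF_1(\omega)=u(\omega)\cF_2(\omega)$ where $\cF_2(\omega)\ne 0$, the norm equality yields $|u(\omega)|=1$; where $\cF_2(\omega)=0$ one checks via the same identity that $\cF_1(\omega)=0$, and we set $u(\omega)=1$ there. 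The only delicate point I anticipate is ensuring measurability of $u$, which I plan to handle by defining $u(\omega)=\langle\cF_1(\omega),\cF_2(\omega)\rangle/\|\cF_2(\omega)\|^2$ on $\{\cF_2\ne 0\}$ and expressing the numerator and denominator as pointwise limits of continuous functions via an orthonormal basis, exploiting the weak continuity of $\cF_1,\cF_2$.
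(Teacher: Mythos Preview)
Your argument is correct and reaches the same Cauchy--Schwarz rigidity endgame as the paper, but you take a longer road to the pointwise identity. The paper uses only the diagonal $x=y$: from $\int_B |\langle x,\cF_1(\omega)\rangle|^2\,d\mu = \int_B |\langle x,\cF_2(\omega)\rangle|^2\,d\mu$ for all Borel $B$, together with the standing hypotheses that $\cF_1,\cF_2$ are \emph{weakly continuous} and $\mathrm{supp}\,\mu=\Omega$, it concludes immediately that $|\langle x,\cF_1(\omega)\rangle|=|\langle x,\cF_2(\omega)\rangle|$ for \emph{every} $\omega\in\Omega$ and every $x\in\cH$ (two continuous functions whose integrals agree over all Borel sets coincide on the support of $\mu$). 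Plugging $x=\cF_1(\omega)$ and $x=\cF_2(\omega)$ then gives the equality case in Cauchy--Schwarz exactly as you do.

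So the paper sidesteps the Radon--Nikodym step, the null-set bookkeeping, the separability argument, and the measurability discussion for $u$ entirely; the conclusion holds at every point, not just $\mu$-a.e. Your approach has the compensating virtue that it would still work (yielding an a.e.\ conclusion) for Bessel maps that are merely measurable rather than weakly continuous, but in the present setting the continuity hypothesis lets you simplify considerably.
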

\begin{proof}
Since $E_{\cF_1}=E_{\cF_2}$, we have
$$\int_E |\langle x,
\cF_1(\omega)\rangle|^2 d\,\mu(\omega)=\langle E_{\cF_1}(B)x,x\rangle=\langle E_{\cF_2}(B)x,x\rangle=\int_B |\langle x,
\cF_2(\omega)\rangle|^2 d\,\mu(\omega)$$ for all $B\in\Sigma$ and $x\in \cH.$ Then by the continuity, we have $|\langle x,
\cF_1(\omega)\rangle|=|\langle x, \cF_2(\omega)\rangle|$ for all $x\in\cH$ and $\omega\in\Omega.$ Thus we have $$|\langle \cF_1(\omega),
\cF_2(\omega)\rangle|=\|\cF_1(\omega)\|^2= \|\cF_2(\omega)\|^2=\|\cF_1(\omega)\|\cdot\|\cF_2(\omega)\|.$$ Then there is a unimodular function on
$\Omega$ such that $\cF_1(\omega)=u(\omega)\cdot \cF_2(\omega).$
\end{proof}

From Theorem \ref{th:H39}, we know that $E_\cF$ has a
Hilbert dilation space $\widetilde{M}_{E_\cF}.$ The
following theorem shows that $\widetilde{M}_{E_\cF}$ is
isometric to $L^2(\Omega,\mu).$
\begin{theorem}\label{th:45}
Suppose that $\mathrm{supp}\cF=\Omega. $ Then the dilation Hilbert space $\widetilde{M}_{E_\cF}$ is isometric to $L^2(\Omega,\mu)$ under
the following natural linear surjective isometry
$$U:\widetilde{M}_{E_\cF}\to L^2(\Omega,\mu), \quad
U({E_\cF}_{B,x})(\omega)=\chi_B(\omega)\cdot \langle x,
\cF(\omega)\rangle$$
\end{theorem}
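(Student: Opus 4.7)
The plan is to prove that $U$ is a well-defined linear map on $M_{E_\cF}$ (with its inner product inherited from the Naimark-type construction in the proof of Theorem \ref{th:H39}) that preserves inner products, extend it by continuity to the completion $\widetilde{M}_{E_\cF}$, and then show its range is dense in $L^2(\Omega,\mu)$. Since an isometry between Hilbert spaces with dense range is automatically surjective, this delivers the isometric isomorphism.

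First I would verify inner-product preservation on the spanning set by direct computation. For $f=\sum_{i}c_i E_{B_i,x_i}$ and $h=\sum_j d_j E_{A_j,y_j}$ in $M_{E_\cF}$, the inner product introduced in the proof of Theorem \ref{th:H39} and the defining formula for $E_\cF$ together give
\[
\langle f,h\rangle_{\widetilde{M}_{E_\cF}}=\sum_{i,j}c_i\overline{d_j}\int_{B_i\cap A_j}\langle x_i,\cF(\omega)\rangle\overline{\langle y_j,\cF(\omega)\rangle}\,d\mu(\omega),
\]
while a direct expansion of $\langle Uf,Uh\rangle_{L^2(\Omega,\mu)}$, using $\chi_{B_i}\chi_{A_j}=\chi_{B_i\cap A_j}$, yields exactly the same sum of integrals. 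Hence $U$ is inner-product preserving on $M_{E_\cF}$. In particular it is well defined on equivalence classes (the kernel coincides with the null space of the semi-inner product), linear, and bounded, so it extends uniquely by continuity to an isometry on $\widetilde{M}_{E_\cF}$.

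The main step, and the only nontrivial obstacle, is to establish that $U$ has dense range. Suppose $g\in L^2(\Omega,\mu)$ is orthogonal to every $U(E_{B,x})$; then
\[
\int_B \langle x,\cF(\omega)\rangle\overline{g(\omega)}\,d\mu(\omega)=0 \qquad \forall\,B\in\mathcal{B},\ x\in\cH,
\]
which forces $\langle x,\cF(\omega)\rangle\overline{g(\omega)}=0$ $\mu$-a.e.\ for each $x$. Choosing a countable dense subset $\{x_n\}\subset\cH$, we can find a single $\mu$-null set $N$ outside which the identity holds for every $x_n$ simultaneously. For any $\omega\notin N$ with $\cF(\omega)\neq 0$, density of $\{x_n\}$ yields some $x_n$ with $\langle x_n,\cF(\omega)\rangle\neq 0$, forcing $g(\omega)=0$. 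The hypothesis $\mathrm{supp}\,\cF=\Omega$, combined with $\mathrm{supp}\,\mu=\Omega$, ensures that $\{\omega:\cF(\omega)=0\}$ is $\mu$-null, so $g=0$ in $L^2(\Omega,\mu)$. Thus the range of $U$ is dense, and the extension of $U$ is a surjective linear isometry onto $L^2(\Omega,\mu)$. The harder part is cleanly invoking the support hypothesis to conclude that $\cF$ is nonzero almost everywhere; once that is in hand, everything else is a routine polarization and density argument.
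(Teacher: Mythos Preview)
Your argument is correct and takes a genuinely different route from the paper. The paper proves surjectivity \emph{constructively}: it approximates $\chi_B$ for compact $B$ by using weak continuity of $\cF$ to produce, for each $\omega\in B$, a vector $x_\omega$ with $\langle x_\omega,\cF(\omega)\rangle=1$, then passes to a finite subcover and builds an element of $M_{E_\cF}$ whose image is $\epsilon$-close to $\chi_B$ in $L^2$. You instead argue by duality: any $g\in L^2$ orthogonal to the range must satisfy $\langle x,\cF(\omega)\rangle\,\overline{g(\omega)}=0$ a.e.\ for each $x$, and separability of $\cH$ lets you upgrade this to a single null set. Your approach is shorter and avoids the compactness/covering machinery; the paper's approach has the advantage of being explicit and not requiring separability of $\cH$ (though separability is part of the standing hypotheses anyway).

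One clarification on the point you flagged as ``the harder part'': the paper's own proof uses that $\cF(\omega)\neq 0$ for \emph{every} $\omega\in\Omega$ (it picks, for each $\omega$ in a compact set, some $x_0$ with $\langle x_0,\cF(\omega)\rangle\neq 0$). So in this paper $\mathrm{supp}\,\cF=\Omega$ is being read as the pointwise condition $\cF(\omega)\neq 0$ for all $\omega$, not merely as ``$\{\cF\neq 0\}$ is dense''. Under that reading your step is immediate: $\{\cF=0\}=\emptyset$. If one instead used the closure-of-the-nonzero-set interpretation, neither your argument nor the paper's would go through as stated (a weakly continuous $\cF$ can vanish on a closed nowhere-dense set of positive measure), so there is nothing further to justify here.
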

\begin{proof} Without losing the generality, we can assume that
$\{B_i\}$ is a disjoint countable collection of members of $\Sigma$. Then we have
\begin{eqnarray*}
\left\|\sum_i{E_\cF}_{B_i,x_i}\right\|^2_{\widetilde{M}_{E_\cF}}
&=& \sum_i
\langle E_\cF(B_i)x_i,x_i\rangle\\
&=&\sum_i \int_{B_i} \left|\langle x_i,\cF
(\omega)\rangle\right|^2
d\, \mu(\omega)\\
&=&\int_\Omega \left|\sum_{i} \chi_{B_i}(\omega)\cdot
\langle x_i,\cF (\omega)\rangle\right|^2d\,\mu(\omega)\\
&=&\left\|U\left(\sum_i{E_\cF}_{B_i,x_i}\right)\right\|^2_{
L^2(\Omega,\mu)}.
\end{eqnarray*}
Thus $U$ is an isometry.

Now we prove that $U$ is surjective. Because all the functions $\chi_{B}(\omega) $ are dense in $L^2(\Omega,\mu)$, where $B$ is any set in
$\Omega$. So we only need to approximate $\chi_B(\omega)$ with $0<\mu(B)<\infty$. Since $\mu$ is a Radon measure which is inner regular, without
losing the generality, we can assume that $B$ is compact. Then for any $\omega\in E$, we can find an $x_0\in \cH$ such that $\langle x_0,\cF
(\omega)\rangle\neq 0.$ Let $x_{\omega}=x_0/\langle x_0,\cF (\omega)\rangle.$ Then
\[\langle x_{\omega},\cF(\omega)\rangle=1.\]
Since $\cF$ is a weakly continuous function from $\Omega$ to $\cH,$
there is a neighborhood $\mathcal{U}_{\omega}$ of $\omega$ such that
for all $\upsilon\in \mathcal{U}_{\omega}$,
\[\left|\langle x_{\omega}, \cF(\upsilon) \rangle-1\right|\le
\sqrt{\frac{\epsilon}{\mu(B)}} .\] Then we have $E\subset
\cup_{\omega\in B}\mathcal{U}_{\omega}$. Since $E$ is compact, there
are finitely many $\{\omega_i\}_{1\le i\le N}$ such that $B\subset
\cup_{1\le i\le N}\mathcal{U}_{\omega_i}$, and hence $B=\cup_{1\le
i\le N}(\mathcal{U}_{\omega_i}\cap B)$. We can find a sequence of
disjoint subsets $\widetilde{\mathcal{U}}_{\omega_i}$ of
$\mathcal{U}_{\omega_i}\cap B$ such that $B=\cup_{1\le i\le
N}\widetilde{\mathcal{U}}_{\omega_i}$. Then we have
\begin{eqnarray*}
&&\left\|U\left(\sum_{i=1}^N{E_\cF}_{x_{\omega_i},\widetilde{\mathcal{U}}_{\omega_i}}
\right)-\chi_B(\omega)y\right\|^2_{L^2(\Omega,\mu)}\\
&=&\int_\Omega \left|\sum_{i=1}^N
\chi_{\widetilde{\mathcal{U}}_{\omega_i}}(\omega)\langle
x_{\omega_i},\cF(\omega)\rangle-\chi_B(\omega)\right|^2
d\,\mu(\omega)\\
&=&\sum_{i=1}^N  \int_{\widetilde{\mathcal{U}}_{\omega_i}}\left|
\langle
x_{\omega_i},\cF(\omega)\rangle-1\right|^2d\,\mu(\omega)\\
&\le& \sum_{i=1}^N \mu(\widetilde{\mathcal{U}}_{\omega_i})
    \frac{\epsilon}{\mu(B)}=\epsilon.
\end{eqnarray*}
Therefore $U$ is surjective as claimed.
\end{proof}

Then we can write the orthogonal projection-valued measure $F$ on
the dilation space as follows:

\begin{theorem}\label{th:46} Let $F(B)$ be the orthogonal projection-valued measure in Theorem \ref{th:H39}. Then we have $F(B)=U^* \chi_B\,
U$ for every $B\in\Sigma$.
\end{theorem}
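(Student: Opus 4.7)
The plan is to prove the identity by verifying it on the dense subspace $M_{E_\cF}$ of elementary sums $\sum_i {E_\cF}_{B_i, x_i}$, and then extending by continuity. Since Theorem \ref{th:45} tells us that $U:\widetilde{M}_{E_\cF}\to L^2(\Omega,\mu)$ is a surjective linear isometry between Hilbert spaces, it is unitary, so $U^{*}=U^{-1}$. Hence the claimed identity $F(B)=U^{*}\chi_{B}U$ is equivalent to the intertwining relation $UF(B)=\chi_{B}U$, where $\chi_{B}$ denotes the multiplication operator $f\mapsto \chi_{B}\cdot f$ on $L^{2}(\Omega,\mu)$.

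First I would recall from the construction of $F$ in the proof of Theorem \ref{th:H39} that $F(B)$ acts on generators by
\[
F(B)\bigl({E_\cF}_{A,x}\bigr)={E_\cF}_{B\cap A,\,x},
\]
and recall from Theorem \ref{th:45} that $U\bigl({E_\cF}_{A,x}\bigr)(\omega)=\chi_{A}(\omega)\,\langle x,\cF(\omega)\rangle$. Then the key step is the direct computation on a generator: on the one hand,
\[
\bigl(UF(B)\,{E_\cF}_{A,x}\bigr)(\omega)=U\bigl({E_\cF}_{B\cap A,\,x}\bigr)(\omega)=\chi_{B\cap A}(\omega)\,\langle x,\cF(\omega)\rangle,
\]
while on the other hand,
\[
\bigl(\chi_{B}U\,{E_\cF}_{A,x}\bigr)(\omega)=\chi_{B}(\omega)\chi_{A}(\omega)\,\langle x,\cF(\omega)\rangle=\chi_{B\cap A}(\omega)\,\langle x,\cF(\omega)\rangle.
\]
The two expressions agree pointwise $\mu$-a.e., so $UF(B)=\chi_{B}U$ on $M_{E_\cF}$ by linearity.

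Finally, since $F(B)$ is bounded on $\widetilde{M}_{E_\cF}$ (indeed an orthogonal projection), the multiplication operator $\chi_{B}$ is bounded on $L^{2}(\Omega,\mu)$, and $U$ is an isometry, both sides extend continuously from the dense subspace $M_{E_\cF}$ to all of $\widetilde{M}_{E_\cF}$, so the equality $UF(B)=\chi_{B}U$ holds on $\widetilde{M}_{E_\cF}$. Multiplying on the left by $U^{*}=U^{-1}$ yields $F(B)=U^{*}\chi_{B}U$ for every $B\in\Sigma$. The argument is essentially a bookkeeping verification; the only point requiring minor care is the density and continuous extension, but this is immediate from the isometry property of $U$ and the boundedness of both $F(B)$ and multiplication by $\chi_{B}$, so I do not anticipate any real obstacle.
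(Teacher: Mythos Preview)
Your proof is correct and follows essentially the same approach as the paper: both verify the identity on the dense subspace $M_{E_{\cF}}$ of elementary sums and extend by continuity. The only difference is cosmetic---the paper computes $U^{*}\chi_{B}U$ directly by first identifying $U^{*}$ on functions of the form $\chi_{B}(\omega)\langle x,\cF(\omega)\rangle$ via inner products against ${E_{\cF}}_{A,y}$, whereas you rewrite the claim as the intertwining relation $UF(B)=\chi_{B}U$ and check it immediately on generators; your route is slightly cleaner since it avoids the explicit adjoint computation.
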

\begin{proof}
From Theorem \ref{th:H39}, we know that
$$F(B)\left(\sum_{i}{E_\cF}_{{B_i},{x_i}}\right)=\sum_{i}
  {E_\cF}_{{B\cap B_i},{x_i}}.$$
Also for any $y\in\cH$ and $A\in \Sigma,$ we have
\begin{eqnarray*}
\left\langle U^* \left(\chi_B(\omega)\langle x, \cF(\omega)\rangle\right), {E_\cF}_{A,y}\right\rangle_{\widetilde{M}_{E_\cF}} & =&
\left\langle \chi_B(\omega)\langle x,\cF(\omega)\rangle, U\left({E_\cF}_{A,y}\right)\right\rangle_{L^2(\Omega,\mu)}
\\
&=&\Big\langle \chi_B(\omega)\langle x,\cF(\omega)\rangle,
\chi_A(\omega)\langle y,\cF(\omega)\rangle\Big\rangle_{L^2(\Omega,\mu)}\\
&=& \int_{B\cap A}\langle x,\cF(\omega)\rangle\cdot\langle
\cF(\omega),y\rangle d\,\mu(\omega),
\end{eqnarray*}
and
\begin{eqnarray*}
\left\langle {E_\cF}_{B,x},
{E_\cF}_{A,y}\right\rangle_{\widetilde{M}_{E_\cF}}
&=&\left\langle E_\cF(B\cap A)x, y\right\rangle_\cH
\\
&=&\int_{B\cap A}\langle x,\cF(\omega)\rangle\cdot\langle
\cF(\omega),y\rangle d\,\mu(\omega).
\end{eqnarray*}
Thus
\[U^* \left(\chi_B(\omega)\langle
x,\cF(\omega)\rangle\right)={E_\cF}_{B,x}.\] Since
\begin{eqnarray*}
U^* \chi_B\,U\left(\sum_{i}{E_\cF}_{{B_i},{x_i}}\right)
&=&\sum_{i}U^*
\chi_B(\omega)\left(U{E_\cF}_{{x_i},{E_i}}\right)
\\
&=&\sum_{i}U^* \chi_B(\omega)\chi_{B_i}(\omega)\langle
x_i,\cF(\omega)\rangle\\
&=&\sum_{i}U^* \chi_{B\cap B_i}(\omega)\langle
x_i,\cF(\omega)\rangle\\
&=&\sum_{i}{E_\cF}_{{B\cap B_i},{x_i}}\\
&=&F(B)\left(\sum_{i}{E_\cF}_{{B_i},{x_i}}\right),
\end{eqnarray*}
we get $F(B)=U^* \chi_B\, U$, as claimed.
\end{proof}

\begin{theorem}\label{th:42}
 Suppose that $\mathrm{supp}\,\cF=\Omega.$ If
\[
 L : =\inf\{\|E_{\cF}(B)\|:\|E_{\cF}(B)\|> 0\}>0,
 \]
then $\Omega$ is at most countable, that is, every point in
$\Omega$ is an open set.
\end{theorem}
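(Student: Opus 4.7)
\medskip

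The plan is to show that every point $\omega_0 \in \Omega$ is isolated (so $\{\omega_0\}$ is open), from which the countability of $\Omega$ will follow by $\sigma$-compactness (a discrete Hausdorff space that is $\sigma$-compact is at most countable, since compact discrete sets are finite). I will argue by contradiction: assume $\{\omega_0\}$ is not open and produce a nonempty open set with small $\mu$-measure where $\cF$ nevertheless must be nonzero on a set of positive measure.

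First I would establish the key quantitative estimate on a small neighborhood. By local compactness choose a compact neighborhood $K$ of $\omega_0$. Since $\cF$ is weakly continuous and $K$ is compact, the image $\cF(K)$ is weakly compact in $\cH$; by the Uniform Boundedness Principle it is norm bounded, so $M:=\sup_{\omega\in K}\|\cF(\omega)\|<\infty$. Then for every Borel $B\subseteq K$ and every unit vector $x\in \cH$,
\[
\langle E_{\cF}(B)x,x\rangle=\int_B|\langle x,\cF(\omega)\rangle|^2\,d\mu(\omega)\le M^2\mu(B),
\]
and since $E_{\cF}(B)\ge 0$ this yields $\|E_{\cF}(B)\|\le M^2\mu(B)$. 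Combined with the gap hypothesis, this gives the crucial dichotomy: if $\mu(B)<L/M^2$, then $\|E_{\cF}(B)\|<L$, hence $\|E_{\cF}(B)\|=0$ by definition of $L$, and so $\cF(\omega)=0$ for $\mu$-a.e.\ $\omega\in B$.

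Now suppose $\{\omega_0\}$ is not open, so every open neighborhood of $\omega_0$ contains another point. Since $\mu$ is Radon and $\mu(K)<\infty$, outer regularity produces an open set $V$ with $\omega_0\in V\subseteq K^\circ$ and $\mu(V)<\mu(\{\omega_0\})+L/M^2$. Put $W:=V\setminus\{\omega_0\}$; as $\Omega$ is Hausdorff, $W$ is open, and by our assumption $W$ is nonempty. Moreover $\mu(W)=\mu(V)-\mu(\{\omega_0\})<L/M^2$, so by the dichotomy $\cF=0$ holds $\mu$-a.e.\ on $W$. On the other hand, because $\mathrm{supp}\,\cF=\Omega$ and $W$ is a nonempty open set, there exists $\omega'\in W$ with $\cF(\omega')\ne 0$; picking $x_0\in\cH$ with $\langle x_0,\cF(\omega')\rangle\ne 0$ and using weak continuity, I can find an open neighborhood $W'\subseteq W$ of $\omega'$ on which $|\langle x_0,\cF(\omega)\rangle|^2$ stays bounded away from $0$. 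Finally, $\mathrm{supp}\,\mu=\Omega$ forces $\mu(W')>0$, so $\int_{W'}|\langle x_0,\cF\rangle|^2\,d\mu>0$, contradicting $\cF=0$ $\mu$-a.e.\ on $W$.

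The main obstacle is really the combination in Step~1: one needs a uniform bound $M$ for $\|\cF\|$ near $\omega_0$, and this is the only point where weak continuity of $\cF$ (rather than merely measurability) is used in an essential way; once the estimate $\|E_{\cF}(B)\|\le M^2\mu(B)$ is in hand, the rest is a careful interplay between outer regularity of $\mu$, the Hausdorff property (to detach $\omega_0$ from the rest of $V$), and the two support hypotheses (to manufacture a point in $W$ where both $\cF$ and $\mu$ fail to vanish). Notice that the argument does not require $\cF(\omega_0)\ne 0$; it applies to every $\omega_0\in\Omega$, which is exactly what is needed to conclude discreteness, and hence countability via $\sigma$-compactness.
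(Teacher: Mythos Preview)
Your argument is correct and reaches the same conclusion via a genuinely different endgame. Both proofs hinge on the same local estimate $\|E_{\cF}(B)\|\le M^2\mu(B)$ on a compact neighborhood. From there the paper argues positively: it first shows $\|E_{\cF}(U)\|>0$ (hence $\ge L$) for every nonempty open $U$, deduces $\mu(U)\ge L/M^2$ for all open $U$ in the compact neighborhood, then takes an almost-minimizing open neighborhood $V_{\omega_0}$ of $\omega_0$ and shows that a second point in $V_{\omega_0}$ would allow a Hausdorff split into two disjoint open pieces, each of measure $\ge L/M^2$, overshooting $\mu(V_{\omega_0})$. You instead use outer regularity of the Radon measure to manufacture directly a nonempty open set $W=V\setminus\{\omega_0\}$ with $\mu(W)<L/M^2$, force $\|E_{\cF}(W)\|=0$, and then contradict the two support hypotheses. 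Your route avoids the infimum/splitting bookkeeping; the paper's route avoids the step of passing from $\|E_{\cF}(W)\|=0$ to $\cF=0$ $\mu$-a.e.\ on $W$ (which tacitly uses separability of $\cH$ to handle the uncountably many $x$). In fact you could shortcut your last paragraph: once $\|E_{\cF}(W)\|=0$ with $W$ nonempty open, your own final computation (pick $\omega'\in W$ with $\cF(\omega')\ne0$, find $W'\subseteq W$, use $\mathrm{supp}\,\mu=\Omega$) already yields $\langle E_{\cF}(W')x_0,x_0\rangle>0$, contradicting $E_{\cF}(W)=0$ directly, without ever invoking the a.e.\ vanishing of $\cF$.
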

\begin{proof}Let
$L=\inf\{\|E_{\cF}(B)\|:\|E_{\cF}(B)\|> 0\}>0.$ First we show that for any open subset $U$ of $\Omega$, we have
$\|E_{\cF}(U)\|>0$. Choose $\omega_U\in U$ and $x_U\in \cH$ such that $\langle x_U,\cF(\omega_U)\rangle\neq0$. Then we have
\begin{eqnarray*}
\langle E_{\cF}(U)x_U, x_U\rangle&=& \int_U
\left|\langle x_U,\cF(\omega)\rangle\right|^2 d\,\mu(\omega)\\
&\ge&\int_{\{\omega\in U: |\langle
x_U,\cF(\omega)\rangle|>|\langle x_U,\cF(\omega_U)\rangle|/2\}}
\left|\langle x_U,\cF(\omega)\rangle\right|^2 d\,\mu(\omega)\\
&\ge& \mu\{\omega\in U: |\langle x_U,\cF(\omega)\rangle|>|\langle
x_U,\cF(\omega_U)\rangle|/2\}\cdot |\langle
x_U,\cF(\omega_U)\rangle|^2/4\\
&>&0.
\end{eqnarray*}
Thus $\|E_{\cF}(U)\|>0,$ and so $\|E_{\cF}(U)\|\ge L$ for any open set $U$.

Fix any $\omega_0 \in\Omega$. Since $\Omega$ is locally compact, we can choose a compact neighborhood $U_{\omega_0}$ of $\omega_0$. Since
$\cF(U_{\omega_0})$ is weakly compact, we have $$M:=\sup_{w\in U_{\omega_0}}\|\cF(\omega)\|<\infty.$$ Thus, for any open subset $U\subset
U_{\omega_0}$, we get
\begin{eqnarray*}
|\langle E_\cF (U)x,y\rangle| &=&\left|\int_U \langle x,\cF
(\omega)\rangle\cdot\langle \cF (\omega),y\rangle
d\,\mu(\omega)\right|\\
&\leq&\int_U |\langle x,\cF (\omega)\rangle|\cdot|\langle \cF
(\omega),y\rangle|
d\,\mu(\omega)\\
&\leq&\left(\int_U |\langle x,\cF (\omega)\rangle|^2
d\,\mu(\omega)\right)^{1/2}\cdot\left(\int_U|\langle \cF
(\omega),y\rangle|^2 d\,\mu(\omega)\right)^{1/2}\\
&\leq&\left(\int_U \| x\|^2\cdot\|\cF (\omega)\|^2
d\,\mu(\omega)\right)^{1/2}\cdot\left(\int_U\|\cF
(\omega)\|^2\cdot\|y\|^2
d\,\mu(\omega)\right)^{1/2}\\
&\leq&\left(\int_U \| x\|^2\cdot M^2
d\,\mu(\omega)\right)^{1/2}\cdot\left(\int_U M^2\cdot\|y\|^2
d\,\mu(\omega)\right)^{1/2}\\
&\leq& \mu(U)\cdot M^2  \cdot\|x\|\cdot\|y\|.
\end{eqnarray*}
This implies that $\|E_{\cF}(U)\|\le M^2\mu(U)$.  By $\|E_{\cF}(U)\|\ge L$, we obtain that $\mu(U) \ge L/M^2>0$ for all open subsets
of $U_{\omega_0}.$ Let $$\mathcal {V}_{\omega_0}=\{V: V \,\mbox{is an open subset of }U_{\omega_0} \, \mbox{containing}\, \omega_0\}$$  and
$$\widetilde{L}:=\inf_{V\in \mathcal{V}_{\omega_0}} \mu(V)\ge L/M^2>0.$$ We can choose $V_{\omega_0}\in \mathcal{V}_{\omega_0}$ such that
$\mu(V_{\omega_0})<\widetilde{L}+L/(2M^2).$

Now, we prove that $V_{\omega_0}=\{\omega_0\}.$ If there is
another $\omega_1\in V_{\omega_0}$ and $\omega_1\ne \omega_0$,
since $\Omega$ is Hausdorff, we can find an open set
$W_{\omega_0}\subset V_{\omega_0}\subset U_{\omega_0}$ containing
$\omega_0$, and an open set $W_{\omega_1}\subset
V_{\omega_0}\subset U_{\omega_0}$ containing $\omega_1$ such that
and $W_{\omega_0}\cap W_{\omega_1}=\emptyset$. Since
$W_{\omega_0}\in \mathcal {V}_{\omega_0}$, we have
$\mu(W_{\omega_0})\geq \widetilde{L}$. Thus,
\begin{eqnarray*}
\widetilde{L}+L/(2M^2)>\mu(V_{\omega_0}) \ge
\mu(W_{\omega_0})+\mu(W_{\omega_1})\ge \widetilde{L}+L/M^2,
\end{eqnarray*}
which is a contradiction. So $\{\omega_0\}=V_{\omega_0}$ is an
open set, hence $\Omega$ is at most countable.
\end{proof}

\begin{corollary}\label{co:H48}
If $\cH$ is separable and $E_{\cF}$ is a projection-valued
measure, then $\Omega$ is countable.
\end{corollary}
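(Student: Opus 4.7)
The plan is to reduce this to Theorem \ref{th:42} by checking that its hypothesis $L > 0$ is automatically satisfied when $E_{\cF}$ is projection-valued. Recall from the conventions of Section 3.1 (Definition \ref{de:H51}) that in the Hilbert space setting ``projection-valued'' means ``orthogonal-projection-valued.'' Hence for every $B\in\Sigma$ the operator $E_{\cF}(B)$ is an orthogonal projection on $\cH$, and so its norm is either $0$ or $1$. This immediately yields
\[
L \;:=\; \inf\bigl\{\|E_{\cF}(B)\| : \|E_{\cF}(B)\| > 0\bigr\} \;=\; 1 \;>\; 0.
\]

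Next I would invoke Theorem \ref{th:42} directly: under the standing assumption that $\cF : \Omega \to \cH$ is a continuous frame with $\mathrm{supp}\,\cF = \Omega$ (part of the framework of Definition \ref{de:43}), the conclusion $L > 0$ forces $\Omega$ to be at most countable, with every singleton open. Since $\Omega$ is also $\sigma$-locally compact (hence $\sigma$-compact) by hypothesis, and the discrete topology on $\Omega$ makes every compact set finite, $\Omega$ is a countable union of finite sets and therefore countable.

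The separability of $\cH$ plays its standard role in the background (it is part of the setup of Definition \ref{de:43}); in fact one could obtain countability independently by observing that the projections $\{E_{\cF}(\{\omega\})\}_{\omega\in\Omega}$ are mutually orthogonal and nonzero (each singleton being open of positive $\mu$-measure since $\mathrm{supp}\,\mu = \Omega$), so there can be at most countably many of them in a separable Hilbert space. Either route closes the argument. The only conceptual step is the first one, and it is immediate; there is no genuine obstacle here — the corollary is essentially a packaging of Theorem \ref{th:42} with the observation that orthogonal projections have normalized norm.
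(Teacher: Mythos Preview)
Your proof is correct and matches the paper's intended argument: since orthogonal projections have norm $0$ or $1$, the infimum $L$ in Theorem~\ref{th:42} equals $1$, and the corollary follows immediately. One small attribution slip: the condition $\mathrm{supp}\,\cF=\Omega$ is not part of Definition~\ref{de:43} (that definition only requires $\mathrm{supp}\,\mu=\Omega$) but rather a standing hypothesis carried over from Theorems~\ref{th:45} and~\ref{th:42}; this does not affect your argument.
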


We briefly discuss one generalization of a continuous frame.

\begin{definition}\label{de:H417}
 A function $\mathcal{F}:\Omega\to
B(\cH,\cH_0)$ is called an \emph{operator-valued $\mu$-frame} if
it is weakly Bochner measurable and if there exist two constants
$A,B>0$ such that
$$A\|x\|_\cH^2\le \int_\Omega \|\mathcal{F}(\omega)x\|_{\cH_0}^2 \mathrm{d}\mu(\omega)\le B \|x\|^2_\cH$$
holds for all $x\in \cH.$
\end{definition}

Similar to the continuous frame case we have:

\begin{theorem}\label{th:H418}
Then the mapping
\[\varphi
_{\cF}:\Sigma\to B(\cH) \quad
\langle E_{\cF}(B)x,y\rangle_{\cH}=\int_B\langle\cF(\omega)x,
\cF(\omega)y\rangle_{\cH_0}\mathrm{d}\mu(\omega);\] is an
operator-valued measure.
\end{theorem}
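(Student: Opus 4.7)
The plan is to verify that $E_{\cF}$ is a well-defined $B(\cH)$-valued map and then to establish its countable additivity in the weak operator topology, which by the Orlicz--Pettis theorem (Remark \ref{re:36}) is what is required in Definition \ref{de:35}.

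For each fixed $B \in \Sigma$, first I would show that the sesquilinear form
\[
a_B(x,y) := \int_B \langle \cF(\omega)x, \cF(\omega)y \rangle_{\cH_0}\, d\mu(\omega)
\]
is well-defined and bounded on $\cH \times \cH$. Weak Bochner measurability of $\cF$ and the polarization identity (together with separability of $\cH_0$, if needed, to invoke Pettis' theorem) guarantee that $\omega \mapsto \langle \cF(\omega)x, \cF(\omega)y\rangle_{\cH_0}$ is a measurable scalar function. Applying the Cauchy--Schwarz inequality pointwise and then in $L^2(\mu)$, together with the upper frame bound $\int_\Omega \|\cF(\omega)x\|^2\, d\mu \leq B\|x\|^2$, yields
\[
|a_B(x,y)| \leq \Bigl(\int_B \|\cF(\omega)x\|^2 d\mu\Bigr)^{1/2} \Bigl(\int_B \|\cF(\omega)y\|^2 d\mu\Bigr)^{1/2} \leq B\|x\|\|y\|.
\]
In particular the integrand is in $L^1(\mu)$ so the integral exists. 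By the Riesz representation theorem there is a unique $E_{\cF}(B) \in B(\cH)$ with $\langle E_{\cF}(B)x, y\rangle = a_B(x,y)$, and $\|E_{\cF}(B)\| \leq B$ uniformly in $B \in \Sigma$.

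For weak countable additivity, fix $x, y \in \cH$ and let $f_{x,y}(\omega) := \langle \cF(\omega)x, \cF(\omega)y\rangle_{\cH_0}$, which belongs to $L^1(\Omega, \mu)$ by the bound above. The scalar set function $\nu_{x,y}(B) := \int_B f_{x,y}\, d\mu$ is an ordinary complex measure on $(\Omega, \Sigma)$ and is therefore countably additive by the classical Lebesgue dominated convergence theorem applied to the partial sums $\sum_{i=1}^N \chi_{B_i} f_{x,y} \to \chi_B f_{x,y}$, with dominating function $|f_{x,y}|$. Since $\nu_{x,y}(B) = \langle E_{\cF}(B)x, y\rangle$ for every $B \in \Sigma$, this gives
\[
\langle E_{\cF}(B)x, y\rangle = \sum_i \langle E_{\cF}(B_i)x, y\rangle
\]
for every disjoint decomposition $B = \bigsqcup_i B_i$ and every $x, y \in \cH$, which is precisely Definition \ref{de:35}.

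The only nontrivial point I anticipate is the measurability of $f_{x,y}$: weak Bochner measurability of $\cF$ gives that $\omega \mapsto \cF(\omega)x$ is weakly measurable into $\cH_0$, and one must promote the pairing $\omega \mapsto \langle \cF(\omega)x, \cF(\omega)y\rangle_{\cH_0}$ to a scalar measurable function. In the separable Hilbert space setting this is standard via Pettis' theorem (weak measurability implies strong measurability up to a null set), and everything else reduces to dominated convergence together with the upper frame bound; so the bulk of the argument is a straightforward translation of the continuous frame computation in Lemma \ref{le:44} to the operator-valued setting.
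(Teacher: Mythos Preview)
Your proposal is correct and matches the paper's approach: the paper does not give a separate proof of this theorem but simply states ``Similar to the continuous frame case we have'' and defers to Lemma~\ref{le:44}, which is precisely the argument you carry out (boundedness via Cauchy--Schwarz and the frame upper bound, then weak countable additivity from $\sigma$-additivity of the scalar integral). Your write-up is in fact more careful than the paper's, since you explicitly address the measurability of $\omega\mapsto\langle\cF(\omega)x,\cF(\omega)y\rangle_{\cH_0}$ and the Riesz representation step, both of which the paper leaves implicit.
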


Define an operator $\theta_{\mathcal {F}}:\cH\to L^2(\mu;\cH_0)$
by
$$(\theta_\mathcal{F}x)(\omega)=\mathcal{F}(\omega)x,\qquad \forall \ x\in\cH, \omega\in\Omega.$$
It is easy to see that $\theta_{\mathcal {F}}$ is a bounded linear operator. In fact, it is injective and bounded below. For operator-valued
$\mu$-frames the dilation space is very much similar to the regular frames case.

\begin{theorem}\label{th:H419}
$L^2(\mu;\cH_0)$ is a Hilbert dilation space of  $E_{\cF}$.
\end{theorem}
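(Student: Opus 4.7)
The plan is to exhibit an explicit Hilbert dilation triple $(S,T,F)$ on the Hilbert space $Z=L^{2}(\mu;\cH_{0})$, using the analysis operator $\theta_{\mathcal{F}}$ already defined just before the statement. Specifically, I would set
\[
T:=\theta_{\mathcal{F}}:\cH\to L^{2}(\mu;\cH_{0}),\qquad S:=\theta_{\mathcal{F}}^{\ast}:L^{2}(\mu;\cH_{0})\to\cH,
\]
and define
\[
F:\Sigma\to B(L^{2}(\mu;\cH_{0})),\qquad F(B)f:=\chi_{B}\cdot f\quad(f\in L^{2}(\mu;\cH_{0})),
\]
i.e.\ the multiplication operator by the characteristic function $\chi_{B}$. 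Since $T$ is bounded (this is exactly the upper $\mu$-frame bound) and $S=T^{\ast}$ is therefore also bounded, the main task is to verify that $F$ is an orthogonal projection-valued measure and that $E_{\mathcal{F}}(B)=SF(B)T$ for every $B\in\Sigma$.

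For the first point, each $F(B)$ is obviously a self-adjoint idempotent on $L^{2}(\mu;\cH_{0})$, and $F(\Omega)=I$. Countable additivity in the weak (equivalently strong, by the Orlicz--Pettis theorem invoked in Remark~\ref{re:36}) operator topology follows from the Dominated Convergence Theorem: if $\{B_{i}\}$ is a disjoint countable family with union $B$, then for any $f,g\in L^{2}(\mu;\cH_{0})$,
\[
\langle F(B)f,g\rangle=\int_{B}\langle f(\omega),g(\omega)\rangle_{\cH_{0}}\,d\mu(\omega)=\sum_{i}\int_{B_{i}}\langle f(\omega),g(\omega)\rangle_{\cH_{0}}\,d\mu(\omega)=\sum_{i}\langle F(B_{i})f,g\rangle.
\]
Thus $F$ is an orthogonal projection-valued measure on $(\Omega,\Sigma)$.

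For the second point, a direct computation using the definitions gives, for all $x,y\in\cH$,
\[
\langle SF(B)Tx,y\rangle_{\cH}=\langle F(B)\theta_{\mathcal{F}}x,\theta_{\mathcal{F}}y\rangle_{L^{2}(\mu;\cH_{0})}=\int_{\Omega}\chi_{B}(\omega)\langle\mathcal{F}(\omega)x,\mathcal{F}(\omega)y\rangle_{\cH_{0}}\,d\mu(\omega),
\]
which is precisely $\langle E_{\mathcal{F}}(B)x,y\rangle_{\cH}$ by the defining formula in Theorem~\ref{th:H418}. Hence $E_{\mathcal{F}}(B)=SF(B)T$ for every $B\in\Sigma$, so $L^{2}(\mu;\cH_{0})$ is a Hilbert dilation space in the sense of Definition~\ref{de:H31}.

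The only nonroutine point is the verification that the multiplication-valued map $B\mapsto M_{\chi_{B}}$ is genuinely a (weakly, hence strongly) countably additive projection-valued measure on the possibly non-$\sigma$-finite space $(\Omega,\mu)$. This is where one relies on the integrability of $\langle f,g\rangle_{\cH_{0}}$ (which lies in $L^{1}(\mu)$ by Cauchy--Schwarz in $L^{2}(\mu;\cH_{0})$) to legitimize the dominated convergence step; no Bessel hypothesis on $\mathcal{F}$ is used in this part, only the Hilbert-valued $L^{2}$ theory.
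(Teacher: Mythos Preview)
Your proof is correct and follows essentially the same approach as the paper: the paper likewise takes $T=\theta_{\mathcal{F}}$, $S=\theta_{\mathcal{F}}^{\ast}$, and $F(B)=M_{\chi_B}$ (written there as restriction to $B$), and verifies $E_{\mathcal{F}}(B)=\theta_{\mathcal{F}}^{\ast}F(B)\theta_{\mathcal{F}}$ by the identical inner-product computation. You actually supply more detail than the paper does on the countable additivity of $F$, which the paper simply asserts.
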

\begin{proof}
Define a mapping $F_{\mathcal {F}}:\Sigma\to B(L^2(\mu;\cH_0))$
 by
\begin{displaymath}
(F_{\cF}(B)f)(\omega)=\left\{\begin{array}{ll}
f(\omega),&\textrm{$\omega\in B$},\\
0,&\textrm{$\omega\neq 0$}.
\end{array}\right.
\end{displaymath}
Then $F_{\mathcal {F}}$ is a self-adjoint projection-valued
measure.

Since
\begin{eqnarray*}
\left\langle\theta^{\ast}_{\mathcal
{F}}F_{\cF}(B)\theta_{\mathcal {F}}x,y\right\rangle_\cH
&=&\left\langle F_{\cF}(B)\theta_{\mathcal
{F}}x,\theta_{\mathcal {F}} y\right\rangle_{L^2(\mu;\cH_0)}
\nonumber\\
&=&\left\langle F_{\cF}(B)\mathcal {F}(\omega)x,\mathcal
{F}(\omega)y\right\rangle_{L^2(\mu;\cH_0)}\nonumber\\
&=& \left\langle \chi_E(\omega)\mathcal {F}(\omega)x,\mathcal
{F}(\omega)y\right\rangle_{L^2(\mu;\cH_0)}\nonumber\\
&=&\int_B \langle
\mathcal{F}(\omega)x,\mathcal{F}(\omega)x\rangle_{\cH_0}d\mu(\omega)\nonumber\\
&=&\langle E_{\cF}(B)x,y\rangle_{\cH}
\end{eqnarray*}
Thus $L^2(\mu;\cH_0)$ is a Hilbert dilation space of
$E_{\cF}$.
\end{proof}

The operator $\theta_{\mathcal {F}}$ is  the usual analysis operator and $\theta^{\ast}_{\mathcal {F}}$ is the usual synthesis operator. The
frame operator on $\cH$ is defined by
$$S_{\mathcal{F}}:\cH\to\cH,\qquad S_{\mathcal{F}}=\theta^{\ast}_{\mathcal {F}}\theta_{\mathcal
{F}}.$$ For any $x,y\in\cH,$ we have
\begin{eqnarray*}
\langle S_{\mathcal {F}}x,y\rangle_{\cH}
&=&\langle \theta_{\mathcal {F}}x,\theta_{\mathcal {F}}y\rangle_{L^2(\mu;\cH_0)}\nonumber\\
&=&\int_\Omega \langle
\mathcal{F}(\omega)x,\mathcal{F}(\omega)y\rangle_{\cH_0}d\mu(\omega)\nonumber\\
&=&\int_\Omega \langle
\mathcal{F}^{\ast}(\omega)\mathcal{F}(\omega)x,y\rangle_{\cH}d\mu(\omega)\nonumber\\
\end{eqnarray*}
Thus
\[
S_{\mathcal{F}}x=\int_\Omega\mathcal{F}^{\ast}(\omega)\mathcal{F}(\omega)xd\mu(\omega).
\]
\[
\theta^{\ast}_{\mathcal{F}}f=\int_\Omega\mathcal{F}^{\ast}(\omega)f(\omega)d\mu(\omega),
\]
where the equation is in weak sense, i.e.
\[\langle\theta^{\ast}_{\mathcal {F}}f,x\rangle_{\cH}
=\int_\Omega \langle \mathcal{F}^{\ast}(\omega)
f(\omega),x\rangle_{\cH}d\mu(\omega),\] and
\[\langle S_{\mathcal {F}}x,y\rangle_{\cH}
=\int_\Omega \langle
\mathcal{F}^{\ast}(\omega)\mathcal{F}(\omega)x,y\rangle_{\cH}d\mu(\omega).
\]

Therefore for each operator-valued $\mu$-frame,  we have four associated  operator-valued measures on $(\Omega,\Sigma)$, namely:
\begin{enumerate}
\item[(i)] The frame operator-valued measure $E_{\mathcal
{F}}:\Sigma\to B(\cH)$ is defined by
\[\langle E_{\mathcal {F}}(B)x,y\rangle_{\cH}=\int_B\langle\mathcal {F}(\omega)x,
\mathcal {F}(\omega)y\rangle_{\cH_0}d\mu(\omega);\]

\item[(ii)] The analysis operator-valued measure $\alpha_{\mathcal
{F}}:\Sigma\to B(\cH,L^2(\mu;\cH_0))$ is defined by
\begin{displaymath}
(\alpha_{\mathcal {F}}(B)x)(\omega)=\left\{\begin{array}{ll}
\cF(\omega)x,&\textrm{$\omega\in B$},\\
0,&\textrm{$\omega\neq 0$};
\end{array}\right.
\end{displaymath}

\item[(iii)] The synthesis operator-valued measure
$\sigma_{\mathcal {F}}:\Sigma\to B(L^2(\mu;\cH_0),\cH)$ is defined
by
\[\langle\sigma_{\cF}(B)f,x\rangle_{\cH}=\int_B\langle f(\omega),\mathcal {F}(\omega)x\rangle_{\cH_0}d\mu(\omega);\]

\item[(iv)] The self-adjoint projection-valued measure
$F_{\mathcal {F}}:\Sigma\to B(L^2(\mu;\cH_0))$ is defined by
\begin{displaymath}
(F_{\cF}(E)f)(\omega)=\left\{\begin{array}{ll}
f(\omega),&\textrm{$\omega\in E$},\\
0,&\textrm{$\omega\neq 0$}.
\end{array}\right.
\end{displaymath}
\end{enumerate}

\chapter{Dilations of Maps}

In this chapter we establish some dilation results for general
linear mappings of algebras, mainly focusing on (not necessarily
cb-maps) on von Neumann algebras, and more generally on Banach
algebras. The ideas for our proofs come indirectly from our
methods in Chapter 2 for OVM's.

We begin with a possibly known purely algebraic result (Proposition
\ref{th:t50}) which shows that dilations of linear maps are always
possible even in the absence of any topological structure. In the
presence of additional hypotheses stronger results are possible:
When a domain algebra, mapping and range space have strong
continuity and/or structural properties we seek similar properties
for the dilation. This plan led to our other results. Theorem
\ref{th:5B1} states that for any Banach algebra $\mathscr{A}$ and
any bounded linear operator $\phi$ from $\mathscr{A}$ to $B(\cH)$ on
a Banach space $\cH$, there exist a Banach space $Z,$ a bounded
linear unital homomorphism $\pi:\mathscr{A}\to B(Z)$, and bounded
linear operators $T:\cH\to Z$ and $S:Z\to\cH$ such that
\[\phi(a)=S\pi(a)T\] for all $a\in\mathscr{A}.$   In the case that the Banach algebra is an abelian purely atomic von Neumann algebra $\mathscr{A}$
and $\cH$ is a Hilbert space, and $\phi$ is normal (i.e. ultraweakly continuous), then there is a
normal  dilation $\pi$ (Theorem \ref{th:t531}).  If $\phi$ is not cb then the dilation space cannot be
a Hilbert space; the normality of the dilation is with respect to the natural ultraweak topology on $B(Z)$.
It is not known the extent to which this result can be generalized (i.e. achieving normality of the dilation).

\section{Algebraic Dilations}

\begin{proposition}\label{th:t50}
If $A$ is unital algebra, $V$  a vector space, and
$\phi:A\rightarrow L(V)$ a linear map, then there exists a vector
space $W$, a unital homomorphism $\pi:A\rightarrow L(V)$, and linear
maps $T:V\rightarrow W$, $S:W\rightarrow V$, such that
$$\phi(\cdot)=S\pi(\cdot) T.$$
\end{proposition}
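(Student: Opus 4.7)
The plan is to construct the dilation explicitly using an algebraic tensor product, which is the most natural ``free'' construction available in the purely algebraic setting. Let $\mathbb{F}$ denote the ground field and set
\[
W := A \otimes_{\mathbb{F}} V.
\]
This vector space carries a canonical left $A$-module structure, so I will define $\pi : A \to L(W)$ on elementary tensors by
\[
\pi(a)(b \otimes v) := (ab) \otimes v
\]
and extend by linearity. Bilinearity of this assignment in $(b,v)$ ensures $\pi(a)$ is a well-defined linear endomorphism of $W$, and a direct check gives $\pi(a_1 a_2) = \pi(a_1)\pi(a_2)$ and $\pi(1_A) = \mathrm{id}_W$, so $\pi$ is a unital homomorphism.

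Next I would define the ``analysis'' map $T : V \to W$ by $T(v) := 1_A \otimes v$, which is obviously linear, and the ``synthesis'' map $S : W \to V$ on elementary tensors by
\[
S(b \otimes v) := \phi(b)v.
\]
The assignment $(b,v) \mapsto \phi(b)v$ is bilinear (linearity in $b$ uses linearity of $\phi$, and linearity in $v$ uses that $\phi(b) \in L(V)$), so by the universal property of the tensor product $S$ extends uniquely to a linear map $W \to V$.

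The verification that $S \pi(a) T = \phi(a)$ is then a one-line calculation: for any $v \in V$,
\[
S\pi(a)T(v) \;=\; S\pi(a)(1_A \otimes v) \;=\; S(a \otimes v) \;=\; \phi(a)v.
\]
There is essentially no obstacle here, since in the purely algebraic setting we have no continuity, normality, or boundedness requirements to preserve; the tensor product $A \otimes V$ is a universal bilinear construction that turns the bilinear pairing $(a,v) \mapsto \phi(a)v$ into a linear map, and left multiplication on the first tensor factor automatically yields a homomorphism. The interesting content of the paper lies in later chapters, where one must upgrade this free construction to respect Banach space norms, completely bounded structure, and ultraweak continuity; those are the real obstacles, but none of them appears in this proposition.
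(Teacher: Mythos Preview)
Your proof is correct and uses essentially the same idea as the paper: both build $W$ from pairs $(a,v)$, take $\pi$ to be left multiplication on the $A$-factor, set $T(v)=1_A\otimes v$, and set $S(a\otimes v)=\phi(a)v$. The only difference is representational: the paper realizes these elements as functions $\alpha_{a,x}\in L(A,V)$ defined by $\alpha_{a,x}(b)=\phi(ba)x$, and takes $W=\mathrm{span}\{\alpha_{a,x}\}\subset L(A,V)$. This is precisely the image of your tensor product $A\otimes V$ under the natural map $a\otimes x\mapsto \alpha_{a,x}$, so the paper's $W$ is a quotient of yours (by the relations $\phi$ already satisfies). Your construction has the advantage that well-definedness of $S$ and $\pi$ is automatic from the universal property of $\otimes$, whereas the paper's function-space version requires a (routine, unstated) check; the paper's version has the advantage of being smaller and of foreshadowing the $B(\mathscr{A},\cH)$-norm construction used later in the Banach algebra dilation theorem.
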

\begin{proof}
For $a\in A, x\in V$, define $\alpha_{a,x}\in L(A,V)$ by
$$\alpha_{a,x}(\cdot):=\phi(\cdot a) x.$$ Let $W:=span\{\alpha_{a,x}:a\in A,x\in V\}\subset L(A,V).$
Define $\pi:A\rightarrow L(W)$ by
$\pi(a)(\alpha_{b,x}):=\alpha_{ab,x}.$ It is easy to see that
$\pi$ is a unital homomorphism. For $x\in V$ define $T:V
\rightarrow L(A,V)$ by $T_x:=\alpha_{I,x}=\phi(\cdot
I)x=\phi(\cdot)x$. Define $S:W\rightarrow W$ by setting
$S(\alpha_{a,x}):=\phi(a)x$ and extending linearly to $W$. If
$a\in A, x\in V$ are arbitrary, we have
$S\pi(a)Tx=S\pi(a)\alpha_{I,x}=S \alpha_{a,x}=\phi(a)x.$ Hence
$\phi=S\pi T.$
\end{proof}

\section{The Commutative Case}

We first examine a mapping from the commutative $C^*$-algebra
$\ell_\infty$ into $B(\cH)$ which is induced by a framing on a
Hilbert space $\cH$.  Here $\ell_{\infty}$ means with respect to a
countable or finite index set $J$, and it is well known that every
separably acting purely atomic abelian von Neumann algebra is
equivalent to some $\ell_{\infty}$ via an ultraweakly continuous
$*$-isomorphism.

\begin{theorem}\label{th:t51}
Let $\cH$ be a separable Hilbert space and let $(x_i,y_i)$ be a
framing of $\cH.$ Then the mapping $\phi$ from $\ell_\infty$ into
$B(\cH)$ defined by
\begin{eqnarray*}\label{eq:A1}
\phi:\ell_\infty\to B(\cH),\qquad (a_i)\to {\sum}^{SOT} a_ix_i\otimes
y_i
\end{eqnarray*}
is well-defined, unital, linear and ultraweakly continuous.
\end{theorem}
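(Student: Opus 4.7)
The plan is to check each of the four claims in turn. Linearity of $\phi$ on $\ell_\infty$ is immediate from the linear dependence of the partial sums $T_N(a):=\sum_{i=1}^N a_i x_i\otimes y_i$ on the coefficient sequence $a=(a_i)$, and the unital property $\phi(\mathbf{1})=I_\cH$ will drop out from the defining framing identity $h=\sum_i\langle h,y_i\rangle x_i$. So the substantive work is (a) well-definedness/boundedness of $\phi(a)$ for each $a\in\ell_\infty$, and (b) ultraweak (i.e.\ $\sigma(\ell_\infty,\ell_1)$-to-$\sigma(B(\cH),B(\cH)_*)$) continuity of the map $a\mapsto\phi(a)$.

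For well-definedness, fix $a\in\ell_\infty$ and $h\in\cH$ and aim to show that $\sum_i a_i\langle h,y_i\rangle x_i$ converges unconditionally in $\cH$. The framing hypothesis gives unconditional convergence of $\sum_i\langle h,y_i\rangle x_i$ to $h$, and I would invoke the bounded multiplier test for unconditionally convergent Banach-space series: multiplying an unconditionally convergent series termwise by any bounded scalar sequence preserves unconditional convergence. This produces the pointwise SOT limit $\phi(a)h$. To see that $\phi(a)$ is a bounded operator, observe that for each $h$ the partial sums $T_N(a)h$ converge, so by Banach--Steinhaus the norms $\|T_N(a)\|$ are uniformly bounded; the SOT limit of a uniformly norm-bounded net of bounded operators is itself bounded. (A cheap uniform bound for $\|\phi(a)\|$ in terms of $\|a\|_\infty$ and the framing constants can then be read off.)

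For ultraweak continuity, I would use the standard duality criterion: $\phi$ is ultraweakly continuous iff $\omega\circ\phi$ is weak*-continuous on $\ell_\infty$ for every normal functional $\omega$ on $B(\cH)$. Every such $\omega$ has the form $\omega(S)=\mathrm{tr}(TS)$ with $T$ trace class, and weak*-continuity on $\ell_\infty$ amounts to identifying $\omega\circ\phi$ with an element of $\ell_1$. The computation to aim for is
\[
\mathrm{tr}(T\phi(a))=\sum_i a_i\,\langle Tx_i,y_i\rangle,
\]
which I would justify by applying $\mathrm{tr}(T\,\cdot\,)$ to the uniformly bounded SOT-convergent sequence $T_N(a)\to\phi(a)$ and using that $\mathrm{tr}(T\,\cdot\,)$ is WOT-continuous on norm-bounded sets when $T$ is trace class. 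The ultraweak continuity then reduces to the assertion that, for every trace-class $T$, the sequence $b_i(T):=\langle Tx_i,y_i\rangle$ lies in $\ell_1$.

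The delicate step --- and the one I expect to be the main obstacle --- is precisely this $\ell_1$-summability. My plan is to extract it from the operator-valued-measure framework of Chapter~2. Let $E:2^{\N}\to B(\cH)$ be the OVM $E(B)=\sum_{i\in B}^{SOT}x_i\otimes y_i$ induced by the framing, so $\sup_B\|E(B)\|<\infty$ by Remark~\ref{re:38}. Define the scalar set function $\mu_T(B):=\mathrm{tr}(T\,E(B))$. Writing $T=\sum_n\sigma_n u_n\otimes v_n$ with $\sum_n\sigma_n\|u_n\|\|v_n\|<\infty$ gives $\mu_T(B)=\sum_n\sigma_n\langle E(B)u_n,v_n\rangle$; the weak countable additivity of $E$ combined with the uniform bound $\|E(\cdot)\|\le M$ lets me apply dominated convergence termwise to conclude that $\mu_T$ is a countably additive complex measure on $(\N,2^{\N})$. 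The classical theorem that every countably additive complex measure on a $\sigma$-algebra has finite total variation then yields
\[
\sum_i|b_i(T)|=\sum_i|\mu_T(\{i\})|\le|\mu_T|(\N)<\infty,
\]
so $(b_i(T))\in\ell_1$ as required. This closes the ultraweak continuity argument and completes the proof.
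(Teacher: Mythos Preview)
Your proof is correct. The well-definedness portion is essentially the same as the paper's: both use the bounded-multiplier property of unconditionally convergent series together with the Uniform Boundedness Principle to obtain a constant $K_u=\sup_{\|h\|\le 1}\sup_{|\sigma_i|\le 1}\big\|\sum_i\sigma_i\langle h,y_i\rangle x_i\big\|<\infty$, which simultaneously gives SOT-convergence of $\sum_i a_i x_i\otimes y_i$ and the bound $\|\phi\|\le K_u$.

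For ultraweak continuity you take a genuinely different route. The paper works with an explicit net $(a^\lambda)\to 0$ weak$^*$, writes a trace-class $T$ via its polar decomposition $T=U\sum_j\lambda_j e_j\otimes e_j$, and establishes the absolute double-sum bound
\[
\sum_i\sum_j\big|a_i\lambda_j\langle Ue_j,y_i\rangle\langle x_i,e_j\rangle\big|\le \|a\|_\infty\Big(\sum_j\lambda_j\Big)K_u<\infty
\]
by directly invoking the constant $K_u$; Fubini then lets one rewrite $\mathrm{tr}(\phi(a^\lambda)T)=\sum_i a_i^\lambda\big(\sum_j\lambda_j\langle Ue_j,y_i\rangle\langle x_i,e_j\rangle\big)$ and conclude. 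You instead verify the predual criterion abstractly: using the decomposition $T=\sum_n\sigma_n u_n\otimes v_n$ and the uniform bound $\sup_B\|E(B)\|<\infty$ you show that $\mu_T(B)=\mathrm{tr}(T E(B))$ is a countably additive complex measure on $2^{\N}$, and then appeal to the classical fact that such measures have finite total variation to get $(\langle Tx_i,y_i\rangle)_i\in\ell_1$. Both arguments ultimately prove the same $\ell_1$-summability; yours is more modular and avoids the explicit spectral-theorem bookkeeping, while the paper's direct estimate yields an explicit bound $\sum_i|\langle Tx_i,y_i\rangle|\le K_u\|T\|_{S_1}$ in one stroke.
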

\begin{proof}
Since $(x_i,y_i)$ is a framing of $\cH,$ for any $x\in\cH,$
\[x=\sum_i\langle x, y_i\rangle x_i\]
converges unconditionally.

For any $(a_i)\in c_{00}$, define a bounded operator $U_{(a_i)}$ as
follows
\[U_{(a_i)}:\cH\to\cH,\quad U_{(a_i)}(x)=\sum a_i\left\langle x, y_i\right\rangle
x_i.\] For any $x\in\cH,$ since $\sum_i\langle x, y_i\rangle x_i$
converges unconditionally, by \cite{DJT} Theorem 1.9, we know that
\[\sup_{(b_i)\in
B_1({\ell_\infty})}\left\|\sum b_i\left\langle x, y_i\right\rangle
x_i\right\|<+\infty.\]  Thus,
\begin{eqnarray*}\label{eq:A11}
\sup_{(a_i)\in
B_1({c_{00}})}\left\|U_{(a_i)}(x)\right\|\leq\sup_{(b_i)\in
B_1({\ell_\infty})}\left\|\sum b_i\left\langle x, y_i\right\rangle
x_i\right\|<+\infty.
\end{eqnarray*}
Then by the Uniform Boundedness Principle,
\begin{eqnarray*}\label{eq:A12}
\sup_{(a_i)\in B_1({c_{00}})}\left\|U_{(a_i)}\right\|<+\infty.
\end{eqnarray*}
It follows that
\begin{eqnarray*}\label{eq:A2}
K_u=\sup_{x\in B_1(\cH)}\sup_{(\sigma_i)\subset\mathbb{D}}
\left\|\sum\sigma_i\left\langle x, y_i\right\rangle
x_i\right\|<+\infty.
\end{eqnarray*}
Thus, for all $(a_i)\in \ell_\infty$ and $x\in\cH,$
\begin{eqnarray*}\label{eq:A3}
\left\|\sum a_i\left\langle x, y_i\right\rangle x_i\right\| \leq K_u
\|a_i\|_{\ell_\infty} \|x\|.
\end{eqnarray*}
Hence $F$ is well-defined, unital, linear and bounded with
\[\|F\|_{B(\ell_\infty,B(\cH))}\leq K_u.\]
Now we prove that $F$ is ultraweakly continuous. If there is a net $(a_i^\lambda)$ converges to $0$ in the ultraweakly topology, then for any
$(\gamma_i)\in \ell_1,$ $\sum a_i^\lambda \gamma_i\to 0.$ Let $T$ belong to the trace class $S_1(\cH).$ By the polar decomposition, $T=U|T|$
where $U$ is a partial isometry. Moreover, recall that $S_1(\cH)$ is the subset of the compact operators $K(\cH),$ $|T|$ is a self-adjoint
compact operator. Thus there is an orthonormal basis $(e_i)$ and a sequence $\lambda_i\geq 0$ so that
\[|T|=\sum\lambda_ie_i\otimes e_i\]
with $\|T\|_{S_1}=tr(|T|)=\sum_i\lambda_i<\infty.$ Then for all
$(a_i)\in \ell_\infty, (\gamma_j)\in\ell_1$ and $(u_j),(v_j)\subset
B_1(\cH),$ we have
\begin{eqnarray*}\label{eq:A4}
\sum_i\sum_j\left|a_i\gamma_j\left\langle u_j, y_i\right\rangle\left\langle x_i, v_j\right\rangle\right|\nonumber
&\leq&\|(a_i)\|_{\infty}\sum_j|\gamma_j|\sum_i\left|\left\langle u_j, y_i\right\rangle
\left\langle x_i, v_j\right\rangle\right|\nonumber\\
&=&\|(a_i)\|_{\infty}\sum_j|\gamma_j|\sum_i\theta_{i,j}\left\langle
u_j, y_i\right\rangle\left\langle x_i,
v_j\right\rangle \nonumber\\
&=&\|(a_i)\|_{\infty}\sum_j|\gamma_j|\left\langle\sum_i\theta_{i,j}\left\langle
u_j, y_i\right\rangle x_i, v_j\right\rangle\nonumber\\
&\leq&\|(a_i)\|_{\infty}\sum_j|\gamma_j|\sup_j\left\|\sum_i\theta_{i,j}\left\langle
u_j, y_i\right\rangle x_i\right\|\nonumber\\
&\leq&\|(a_i)\|_{\infty}\sum_j|\gamma_j|K_u<\infty,
\end{eqnarray*}
where $\overline{\theta_{i,j}} =sgn\left\{\left\langle u_j, y_i\right\rangle\left\langle x_i, v_j\right\rangle\right\}$. So we have
\[\sum_i\sum_j\left|a^\lambda_i\lambda_j\left\langle Ue_j, y_i\right\rangle\left\langle x_i, e_j\right\rangle\right|<\infty\] and
\[\sum_i\left|\sum_j\lambda_j\left\langle Ue_j,
y_i\right\rangle\left\langle x_i, e_j\right\rangle\right|<\infty.\] Therefore
\begin{eqnarray*}\label{eq:A5}
tr\left(\phi(a^\lambda_i)T\right)
&=&\sum_j\left\langle \phi(a^\lambda_i)Te_j, e_j\right\rangle\nonumber\\
&=&\sum_j\left\langle \phi(a^\lambda_i)U\left(\sum_k\lambda_ke_k\otimes e_k\right)e_j, e_j\right\rangle\nonumber\\
&=&\sum_j\left\langle \phi(a^\lambda_i)U\lambda_je_j, e_j\right\rangle\nonumber\\
&=&\sum_j\lambda_j\left\langle \sum_ia^\lambda_ix_i\otimes y_iUe_j, e_j\right\rangle\nonumber\\
&=&\sum_j\lambda_j\sum_ia^\lambda_i\left\langle Ue_j, y_i\right\rangle\left\langle x_i, e_j\right\rangle, \\
&=&\sum_ia^\lambda_i\sum_j\lambda_j\left\langle Ue_j, y_i\right\rangle\left\langle x_i, e_j\right\rangle,
\end{eqnarray*}
which converges to $0$, as claimed.
\end{proof}

The main purpose of this section is to show that for every
ultraweakly continuous mapping $\phi$ from a purely atomic abelian
von Neumann algebra $\mathcal{A}$ into $B(\cH)$, we can find a
Banach space $Z$, an ultraweakly continuous unital homomorphism
$\pi$ from $\mathcal{A}$ into $B(Z)$,  and bounded linear
operators $T$ and $S$ such that for all $a\in \mathcal{A}$,
\[\phi(a)=S\pi(a)T.\] This result differs from Stinespring's dilation because the map $\phi$ here
is not necessarily completely bounded and consequently the
dilation space is not necessarily a Hilbert space.

While the ultraweak  topology on $B(\cH)$ for a Hilbert space
$\cH$ is well-understood,  we define the ultraweak topology on
$B(X)$ for a Banach space $X$ through tensor products: Let
$X\otimes Y$ be the tensor product of the Banach space $X$ and
$Y.$ The projective norm on $X\otimes Y$ is defined by:
\[\|u\|_{\wedge}=\inf\left\{\sum_{i=1}^n\|x_i\|\|y_i\|:u=\sum_{i=1}^n x_i\otimes y_i\right\}.\]
We will use $X\otimes_{\wedge} Y$ to denote the tensor product
$X\otimes Y$ endowed with the projective norm
$\|\cdot\|_{\wedge}.$   Its completion will be denoted by
$X\widehat{\otimes} Y.$ From \cite{R} Section 2.2, for any Banach
spaces $X$ and $Y,$ we have the identification:
\[(X\widehat{\otimes} Y)^*=B(X,Y^*).\]
Thus $B(X,X^{**})=(X\widehat{\otimes}  X^*)^*.$ Viewing $X\subseteq
X^{**},$ we define the {\it ultraweak topology} on $B(X)$ to be the
weak* topology induced by the predual $X\widehat{\otimes} X^*.$  We
will usually use the term {\it normal} to denote an ultraweakly
continuous linear map.

The following lemma generalizes Theorem \ref{th:t51} and will be
used in the proof of Theorem \ref{th:t530} of this section. The
proof is similar to that of Theorem \ref{th:t51} and we include a
sketch for completeness.

\begin{lemma}\label{le:51}
Let $X$ be a Banach space and let $E:2^\N\to B(X)$ be an
operator-valued measure on $(\N,2^\N)$. Denote $E(\{i\})$ by
$E_i$ for all $i\in\N$. Then the mapping $\phi$ from
$\ell_\infty$ into $B(X)$ defined by
\begin{eqnarray*}\label{eq:A1}
\phi:\ell_\infty\to B(X),\qquad (a_i)\mapsto {\sum}^{SOT} a_iE_i
\end{eqnarray*}
is well-defined, linear and ultraweakly continuous.
\end{lemma}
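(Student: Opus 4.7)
The plan is to mimic the proof of Theorem \ref{th:t51}, replacing the rank-one atoms $x_i \otimes y_i$ by the general atoms $E_i$ and replacing the trace-class predual of $B(\cH)$ by the projective tensor product $X \widehat{\otimes} X^*$, which by the definition given just before the lemma parameterizes the ultraweak topology on $B(X)$.

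First I would settle well-definedness and boundedness of $\phi$. Because $E$ is an operator-valued measure, Remark \ref{re:36} (Orlicz--Pettis) gives unconditional norm convergence of $\sum_i E_i x = E(\N)x$ for every $x \in X$. The Bounded Multiplier Test (\cite{DJT}, Theorem 1.9, exactly as invoked in the proof of Theorem \ref{th:t51}) then yields unconditional convergence of $\sum_i a_i E_i x$ for every $(a_i) \in \ell_\infty$ and, for each fixed $x$, a uniform bound as $(a_i)$ runs over the unit ball of $c_{00}$. Applying the Uniform Boundedness Principle to the operators $U_{(a_i)} x := \sum_i a_i E_i x$ produces a finite constant
$$K := \sup_{(a_i) \in B_1(c_{00})} \|U_{(a_i)}\|,$$
so $\phi$ is a well-defined bounded linear map $\ell_\infty \to B(X)$ with $\|\phi\| \le K$.

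The hard part is ultraweak continuity. Let $(a^\lambda)$ be a net in $\ell_\infty$ converging to $0$ ultraweakly, i.e., $\sum_i a^\lambda_i \gamma_i \to 0$ for every $(\gamma_i) \in \ell_1$. Fix $u \in X \widehat{\otimes} X^*$ and write $u = \sum_j x_j \otimes x_j^*$ with $\sum_j \|x_j\| \|x_j^*\| < \infty$. The formal pairing is
$$\langle \phi(a^\lambda), u \rangle \;=\; \sum_j x_j^*\bigl(\phi(a^\lambda) x_j\bigr) \;=\; \sum_j \sum_i a^\lambda_i\, x_j^*(E_i x_j),$$
and the task is to interchange the two sums and then to recognize the result as $\sum_i a^\lambda_i \gamma_i$ for a fixed $\ell_1$ sequence $\gamma_i := \sum_j x_j^*(E_i x_j)$.

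Both the Fubini interchange and the $\ell_1$ membership of $(\gamma_i)$ will fall out of one observation: the bound $\|\phi((b_i))\| \le K$ for $(b_i) \in B_1(\ell_\infty)$ obtained in the first step. For each fixed $j$, choose unimodular signs $\sigma_i^{(j)}$ with $\sigma_i^{(j)}\, x_j^*(E_i x_j) = |x_j^*(E_i x_j)|$; then
$$\sum_i |x_j^*(E_i x_j)| \;=\; \Big\langle \phi\bigl((\sigma_i^{(j)})\bigr) x_j,\, x_j^*\Big\rangle \;\le\; K \|x_j\| \|x_j^*\|,$$
and summing over $j$ yields $\sum_{i,j} |a^\lambda_i\, x_j^*(E_i x_j)| \le K \|a^\lambda\|_\infty \|u\|_\wedge < \infty$, so Fubini applies. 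The same trick with $\sigma_i = \mathrm{sgn}(\gamma_i)$ gives $\sum_i |\gamma_i| \le K \|u\|_\wedge$, so $(\gamma_i) \in \ell_1$. The ultraweak hypothesis on the net then forces $\langle \phi(a^\lambda), u \rangle = \sum_i a^\lambda_i \gamma_i \to 0$, which is the required ultraweak continuity.
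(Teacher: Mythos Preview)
Your proof is correct and follows essentially the same route as the paper's: the well-definedness/boundedness step via unconditional convergence, the bounded multiplier test, and the Uniform Boundedness Principle is identical, and for ultraweak continuity you use the same projective-tensor-product representation $u=\sum_j x_j\otimes x_j^*$ together with the unimodular-sign trick to establish absolute summability of the double sum $\sum_{i,j}|a_i\,x_j^*(E_ix_j)|$, justify Fubini, and recognize $(\gamma_i)=(\sum_j x_j^*(E_ix_j))\in\ell_1$. The only cosmetic difference is that you phrase the sign trick via $\phi$ applied to a unimodular sequence while the paper writes out the same computation coordinatewise.
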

\begin{proof}
Since $E:2^\N\to B(X)$ is an operator-valued measure on $X$, we
have for all $x\in X,$
\[\sum_iE_i(x)\]
converges unconditionally. Similar to the proof in Theorem \ref{th:t51}, we get that
\begin{eqnarray*}\label{eq:A2}
K_u=\sup_{x\in B_1(X)}\sup_{(\sigma_i)\subset\mathbb{D}}
\left\|\sum_i\sigma_iE_i(x)\right\|<+\infty,
\end{eqnarray*}
and so for all $(a_i)\in \ell_\infty$ and $x\in X,$ we have
\begin{eqnarray*}\label{eq:A3}
\left\|\sum_i a_iE_i(x)\right\| \leq K_u \|a_i\|_{\ell_\infty}
\|x\|.
\end{eqnarray*}
Thus $\phi$ is well-defined, linear and bounded with
\[\|\phi\|_{B(\ell_\infty,B(X))}\leq K_u.\]

For the ultraweakly continuity of $\phi$, let $(a_i^\lambda)$ be a
net converging to $0$ in the ultraweak topology. Then for any
$(\gamma_i)\in \ell_1,$ $\sum a_i^\lambda \gamma_i\to 0.$ Let $w\in
X\hat{\otimes}_\pi X^*$. Then there is a pair of sequences
$(u_j,v_j)\subset X/\{0\}\times X^*/\{0\}$ with the property that
$\sum \|u_j\|\|v_j\|<\infty$ and $w=\sum u_j\otimes v_j$. Thus for
all $(a_i)\in \ell_\infty$, we have
\begin{eqnarray*}\label{eq:A13}
\sum_i\sum_j\left|a_i\left\langle E_i(u_j), v_j\right\rangle\right|\nonumber
&=&\sum_i|a_i|\sum_j\left|\left\langle E_i(u_j), v_j\right\rangle\right|\nonumber\\
&\leq&\|(a_i)\|_{\infty}\sum_i\sum_j\left|\left\langle E_i(u_j), v_j\right\rangle\right|\nonumber\\
&=&\|(a_i)\|_{\infty}\sum_j\sum_i\left|\left\langle E_i(u_j), v_j\right\rangle\right|\nonumber\\
&=&\|(a_i)\|_{\infty}\sum_j\|u_j\|\|v_j\|\sum_i\left|\left\langle
E_i\left(\frac{u_j}{\|u_j\|}\right), \frac{v_j}{\|v_j\|}\right\rangle\right|\nonumber\\
&=&\|(a_i)\|_{\infty}\sum_j\|u_j\|\|v_j\|\sum_i\theta_{i,j}\left\langle
E_i\left(\frac{u_j}{\|u_j\|}\right),
\frac{v_j}{\|v_j\|}\right\rangle\nonumber\\&&\quad\mbox{here}\quad
\overline{\theta_{i,j}} =\mathrm{sgn}\left\{\left\langle
E_i\left(\frac{u_j}{\|u_j\|}\right), \frac{v_j}{\|v_j\|}\right\rangle\right\} \nonumber\\
&=&\|(a_i)\|_{\infty}\sum_j\|u_j\|\|v_j\|\left\langle\sum_i\theta_{i,j}
E_i\left(\frac{u_j}{\|u_j\|}\right), \frac{v_j}{\|v_j\|}\right\rangle\nonumber\\
&\leq&\|(a_i)\|_{\infty}\sum_j\|u_j\|\|v_j\|
\sup_j\left\|\sum_i\theta_{i,j}E_i\left(\frac{u_j}{\|u_j\|}\right)\right\|\nonumber\\
&\leq&\|(a_i)\|_{\infty}\sum_j\|u_j\|\|v_j\|K_u < \infty.
\end{eqnarray*}
Thus we get that
\[\sum_j\sum_i|a^\lambda_i\left\langle E_i(u_j),
v_j\right\rangle|=\sum_i\sum_j|a^\lambda_i\left\langle E_i(u_j), v_j\right\rangle|=\sum_i|a^\lambda_i|\sum_j|\left\langle E_i(u_j),
v_j\right\rangle|<\infty\] and
\[\sum_i\left|\sum_j\left\langle E_i(u_j),
v_j\right\rangle\right|\le\sum_i\sum_j|\left\langle E_i(u_j), v_j\right\rangle|<\infty.\] So \[ \left(\sum_j\left\langle E_i(u_j),
v_j\right\rangle\right)_i\in\ell_1.\]

Therefore
\begin{eqnarray*}\label{eq:A5}
\phi(a^\lambda_i)w &=&\sum_j\left\langle \phi(a^\lambda_i)u_j, v_j\right\rangle \\
&=&\sum_j\left\langle\sum_i a^\lambda_iE_i(u_j), v_j\right\rangle\\
& = & \sum_j\sum_ia^\lambda_i\left\langle E_i(u_j), v_j\right\rangle\\
 & =&\sum_ia^\lambda_i\sum_j\left\langle E_i(u_j),
v_j\right\rangle
\end{eqnarray*} converges to $0,$ as expected.
\end{proof}

\begin{theorem}\label{th:t530}
Let $H$ be a separable Hilbert space and  $\phi: \ell_{\infty}(\Bbb N) \rightarrow B(\cH)$ such
that $\phi(1) = I$ and $\phi(e_{n})$ is at most rank one
for all $n\in\Bbb{N}$, where $e_{n} = \chi_{\{n\}}$ and $1$ is the function $1$ in $\ell_{\infty}$. Then the following are equivalent:

(i) $\phi$ is ultraweakly continuous,

(ii) the induced measure $E$ defined by defined by $E(B) = \phi(\sum_{n\in B}e_{n})$ for any $B\subseteq \Bbb{N}$ is an operator-valued
measure;

(iii) $\phi$ is induced by a framing $(x_{n},y_{n})$ for $\cH$, i.e,
$$
\phi(\sum_{n\in\Bbb{N}} a_{n}e_{n}) = {\sum}^{SOT} a_n x_n\otimes y_n
$$
\end{theorem}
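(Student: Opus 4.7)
The plan is to close the cycle $(iii) \Rightarrow (i) \Rightarrow (ii) \Rightarrow (iii)$, using the ultraweak topology on $\ell_\infty = (\ell_1)^*$ as the main technical tool. The implication $(iii) \Rightarrow (i)$ is essentially immediate from Theorem \ref{th:t51}, which shows that every framing-induced map $(a_n) \mapsto \sum^{SOT} a_n x_n \otimes y_n$ from $\ell_\infty$ into $B(\cH)$ is ultraweakly continuous.

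For $(i) \Rightarrow (ii)$, given a disjoint decomposition $B = \bigsqcup_{i=1}^\infty B_i$ of a subset of $\N$, I would first observe that the partial sums $\sum_{i=1}^N \chi_{B_i}$ converge to $\chi_B$ in the weak-$*$ topology on $\ell_\infty$: for any $(\gamma_n) \in \ell_1$, the tails $\sum_{n \in B \setminus \cup_{i\le N} B_i} \gamma_n$ vanish as $N\to\infty$ by absolute convergence of $\sum_n \gamma_n$. Since $\phi$ is ultraweakly continuous, $\sum_{i=1}^N E(B_i) = \phi(\sum_{i\le N}\chi_{B_i}) \to E(B)$ in the ultraweak topology on $B(\cH)$, hence in particular in the weak operator topology. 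This is exactly the countable additivity required in Definition \ref{de:35}.

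The substantive direction is $(ii) \Rightarrow (iii)$. Since each $E(\{n\})$ has rank at most one, write $E(\{n\}) = x_n \otimes y_n$. Combining $E(\N) = \phi(1) = I$ with the strong countable additivity of $E$ (which follows from weak additivity by Orlicz--Pettis; cf.\ Remark \ref{re:36}) yields, for every $x \in \cH$, the unconditionally convergent expansion
\[
x = E(\N)x = \sum_{n} E(\{n\})x = \sum_{n} \langle x, y_n\rangle x_n,
\]
so $(x_n,y_n)$ is a framing of $\cH$. By Lemma \ref{le:51}, the formula $\psi(a) := \sum^{SOT} a_n x_n \otimes y_n$ defines a bounded linear, ultraweakly continuous map $\psi : \ell_\infty \to B(\cH)$. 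Moreover, for every $B \subseteq \N$ the strong additivity of $E$ gives
$\psi(\chi_B) = \sum_{n\in B} x_n \otimes y_n = E(B) = \phi(\chi_B)$.

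The main obstacle is to promote this equality on characteristic functions to the assertion $\phi = \psi$ on all of $\ell_\infty$, which is precisely what (iii) demands. By linearity, $\phi$ and $\psi$ agree on the subspace of simple functions, i.e.\ the linear span of $\{\chi_B : B \subseteq \N\}$. Under the standing assumption that $\phi$ is a bounded linear map from $\ell_\infty$ into $B(\cH)$ (automatic under (i), since weak-$*$--weak-$*$ continuous linear maps between dual spaces are norm bounded, and natural in the framework of this chapter), the norm-density of simple functions in $\ell_\infty$---any $a\in\ell_\infty$ can be uniformly approximated within $\varepsilon$ by a simple function obtained from a partition of the range disk into cells of diameter $\varepsilon$---forces $\phi = \psi$ pointwise, completing the cycle.
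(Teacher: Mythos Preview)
Your proof is correct and uses the same ingredients as the paper's argument: Theorem~\ref{th:t51} for $(iii)\Rightarrow(i)$, the weak-$*$ convergence of characteristic functions for $(i)\Rightarrow(ii)$, and Lemma~\ref{le:51} together with the rank-one atom decomposition for the remaining link. The only difference is organizational: the paper does not close a cycle but instead proves the four implications $(i)\Rightarrow(ii)$, $(i)\Rightarrow(iii)$, $(ii)\Rightarrow(i)$, $(iii)\Rightarrow(i)$ separately, with the middle two dispatched by direct appeal to Lemma~\ref{le:51} and Theorem~\ref{th:t51}. Your route $(ii)\Rightarrow(iii)$ is actually more explicit than the paper's terse ``follows from Lemma~\ref{le:51}'' for $(ii)\Rightarrow(i)$: both directions require identifying the given $\phi$ with the framing-induced map $\psi$, and you spell out that this rests on boundedness of $\phi$ plus norm-density of simple functions in $\ell_\infty$, a point the paper leaves implicit.
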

\begin{proof} $(i) \Rightarrow (ii)$ and $(i)\Rightarrow (iii)$ are obvious from the definition of an operator valued measure and the ultraweakly continuity of $\phi$.
$(ii) \Rightarrow (i)$ follows from Lemma \ref{le:51}. $(iii) \Rightarrow (i)$ follows from Theorem \ref{th:t51}.
\end{proof}

\begin{corollary}\label{cor:c530}
Let $H$ be a separable Hilbert space and $\phi: \ell_{\infty} \rightarrow B(\cH)$ such that $\phi(1) = I$ and $\phi(e_{n})$ is at most rank one for all
$n\in\Bbb{N}$. Then there exist a separable Banach space $Z,$ an ultraweakly continuous unital homomorphism $\pi:\ell_\infty\to B(Z)$, and bounded
linear operators $T:\cH\to Z$ and $S:Z\to\cH$ such that
\[\phi(a)=S\pi(a)T\]
for all $a\in\ell_\infty,$ and $\pi(e_{n})$ is rank one for all $n\in \Bbb{N}$.
\end{corollary}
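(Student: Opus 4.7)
The plan is to reduce the corollary to the framing-induced setting via Theorem \ref{th:t530}, then invoke the Banach space dilation machinery developed in Chapter 2 (specifically the minimal $\alpha$-dilation) to produce the homomorphism $\pi$. The ultraweak continuity of $\pi$ will come from applying Lemma \ref{le:51} to the \emph{dilated} projection-valued measure, not merely to $E$ itself, which is the main technical point that needs care.

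First I would invoke Theorem \ref{th:t530}. Although the statement of the corollary does not explicitly list ultraweak continuity, the natural reading (consistent with Theorem \ref{th:t531} and the discussion preceding it) is that $\phi$ is assumed to satisfy one of the equivalent conditions there; under that hypothesis Theorem \ref{th:t530} produces a framing $(x_n, y_n)$ of $\cH$ with $\phi\big(\sum a_n e_n\big) = \sum^{\mathrm{SOT}} a_n\, x_n\otimes y_n$. The associated operator-valued probability measure $E: 2^{\N}\to B(\cH)$ defined by $E(B) = \sum_{i\in B} x_i\otimes y_i$ is then an OVM with rank-one atoms.

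Next I would apply Theorem \ref{main-thm} (the construction of the minimal $\alpha$-dilation developed in Theorems \ref{th:422} and \ref{th:426}) to this $E$, producing the elementary dilation system $(\N,2^{\N},F_\alpha,B(\widetilde{M}_{E,\alpha}),S_\alpha,T_\alpha)$. Set $Z := \widetilde{M}_{E,\alpha}$, $S := S_\alpha$, and $T := T_\alpha$. By Example \ref{ex:435}, the minimal $\alpha$-dilation space of a framing-induced OVM is precisely the Banach space constructed in \cite{CHL} from the unconditional basis $\{E_{\{i\},\tilde{x}_i}\}$, and in particular $Z$ is separable. By Lemma \ref{le:new37}, each $F_\alpha(\{i\})$ has rank equal to the rank of $E(\{i\}) = x_i\otimes y_i$, namely one.

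Now define
\[
\pi:\ell_\infty\to B(Z),\qquad \pi\big((a_i)\big) := {\sum}^{\mathrm{SOT}} a_i\, F_\alpha(\{i\}).
\]
Applied to the operator-valued measure $F_\alpha$ on $(\N, 2^{\N})$ with values in $B(Z)$, Lemma \ref{le:51} shows that $\pi$ is well-defined, linear, bounded, and ultraweakly continuous (this is the step where separability of $Z$ and the OVM property of $F_\alpha$ are doing real work, and the ultraweak topology on $B(Z)$ is the one defined via $Z\widehat{\otimes} Z^*$ preceding Lemma \ref{le:51}). Because $F_\alpha$ is spectral, $F_\alpha(\{i\})F_\alpha(\{j\}) = \delta_{ij}F_\alpha(\{i\})$, so for $a=(a_i),b=(b_i)\in\ell_\infty$ a routine computation gives $\pi(a)\pi(b)=\pi(ab)$; and $\pi(1) = F_\alpha(\N) = I_Z$, so $\pi$ is a unital homomorphism with $\pi(e_n) = F_\alpha(\{n\})$ of rank one.

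Finally the factorization is immediate: for $a=(a_i)\in\ell_\infty$,
\[
S\pi(a)T \;=\; \sum_i a_i\, S_\alpha F_\alpha(\{i\}) T_\alpha \;=\; \sum_i a_i\, E(\{i\}) \;=\; \sum_i a_i\, x_i\otimes y_i \;=\; \phi(a),
\]
the interchange of sum and $S_\alpha,T_\alpha$ being justified by boundedness of these operators and unconditional SOT convergence. The main obstacle, as mentioned, is verifying the ultraweak continuity of $\pi$ on the Banach space $Z$; this is handled cleanly by Lemma \ref{le:51} precisely because $F_\alpha$ is itself an operator-valued measure into $B(Z)$, which is why the Banach space dilation framework of Chapter 2 is needed rather than a more ad hoc construction.
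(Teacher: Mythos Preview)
Your proof is essentially correct and follows the method the paper uses to prove the more general Theorem~\ref{th:t531}, rather than the shortcut the paper takes for this particular corollary. The paper's own proof is a two-liner: after invoking Theorem~\ref{th:t530} to get a framing $(x_n,y_n)$, it simply cites Theorem~4.6 of \cite{CHL} to dilate the framing to an unconditional basis $\{u_n\}$ of a (necessarily separable) Banach space $Z$, and lets $\pi$ be the map induced by the rank-one system $(u_n,u_n^*)$. You instead construct the minimal $\alpha$-dilation of Chapter~2 and verify the required properties one by one via Example~\ref{ex:435}, Lemma~\ref{le:new37}, and Lemma~\ref{le:51}. Since Example~\ref{ex:435} explicitly identifies the $\alpha$-dilation space with the \cite{CHL} space, the two routes end at the same place; your version is more self-contained within the paper but longer, while the paper's is terse and outsources both separability and the rank-one conclusion to \cite{CHL}.

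One small gap worth flagging: the hypothesis is that each $\phi(e_n)$ has rank \emph{at most} one, while the conclusion requires $\pi(e_n)$ to have rank \emph{exactly} one. If some $\phi(e_n)=0$, then $E(\{n\})=0$ and Lemma~\ref{le:new37} forces $F_\alpha(\{n\})=0$, so your $\pi(e_n)$ has rank zero. The paper's route through the \cite{CHL} unconditional-basis dilation sidesteps this, since the basis projections $u_n\otimes u_n^*$ are always rank one regardless of whether $Pu_n=x_n$ vanishes. Your argument is easily patched (for instance, form $Z\oplus c_0$ and adjoin the coordinate projections over the null indices), but as written it does not quite deliver the stated conclusion in that edge case.
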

\begin{proof}  By Theorem \ref{th:t530}, $\phi$ is induced by a framing $(x_{n},y_{n})$ for $\cH$. Thus by Theorem 4.6 in \cite{CHL} $(x_n,y_n)$ can
be dilated an unconditional basis $\{u_{n}\}$ for a Banach space $Z$ (Hence $Z$ is separable). Let $\pi$ be the induced operator valued map by
$(u_{\lambda},u^{*}_{\lambda})$ (where $\{u^{*}_{\lambda}\}$ is the dual basis of $\{u_{\lambda}\}$). Then $\pi$ satisfies all the requirements.
\end{proof}

By using our main dilation result in Chapter 2 we are able to generalize the above result to more general  ultraweakly continuous operator
valued mapping.

\begin{theorem}\label{th:t531}
Let $\phi:\ell_\infty\to B(\cH)$ be an ultraweakly continuous
linear mapping. Then there exists a Banach space $Z,$ an
ultraweakly continuous unital homomorphism $\pi:\ell_\infty\to
B(Z)$, and bounded linear operators $T:\cH\to Z$ and $S:Z\to\cH$
such that
\[\phi(a)=S\pi(a)T\]
for all $a\in\ell_\infty.$
\end{theorem}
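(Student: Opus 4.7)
The plan is to combine the universal dilation Theorem \ref{main-thm} with the ultraweak continuity result of Lemma \ref{le:51}, which already contains the hard analytic content. First, I would promote $\phi$ to an operator-valued measure $E:2^{\N}\to B(\cH)$ by setting $E(B)=\phi(\chi_B)$. To see that $E$ is countably additive, I observe that for disjoint $\{B_n\}$ with union $B$, the partial sums $\sum_{n=1}^{N}\chi_{B_n}$ converge to $\chi_B$ in the weak* topology of $\ell_\infty$; ultraweak continuity of $\phi$ then forces $\sum_n \phi(\chi_{B_n})$ to converge ultraweakly (hence weakly) to $\phi(\chi_B)$. Theorem \ref{main-thm} now supplies a Banach space $Z$, bounded operators $S:Z\to\cH$ and $T:\cH\to Z$, and a projection-valued probability measure $F:2^{\N}\to B(Z)$ with $E(B)=SF(B)T$ for every $B\in 2^{\N}$.

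Next, I would define the candidate dilation $\pi:\ell_\infty\to B(Z)$ by $\pi(a)={\sum}^{SOT} a_i F(\{i\})$. Applying Lemma \ref{le:51} to the operator-valued measure $F$ on $Z$ (this is exactly its hypothesis) delivers immediately that $\pi$ is well-defined, bounded, linear, and ultraweakly continuous. Unitality follows from $\pi(1)={\sum}^{SOT}F(\{i\})=F(\N)=I_Z$, using strong countable additivity of the probability measure $F$. For multiplicativity, fix $a,b\in\ell_\infty$ and $z\in Z$. The partial sums $\sum_{j=1}^{N}b_j F(\{j\})z$ converge in norm to $\pi(b)z$, and since $\pi(a)$ is norm continuous,
\[
\pi(a)\pi(b)z=\lim_{N}\sum_{j=1}^{N}b_j\,\pi(a)F(\{j\})z.
\]
The spectral identity $F(\{i\})F(\{j\})=\delta_{ij}F(\{j\})$ gives $\pi(a)F(\{j\})z=a_j F(\{j\})z$, so the right-hand side equals $\sum_j a_j b_j F(\{j\})z=\pi(ab)z$.

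Finally, I would verify the dilation identity $\phi(a)=S\pi(a)T$ on all of $\ell_\infty$ by weak* density. On each $e_i$ it is immediate: $S\pi(e_i)T=SF(\{i\})T=E(\{i\})=\phi(e_i)$, and by linearity it extends to finite sums $a^{(N)}=\sum_{i=1}^{N}a_i e_i$. For general $a\in\ell_\infty$, the partial sums $a^{(N)}$ converge to $a$ in the weak* topology of $\ell_\infty$. Ultraweak continuity of $\phi$ gives $\phi(a^{(N)})\to\phi(a)$ ultraweakly in $B(\cH)$; on the other hand ultraweak continuity of $\pi$ combined with ultraweak-to-ultraweak continuity of the compression map $b\mapsto SbT:B(Z)\to B(\cH)$ yields $S\pi(a^{(N)})T\to S\pi(a)T$ ultraweakly. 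The two limits must coincide, giving $\phi(a)=S\pi(a)T$ for every $a$.

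The main obstacle, and the only step that requires genuine verification beyond quoting Theorem \ref{main-thm} and Lemma \ref{le:51}, is the ultraweak continuity of the compression $b\mapsto SbT$ in the Banach-space sense used in this paper. This I would handle by exhibiting its predual map explicitly: for $u=\sum u_j\otimes v_j\in\cH\hat{\otimes}\cH^*$ one has $\langle SbT,u\rangle=\langle b,\sum Tu_j\otimes S^* v_j\rangle$, and $\sum \|Tu_j\|\,\|S^*v_j\|\le \|T\|\,\|S\|\,\sum\|u_j\|\,\|v_j\|<\infty$, so the element $\sum Tu_j\otimes S^*v_j$ converges in $Z\hat{\otimes}Z^*$ and defines the required preadjoint. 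Once this is in place, the three paragraphs above assemble the full theorem.
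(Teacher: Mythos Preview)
Your proof is correct and follows essentially the same route as the paper: promote $\phi$ to an operator-valued measure $E$, dilate $E$ via Theorem~\ref{main-thm}, and then invoke Lemma~\ref{le:51} on the dilated projection-valued measure $F$ to build the ultraweakly continuous homomorphism $\pi$. The one difference worth noting is in the final verification of $\phi(a)=S\pi(a)T$: the paper works explicitly with the minimal dilation space $\widetilde{M}_{E,\alpha}$, where $Tx=E_{\Omega,x}$ and $F(\{i\})E_{\Omega,x}=E_{\{i\},x}$, so the identity falls out of a one-line calculation without appealing to weak*-density or continuity of the compression map; your abstract use of Theorem~\ref{main-thm} forces you to supply the preadjoint of $b\mapsto SbT$, which you do correctly.
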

\begin{proof}
Let $E:2^\N\to B(\cH)$ be defined by
\[E(N)=\phi(\chi_N)\quad\mbox{for all } N\subset\N.\] If $(N_i)$ is a
sequence of disjoint subsets of $\N$ with union $N$, then it follows easily that $\sum \chi_{N_i}$ converges to $\chi_N$ under the ultraweak
topology of $\ell_\infty$. Since $\phi$ is ultraweakly continuous and $x\otimes y$ belongs to the trace class $S_1(\cH)$ for all $x,y\in\cH$,we get
that
\begin{eqnarray*}&&\langle E(N)x,y\rangle=\langle \phi(\chi_N)x,y\rangle
=\phi(\chi_N)( x\otimes y)\\&=&\sum_i \phi(\chi_{N_i}) (x\otimes y)=\sum_i
\langle \phi(\chi_{N_i})x,y\rangle=\sum_i\langle E(N_i)x,y\rangle.
\end{eqnarray*}
Thus $E:2^\N\to B(\cH)$ is an operator-valued measure on $(\N,2^\N)$. Let $$(\Omega,\Sigma,\widetilde{M}_{E,\mathcal
{M}},\rho_{\mathcal {M}},S_{\mathcal {M}},T_{\mathcal {M}})$$ be its minimal dilation system.  By Lemma \ref{le:51}, the mapping $\pi$ from
$\ell_\infty$ to $B(\widetilde{M}_E)$ defined by
\[\pi:\ell_\infty\to B(\widetilde{M}_{E,\mathcal
{M}}), \quad \pi(a_i)={\sum_i}^{SOT} a_i \rho_{\mathcal {M}}(\{i\})\]
is an ultraweakly continuous unital homomorphism. Moreover, for all
$(a_i)\in\ell_\infty$ and $x\in\cH$, we have
\[S_{\mathcal {M}} \pi(a_i)T_{\mathcal {M}}x=S_{\mathcal {M}}\sum_i a_i\rho_{\mathcal {M}}(\{i\})E_{x,\N}
=S_{\mathcal {M}}\sum_i a_iE_{x,\{i\}}=\sum_i
a_iE(\{i\})x=\phi(a_i)x.\] This completes the proof.
\end{proof}

Since every separably acting purely atomic abelian von Neumann
algebra is equivalent to some $\ell_{\infty}$ via an ultraweakly
continuous $*$-isomorphism, we immediately get the following:

\begin{theorem}\label{th:t531} Let $\mathcal{A}$ be a purely
atomic abelian von Neumann algebra acting on a separable Hilbert
space. Then for every ultraweakly continuous linear map
$\phi:\mathcal{A}\to B(\cH)$, there exists a Banach space $Z,$ an
ultraweakly continuous unital homomorphism $\pi:\ell_\infty\to
B(Z)$, and bounded linear operators $T:\cH\to Z$ and $S:Z\to\cH$
such that
\[\phi(a)=S\pi(a)T\]
for all $a\in\mathcal{A}.$

\end{theorem}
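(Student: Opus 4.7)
The plan is to reduce this directly to the previous theorem (the $\ell_\infty$ case, proven via Lemma~\ref{le:51} and the minimal dilation construction) by transferring the whole picture along a $*$-isomorphism $\mathcal{A} \cong \ell_\infty(J)$. Since $\mathcal{A}$ is a purely atomic abelian von Neumann algebra acting on a separable Hilbert space, its set of minimal projections is at most countable, and summing these projections yields a normal $*$-isomorphism $\Phi: \mathcal{A} \to \ell_\infty(J)$ with $J$ finite or countably infinite; this $\Phi$ is automatically ultraweakly continuous (as is $\Phi^{-1}$), since normal $*$-homomorphisms between von Neumann algebras preserve ultraweak continuity.

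Given $\phi: \mathcal{A} \to B(\cH)$ ultraweakly continuous, I would form the composition
\[
\tilde{\phi} := \phi \circ \Phi^{-1} : \ell_\infty(J) \to B(\cH),
\]
which is ultraweakly continuous since both $\Phi^{-1}$ and $\phi$ are. Applying the previous theorem (the $\ell_\infty$ version of Theorem~\ref{th:t531}) to $\tilde{\phi}$ produces a Banach space $Z$, an ultraweakly continuous unital homomorphism $\tilde{\pi}: \ell_\infty(J) \to B(Z)$, and bounded operators $T: \cH \to Z$, $S: Z \to \cH$ such that
\[
\tilde{\phi}(b) = S\, \tilde{\pi}(b)\, T, \qquad \forall\, b \in \ell_\infty(J).
\]
(If $J$ is finite the conclusion is trivial, so the interesting case is $J$ countably infinite, which is exactly what the previously proven theorem handles.)

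Now define $\pi := \tilde{\pi} \circ \Phi : \mathcal{A} \to B(Z)$. Then $\pi$ is a unital homomorphism as a composition of unital homomorphisms, and it is ultraweakly continuous as a composition of two ultraweakly continuous maps (ultraweak continuity of $\tilde{\pi}$ is in the sense of the Banach-space ultraweak topology on $B(Z)$ induced by the predual $Z\widehat{\otimes} Z^*$, as defined in the previous section). For every $a \in \mathcal{A}$, setting $b = \Phi(a)$ gives
\[
S\, \pi(a)\, T = S\, \tilde{\pi}(\Phi(a))\, T = \tilde{\phi}(\Phi(a)) = \phi(\Phi^{-1}(\Phi(a))) = \phi(a),
\]
which is the required factorization.

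The potentially delicate point, and the only place where care is needed, is the claim that the ultraweak continuity of $\tilde{\pi}$ transfers to $\pi$ through $\Phi$. This reduces to the standard fact that a normal $*$-isomorphism of abelian von Neumann algebras is a homeomorphism for the ultraweak topologies on the domain and codomain; no new work beyond invoking this is required, since the ultraweak topology on $B(Z)$ is intrinsic to $Z$ and does not interact with $\Phi$ on the source side. Everything else is pure bookkeeping, so the whole statement is effectively a corollary of the $\ell_\infty$ case.
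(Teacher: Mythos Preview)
Your proposal is correct and follows exactly the route the paper takes: the paper gives no proof at all beyond the sentence ``Since every separably acting purely atomic abelian von Neumann algebra is equivalent to some $\ell_{\infty}$ via an ultraweakly continuous $*$-isomorphism, we immediately get the following,'' and your argument supplies precisely the bookkeeping that sentence implies. One small cosmetic point: the paper's statement literally writes $\pi:\ell_\infty\to B(Z)$ rather than $\pi:\mathcal{A}\to B(Z)$, so strictly speaking the paper is content to identify $\mathcal{A}$ with $\ell_\infty$ and use $\tilde{\pi}$ directly, whereas you take the extra step of composing with $\Phi$ to obtain a map out of $\mathcal{A}$; both amount to the same thing.
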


Every ultraweakly continuous linear map $\phi:L^\infty(\Omega,\Sigma,\mu)\rightarrow B(H)$
induces an OVM $E:(\Omega,\Sigma)\rightarrow B(H)$. Please notice that here $E$ is absolutely
continuous with respect to $\mu$, $E\ll\mu$, that is, $\mu(E)=0$ implies $E(E)=0$.
\begin{lemma}\label{lemma-ext}
If $E: (\Omega,\Sigma)\rightarrow B(\mathcal{H})$ is an OVM, and if
$\mu$ is a non-negative scalar measure on $(\Omega,\Sigma)$ such
that $E\ll\mu$ (that is, $E$ is absolutely continuous with respect
to $\mu$), then there exists a bounded linear map
$\phi:L^\infty(\mu)\rightarrow B(\mathcal{H})$ that induces $E$ on
$(\Omega,\Sigma),$ (that is, $E(B)=\phi(\chi_B)$ for all
$B\in\Sigma).$
\end{lemma}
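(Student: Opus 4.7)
The plan is to extend $\phi$ from simple functions by a standard density argument, using absolute continuity to pass to $\mu$-equivalence classes and the Uniform Boundedness Principle (in the spirit of Lemma \ref{le:51}) to obtain the required bound.

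First I would define $\phi$ on the linear space $S(\Sigma)$ of $\Sigma$-measurable simple functions by
\[
\phi\!\left(\sum_{i=1}^{n}c_i\chi_{B_i}\right)=\sum_{i=1}^{n}c_i E(B_i),
\]
where (by refining the partition if necessary) we may assume the $B_i$ are pairwise disjoint; this is well defined on $S(\Sigma)$ because any two disjoint refinements give the same value thanks to finite additivity of $E$. The hypothesis $E\ll\mu$ is then used exactly here to pass to $L^\infty(\mu)$: if two simple functions $s_1,s_2$ agree $\mu$-almost everywhere, then $\{s_1\neq s_2\}$ is a $\mu$-null set, hence $E$-null, so $\phi(s_1)=\phi(s_2)$. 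Therefore $\phi$ descends to a well-defined linear map on the subspace of $L^\infty(\mu)$ consisting of equivalence classes of simple functions.

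The main work is the uniform estimate $\|\phi(s)\|\le K\|s\|_\infty$ on simple functions. For each $x\in\cH$, the map $B\mapsto E(B)x$ is a strongly countably additive $\cH$-valued vector measure, so by Orlicz--Pettis (Remark \ref{re:36}) the series $\sum_i E(B_i)x$ converges unconditionally for every disjoint sequence $\{B_i\}\subset\Sigma$. By the standard characterization of unconditional convergence (as in the proofs of Theorem \ref{th:t51} and Lemma \ref{le:51}) this gives, for each fixed $x$,
\[
\sup\!\left\{\Big\|\sum_{i=1}^{n}c_i E(B_i)x\Big\|:n\in\N,\;|c_i|\le 1,\;B_i\in\Sigma\text{ disjoint}\right\}<\infty.
\]
Applying the Uniform Boundedness Principle over $x\in\cH$ then yields a constant $K<\infty$ with this supremum bounded by $K\|x\|$ for every $x\in\cH$. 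Consequently, for any $s=\sum_{i=1}^{n}c_i\chi_{B_i}$ with disjoint $B_i$,
\[
\|\phi(s)\|=\sup_{\|x\|\le 1}\Big\|\sum_{i=1}^{n}c_i E(B_i)x\Big\|\le K\max_i|c_i|=K\|s\|_\infty.
\]

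Finally, since $\Sigma$-measurable simple functions are norm-dense in $L^\infty(\mu)$ under the essential supremum norm, $\phi$ extends uniquely to a bounded linear map $\phi:L^\infty(\mu)\to B(\cH)$ of norm at most $K$, and $\phi(\chi_B)=E(B)$ holds for every $B\in\Sigma$ by construction. The hard part, as always with such boundedness claims in the absence of complete positivity, is the passage from pointwise unconditional convergence (on each vector $x$) to a uniform operator-norm bound; the Uniform Boundedness Principle is precisely what bridges that gap, exactly as in Lemma \ref{le:51}.
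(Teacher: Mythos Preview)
Your proof is correct and follows the same overall strategy as the paper: define $\phi$ on simple functions, use $E\ll\mu$ to pass to $\mu$-equivalence classes, establish $\|\phi(s)\|\le K\|s\|_\infty$, and extend by density. The only substantive difference is in the boundedness step. The paper argues directly via the scalar measures $\mu_{x,y}(B)=\langle E(B)x,y\rangle$: each such complex measure has finite total variation, uniformly bounded by (a constant times) $\sup_B\|E(B)\|$, which immediately yields $M_\phi:=\sup_{\{B_i\}}\sup_{\|x\|,\|y\|\le 1}\sum_i|\langle E(B_i)x,y\rangle|<\infty$ and hence $\|\phi(s)\|\le M_\phi\|s\|_\infty$, with no appeal to the Uniform Boundedness Principle. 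Your UBP route also works, but one point deserves care: the characterization of unconditional convergence you invoke bounds $\sup_{|c_i|\le 1}\|\sum c_i E(B_i)x\|$ for a \emph{fixed} disjoint sequence, whereas you need the supremum over \emph{all} finite disjoint families; what you are really using is the bounded-semivariation property of the vector measure $B\mapsto E(B)x$, which the paper's scalar total-variation argument delivers directly and with an explicit constant.
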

\begin{proof}
For any simple function $\sum_{i=1}^n \alpha_i \chi_{B_i}$ with disjoint $B_i\in \Sigma$, define $\phi(\sum_{i=1}^n \alpha_i \chi_{B_i})=\sum_{i=1}^n \alpha_i E(B_i)$. If $\sum_{i=1}^n \alpha_i \chi_{B_i}=\sum_{j=1}^m \beta_j \chi_{A_j}$ for disjoint $B_i$ and disjoint $A_j$, then $\sum_{i=1}^n\sum_{j=1}^m (\alpha_i-\beta_j) \chi_{B_i\cap A_j}=0$. Thus, if $\alpha_i-\beta_j\neq 0$, then $\mu(B_i\cap A_j)=0$. By $E\ll\mu$, we obtain that $\phi(\sum_{i=1}^n \alpha_i \chi_{B_i})=\phi(\sum_{j=1}^m \beta_j \chi_{A_j})$, $\phi$ is well-defined on the subspace of all simple functions in $L^\infty(\mu)$. To prove that $\phi$ is linear, we only need to notice that $\sum_{i=1}^n \alpha_i \chi_{B_i}+\sum_{j=1}^m \beta_j \chi_{A_j}=\sum_{i=1}^n\sum_{j=1}^m (\alpha_i+\beta_j) \chi_{B_i\cap A_j}$.
For the boundedness, we need the following uniformly boundedness first, since $\sup_{E\in\Sigma}\|E(B)\|<\infty$, it is easy to obtain that $$\sup_{\{B_i\}_{i=1}^n \mbox{ is a partition of } \Omega}\sup_{\|x\|,\|y\|\le1}
\sum_{i=1}^n|\langle E(B_i)x,y \rangle|=M_{\phi}<\infty.$$ Then we have
\begin{eqnarray*}
\left\|\phi\left(\sum_{i=1}^n\alpha_i \chi_{B_i}\right)\right\|&=&\left\|\sum_{i=1}^n\alpha_iE(B_i)\right\|\\
&=&\sup_{\|x\|,\|y\|\le1}\sum_{i=1}^n|\alpha_i\langle E(B_i)x,y\rangle|\\
&\le&\left( \sup_{\|x\|,\|y\|\le1}
\sum_{i=1}^n|\langle E(B_i)x,y \rangle| \right) \max_{1\le i\le n} |\alpha_i|\\
&\le& M_{\phi}\left\|\sum_{i=1}^n\alpha_i \chi_{B_i}\right\|_{L^\infty}.
\end{eqnarray*}
Because the simple functions are dense in $L^\infty(\mu)$, the
conclusion follows.
\end{proof}

For an OVM $E:(\Omega,\Sigma)\rightarrow B(H)$, where $H$ is a
separable Hilbert space, there always exists a non-negative scalar
measure $\mu$ on $(\Omega,\Sigma)$ such that $E\ll\mu$ holds. 
Then we immediately get that
\begin{corollary}\label{cor-ext}
If $E: (\Omega,\Sigma)\rightarrow B(\mathcal{H})$ is an OVM, then
there exists $\mu$ which is a non-negative scalar measure on
$(\Omega,\Sigma)$ such that $E\ll\mu$, and a bounded linear map
$\phi:L^\infty(\mu)\rightarrow B(\mathcal{H})$ that induces $E$ on
$(\Omega,\Sigma).$
\end{corollary}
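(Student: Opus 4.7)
The plan is to reduce the corollary to Lemma \ref{lemma-ext} by constructing a suitable non-negative finite scalar control measure $\mu$ on $(\Omega, \Sigma)$ with $E \ll \mu$. Once $\mu$ is produced, the corollary follows immediately from Lemma \ref{lemma-ext}. So the entire content of the argument lies in manufacturing $\mu$ out of the separability of $\mathcal{H}$ and the scalar-valued Borel measure theory already invoked in Remark \ref{re:38}.

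The construction will go as follows. Fix a countable dense subset $\{x_n\}_{n=1}^\infty$ of $\mathcal{H}$. For each pair $(n,m)$, the set function $B \mapsto \langle E(B) x_n, x_m \rangle$ is a complex measure on $(\Omega, \Sigma)$ (weak countable additivity, together with boundedness from Remark \ref{re:38}, ensures this), hence it has a finite total variation measure $\mu_{n,m}$ which is a non-negative finite scalar measure on $(\Omega, \Sigma)$. I would then set
\[
\mu \;:=\; \sum_{n,m=1}^{\infty} \frac{1}{2^{n+m}\,\bigl(1 + \mu_{n,m}(\Omega)\bigr)}\,\mu_{n,m},
\]
which is a non-negative scalar measure on $(\Omega,\Sigma)$ with $\mu(\Omega) \le 1$.

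To see that $E \ll \mu$, suppose $\mu(B) = 0$. Then $\mu_{n,m}(B) = 0$ for every $n,m$, so $\langle E(B) x_n, x_m \rangle = 0$ for every $n,m$. Since $\{x_n\}$ is dense in $\mathcal{H}$ and $E(B)$ is a bounded operator, bilinearity and continuity yield $\langle E(B) x, y \rangle = 0$ for all $x, y \in \mathcal{H}$, hence $E(B) = 0$. Thus $E \ll \mu$. Applying Lemma \ref{lemma-ext} to this $\mu$ produces the bounded linear map $\phi : L^\infty(\mu) \to B(\mathcal{H})$ with $\phi(\chi_B) = E(B)$ for all $B \in \Sigma$, and the proof is complete.

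The only delicate point — and what I would identify as the main (mild) obstacle — is verifying that $B \mapsto \langle E(B) x_n, x_m\rangle$ is a bona fide complex measure of finite total variation, rather than merely a weakly countably additive finitely-bounded set function; but this is standard since weak countable additivity of a scalar-valued set function together with uniform boundedness already forces it to be a countably additive complex measure, which automatically has finite variation. Everything else is bookkeeping: checking absolute summability of the series defining $\mu$, and invoking Lemma \ref{lemma-ext} verbatim.
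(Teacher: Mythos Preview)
Your proposal is correct and follows the same route the paper takes: the paper simply asserts (in the sentence immediately preceding the corollary) that for a separable Hilbert space there always exists a non-negative scalar measure $\mu$ with $E\ll\mu$, and then invokes Lemma~\ref{lemma-ext}. You have supplied the standard control-measure construction that the paper leaves implicit, so your argument is in fact more detailed than the paper's own.
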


We remark that the conclusion of Corollary \ref{cor-ext} actually
holds for more general von Neumann algebras \cite{BW}: Let
$\mathcal{A}$ be a von Neumann algebra without direct summand of
type $I_{2}$. Then every bounded and finitely additive $B(H)$-valued
measure (c.f. \cite{BW} for definition) on the projection lattice of
$A$ can be uniquely extend to bounded linear map from $\mathcal{A}$
to $B(H)$. We include Lemma \ref{lemma-ext} and Corollary
\ref{cor-ext} here since they are directly related to the following
problem. We think if the answer to Problem C is positive then a
solution might be along the lines of the proof of Lemma
\ref{lemma-ext}.
\bigskip

\noindent{\bf Problem C.} Let $E: (\Omega,\Sigma)\rightarrow
B(\mathcal{H})$ be an OVM. Is there an ultraweakly continuous map
$\phi:L^\infty(\mu)\rightarrow B(\mathcal{H})$ that induces $E$ on
$(\Omega,\Sigma)?$

\section{The Noncommutative Case}

\begin{theorem}\label{th:5B1} [{\bf Banach Algebra Dilation Theorem}]
Let $\mathscr{A}$ be a Banach algebra, and let
$\phi:\mathscr{A}\to B(\cH)$ be a bounded linear operator, where
$H$ is a Banach space. Then there exists a Banach space $Z,$ a
bounded linear unital homomorphism $\pi:\mathscr{A}\to B(Z)$, and
bounded linear operators $T:\cH\to Z$ and $S:Z\to\cH$ such that
\[\phi(a)=S\pi(a)T\]
for all $a\in\mathscr{A}.$
\end{theorem}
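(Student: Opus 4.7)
The plan is to upgrade the purely algebraic construction of Proposition \ref{th:t50} to the Banach setting by equipping the universal dilation module with an operator-type norm in the same spirit as the minimal dilation norm $\|\cdot\|_\alpha$ of Chapter 2. First I would assume without loss of generality that $\mathscr{A}$ is unital: if not, replace $\mathscr{A}$ by its unitization $\widetilde{\mathscr{A}}=\mathscr{A}\oplus\mathbb{C}\cdot 1$ and extend $\phi$ by $\tilde{\phi}(a+\lambda 1):=\phi(a)+\lambda I_{\cH}$; this preserves boundedness and reduces the problem to the unital case. If the factorization $\tilde{\phi}=\widetilde{S}\,\widetilde{\pi}\,\widetilde{T}$ is achieved through a unital homomorphism $\widetilde{\pi}$ on $\widetilde{\mathscr{A}}$, its restriction to $\mathscr{A}$ gives the desired (non-unital) dilation of $\phi$.

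So assume $\mathscr{A}$ is unital with $\|1\|=1$. Following Proposition \ref{th:t50}, for each $a\in\mathscr{A}$ and $x\in\cH$ let $\alpha_{a,x}:\mathscr{A}\to\cH$ be the bounded linear map $\alpha_{a,x}(c):=\phi(ca)x$, and put
\[
W:=\mathrm{span}\{\alpha_{a,x}:a\in\mathscr{A},\,x\in\cH\}\subset B(\mathscr{A},\cH).
\]
Define a norm on $W$ by
\[
\Big\|\sum_{i=1}^N C_i\alpha_{a_i,x_i}\Big\|_\pi
\;:=\;\sup_{c\in B_1(\mathscr{A})}\Big\|\sum_{i=1}^N C_i\phi(ca_i)x_i\Big\|_\cH,
\]
i.e.\ the operator norm of the element of $W$ viewed as a map $\mathscr{A}\to\cH$. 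This is well-defined (if the underlying function vanishes it vanishes at $c=1$), is nondegenerate, and each $\alpha_{a,x}$ satisfies $\|\alpha_{a,x}\|_\pi\le\|\phi\|\,\|a\|\,\|x\|<\infty$. Let $Z$ be the completion of $(W,\|\cdot\|_\pi)$. Define
\[
\pi(a):W\to W,\quad \pi(a)\alpha_{b,x}:=\alpha_{ab,x};\qquad
T:\cH\to Z,\quad Tx:=\alpha_{1,x};\qquad
S:W\to\cH,\quad S\alpha_{a,x}:=\phi(a)x,
\]
and extend each by linearity on $W$.

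The algebraic verification that $\pi$ is a unital homomorphism and that $\phi(a)=S\pi(a)T$ is exactly that of Proposition \ref{th:t50}, so the content is the norm estimates. Writing $(\pi(a)\alpha)(c)=\alpha(ca)$ for $\alpha\in W$ gives at once
\[
\|\pi(a)\alpha\|_\pi=\sup_{\|c\|\le 1}\|\alpha(ca)\|_\cH\le\|a\|\sup_{\|c'\|\le 1}\|\alpha(c')\|_\cH=\|a\|\,\|\alpha\|_\pi,
\]
so $\pi(a)$ extends by continuity to a bounded operator on $Z$ with $\|\pi(a)\|\le\|a\|$, and $\pi(1)$ acts as the identity. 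For $T$, $\|Tx\|_\pi=\sup_{\|c\|\le1}\|\phi(c)x\|\le\|\phi\|\,\|x\|$. The only step requiring care is the extension of $S$ to $Z$: since $S\alpha=\alpha(1)$ for $\alpha\in W$, the identity $\|1\|=1$ yields $\|S\alpha\|_\cH\le\|\alpha\|_\pi$, so $S$ extends uniquely to a norm-one operator $S:Z\to\cH$. Finally, for any $a\in\mathscr{A}$ and $x\in\cH$,
\[
S\pi(a)Tx=S\pi(a)\alpha_{1,x}=S\alpha_{a,x}=\phi(a)x.
\]

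The main obstacle is really bookkeeping rather than conceptual: one must confirm that $\|\cdot\|_\pi$ is a genuine norm (not merely a seminorm) on $W$ viewed as a vector space of functions, and that the extensions of $\pi(a)$ and $S$ to the completion $Z$ are mutually compatible so that the factorization $\phi=S\pi(\cdot)T$ persists on all of $\mathscr{A}$. Both difficulties are handled by the observation that $W$ sits inside $B(\mathscr{A},\cH)$ and $\|\cdot\|_\pi$ is exactly the restricted operator norm, so linear dependence in $W$ is genuine functional identity—and evaluation at the unit $1\in\mathscr{A}$ (which is continuous with respect to $\|\cdot\|_\pi$) simultaneously realizes $S$ and guarantees consistency of all operations under norm-density.
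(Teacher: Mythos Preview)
Your proof is correct and is essentially the paper's own argument. The paper works with the algebraic tensor product $\mathscr{A}\otimes\cH$, identifies each $a\otimes x$ with the map $b\mapsto\phi(ba)x$ in $B(\mathscr{A},\cH)$, takes the resulting operator quasi-norm, mods out by its kernel, and completes; your choice to work directly inside $B(\mathscr{A},\cH)$ via the $\alpha_{a,x}$'s simply builds the quotient step into the notation, and your observation that $S$ is evaluation at $1$ is exactly what the paper uses to get $\|S\|\le 1$.
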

\begin{proof} Consider the algebraic tensor product space
$\mathscr{A}\otimes\cH$, and define a $B(\mathscr{A},\cH)$ operator tensor norm with respect to $\phi$ as follows: for any $a\in\mathscr{A}$
and $x\in\cH,$ identity $a\otimes x$ with the map $a\rightarrow\cH$ defined by
\[(a\otimes x)(b)=\phi(ba)x.\]
Then $a\otimes x$ is a bounded linear operator from $\mathscr{A}$ to $\cH$ with $\|a\otimes
x\|_{B(\mathscr{A},\cH)}\leq\|\phi\|\|a\|\|x\|.$ So this defines a quasi-norm on $\mathscr{A}\otimes\cH.$ Let $$\mathcal
{N}=\left\{\sum_{i}c_{i}a_{i}\otimes x_{i}: ||\sum_{i}c_{i}a_{i}\otimes x_{i}||_{B(\mathscr{A},\cH)} = 0 \right\}$$ and
$\mathscr{A}\widetilde{\otimes}_\phi\cH/\mathcal {N}$ be the completion of the norm space $\mathscr{A}\otimes\cH/\mathcal {N}.$

For any $a\in\mathscr{A},$ let us define
$\pi(a):\mathscr{A}\otimes\cH/\mathcal {N}
\to\mathscr{A}\otimes\cH/\mathcal {N}$ by
$$\pi(a)\left(\sum_i a_i\otimes x_i\right)=\sum_i
(aa_i)\otimes x_i.$$

Assume that $f=\sum^n_{i=1} a_i\otimes x_i=\sum^m_{j=1} b_j\otimes y_i.$ Then we have
\begin{eqnarray*}\label{eq:V1}
\pi(a)\left(\sum_i a_i\otimes x_i\right)(b) &=&\left(\sum_i
(aa_i)\otimes x_i\right)(b)\\
&=&\sum_i\phi(baa_i)x_i\nonumber\\
&=&\left(\sum_i a_i\otimes x_i\right)(ba)\nonumber\\
&=&f(ba),
\end{eqnarray*}
and
\begin{eqnarray*}\label{eq:V2}
\pi(a)\left(\sum_i b_j\otimes y_j\right)(b) &=&\left(\sum_i
(ab_j)\otimes y_j\right)(b)\\
&=&\sum_i\phi(bab_j)y_j\nonumber\\
&=&\left(\sum_i b_j\otimes y_j\right)(ba)\nonumber\\
&=&f(ba).
\end{eqnarray*}
Therefor $\pi(a)$  is well defined.

The boundedness  $\pi(a)$ (with $\|\pi(a)\|\leq\|a\|$) follows from
\begin{eqnarray*}\label{eq:V3}
\left\|\pi(a)(f)\right\| &=&\left\|\pi(a)\left(\sum_i
a_i\otimes x_i\right)\right\|\nonumber\\
&=&\left\|\sum_i (aa_i)\otimes x_i\right\|\nonumber\\
&=&\sup_{b\in B_{\mathscr{A}}}\left\|\left(\sum_i(aa_i)\otimes x_i\right)(b)\right\|\nonumber\\
&=&\sup_{b\in
B_{\mathscr{A}}}\left\|\sum_i\phi(baa_i)x_i\right\|\nonumber\\
&=&\|a\|\sup_{b\in
B_{\mathscr{A}}}\left\|\sum_i\phi(b\frac{a}{\|a\|}a_i)x_i\right\|\nonumber\\
&\leq&\|a\|\sup_{b\in
B_{\mathscr{A}}}\left\|\sum_i\phi(ba_i)x_i\right\|\nonumber\\
&=&\|a\|\sup_{b\in B_{\mathscr{A}}}\left\|\left(\sum_i
a_i\otimes x_i\right)(b)\right\|\nonumber\\
&=&\|a\|\left\|\sum_i a_i\otimes
x_i\right\|\nonumber\\
&=&\|a\|\cdot\|f\|.
\end{eqnarray*}

Extend $\pi(a)$ to a bounded linear operator on $\mathscr{A}\widetilde{\otimes}_\phi\cH/\mathcal {N}$ which we still denote it by $\pi(a)$.
Thus $\pi:\mathscr{A}\to \mathscr{A}\widetilde{\otimes}_\phi\cH/\mathcal {N}$ is a bounded linear operator. Moreover,
\begin{eqnarray*}\label{eq:V4}
\pi(ab)\left(\sum _ia_i\otimes x_i\right)
&=&\sum_i (aba_i)\otimes x_i\nonumber\\
&=&\pi(a)\left(\sum_i (ba_i)\otimes x_i\right)\nonumber\\
&=&\pi(a)\pi(b)\left(\sum_i a_i\otimes x_i\right).
\end{eqnarray*}
Hence\[\pi(ab)=\pi(a)\pi(b),\] and therefore $\pi$ is a
homomorphism.

Define $T:\cH\to\mathscr{A}\widetilde{\otimes}_\phi\cH/\mathcal
{N}$ by $T(x)=\textbf{1}\otimes x.$ Since
\begin{eqnarray*}\label{eq:V5}
\|T(x)\|&=&\left\|\textbf{1}\otimes
x\right\|\nonumber\\
&=&\sup_{a\in B_{\mathscr{A}}}\left\|\left(\textbf{1}\otimes
x\right)(a)\right\|\nonumber\\
&=&\sup_{a\in B_{\mathscr{A}}}\left\|\phi(a)x\right\|\nonumber\\
&\leq&\sup_{a\in
B_{\mathscr{A}}}\|a\|\|\phi\|\|x\|\nonumber\\
&=&\|\phi\|\|x\|,
\end{eqnarray*}
we have that $T$ is a bounded linear operator with $\|T\|\leq\|\phi\|.$

Define $S:\mathscr{A}\otimes\cH/\mathcal {N}\to \cH$ by
$S(a\otimes x)=E(a)x$ and linearly extend $S$ to
$\mathscr{A}\otimes\cH/\mathcal {N}.$ It is easy to check
that $S$ is well-defined. Since
\begin{eqnarray*}\label{eq:V6}
\|S(a\otimes x)\|&=&\left\|\phi(a)
x\right\|\nonumber\\
&\leq&\sup_{b\in B_{\mathscr{A}}}\left\|\phi(ba)x\right\|\nonumber\\
&=&\sup_{b\in B_{\mathscr{A}}}\left\|\left(a\otimes x\right)(b)\right\|\nonumber\\
&=&\|a\otimes x\|,
\end{eqnarray*}
we have that $S$ is a bounded linear operator with $\|S\|\leq 1.$ Extend $S$ to a bounded linear operator from
$\mathscr{A}\widetilde{\otimes}_\phi\cH/\mathcal {N}$ to $\cH,$ which we still denote it by $S$.

Finally, for any $x\in \cH$ we have
\begin{eqnarray*}\label{eq:V7}
S\pi(a)T(x)&=&S\rho(a)\left(\textbf{1}\otimes
x\right)\nonumber\\
&=&S\left(a\otimes
x\right)\nonumber\\
&=&\phi(a)x.
\end{eqnarray*}
Thus $\phi(a)=S\pi(a)T.$
\end{proof}

\begin{remark}\label{re:431}
Consider the construction of the dilation space in Theorem
\ref{th:5B1}, and compare it with the construction of the dilation
space in Theorem \ref{th:t531}. In the first case we take the
algebraic tensor product $\mathscr{A}\otimes\cH$ considered as a
linear space of operators in $B(\mathscr{A}, \cH)$ to obtain a
quasi-norm on $\mathscr{A}\otimes\cH$ which we denote
$\|\cdot\|_{B(\mathscr{A}, \cH)}$. Then we mod out by the kernel to
obtain a norm on the quotient space, and then we complete it to
obtain a Banach space. If we began this construction by using a
dense subsalgebra of $\mathscr{A}$ instead of $\mathscr{A}$ itself,
then the construction goes through smoothly, and the dilation space
is the same. In the case of the second space we work with
$\mathscr{A}=\ell_\infty$, and the reader can note that we start
with the algebraic tensor product of the dense subalgebra
$\mathscr{A}_0=\{\sum c_i\chi_{E_i}:c_i\in\C, E_i\in\Sigma\}$ with
$\cH$, and define a special ``minimal" quasi-norm $\|\cdot\|_1$ on
it as in Section 2.4, then mod out by the kernel, and complete it to
obtain the dilation space. In both cases we could begin with the
dense subalgebra $\mathscr{A}_0$ of $\ell_\infty$, form the
algebraic tensor product $\mathscr{A}_0\otimes\ell_\infty$, and put
a quasi-norm on the space. It is easy to show that these two
quasi-norms are equivalent in the sense that each one is dominated
by a constant multiple of the other. Hence the dilation spaces of
Theorem \ref{th:t531} and Theorem \ref{th:5B1} are equivalent for
the special case $\mathscr{A}=\ell_\infty$.

\end{remark}

 As with Stinespring's dilation theorem, if $A$ and $\cH$ in Theorem \ref{th:5B1} are both separable then  the dilated Banach
 space $Z$ is also
separable. However, the Banach algebras  we are interested in include von Neumann algebras and these
 are generally not separable, and the linear maps $\phi:A\rightarrow B(H)$ are often normal. So we pose the
following two problems.
\bigskip

\noindent{\bf Problem D.} Let $K,H$ be separable Hilbert spaces, let
$A\subset B(K)$ be a von Neumann algebra, and let $\phi:A\rightarrow
B(H)$ be a bounded linear map. When is there a {\it separable}
Banach space $Z,$ a bounded linear unital homomorphism
$\pi:\mathscr{A}\to B(Z)$, and bounded linear operators $T:\cH\to Z$
and $S:Z\to\cH$ such that
\[\phi(a)=S\pi(a)T\]
for all $a\in\mathscr{A}$ ?

\bigskip

\noindent{\bf Problem E.} Let  $A\subset B(K)$ be a von Neumann
algebra, and $\phi:A\rightarrow B(H)$ be a normal linear map. When
can we dilate $\phi$ to a {\it normal} linear unital homomorphism
$\pi:\mathscr{A}\to B(Z)$ for some Banach space $Z$?

\bigskip

Although we do not know the answer to Problem E, we do have the
following result:

\begin{theorem}\label{th:t47}
Let $K,H$ be Hilbert spaces, $A\subset B(K)$ be a von Neumann
algebra, and $\phi:A\rightarrow B(H)$ be a bounded linear operator
which is ultraweakly-\textsc{SOT} continuous on the unit ball
$B_A$ of $A$. Then there exists a Banach space $Z$, a bounded
linear homeomorphism $\pi:A\rightarrow B(Z)$ which is
\textsc{SOT}-\textsc{SOT} continuous on $B_A$, and bounded linear
operator $T:H\rightarrow Z$ and $S:Z\rightarrow H$ such that
$$\phi(a)=S \pi(a)T$$ for all $a\in A.$
\end{theorem}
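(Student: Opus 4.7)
The plan is to reuse the construction of Theorem~\ref{th:5B1} and then verify that the additional ultraweak-SOT hypothesis on $\phi$ upgrades the resulting $\pi$ to be SOT-SOT continuous on $B_A$. Applying Theorem~\ref{th:5B1} with $\mathscr{A}=A$ produces the Banach space $Z=A\widetilde{\otimes}_\phi H/\mathcal{N}$, on which the elementary tensor $a\otimes x$ has norm $\|a\otimes x\|_Z=\sup_{b\in B_A}\|\phi(ba)x\|_H$, together with a bounded unital homomorphism $\pi:A\to B(Z)$ satisfying $\pi(c)(a\otimes x)=(ca)\otimes x$ and $\|\pi(c)\|\le\|c\|$, and bounded maps $T:H\to Z$, $S:Z\to H$ with $\phi=S\pi T$. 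Everything except the SOT-SOT continuity of $\pi|_{B_A}$ is already in hand.

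Fix a net $(c_\lambda)\subset B_A$ with $c_\lambda\to c\in B_A$ in SOT. Since $\|\pi(c_\lambda)\|\le 1$ uniformly and since the finite sums $\sum_i a_i\otimes x_i$ are norm-dense in $Z$, a standard density argument reduces the continuity claim to showing that, for each simple tensor $a\otimes x$, $\pi(c_\lambda)(a\otimes x)\to\pi(c)(a\otimes x)$ in $Z$, i.e.
\[
\|((c_\lambda-c)a)\otimes x\|_Z
=\sup_{b\in B_A}\|\phi(b(c_\lambda-c)a)x\|_H
\longrightarrow 0.
\]
Put $d_\lambda:=(c_\lambda-c)a$; then $\|d_\lambda\|\le 2\|a\|=:M$ and $d_\lambda\to 0$ in SOT, since right multiplication by the bounded operator $a$ is SOT-continuous on norm-bounded sets.

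The central technical step is the claim that for each $\omega\in A_*$,
\[
\sup_{b\in B_A}|\omega(bd_\lambda)|\longrightarrow 0.
\]
Let $\tilde\omega\in B(K)_*=S_1(K)$ be any normal extension of $\omega$, written $\tilde\omega(\cdot)=\mathrm{tr}(\tilde T\,\cdot\,)$ with $\tilde T$ trace-class. By the trace identity $\tilde\omega(bd_\lambda)=\mathrm{tr}(d_\lambda\tilde T\,b)$, the functional $b\mapsto\tilde\omega(bd_\lambda)$ has $B(K)$-norm equal to $\|d_\lambda\tilde T\|_{S_1}$; hence
$\sup_{b\in B_A}|\omega(bd_\lambda)|\le\|d_\lambda\tilde T\|_{S_1}$.
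Expanding $\tilde T=\sum_i s_i|e_i\rangle\langle f_i|$ via its singular-value decomposition gives $\|d_\lambda\tilde T\|_{S_1}\le\sum_i s_i\|d_\lambda e_i\|$; since $\|d_\lambda e_i\|\le M$ and $\|d_\lambda e_i\|\to 0$ for each $i$ (by SOT-convergence of $d_\lambda$), and $\sum_i s_i=\|\tilde T\|_1<\infty$, dominated convergence yields $\|d_\lambda\tilde T\|_{S_1}\to 0$, which proves the claim.

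To finish, combine the claim with the ultraweak-SOT continuity of $\phi$ on $B_A$ (which extends by rescaling to $MB_A$). The map $c\mapsto\phi(c)x$ from $MB_A$---ultraweakly compact by Banach-Alaoglu---into $H$ is ultraweak-norm continuous, hence uniformly continuous on this compact Hausdorff space. Given $\epsilon>0$, choose an ultraweak zero-neighborhood $U=\{c:|\omega_j(c)|<\delta,\ 1\le j\le n\}$ so that $c\in MB_A\cap U$ forces $\|\phi(c)x\|<\epsilon$. By the claim, eventually $\sup_{b\in B_A}|\omega_j(bd_\lambda)|<\delta$ for each $j$, whence $bd_\lambda\in U$ for all $b\in B_A$ and so $\sup_{b\in B_A}\|\phi(bd_\lambda)x\|\le\epsilon$. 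This provides the required estimate and establishes the SOT-SOT continuity of $\pi$ on $B_A$.

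The main obstacle is the technical claim---producing \emph{uniform} (in $b\in B_A$) ultraweak convergence of $bd_\lambda\to 0$ from only the pointwise SOT hypothesis $d_\lambda\to 0$. The predual structure of the von Neumann algebra, together with dominated convergence against the singular-value series of a trace-class operator, is exactly what makes this step work; it is also why the argument, unlike that of Theorem~\ref{th:5B1}, essentially requires $A$ to be a von Neumann algebra concretely realized inside some $B(K)$.
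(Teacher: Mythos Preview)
Your proof is correct but takes a different route than the paper. Both start from the dilation of Theorem~\ref{th:5B1} and reduce the SOT--SOT continuity of $\pi|_{B_A}$ to showing that $\sup_{b\in B_A}\|\phi(bd_\lambda)x\|\to 0$ whenever $d_\lambda\to 0$ in SOT on a bounded set. You establish this directly: first you prove the uniform estimate $\sup_{b\in B_A}|\omega(bd_\lambda)|\to 0$ for every $\omega\in A_*$ via a trace-class expansion and dominated convergence, and then you feed this into continuity of $c\mapsto\phi(c)x$ at $0$. The paper instead argues by contradiction: if the supremum fails to vanish, choose $b_\lambda\in B_A$ witnessing the failure; since $\|b_\lambda\|\le 1$ and $d_\lambda\to 0$ SOT, one has $b_\lambda d_\lambda\to 0$ SOT immediately (just $\|b_\lambda d_\lambda\xi\|\le\|d_\lambda\xi\|\to 0$), hence ultraweakly on this bounded set, and then $\phi(b_\lambda d_\lambda)x\to 0$ by hypothesis---a contradiction. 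The paper's argument is shorter and avoids the explicit predual computation entirely; your approach, on the other hand, yields as a byproduct the stronger intermediate fact that $bd_\lambda\to 0$ ultraweakly \emph{uniformly} in $b\in B_A$. One minor remark: your appeal to ``uniform continuity on the compact set $MB_A$'' is superfluous---what you actually use in the next sentence is just continuity of $c\mapsto\phi(c)x$ at $c=0$, which follows directly from the hypothesis after rescaling.
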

\begin{proof} We only need to prove that the bounded linear homeomorphism $\pi$ constructed in Theorem \ref{th:5B1} is
\textsc{SOT}-\textsc{SOT} continuous.

Assume that a net $\{a_\lambda\}\subset B_A$. It is sufficient to prove that $a_\lambda\rightarrow 0$ (SOT) implies that
$\pi(a_\lambda)\rightarrow 0$ (SOT). Since $\{\pi(a_\lambda)\}$ is norm-bounded, we have that $\pi(a_\lambda)\rightarrow 0$ (SOT) if and only
if $\pi(a_\lambda)\xi\rightarrow 0$ in norm for a dense set of $\xi.$ Thus, we only need to prove that
$$\pi(a_\lambda)(a\otimes x)=(a_\lambda a)\otimes x\rightarrow 0$$
in the norm topology for all $a\in A$ and $x\in H.$ If this is not the case,  then there exist $\delta>0$, a subnet of $\{a_\lambda\}$ (still
denoted by $\{a_\lambda\}$), and $\{b_\lambda\}\subset B_A$ such that
$$\|(a_\lambda a\otimes x)(b_\lambda)\|=\|\varphi(b_\lambda a_\lambda a)(x)\|>\delta.$$

Since on any norm-bounded set the WOT and ultraweak topology are the same (and in particular the unit ball is compact in both topologies), there
is a subnet of $\{b_\lambda\}$ (still denoted by $\{b_\lambda\}$) converges to some element $b\in B_A$ in the ultraweak topology (or WOT). Thus,
from our hypothesis that $a_\lambda\rightarrow0$ (SOT), we get that $b_\lambda a_\lambda a\rightarrow0$ (SOT). This implies that $b_\lambda
a_\lambda a\rightarrow0$ in ultraweak topology (or WOT), and therefore $\varphi(b_\lambda a_\lambda a)\rightarrow0$ (SOT) since
ultraweakly-\textsc{SOT} continuous on $B_A$. This leads to a contradiction.
\end{proof}

\begin{corollary}\label{cor:e1}
Let $K,H$ be separable Hilbert spaces, $A\subset B(K)$ be a von
Neumann algebra, and $\phi:A\rightarrow B(H)$ be a bounded linear
operator which is ultraweakly-\textsc{SOT} continuous on $B_A$.
Then there exists a separable Banach space $\widetilde{Z}$, a
bounded linear homeomorphism $\tilde{\pi}:A\rightarrow
B(\widetilde{Z})$ which is \textsc{SOT}-\textsc{SOT} continuous on
$B_A$, and bounded linear operator $\widetilde{T}:H\rightarrow
\widetilde{Z}$ and $\widetilde{S}:\widetilde{Z}\rightarrow H$ such
that
$$\phi(a)=\widetilde{S} \tilde{\pi}(a)\widetilde{T}$$ for all $a\in A.$
\end{corollary}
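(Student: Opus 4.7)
The plan is to start with the (possibly non-separable) Banach space $Z$ and homomorphism $\pi:A\to B(Z)$ produced by Theorem \ref{th:t47}, and then cut down to a $\pi(A)$-invariant separable closed subspace that still contains the range of $T$. The separability of $K$ and $H$ will give us enough countable density inside $B_A$ and $H$ to build this subspace.

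First, since $K$ is separable, the ball $B_A$ is metrizable and separable in the SOT. Using Kaplansky's density theorem, I would choose a countable subset $\mathcal{A}_0\subset B_A$ that is a unital $*$-subalgebra over $\mathbb{Q}+i\mathbb{Q}$ (in particular, closed under products) and is SOT-dense in $B_A$. Simultaneously, let $\{x_m\}_{m\in\N}$ be a countable dense subset of $H$. Define
\[
\widetilde{Z}\;=\;\overline{\mathrm{span}}\,\bigl\{\,a\otimes x_m\;:\;a\in\mathcal{A}_0,\ m\in\N\,\bigr\}\;\subset\;Z,
\]
where $Z$ and the elementary tensors $a\otimes x$ are as in the proof of Theorem \ref{th:5B1}. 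Since $\mathcal{A}_0$ and $\{x_m\}$ are countable, $\widetilde{Z}$ is separable.

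The central step is to show that $\widetilde{Z}$ is invariant under $\pi(a)$ for every $a\in A$. For $a\in\mathcal{A}_0$ this is immediate from the formula $\pi(a)(b\otimes x_m)=(ab)\otimes x_m$ and the fact that $\mathcal{A}_0$ is closed under multiplication. For a general $a\in A$ with $\|a\|\le1$, Kaplansky density produces a sequence $a_k\in\mathcal{A}_0$ with $a_k\xrightarrow{\mathrm{SOT}}a$; Theorem \ref{th:t47} then gives $\pi(a_k)\to\pi(a)$ in SOT on $Z$, so for each $z\in\widetilde{Z}$ we have $\pi(a_k)z\to\pi(a)z$ in norm. Since each $\pi(a_k)z\in\widetilde{Z}$ and $\widetilde{Z}$ is norm-closed, $\pi(a)z\in\widetilde{Z}$, proving invariance. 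Scaling handles arbitrary $a\in A$.

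Having established invariance, define $\tilde{\pi}(a):=\pi(a)|_{\widetilde{Z}}\in B(\widetilde{Z})$; it is a bounded unital homomorphism, and SOT-SOT continuity on $B_A$ is inherited from $\pi$ because the SOT on $B(\widetilde{Z})$ is the restriction of the pointwise topology. Next, I need $T(H)\subset\widetilde{Z}$: since $\mathbf{1}\in\mathcal{A}_0$, each $\mathbf{1}\otimes x_m$ lies in $\widetilde{Z}$, and by continuity of $x\mapsto\mathbf{1}\otimes x=T(x)$ and density of $\{x_m\}$ in $H$, the entire range of $T$ lies in $\widetilde{Z}$. Set $\widetilde{T}:=T$ (with codomain $\widetilde{Z}$) and $\widetilde{S}:=S|_{\widetilde{Z}}$; the identity $\phi(a)=\widetilde{S}\,\tilde{\pi}(a)\,\widetilde{T}$ is just the restriction of $\phi=S\pi(\cdot)T$ from Theorem \ref{th:t47}.

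The only subtle point, and what I expect to be the main obstacle to check carefully, is the verification of invariance: one must know both that Kaplansky density furnishes an \emph{SOT-approximating sequence from inside $\mathcal{A}_0$ staying in $B_A$} (so that the SOT-SOT continuity of $\pi$ on $B_A$ applies) and that SOT convergence of $\pi(a_k)$ to $\pi(a)$ is exactly the pointwise-in-$\widetilde{Z}$ statement needed to conclude that the closed subspace $\widetilde{Z}$ is preserved. Once that is in place, all the remaining verifications (boundedness, homomorphism property, and the factorization identity) are immediate restrictions of the corresponding statements for $(Z,\pi,S,T)$.
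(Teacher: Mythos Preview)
Your proposal is correct and follows essentially the same route as the paper: start from the dilation $(Z,\pi,S,T)$ of Theorem \ref{th:t47}, pick a countable SOT-dense set in $B_A$ (the paper just takes $\{a_i\}$; you further close it under products, which makes the invariance step for $\pi(\mathcal{A}_0)$ immediate), form the closed span of the corresponding elementary vectors $\pi(a_i)T(h)=a_i\otimes h$, and use SOT--SOT continuity of $\pi$ on $B_A$ to pass from the dense set to all of $A$ for invariance. One small remark: you do not actually need Kaplansky's density theorem---separability of $K$ already makes $B_A$ SOT-metrizable and separable, which is all that is required to produce the countable $\mathcal{A}_0$.
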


\begin{proof}
By Theorem \ref{th:t47}, there exists a Banach space $Z$, a
bounded linear homeomorphism $\pi:A\rightarrow B(Z)$ which is
\textsc{SOT}-\textsc{SOT} continuous on $B_A$, and bounded linear
operator $T:H\rightarrow Z$ and $S:Z\rightarrow H$ such that
$\phi(a)=S \pi(a)T$ for all $a\in A.$ Since $K$ is separable,
there is $\{a_i\}_{i=1}^\infty$ SOT dense in $B_A$. Define
$$\widetilde{Z}=\overline{\mathrm{span}}^{\|\cdot\|}\{\pi(a_i)T(h): 1\le i<\infty, h\in H\}.$$
Then $\widetilde{Z}$ is separable since $H$ is separable. Moreover, the SOT-SOT continuity of $\pi$
(restricted to the unit ball of $A$) implies that $\pi(A)\widetilde{Z}\subset \widetilde{Z}$ and $\pi(A)T(H)\subset \widetilde{Z}$. Let $V:\widetilde{Z}\rightarrow Z$ be the inclusion linear map. Then
$$\phi(a)=S V^{-1}\pi(a)V T, \ \ \forall \, a\in A.$$ Let $\tilde{\pi}(a)=V^{-1}\pi(a)V$ acting on $\widetilde{Z}$. Then $\tilde{\pi}$ is a dilation of $\phi$ on the separable Banach space $\widetilde{Z}$ and $\tilde{\pi}:A\rightarrow B(\widetilde{Z})$ is also SOT-SOT continuous when restricted to the unit ball of $A$.
\end{proof}

\remark{Dilations and the similarity problem:} Let $\mathscr{A}$
be a $C^{*}$-algebra. In 1955, Kadison \cite{K1} formulated the
following still open conjecture: Any bounded homomorphism $\pi$
from a $C^*$-algebra $\mathscr{A}$ into the algebra $B(H)$ of all
bounded operators on a Hilbert space $H$ is similar to a
$*$-homomorphism, i.e. there is an invertible operator $T\in B(H)$
such that $T\pi(\cdot)T^{-1}$ is a $*$-homomorphism from
$\mathscr{A}$ to $B(H)$. This problem is known to be equivalent to
several famous open problems (c.f. \cite{Pi}) including the
derivation problem: Is every derivation from a $C^{*}$-algebra
$\mathscr{A}\subseteq B(H)$ into $B(H)$ inner?  While Kadison's
similarity problem remains unsettled, many remarkable partial
results are known.  In particular, it is well known that $\pi$ is
similar to a $*$-homomorphism if and only if it is completely
bounded. It has been proved  that a bounded unital homomorphism
$\pi \colon \mathscr{A} \to B(H)$ is completely bounded (and hence
similar to a $*$-representation) if $\mathscr{A}$ is nuclear
\cite{Bun, Chr}; or if $\mathscr{A} = B(H)$, or more generally if
$\mathscr{A}$ has no tracial states; or if $\mathscr{A}$ is
commutative; or if $\mathscr{A}$ is a $II_{1}$-factor with Murry
and von Neumann's property $\Gamma$; or if $\pi$ is cyclic
\cite{Haa}. Therefore if $\mathscr{A}$ belongs to any of the above
mentioned classes, and $\phi:\mathscr{A}\to B(H)$ is a bounded but
not completely bounded linear map, then the dilation space $Z$ in
Theorem \ref{th:5B1} [Banach algebra dilation theorem] can never
be a Hilbert space since otherwise $\pi:\mathscr{A}\to B(Z)$ would
be completely bounded and so would be $\phi$. On the other hand,
if there is a non completely bounded map $\phi$ from a
$C^{*}$-algebra to $B(H)$ that has a Hilbert space dilation:
$\pi:\mathscr{A}\to B(Z)$ (i.e., where $Z$ is a Hilbert space),
then it would be a counterexample to the Kadison's similarity
problem. So we have the following question: Is there a non-cb map
that admits a Hilbert space dilation to a bounded homomorphism?

\remark\label{re:415} For a countable index set $\Lambda$, there is a 1-1
correspondence between the set of (discrete) framings on a Hilbert
space $\cH$ indexed by $\Lambda$ and the set of ultraweakly
continous unital linear maps from $\ell_{\infty}(\Lambda)$ into
$B(\cH)$.  Here, unital means it takes the function $1$ in
$\ell_{\infty}(\Lambda)$ to the identity operator in $B(\cH)$.
There is also a 1-1 correspondence between the set of (discrete)
framings on a Hilbert space $\cH$ indexed by $\Lambda$ and the set
of purely atomic probability operator-valued measures on the
$\sigma$-algebra of all subsets of $\Lambda$ with rank-1 atoms in
$B(\cH)$. So we have a space of ordered triples which consists of
a discrete framing, a purely atomic operator-valued probability
measure with rank-1 atoms, and an ultraweakly continous unital
linear map from an purely atomic abelian von Neumann algebra into
$B(\cH)$ where each minimal projection is sent to a rank $\leq 1$
operator. Each item in a triple determines the other items
uniquely. There is a consistent dilation theory,  where the
dilation of the discrete framing, the operator-valued measure, and
the ultraweakly continous unital linear map, all have the same
dilation space and the dilation procedure commutes with the
correspondences in the triple,   i.e. when we go from framing to
OVM to linear map and then dilate each one to get a dilated
triple, it will be the same as if we dilate any one and then
derived the other two from it in the natural way. Similarly for a
more general index set $\Lambda$ (i.e. compact or locally compact,
whatever is more suitable) there are connections among the set of
operator-valued probability measures on $\Lambda$ taking values in
$B(\cH)$ for a Hilbert space $\cH$,  the set of unital ultraweakly
continuous maps from $\ell_{\infty}(\Lambda)$ into $B(\cH)$, and
their corresponding dilation theory.  Although not every
ultraweakly continuous unital linear mapping is associated with a
(continuous) framing, there are many ultraweakly continuous unital
linear mappings that are induced by framings. For this reason we
can view ultraweakly continuous unital linear mappings from
$\ell_{\infty}(\Lambda)$ to $B(\cH)$ as {\it abstract framings},
which is the commutative theory because the domain of the map is a
commutative von neumann algebra. When we pass to the
noncommutative domains, the dilation theory for various linear
maps can be viewed as a noncommutative (abstract) framing dilation
theory.

\chapter{Examples}

We provide the details of the two examples mentioned in the
introduction chapter. Our first example shows that there exists a
framing for a Hilbert space whose induced operator valued measure
fails to admit a Hilbert space dilation. Equivalently, it cannot
be re-scaled to obtain a framing that admits a Hilbert space
dilation. The construction is based on an example of Ozaka
\cite{Os}  of a normal non-completely bounded map of
$\ell^{\infty}(\mathbb{N})$ into B(H).

\begin{theorem}\label{th:82}There exist a framing for a Hilbert space such that its induced operator-valued measure is not completely bounded,
and consequently it can not be re-scaled to obtain a framing that
admits a Hilbert space dilation.
\end{theorem}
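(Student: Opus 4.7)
The plan is to leverage the equivalence established in Theorem~\ref{th:t530}: a normal unital linear map $\phi \colon \ell^{\infty}(\mathbb{N}) \to B(\mathcal{H})$ whose atomic images $\phi(e_n)$ all have rank at most one is automatically induced by a framing $(x_n,y_n)$ of $\mathcal{H}$. Consequently, to exhibit a framing whose induced operator-valued measure is not completely bounded, it suffices to produce a non-completely-bounded, normal, unital linear map $\phi$ with rank-$\le 1$ atoms.

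First I would take the non-completely-bounded normal unital linear map from $\ell^{\infty}(\mathbb{N})$ into $B(\mathcal{H})$ constructed by Ozawa in \cite{Os}, and arrange—possibly after a routine reduction (e.g.\ tensoring with a rank-one vector state or restricting to an appropriate atomic subalgebra)—that every $\phi(e_n)$ has rank at most one. Writing $\phi(e_n) = x_n \otimes y_n$, Theorem~\ref{th:t530} yields a framing $(x_n,y_n)$ for $\mathcal{H}$ whose induced operator-valued probability measure
\[
E(B) \;=\; \sum_{n\in B} x_n \otimes y_n \;=\; \phi(\chi_B), \qquad B \subseteq \mathbb{N},
\]
recovers $\phi$ by integration in the sense of Section~1.4.1. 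Since $\phi$ is not completely bounded, the OVM $E$ cannot be completely bounded either, as complete boundedness of $E$ would translate into complete boundedness of $\phi$ via the correspondence between bounded maps on $\ell^{\infty}$ and regular $B(\mathcal{H})$-valued measures.

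Applying Corollary~\ref{co:cH222} (Theorem~D(ii)), the failure of $E$ to be completely bounded means that $(x_n, y_n)$ admits no rescaling to a dual frame pair. Combined with Theorem~\ref{pr:Hp2} (Theorem~D(i)), whose rescaling condition $\alpha_n\overline{\beta_n}=1$ on a pair of frames is precisely the dual-frame-pair condition, this shows that no rescaling $(\alpha_n x_n,\overline{\alpha_n}^{-1} y_n)$ produces a framing that admits a Hilbert space dilation, yielding the second assertion.

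The principal obstacle is the first step: confirming that Ozawa's non-cb example can be realized (or adapted) as a unital normal map all of whose atomic images are of rank at most one, so that Theorem~\ref{th:t530} becomes applicable. This is a concrete verification on an explicit construction rather than a conceptual difficulty; once secured, the rest of the argument is a direct bundling of Theorem~\ref{th:t530}, Corollary~\ref{co:cH222}, and Theorem~\ref{pr:Hp2}, with no further analysis required.
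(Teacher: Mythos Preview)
Your high-level strategy matches the paper's: start from Osaka's non-cb normal map on an atomic abelian von Neumann algebra, massage it into a unital normal map on $\ell^\infty(\mathbb{N})$ with rank-one atoms, invoke Theorem~\ref{th:t530} to get a framing, and then read off the conclusion from Corollary~\ref{co:cH222} and Theorem~\ref{pr:Hp2}. The endgame is exactly right (and the paper dispatches part (b) the same way: rescaling does not change $E$, so non-cb already blocks every rescaling from having a Hilbert dilation).

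The genuine gap is the step you flag as the ``principal obstacle'' but then describe as a ``routine reduction.'' It is not routine, and neither of your proposed mechanisms works. In Osaka's construction the atomic images $\psi(e_i^n)$ sit in $M_{2^n}$ and are not rank one; tensoring with a vector state or restricting to a subalgebra does nothing to lower their rank. What the paper actually does is substantial: it first perturbs by adding the projections $P_n$ (setting $A_n=\sum_i\psi(e_i^n)+P_n$) so that the total operator $A=B+I$ is invertible; it then proves a lemma (Lemma~\ref{le:84'}) that any finite-rank operator admits a rank-one decomposition $A_n=\sum_j Q_{n,j}$ with $\|\sum_{j\in\Lambda}Q_{n,j}\|\le\|A_n\|$ for every subset $\Lambda$; it proves a second lemma (Lemma~\ref{le:83'}) showing that under the block-orthogonality conditions (i)--(iii) this controlled rank-one splitting still converges unconditionally in SOT; and finally it composes with $A^{-1}$ (setting $u_j=A^{-1}x_j$ where $Q_j=x_j\otimes y_j$) to recover unitality. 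Only after all this does one land on a framing whose induced map restricts to $\psi+\mathrm{id}$ on the embedded copy of $\bigoplus_n\ell^\infty_n$, from which non-complete-boundedness follows. None of these steps is automatic, and the norm-controlled rank-one decomposition together with the unconditional-convergence lemma are the heart of the proof---precisely the content your sketch omits.
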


Since if a framing admits a  Hilbert space dilation, then the induced operator valued map is completely bounded and a re-scaled framing induces
the same operator valued map,  we only need to show that there exists a framing for a Hilbert space such that its induced operator-valued
measure is not completely bounded. We need the following lemmas.

\begin{lemma}\label{le:83'} Let $\{A_{n}\}$ be a sequence of finite-rank bounded linear operators on a Hilbert space $H$ such that

(i) $A_{n}A_{m} = A_{m}A_{n} = 0$ for all $n\neq m$;

(ii) there exist mutually orthogonal projections $\{P_{n}\}$ such that $A_{n} = P_{n}A_{n}P_{n}$ for all $n$.

(iii) $\sum_{n=1}^{\infty}A_{n}$ converges unconditionally to $A\in B(H)$ with the strong operator topology.

Assume that $A_{n}$ has the rank one operator decomposition $A_{n} = Q_{n,1}+ ... +Q_{n,k_{n}}$ such that for any subset $\Lambda$ of $J_{n} :=
\{(n,1), ..., (n, k_{n})\}$ we have  $||\sum_{j\in \Lambda}Q_{j} || \leq ||A_{n}||$. Let $J$ be the disjoint union of $J_{n}$ $(n = 1, 2, ...)$.
Then the series $\sum_{j\in J}Q_{j}$ converges unconditionally to $A$.
\end{lemma}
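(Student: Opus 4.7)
The plan is to leverage the orthogonality provided by the mutually orthogonal projections $\{P_n\}$ together with the subset-norm hypothesis on the rank-one decomposition, reducing the unconditional SOT-convergence of $\sum_{j\in J}Q_j$ to a single summability estimate driven by the unconditional SOT-convergence of $\sum_n A_n$. After a small harmless reduction (replacing each $Q_{n,j}$ by $P_nQ_{n,j}P_n$ if necessary; this preserves the sum $\sum_{j\in J_n}Q_j=P_nA_nP_n=A_n$ and the subset-norm bound), I may assume $Q_{n,j}=P_nQ_{n,j}P_n$, so that vectors of the form $\bigl(\sum_{j\in\Lambda}Q_{n,j}\bigr)x$ lie in $P_nH$ and hence are pairwise orthogonal across distinct $n$.

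The key preliminary is the uniform operator bound $\sup_n\|A_n\|\le\|A\|$. Since $\sum_n A_n x\to Ax$ in norm for each $x$ and the summands $A_n x\in P_nH$ are pairwise orthogonal, Pythagoras gives $\sum_n\|A_n x\|^2=\|Ax\|^2\le\|A\|^2\|x\|^2$; testing on unit vectors of $P_nH$ yields $\|A_n\|\le\|A\|$. Consequently, for every $x\in H$,
\begin{equation*}
\sum_n\|A_n\|^2\|P_n x\|^2\le\|A\|^2\sum_n\|P_n x\|^2\le\|A\|^2\|x\|^2<\infty,
\end{equation*}
which is the summability on which the entire argument rests.

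To finish, fix $x\in H$ and $\varepsilon>0$, choose a finite $N_0\subset\mathbb N$ with $\sum_{n\notin N_0}\|A_n\|^2\|P_n x\|^2<\varepsilon^2$, and set $F_0:=\bigsqcup_{n\in N_0}J_n$. For any finite $F\supseteq F_0$ one has $F\cap J_n=J_n$ for all $n\in N_0$, so $\sum_{j\in F\cap J_n}Q_j=A_n$ on those indices and
\begin{equation*}
\sum_{j\in F}Q_j\,x-Ax=-\sum_{n\notin N_0}\Bigl(\sum_{j\in J_n\setminus F}Q_j\Bigr)x.
\end{equation*}
Since each summand on the right lies in $P_nH$, Pythagoras together with the hypothesis $\|\sum_{j\in J_n\setminus F}Q_j\|\le\|A_n\|$ yield
\begin{equation*}
\Bigl\|\sum_{j\in F}Q_j\,x-Ax\Bigr\|^2=\sum_{n\notin N_0}\Bigl\|\sum_{j\in J_n\setminus F}Q_j\,x\Bigr\|^2\le\sum_{n\notin N_0}\|A_n\|^2\|P_n x\|^2<\varepsilon^2,
\end{equation*}
which is exactly unconditional SOT-convergence of $\sum_{j\in J}Q_j$ to $A$. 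The main obstacle is promoting the pointwise unconditional convergence of $\sum_n A_n$ to the uniform bound $\sup_n\|A_n\|<\infty$; once this upgrade is in hand, the subset-norm hypothesis becomes exactly what is needed to run the orthogonal-sum estimate above, with the crucial cancellation occurring on $N_0$ because the full block $J_n$ is already included in $F$.
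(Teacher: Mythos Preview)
Your proof is correct and follows essentially the same strategy as the paper's: both use the orthogonality of the ranges $P_nH$ together with the subset-norm hypothesis and the bound $\sup_n\|A_n\|\le\|A\|$ to control the tail. Your version is a bit cleaner in that you work directly with the net of finite partial sums and a single Pythagoras estimate on the $\ell^2$-tail $\sum_{n\notin N_0}\|A_n\|^2\|P_nx\|^2$, whereas the paper fixes an enumeration, reduces to $x\in PH$, and splits $x=x_1+x_0$ before applying a triangle-inequality bound; your explicit reduction to $Q_{n,j}=P_nQ_{n,j}P_n$ also makes precise a step the paper uses implicitly.
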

\begin{proof} Let $x\in H$. Without losing the generality we can assume that $x\in PH$ where $P = \sum_{n=1}^{\infty}P_{n}$. Let $\epsilon >0$.
Then there exists $N$ such that
$$
||\sum_{n=1}^{N}P_{n}x - x|| < \epsilon/2||A||.
$$
Now let $\{j_{\ell}\}_{\ell=1}^{\infty}$ be an enumeration of $J$ and let $L$ be such that $\{j_{1}, .... , j_{L}\}$ contain
$\cup_{n=1}^{N}J_{n}$.  Write $x = x_{1} + x_{0}$ with $x_{1} =\sum_{n=1}^{N}P_{n}x $ and $x_{0} = (\sum_{n=1}^{N}P_{n})^{\perp}x$. Then for any
$L' \geq L$, we get
$$
\sum_{\ell=1}^{L'}Q_{j_{\ell}}x = \sum_{n=1}^{N}\sum_{j_{\ell}\in J_{n}}Q_{j_{\ell}}x + \sum_{j_{\ell}\notin J_{n}, \ell\leq L'} Q_{j_{\ell}}x =
\sum_{n=1}^{N}A_{n}x + \sum_{j_{\ell}\notin J_{n}, \ell\leq L'} Q_{j_{\ell}}x_{0}
$$
where we use the property that $\sum_{j_{\ell}\notin J_{n},
\ell\leq L'} Q_{j_{\ell}}x_{1} = 0$. Moreover, from our
assumptions on $\{A_{n}\}$ and their rank-one decompositions we
also have
$$||\sum_{j_{\ell}\notin J_{n}, \ell\leq L'} Q_{j_{\ell}}x_{0}||
\leq \sup \{||A_{n}\} \leq ||A||$$ and $$\sum_{n=1}^{N}A_{n}x =
A\sum_{n=1}^{N}P_{n}x.$$ Therefor we get
\begin{eqnarray*}
||\sum_{\ell=1}^{L'}Q_{j_{\ell}}x - Ax|| &\leq & ||A\sum_{n=1}^{N}P_{n}x - Ax|| + || \sum_{j_{\ell}\notin J_{n}, \ell\leq L'}
Q_{j_{\ell}}x_{0}|| \\
& \leq & ||A|| \cdot ||\sum_{n=1}^{N}P_{n}x - x|| + ||A||\cdot ||x_{0}||\\
& = & 2||A|| \cdot ||x_{0}|| < \epsilon.
\end{eqnarray*} This completes the
proof.
\end{proof}

\begin{lemma}\label{le:84'} Let $A$ be a rank-$k$ operator. Then there exists rank-one decomposition $A = \sum_{i=1}^{k}Q_{i}$ such that
$||\sum_{i\in I}Q_{i}|| \leq ||A||$ for any subset $I$ of $\{1, 2, ... , k\}$.
\end{lemma}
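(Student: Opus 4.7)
The plan is to invoke the singular value decomposition of the finite rank operator $A$. Since $A$ has rank $k$, standard Hilbert space theory gives orthonormal sets $\{u_1,\dots,u_k\}$ and $\{v_1,\dots,v_k\}$ in $H$ and positive numbers $\sigma_1 \geq \sigma_2 \geq \cdots \geq \sigma_k > 0$ such that
\[
A = \sum_{i=1}^{k} \sigma_i\, u_i \otimes v_i,
\]
where $(u \otimes v)(x) = \langle x, v\rangle u$, and moreover $\|A\| = \sigma_1$. I would then simply define $Q_i := \sigma_i\, u_i \otimes v_i$; this is manifestly a rank-one decomposition of $A$.

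Next I would verify the norm estimate. For any $I \subseteq \{1,\dots,k\}$ and any $x \in H$ with $\|x\| = 1$, the orthonormality of $\{v_i\}$ yields
\[
\left\|\sum_{i \in I} Q_i\, x\right\|^2
= \left\|\sum_{i\in I} \sigma_i\langle x, v_i\rangle u_i\right\|^2
= \sum_{i\in I} \sigma_i^2 |\langle x,v_i\rangle|^2
\leq \max_{i\in I}\sigma_i^2 \cdot \sum_{i\in I}|\langle x,v_i\rangle|^2
\leq \max_{i\in I}\sigma_i^2,
\]
using Bessel's inequality in the last step, with equality attained at $x = v_{i_0}$ for the index achieving the maximum. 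Hence
\[
\left\|\sum_{i \in I} Q_i\right\| = \max_{i\in I}\sigma_i \;\leq\; \sigma_1 = \|A\|,
\]
which is precisely the required bound.

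There is no real obstacle here; the only point that needs to be acknowledged is that the statement tacitly assumes $A$ acts on a Hilbert space (consistent with the ambient context of the chapter, which treats $B(H)$-valued measures), since the singular value decomposition is what makes the argument go through. On a general Banach space one would lose the orthogonality that forces $\|\sum_{i\in I} Q_i\| = \max_{i\in I}\sigma_i$ rather than summing the coefficients. Given the Hilbertian setting of Lemma~\ref{le:83'}, the SVD approach is both natural and sharp.
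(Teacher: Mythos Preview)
Your proof is correct and follows essentially the same route as the paper's: the paper takes the polar decomposition $A = U|A|$, writes $|A| = \sum_{i=1}^{k} x_i \otimes x_i$ (absorbing the eigenvalues into the vectors), sets $Q_i = U x_i \otimes x_i$, and bounds $\|\sum_{i\in I} Q_i\|$ via $\|U\|\cdot\|\sum_{i\in I} x_i\otimes x_i\| \le \||A|\| = \|A\|$ using positivity. Your singular value decomposition is exactly this polar-plus-spectral decomposition made explicit, with the norm bound obtained by the orthonormality calculation instead of the order inequality for positive operators; the two arguments are equivalent.
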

\begin{proof}
Let $A = U|A|$ be its polar decomposition and write $|A| = \sum_{i=1}^{k}x_{i}\otimes x_{i}$. Then we have $||\sum_{i\in I}x_{i}\otimes x_{i}||
\leq ||\sum_{i=1}^{k}x_{i}\otimes x_{i}|| = ||A||$. Let $Q_{i} = Ux_{i}\otimes x_{i}$. Then we get
\begin{eqnarray*}
||\sum_{i\in I}Q_{i}|| &=& ||U\sum_{i\in I}x_{i}\otimes x_{i}||\\
& \leq & ||U||\cdot ||\sum_{i\in I}x_{i}\otimes x_{i}|| \\
&\leq & ||\sum_{i\in I}x_{i}\otimes x_{i}||\\
& \leq& ||\sum_{i=1}^{k}x_{i}\otimes x_{i}|| = ||A||
\end{eqnarray*}
holds for any subset $I$ of $\{1, 2, ... , k\}$, as claimed.
\end{proof}

Now we prove Theorem \ref{th:82}: By Lemma 2.4 (or Lemma 2.1) in \cite{Os} there exists a $\sigma$-weakly continuous bounded linear map $\psi$
from $\oplus_{n=1}^{\infty}\ell^{\infty}_{n}$ into $\oplus_{n=1}^{\infty}M_{2^{n}}$  (as a subalgebra acting on $H := \oplus \Bbb{C}^{2^{n}}$)
which is not completely bounded and $\psi = \oplus_{n=1}^{\infty}\psi_{n}$, where $\psi_{n}: \ell^{\infty}_{n} \rightarrow M_{2^{n}}$ and
$M_{2^{n}}$ is the algebra of $2^{n}\times 2^{n}$-matrices. Let $B_{n} = \sum_{i=1}^{n}\psi(e_{i}^{n})$,  where $e_{i}^{n} \in\ell^{\infty}_{n}$
such that $e_{i}^{n}(j) = \delta_{i, j}$, and let $B = \psi(e)$ where $e\in \oplus_{n=1}^{\infty}\ell^{\infty}_{n}$ is the unital element. Let
$P_{n}$ be the orthogonal projections from $H$ onto $\Bbb{C}^{2^{n}}$. Without losing the generality we can assume that $||\psi|| < 1$. Let
$A_{n} = B_{n} + P_{n}$. Then $\{A_{n}\}$ satisfies all the conditions (i)--(iii) in Lemma \ref{le:83'}, and $\sum_{n=1}^{\infty} A_{n}$
converges to $A =  B + I$ unconditionally in the strong operator topology. By Lemma \ref{le:84'}, we can decompose each $A_{n} = Q_{n,1}+ ...
+Q_{n,k_{n}}$ such that for any subset $\Lambda$ of $J_{n} := \{(n,1), ..., (n, k_{n})\}$ we have  $||\sum_{j\in \Lambda}Q_{j} || \leq
||A_{n}||$. Thus, by Lemma \ref{le:83'}, $\sum_{j\in J}Q_{j}$ converges unconditionally to $A$. Write $Q_{j} = x_{j}\otimes y_{j}$, and let
$u_{j} = A^{-1}x_{j}$. Then $\{u_{j}, y_{j}\}$ form a framing for $H$ since $\sum_{j\in J}u_{j}\otimes y_{j}$ converges unconditionally to $I$
in the strong operator topology.

We claim that the induced operator-valued map $\Phi: \ell^{\infty}(J)\rightarrow B(H)$ is not completely bounded. In fact, if it is completely
bounded, then operator valued map $\Psi$ by $\{x_{j}, y_{j}\}$ will be completely bounded. By embedding $\oplus_{n=1}^{\infty}\ell^{\infty}_{n}$
naturally to $\ell^{\infty}(J)$, we can view $\oplus_{n=1}^{\infty}\ell^{\infty}_{n}$ as a subalgebra of  $\ell^{\infty}(J)$, and so the
restriction of $\Psi$ to this subalgebra is $\psi + id$, where $id$ is the natural embedding of $\oplus_{n=1}^{\infty}\ell^{\infty}_{n}$ into
$B(H)$. This $\psi + id$ is completely bounded and so $\psi$ is completely bounded since the embedding map $id$ is completely bounded. This
leads to a contradiction. Therefore $\Phi: \ell^{\infty}(J)\rightarrow B(H)$ is not completely bounded, and thus  $\{u_{j}, y_{j}\}$ it can not
be re-scaled to obtain a framing that admits a Hilbert space dilation.\qed

Next we examine Example 3.9 of \cite{CHL}, which gave
the first apparently nontrivial example of a bounded
framing for a Hilbert space which is not a
dual pair of frames. A natural, geometric, Banach space dilation
 of it was constructed in \cite{CHL}, along with a proof that it did not have
any Hilbert dilation space. Both the construction and the proof was nontrivial, and both were sketched out in \cite{CHL} without providing great detail.  It was thought for a long time that this example might be a key to
this subject. But when we were finalizing this paper we found a proof that this example can, in fact, be {\it rescaled} to give a dual pair of frames, in fact a pair which consists of two copies of a single Parseval frame.  Since this example was a guiding example for a lengthy time, we include our analysis of it. After obtaining this result, since we knew we still needed a true limiting example for this theory, we managed to obtained the example in Theorem 5.1 above.  The \cite{CHL} example points out how extremely difficult it can be in general to determine when a given framing is scalable to a dual frame pair.

\begin{theorem}\label{th:81} Example 3.9 in \cite{CHL} can be re-scaled to a dual pair of frames.
\end{theorem}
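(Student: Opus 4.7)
The plan is to exhibit explicit scalars $\{\alpha_i\}, \{\beta_i\}$ with $\alpha_i\overline{\beta_i}=1$ and then verify that both rescaled sequences are Bessel, at which point Lemma \ref{le:H222} will automatically upgrade the framing identity to the statement that $\{\alpha_i x_i\}$ and $\{\beta_i y_i\}$ form a dual frame pair. Indeed, if $\{\alpha_i x_i\}$ and $\{\beta_i y_i\}$ are both Bessel, then from the framing identity for $(x_i,y_i)$ we have
\[
x \;=\; \sum_{i\in\N}\langle x, y_i\rangle x_i \;=\; \sum_{i\in\N} \langle x, \beta_i y_i\rangle\,\alpha_i x_i\qquad(\forall x\in\cH),
\]
and condition (i) of Lemma \ref{le:H222} then yields that $\{\alpha_i x_i\}$ and $\{\beta_i y_i\}$ are mutually dual frames. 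So the whole proof reduces to a Bessel verification for a suitably chosen rescaling.

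The first step is to write out the explicit framing $\{x_i,y_i\}$ of Example 3.9 in \cite{CHL}, which is built from an unconditional (but not Riesz) basis structure; the relevant data is the sequence of norms $\|x_i\|$ and $\|y_i\|$ and the precise way the pair reconstructs vectors in $\cH$. The remark following Theorem \ref{th:81} ("a pair which consists of two copies of a single Parseval frame") strongly suggests the proportionality relation $y_i = c_i\, x_i$ for some positive scalars $c_i$. Assuming this, the natural scaling is $\alpha_i := c_i^{-1/2}$ and $\beta_i := c_i^{1/2}$, which satisfies $\alpha_i\overline{\beta_i}=1$ and makes $\alpha_i x_i = \beta_i y_i$ a single common sequence $\{z_i\}$.

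The second step, and the main technical step, is to show that $\{z_i\}$ is a Parseval frame (or at least a frame) for $\cH$. The cleanest route is to exhibit an orthogonal dilation: find a Hilbert space $\cK\supseteq\cH$ and an orthonormal basis (or Riesz basis) $\{u_i\}$ of $\cK$ whose image under the orthogonal projection $P:\cK\to\cH$ is exactly $\{z_i\}$, and then invoke Proposition \ref{prop:orthog}. Equivalently, one can directly estimate $\sum_i |\langle x,z_i\rangle|^2$ against $\|x\|^2$ using the block structure of Example 3.9; the rescaling by $c_i^{-1/2}$ is precisely what balances the growing multiplicities in that example so that the sum converges with two-sided bounds, which is exactly what fails for the unscaled sequences $\{x_i\}$ and $\{y_i\}$ individually.

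The hard part is this Bessel/frame estimate: the unscaled example was constructed in \cite{CHL} precisely so that neither $\{x_i\}$ nor $\{y_i\}$ is Bessel, and the dilation produced there was not Hilbertian, so one must use the specific arithmetic of the $c_i$'s coming from Example 3.9 to see that a judicious geometric-mean-type rescaling cancels the imbalance. Once that quantitative estimate is in hand, the conclusion is immediate from Lemma \ref{le:H222}, and in fact yields the stronger statement that the rescaled pair consists of two copies of a single Parseval frame, thereby also giving (via Theorem \ref{pr:Hp2}) a Hilbertian dilation and (via Corollary \ref{co:cH222}) complete boundedness of the induced operator-valued measure — disproving the conjecture recorded in the introduction.
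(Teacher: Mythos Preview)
Your high-level strategy is exactly the one the paper uses: guess that the two framing sequences are in fact proportional, $y_i=c_i x_i$, rescale by the geometric mean so that the two rescaled sequences coincide, and then verify that this common sequence is Parseval. The paper confirms all of this by explicit computation. However, what you have written is an outline, not a proof, and the missing parts are precisely where all the work lies.

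First, the proportionality $y_i=c_i x_i$ is not at all evident from the description of Example~3.9; the two framing vectors live on opposite sides of a non-isometric isomorphism $U_n:W_n\to\cH_n$ and involve $P_n$ versus $P_n^*$. The paper has to set up the whole apparatus---the Rademacher vectors $r_i^n=2^{-n/p}\sum_j\epsilon_{ij}e_j^n$ in $\ell_p^{2^n}$, the dual Rademachers, the projections $P_n$, the isomorphisms $U_n$---and then compute $\langle h,\,U P_n(e_i^n)\rangle_{\cH}$ and $\langle h,\,(U^{-1})^*P_n^*(e_i^n)^*\rangle_{\cH}$ separately to discover that both equal a constant times $\sum_k a_k^n\epsilon_{ki}$. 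Only then does one read off $c_n=2^{n(1/q-1/p)}$ and the scaling $\alpha_i^n=2^{n(1/q-1/2)}=c_n^{1/2}$. (Incidentally your formula is reversed: to get $\alpha_i x_i=\beta_i y_i$ from $y_i=c_i x_i$ with $\alpha_i\overline{\beta_i}=1$ one needs $\alpha_i=c_i^{1/2}$, $\beta_i=c_i^{-1/2}$, not the other way around.)

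Second, the Parseval verification is not a soft estimate about ``balancing growing multiplicities'' but a hard identity: the key is the orthogonality relation $\sum_{j=1}^{2^n}\epsilon_{ij}\epsilon_{kj}=\delta_{ik}\,2^n$ for the Rademacher sign matrix, which gives exactly $\sum_i 2^{-n}\bigl|\sum_k a_k^n\epsilon_{ki}\bigr|^2=\sum_k|a_k^n|^2$ on each block, and hence Parseval globally. Your proposal to ``exhibit an orthogonal dilation'' or ``directly estimate'' does not engage with this mechanism. The paper also needs a separate lemma (uniform boundedness of the projections $P_n$, proved via Khintchine-type inequalities) just to know that the original pair is a framing on the Banach dilation space. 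So while your roadmap is correct, turning it into a proof requires carrying out all of these explicit Rademacher computations, which is the substance of the theorem.
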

\begin{proof} This result is very technical and it is a stand-alone result for the present paper because the details of the proof are not used anywhere else in this paper.  So we must refer the reader to \cite{CHL} for some of the terminology and Banach space background.  With this in hand this proof can be handily worked through.

Fix any $1<p<\infty$ and $p\neq 2$ and natural number $n$. Let $\{e_i^n\}_{i=1}^{2^n}$ be a unconditional unit basis of $\ell_p^{2^n}$ and
$\{(e_i^n)^*\}_{i=1}^{2^n}$ be the dual of $\{e_i^n\}_{i=1}^{2^n}$. We denote the Rademacher vectors in $\ell_p^{2^n}$ by $\{r_i^n\}_{i=1}^n$,
where
\[r_i^n=\frac{1}{2^{n/p}}\sum_{j=1}^{2^n}\epsilon_{ij}e_j^n\]
with $\|r_i^n\|_{\ell_p^{2^n}}=1$ and $\epsilon_{ij}$ satisfying the condition
\[\sum_{j=1}^{2^n}\epsilon_{ij}\epsilon_{kj}=\delta_{ik}\cdot 2^n.\]
Then $\{r_i^n\}_{i=1}^n$ are linearly independent because they are
the Rademacher vectors. So $W_n=\mathrm{span}\{r_i^n\}_{i=1}^n$ is a
$n$-dimensional subspace of $\ell_p^{2^n}$. For any $\sum_{i=1}^n
a_i r_i^n\in W_n$, we have
\begin{eqnarray}\label{eq:01}\edz{change l to i in order to make the index same}
\left\|\sum_{i=1}^n a_i
r_i^n\right\|_{\ell_p^{2^n}}&=&\left\|\sum_{i=1}^n
\frac{a_i}{2^{n/p}}\sum_{j=1}^{2^n}\epsilon_{ij}e_j^n\right\|_{\ell_p^{2^n}}\nonumber\\
&=& \frac{1}{2^{n/p}}\left\|\sum_{j=1}^{2^n}\left(\sum_{i=1}^n
a_i\epsilon_{ij}\right)e_j^n\right\|_{\ell_p^{2^n}}\nonumber\\
&=&\frac{1}{2^{n/p}}\left(\sum_{j=1}^{2^n}\left|\sum_{i=1}^n
a_i\epsilon_{ij}\right|^p\right)^{1/p}.
\end{eqnarray}

Let $\mathrm{sign}(\sin 2^i \pi t), i=0,1,\cdots,n$ be the Rademacher functions on $[0,1].$ By Theorem 2.b.3 in \cite{LT}, we have
\begin{eqnarray*}
\int_0^1 \left|\sum_{i=1}^n a_i \mathrm{sign}(\sin 2^i \pi
t)\right|^p dt
&=&\sum_{j=1}^{2^n}\int_{\frac{j-1}{2^n}}^{\frac{j}{2^n}}
\left|\sum_{i=1}^n
a_i \mathrm{sign}(\sin 2^i \pi t)\right|^p dt\\
&=&\sum_{j=1}^{2^n}\int_{\frac{j-1}{2^n}}^{\frac{j}{2^n}}\left|\sum_{i=1}^n
a_i \epsilon_{ij}\right|^p dt\\
&=&\sum_{j=1}^{2^n} \frac{1}{2^n}\left|\sum_{i=1}^n a_i
\epsilon_{ij}\right|^p.
\end{eqnarray*}
Let $A_p, B_p$ be the constants as in Theorem 2.b.3 in \cite{LT}.
Then for any $\{a_i\}^n_{i=1},$ we have
\[A_p\left(\sum_{i=1}^n \left|a_i\right|^2\right)^{1/2}
\leq \left(\sum_{j=1}^{2^n} \frac{1}{2^n}\left|\sum_{i=1}^n a_i
\epsilon_{ij}\right|^p\right)^{1/p} \leq B_p \left(\sum_{i=1}^n
\left|a_i\right|^2\right)^{1/2},\] that is
\[A_p\left(\sum_{i=1}^n \left|a_i\right|^2\right)^{1/2}
\leq \frac{1}{2^{n/p}}\left(\sum_{j=1}^{2^n}\left|\sum_{i=1}^n
a_i\epsilon_{ij}\right|^p\right)^{1/p} \leq B_p \left(\sum_{i=1}^n
\left|a_i\right|^2\right)^{1/2}.\] Thus from equation (\ref{eq:01}),
we obtain
\[A_p\left(\sum_{i=1}^n \left|a_i\right|^2\right)^{1/2}
\leq \left\|\sum_{i=1}^n a_i r_i^n\right\|_{\ell_p^{2^n}} \leq B_p
\left(\sum_{i=1}^n \left|a_i\right|^2\right)^{1/2}.\]

Let $\cH_n$ be the span of $\{r_1^n,\cdots, r_n^n\}$ with the inner product
\[\left\langle\sum_{i=1}^na_ir_i^n,\sum_{k=1}^nb_kr_k^n\right\rangle
=\sum_{i=1}^na_i\overline{b}_i.\] The induced Hilbert space norm is
\[\left\| \sum_{i=1}^na_ir_i^n\right\|_{\cH_n}
=\left(\sum_{i=1}^n \left|a_i\right|^2\right)^{1/2},\] and
$\{r_1^n,\cdots, r_n^n\}$ is an orthonormal basis for $\cH_n.$ Then
$\cH_n$ is a Hilbert space isometrically isomorphism to $l_2^n.$ The
spaces $\cH_n$ and $W_n$ are naturally isomorphism as Banach space,
being the same vector space with two different norms. Let
$U_n:W_n\to \cH_n,\quad U_nx=x$ be this isomorphism.

Define a mapping $P_n:\ell_p^{2^n}\to W_n$ by the dual Rademachers
\[(r_i^n)^*=\frac{1}{2^{n/q}}\sum_{j=1}^{2^n}\epsilon_{ij}(e_j^n)^*,\]
where $1/p+1/q=1.$ That is $P_n(x)=\sum_{i=1}^{n}(r_i^n)^*(x)r_i^n$
for any $x\in \ell_p^{2^n}.$ So we have
$P_n(e_k^n)=\sum_{i=1}^{n}(r_i^n)^*(e_k^n)r_i^n$.  Since
\begin{eqnarray*}
(r_i^n)^*(e_k^n)=\frac{1}{2^{n/q}} \sum_{j=1}^{2^n}\epsilon_{ij}(e_j^n)^*(e_k^n) =\frac{1}{2^{n/q}}\epsilon_{ik},
\end{eqnarray*}
we get that
\begin{eqnarray}\label{eq:02}
P_n(e_k^n)=\sum_{i=1}^{n}\frac{1}{2^{n/q}} \epsilon_{ik}r_i^n =\frac{1}{2^{n/q}}\sum_{i=1}^{n} \epsilon_{ik}r_i^n
\end{eqnarray}
and
\begin{eqnarray*}
P^2_n(e_k^n)=P_n\left(\frac{1}{2^{n/q}}\sum_{i=1}^{n}
\epsilon_{ik}r_i^n\right)=\frac{1}{2^{n/q}}\sum_{i=1}^{n}\epsilon_{ik}P_n\left(
r_i^n\right).
\end{eqnarray*}
On the other hand, since $\sum_{j=1}^{2^n}\epsilon_{ij}\epsilon_{kj}=\delta_{ik}\cdot 2^n,$ we have
\begin{eqnarray*}
P_n\left(r_i^n\right)&=& P_n\left(\frac{1}{2^{n/p}}\sum_{j=1}^{2^n}\epsilon_{ij}e_j^{n}\right)
 = \frac{1}{2^{n/p}}\sum_{j=1}^{2^n}\epsilon_{ij}P_n\left(e_j^n\right)\\
&=&\frac{1}{2^{n/p}}\sum_{j=1}^{2^n}\epsilon_{ij}\left(\frac{1}{2^{n/q}}\sum_{k=1}^{n}
\epsilon_{kj}r_k^n
\right)=\frac{1}{2^n}\sum_{j=1}^{2^n}\sum_{k=1}^{n}\epsilon_{ij}\epsilon_{kj}r_k^n\\
&=&\frac{1}{2^n}\sum_{k=1}^{n}\left(\sum_{j=1}^{2^n}\epsilon_{ij}
\epsilon_{kj}\right)r_k^n=\frac{1}{2^n}\sum_{k=1}^{n}\delta_{ik}
\cdot 2^n\cdot r_k^n\\
&=&r_i^n.
\end{eqnarray*}
So we get
\begin{eqnarray*}
P^2_n(e_k^n)&=&\frac{1}{2^{n/q}}\sum_{i=1}^{n}\epsilon_{ik}P_n\left(r_i^n\right)\\
&=&\frac{1}{2^{n/q}}\sum_{i=1}^{n}\epsilon_{ik}r_i^n\\
&=&P_n(e_k^n).
\end{eqnarray*}
So $P_n$ is a projection from $\ell_p^{2^n}$ onto $W_n$ and
$P_n(r_i^n)=r_i^n. $

To complete the proof, we need the following:

\begin{lemma}\label{le:82}
The projections $P_n$ are uniformly bounded in norm.
\end{lemma}
This lemma can be deduced implicitly from standard results in the literature ( c.f. \cite{DJT,R,FHHMPZ} ). However we have not found it stated
explicitly in any references.  Thus for self completeness we include the following proof which was kindly shown to us by P. Casazza.  It is
short and self-contained but doesn't give the best uniform bound.

In order to prove Lemma \ref{le:82}, we need the following results
from Lindenstrauss and Tzafriri:
\begin{lemma}\label{le:83}
There are constants $A_p,B_p$ so that if $\{r_i\}_{i=1}^m$ are the
Rademacher vectors in $\ell^n_p$, then for all scalars
$\{a_i\}_{i=1}^m$, we have
\[A_p\left(\sum^m_{i=1}|a_i|^2\right)^{1/2}
\leq\left\|\sum^m_{i=1}a_ir_i\right\|_{\ell^n_p}\leq
B_p\left(\sum^m_{i=1}|a_i|^2\right)^{1/2}.\]
\end{lemma}
\begin{lemma}\label{le:84}
If $\{r_i\}_{i=1}^m$ are the Rademacher vectors in $\ell^n_p$, then
for all $x\in\ell^n_q,$
\[\left(\sum^m_{i=1}|r_i(x)|^2\right)^{1/2}\leq B_p\|x\|.\]
\end{lemma}
\begin{proof}
Give $x\in \ell^n_q$, choose $\{a_i\}_{i=1}^m$ so that
$\sum^m_{i=1}|a_i|^2=1$ and
$\left(\sum^m_{i=1}|r_i(x)|^2\right)^{1/2}=\sum^m_{i=1}a_ir_i(x)$.
Then we get
\begin{eqnarray*}
\left(\sum^m_{i=1}|r_i(x)|^2\right)^{1/2}
&=&\sum^m_{i=1}a_ir_i(x) =\left(\sum^m_{i=1}a_ir_i\right)(x)\\
&\leq&\left\|\sum^m_{i=1}a_ir_i\right\|\|x\| \leq B_p\left(\sum^m_{i=1}|a_i|^2\right)^{1/2}\|x\| = B_p\|x\|.\\
\end{eqnarray*}
\end{proof}

\textbf{Proof of Lemma \ref{le:82}:} Let
$\{r_i\}_{i=1}^n$(respectively, $\{r^*_i\}_{i=1}^n$) be the
Rademachers in $\ell^{2^n}_p$ (respectively, $\ell^{2^n}_q$) with
$1/p+1/q =1$. Then
\[r^*_i(r_j)=\delta_{i,j}.\]
Now we check the norm of $P_n$. Now, applying our Lemma \ref{le:84}
twice,
\begin{eqnarray*}
\|P_n(x)\|
&=&\sup_{\|f\|_{\ell^{2^n}_q}=1}\left|f\left(P_n(x)\right)\right|\\
&=&\sup_{\|f\|_{\ell^{2^n}_q}=1}\left|\sum^m_{i=1}r^*_i(x)f(r_i)\right|\\
&\leq&\sup_{\|f\|_{\ell^{2^n}_q}=1}\left(\sum^m_{i=1}|r^*_i(x)|^2\right)^{1/2}
\left(\sum^m_{i=1}|f(r_i)|^2\right)^{1/2}\\
&\leq&\sup_{\|f\|_{\ell^{2^n}_q}=1}B_q\|x\|B_p \|f\|\\
&=&B_pB_q\|x\|.
\end{eqnarray*}
We complete the proof of Lemma \ref{le:82}.

Now we resume the proof of Theorem \ref{th:81}: Let $Z$ and $W$ be the Banach spaces defined by
\[Z=\sum_{n=1}^{\infty}\oplus_2\ell_p^{2^n}
=\ell_p^{2^1}\oplus_2\cdots\oplus_2\ell_p^{2^n}\oplus_2\cdots\] and
\[W=\sum_{n=1}^{\infty}\oplus_2W_n
=W_1\oplus_2\cdots\oplus_2W_n\oplus_2\cdots,\] and let $P$ be the projection from $Z$ onto $W$ defined by
\[P=\sum_{n=1}^{\infty}\oplus_2P_n
=P_1\oplus_2\cdots\oplus_2P_n\oplus_2\cdots\ .\]

Since $\{e_i^n\}_{i=1}^{2^n}$ is the unconditional unit basis of
$\ell_p^{2^n},$ we obtain that
\begin{eqnarray*}
&&e^1_i\oplus_20\oplus_2\cdots\oplus_20\oplus_2\cdots,\qquad i=1,2\\
&&0\oplus_2 e^2_i\oplus_2\cdots\oplus_20\oplus_2\cdots,\qquad i=1,2,3,4\\
&&\cdots\cdots\\
&&0\oplus_2 \cdots\oplus_2 e^n_i\oplus_20\oplus_2\cdots,\qquad
i=1,2,\cdots,2^n\\
&&\cdots\cdots
\end{eqnarray*}
is an uncondition basis of $Z,$ and
\begin{eqnarray*}
&&(e^1_i)^*\oplus_20\oplus_2\cdots\oplus_20\oplus_2\cdots,\qquad i=1,2\\
&&0\oplus_2 (e^2_i)^*\oplus_2\cdots\oplus_20\oplus_2\cdots,\qquad i=1,2,3,4\\
&&\cdots\cdots\\
&&0\oplus_2 \cdots\oplus_2 (e^n_i)^*\oplus_20\oplus_2\cdots,\qquad
i=1,2,\cdots,2^n\\
&&\cdots\cdots
\end{eqnarray*}
is the dual of this basis. Thus
\[\Big\{P\left(0\oplus_2 \cdots\oplus_2 e^n_i\oplus_20\oplus_2\cdots\right),
P^*\left(0\oplus_2 \cdots\oplus_2
(e^n_i)^*\oplus_20\oplus_2\cdots\right)\Big\}_{i=1,\cdots,2^n,
n=1,2,\cdots}\] is a framing of $W$, where
\[P^*=\sum_{n=1}^{\infty}\oplus_2P^*_n
=P^*_1\oplus_2\cdots\oplus_2P^*_n\oplus_2\cdots,\] and
\[P^*\left(0\oplus_2 \cdots\oplus_2
(e^n_i)^*\oplus_20\oplus_2\cdots\right)=0\oplus_2 \cdots\oplus_2
P^*(e^n_i)^*\oplus_20\oplus_2\cdots.\] Since each
\[U_n:W_n\to \cH_n,\quad U_nx=x\] is an isomorphic operator, it follows that
\[U=\sum_{n=1}^{\infty}\oplus_2U_n
=U_1\oplus_2\cdots\oplus_2U_n\oplus_2\cdots\] is an isomorphic
operator. This implies that
\[\Big\{U\left(0\oplus_2 \cdots\oplus_2 P_n(e^n_i)\oplus_20\oplus_2\cdots\right),
(U^{-1})^*\left(0\oplus_2 \cdots\oplus_2
P^*_n(e^n_i)^*\oplus_20\oplus_2\cdots\right)\Big\}_{i=1,\cdots,2^n,
n=1,2,\cdots}\] is a framing of the Hilbert space $\cH$, where
\[\cH=\sum_{n=1}^{\infty}\oplus_2\cH_n
=\cH_1\oplus_2\cdots\oplus_2\cH_n\oplus_2\cdots.\]

Now we prove that this framing has a Hilbert space dilation.

Take $\alpha^n_i=2^{n(1/q-1/2)},  i=1,\cdots,2^n, n=1,2,\cdots.$ For
any $h=h_1\oplus_2\cdots\oplus_2 h_n\oplus_2\cdots \in \cH,$ where
$h_n=\sum_{k=1}^na_k^nr_k^n, n=1,2,\cdots,$
\begin{eqnarray*}
&&\left\langle h,\alpha^n_i U\left(0\oplus_2
 \cdots\oplus_2 P_n(e^n_i)\oplus_20\oplus_2\cdots\right)\right\rangle_\cH \\
&=&2^{n(1/q-1/2)}\left\langle h_1\oplus_2\cdots\oplus_2
h_n\oplus_2\cdots,
0\oplus_2\cdots\oplus_2 U_n\left(P_n(e^n_i)\right)\oplus_20\oplus_2\cdots\right\rangle_\cH \\
&=&2^{n(1/q-1/2)}\left\langle  h_n,
U_n\left(P_n(e^n_i)\right)\right\rangle_{\cH_n}\\
&=&2^{n(1/q-1/2)}\left\langle h_n, P_n(e^n_i)\right\rangle_{\cH_n}.
\end{eqnarray*}

From $P_n(e_i^n)=\frac{1}{2^{n/q}}\sum_{j=1}^{n} \epsilon_{j
i}r_j^n, $ we get that
\begin{eqnarray*}
\left\langle h_n, P_n(e^n_i)\right\rangle_{\cH_n}= \left\langle
\sum_{k=1}^na_k^nr_k^n,\frac{1}{2^{n/q}}\sum_{j=1}^{n} \epsilon_{j
i}r_j^n\right\rangle_{\cH_n}=\frac{1}{2^{n/q}}\sum_{k=1}^na_k^n\epsilon_{k
i}.
\end{eqnarray*}
Thus
\begin{eqnarray}\label{eq:A01}
&&\left\langle h,\alpha^n_i U\left(0\oplus_2
 \cdots\oplus_2 P_n(e^n_i)\oplus_20\oplus_2\cdots\right)\right\rangle_\cH \nonumber\\
&=&2^{n(1/q-1/2)}\cdot\frac{1}{2^{n/q}}\sum_{k=1}^na_k^n\epsilon_{k
i}\nonumber\\
&=&2^{-n/2}\sum_{k=1}^na_k^n\epsilon_{k i}.
\end{eqnarray}

On the other hand,
\begin{eqnarray*}
&&\left\langle
h,\frac{1}{\overline{\alpha}^n_i}(U^{-1})^*\left(0\oplus_2
 \cdots\oplus_2 P^*_n(e^n_i)^*\oplus_20\oplus_2\cdots\right)\right\rangle_\cH \\
&=&2^{n(1/2-1/q)}\left\langle h, 0\oplus_2
 \cdots\oplus_2 (U_n^{-1})^*(P^*_n(e^n_i)^*)\oplus_20\oplus_2\cdots\right\rangle_\cH \\
&=&2^{n(1/2-1/q)}\left\langle h_n,(U_n^{-1})^*(P^*_n(e^n_i)^*)\right\rangle_{\cH_n} \\
&=&2^{n(1/2-1/q)}
P^*_n(e^n_i)^*\left(U_n^{-1}h_n\right) \\
&=&2^{n(1/2-1/q)}
P^*_n(e^n_i)^*\left(h_n\right) \\
&=&2^{n(1/2-1/q)}(e^n_i)^*\left(P_n\left(h_n\right)\right)\\
&=&2^{n(1/2-1/q)}(e^n_i)^*\left(h_n\right)\\
&=&2^{n(1/2-1/q)}(e^n_i)^*\left(\sum_{k=1}^na_k^nr_k^n\right)\\
&=&2^{n(1/2-1/q)}\sum_{k=1}^na_k^n(e^n_i)^*\left(r_k^n\right).
\end{eqnarray*}
Since
\begin{eqnarray*}
(e^n_i)^*\left(r_k^n\right)
=(e^n_i)^*\left(\frac{1}{2^{n/p}}\sum_{j=1}^{2^n}\epsilon_{kj}e_j^n\right)
=\frac{1}{2^{n/p}}\sum_{j=1}^{2^n}\epsilon_{kj}(e^n_i)^*\left(e_j^n\right)
=\frac{1}{2^{n/p}}\epsilon_{ki},
\end{eqnarray*}
we have
\begin{eqnarray}\label{eq:A02}
&&\left\langle
h,\frac{1}{\overline{\alpha}^n_i}(U^{-1})^*\left(0\oplus_2
 \cdots\oplus_2 P^*_n(e^n_i)^*\oplus_20\oplus_2\cdots\right)\right\rangle_\cH \nonumber\\
&=&2^{n(1/2-1/q)}\sum_{k=1}^na_k^n\frac{1}{2^{n/p}}\epsilon_{ki}\nonumber\\
&=&2^{n(1/2-1/q-1/p)}\sum_{k=1}^na_k^n\epsilon_{ki}\nonumber\\
&=&2^{-n/2}\sum_{k=1}^na_k^n\epsilon_{k i}.
\end{eqnarray}

From (\ref{eq:A01}) and (\ref{eq:A01}), we know that
\[\alpha^n_i U\left(0\oplus_2
 \cdots\oplus_2 P_n(e^n_i)\oplus_20\oplus_2\cdots\right)=
 \frac{1}{\overline{\alpha}^n_i}(U^{-1})^*\left(0\oplus_2
 \cdots\oplus_2 P^*_n(e^n_i)^*\oplus_20\oplus_2\cdots\right).\]

Now we show that
\[\Big\{\alpha^n_iU\left(0\oplus_2
 \cdots\oplus_2 P_n(e^n_i)\oplus_20\oplus_2\cdots\right)\Big\}_{i=1,\cdots,2^n,
n=1,2,\cdots}\] is the Parseval frame of $\cH.$

For any $h=h_1\oplus_2\cdots\oplus_2 h_n\oplus_2\cdots \in \cH,$
where $h_n=\sum_{k=1}^n a_k^n r_k^n, n=1,2,\cdots,$ we have
\[\|h\|^2=\sum_{n=1}^{\infty}\|h_n\|^2=\sum_{n=1}^{\infty}\sum_{k=1}^n|a_k^n|^2.\]

From $\sum_{j=1}^{2^n}\epsilon_{ij}\epsilon_{kj}=\delta_{ik}\cdot
2^n,$ for any $\{a_k^n\}_{k=1,\cdots,n},$ we have
\[\sum_{i=1}^{2^n} \frac{1}{2^n}\left|\sum_{k=1}^n a_k^n
\epsilon_{ki}\right|^2=\sum_{k=1}^n|a_k^n|^2.\] Hence we get
\begin{eqnarray*}
&&\sum_{n=1}^{\infty}\sum_{i=1}^{2^n}\left|\left\langle h,\alpha^n_i
U\left(0\oplus_2
 \cdots\oplus_2 P_n(e^n_i)\oplus_20\oplus_2\cdots\right)\right\rangle_\cH\right|^2 \\
&=&\sum_{n=1}^{\infty}\sum_{i=1}^{2^n}\left|\frac{1}{2^{n/2}}\sum_{k=1}^na_k^n\epsilon_{k i}\right|^2\\
&=&\sum_{n=1}^{\infty}\sum_{i=1}^{2^n}\frac{1}{2^n}\left|\sum_{k=1}^na_k^n\epsilon_{k
i}\right|^2\\
&=&\sum_{n=1}^{\infty}\sum_{k=1}^n|a_k^n|^2\\
&=&\|h\|^2.
\end{eqnarray*}
Therefore
\[\Big\{\alpha^n_iU\left(0\oplus_2
 \cdots\oplus_2 P_n(e^n_i)\oplus_20\oplus_2\cdots\right)\Big\}_{i=1,\cdots,2^n,
n=1,2,\cdots}\] is the Parseval frame of $\cH.$ Finally by Theorem
\ref{pr:Hp2}, we obtain that
\[\Big\{U\left(0\oplus_2 \cdots\oplus_2
P_n(e^n_i)\oplus_20\oplus_2\cdots\right), (U^{-1})^*\left(0\oplus_2 \cdots\oplus_2
P^*_n(e^n_i)^*\oplus_20\oplus_2\cdots\right)\Big\}_{i=1,\cdots,2^n, n=1,2,\cdots}\] has a Hilbert space dilation. This completes the proof of
Theorem \ref{th:81}.
\end{proof}

\backmatter

\bibliographystyle{amsalpha}

\end{document}